\documentclass[francais,11pt]{smfart}

\usepackage[utf8]{inputenc}
\usepackage{amssymb,footnote}
\usepackage{multicol}
\usepackage{multirow}
\usepackage{dsfont}
\usepackage{mathrsfs}
\usepackage{dsfont}

\usepackage{stmaryrd}
\usepackage{color}
\usepackage[all]{xy}
\usepackage{enumerate}
\usepackage[russian,francais]{babel}
\usepackage{tikz}
\usepackage{url}
\usepackage{tabularx}

\usepackage{rotating}

\topmargin=0.33in
\oddsidemargin=0.22in
\evensidemargin=0.22in
\textwidth=5.7in
\textheight=8.593in
\setlength{\parskip}{3mm}

\makeatletter
\def\@thm#1#2#3{%
  \ifhmode\unskip\unskip\par\fi
  \normalfont

  \trivlist
  \let\thmheadnl\relax
  \let\thm@swap\@gobble
  \thm@notefont{\fontseries\mddefault\upshape}%
  \thm@headpunct{.}
  \thm@headsep 5\p@ plus\p@ minus\p@\relax
  \thm@space@setup
  \setlength{\parskip}{0mm}
  #1
  \@topsep \thm@preskip               
  \@topsepadd \thm@postskip           
  \def\@tempa{#2}\ifx\@empty\@tempa
    \def\@tempa{\@oparg{\@begintheorem{#3}{}}[]}%
  \else
    \refstepcounter{#2}%
    \def\@tempa{\@oparg{\@begintheorem{#3}{\csname the#2\endcsname}}[]}%
  \fi
  \@tempa
}
\renewenvironment{proof}[1][\proofname]{\par
  \ifx@pushQED \pushQED{\qed}\fi
  \setlength{\parskip}{0mm}
  \normalfont
  \topsep6\p@\@plus6\p@ \trivlist \itemindent\z@ 
  \def\@proofhead{\normalfont\itshape #1}%
  \sbox\@tempboxa{\@proofhead}%
  \ifdim\wd\@tempboxa>0.7\linewidth \smf@skippttrue\fi
  \ifsmf@skippt
    \global\smf@skipptfalse
    \item[]{\@proofhead\@@par}
    \nobreak
  \else
    \item[\hskip\labelsep
          \unhbox\@tempboxa\pointrait]%
  \fi
  \ignorespaces
}{%
  \MakeQed
  \endtrivlist
  \@endpefalse
}
\makeatother

\newtheorem{thm''}{Théorème}
\newtheorem{cor''}[thm'']{Corollaire}
\newtheorem{conj''}[thm'']{Conjecture}

\newtheorem{prop'}{Proposition}[section]
\newtheorem{conj'}[prop']{Conjecture}
\newtheorem{lem'}[prop']{Lemme}
\newtheorem{thm'}[prop']{Théorème}
\newtheorem{cor'}[prop']{Corollaire}
\theoremstyle{definition}
\newtheorem{hyp'}[prop']{Hypothèse}
\newtheorem{definit'}[prop']{Définition}
\newtheorem{ex'}[prop']{Exemple}
\newtheorem{ques'}[prop']{Question}
\newtheorem{rem'}[prop']{Remarque}

\theoremstyle{plain}
\newtheorem{prop}{Proposition}[subsection]
\newtheorem{conj}[prop]{Conjecture}
\newtheorem{cor}[prop]{Corollaire}
\newtheorem{lem}[prop]{Lemme}
\newtheorem{thm}[prop]{Théorème}
\theoremstyle{definition}
\newtheorem{definit}[prop]{Définition}
\newtheorem{ex}[prop]{Exemple}
\newtheorem{rem}[prop]{Remarque}

\def\={\buildrel {\rm d\acute ef}\over =}

\DeclareMathOperator{\Id}{Id}

\DeclareMathOperator{\Spec}{Spec}
\DeclareMathOperator{\Spf}{Spf}
\DeclareMathOperator{\Spm}{Spm}

\DeclareMathOperator{\Ind}{Ind}

\DeclareMathOperator{\Gal}{Gal}

\DeclareMathOperator{\GL}{GL}
\newcommand{\nr}{\text{\rm nr}}

\DeclareMathOperator{\ab}{\mathrm{ab}}
\DeclareMathOperator{\tr}{\mathrm{tr}}

\newcommand{\calV}{\mathcal{V}}
\newcommand{\calX}{\mathcal{X}}
\newcommand{\calY}{\mathcal{Y}}
\newcommand{\calZ}{\mathcal{Z}}
\newcommand{\calD}{\mathcal{D}}
\renewcommand{\P}{\mathbb{P}}

\newcommand{\bbM}{\mathbb{M}}

\newcommand{\SK}{\mathfrak{S}}
\newcommand{\MK}{\mathfrak{M}}

\renewcommand{\sp}{\mathrm{sp}}

\newcommand{\oubgenre}{\text{\foreignlanguage{russian}{\it ж}}}

\newcommand{\I}{\mathrm{I}}
\newcommand{\II}{\mathrm{II}}
\newcommand{\oF}{{\mathcal O}_F}

\newcommand{\oE}{{\mathcal O}_E}
\newcommand{\oEp}{{\mathcal O}_{E'}}

\newcommand{\ocE}{{\mathcal O}_{\mathcal{E}}}
\newcommand{\ocEnr}{{\mathcal O}_{\mathcal{E}^{\nr}}}

\newcommand{\oK}{{\mathcal O}_K}

\renewcommand{\1}{\mathds 1}
\newcommand{\Z}{{\mathbb Z}} 
\newcommand{\N}{{\mathbb N}} 
\newcommand{\Q}{{\mathbb Q}} 
 
\newcommand{\Qp}{\Q_{p}} 
 
\newcommand{\Zp}{\Z_{p}}

\newcommand{\F}{\mathbb F}
\newcommand{\Fp}{{\mathbb F}_{p}}
\newcommand{\Fq}{{\mathbb F}_{q}}

\newcommand{\Qbar}{\overline\Q}
\newcommand{\Qpbar}{\Qbar_p}

\newcommand{\rhobar}{\overline\rho}

\newcommand{\pr}{\mathrm{pr}}

\newcommand{\vv}{{\bf v}}

\newcommand{\ttt}{{\rm t}}

\newcommand{\m}{{\mathfrak m}}

\newcommand{\cL}{\mathscr{L}}
\newcommand{\GR}{\mathscr{G\!R}}
\newcommand{\vK}{\overline{\GR}}
\newcommand{\gA}{\text{\tt A}}
\newcommand{\gB}{\text{\tt B}}
\newcommand{\gAB}{\text{\tt AB}}
\newcommand{\gO}{\text{\tt O}}
\definecolor{colA}{rgb}{0,0,0.5}
\definecolor{colB}{rgb}{0,0.4,0}
\definecolor{colstr}{rgb}{0.5,0.1,0.1}
\definecolor{ptmarque}{rgb}{0.4,0.3,0}
\definecolor{colC}{rgb}{0.7,0,0}
\definecolor{colD}{rgb}{0.2,0.2,0.2}
\definecolor{colE}{rgb}{0.4,0.4,0.4}
\definecolor{colF}{rgb}{0.25,0.25,0}
\definecolor{colG}{rgb}{0.85,0.85,0.9}

\def \Ieta{\mathrm I_{\eta}}
\def \Ietap{\mathrm I_{\eta'}}

\def \II{\mathrm{II}}

\renewcommand{\epsilon}{\varepsilon}
\renewcommand{\leq}{\leqslant}
\renewcommand{\geq}{\geqslant}

\author[X. Caruso]{Xavier Caruso}
\address{IRMAR,
Université de Rennes 1, UMR 6625,
Campus de Beaulieu,
35042 Rennes Cedex, France}
\email{xavier.caruso@normalesup.org}

\author[A. David]{Agnès David}
\address{Laboratoire de Mathématiques de Besançon, UMR 6623,
Université de Franche-Comté,
16 route de Gray,
25030 Besançon Cedex, France}
\email{Agnes.David@math.cnrs.fr}

\author[A. Mézard]{Ariane Mézard}
\address{Institut de Mathématiques de Jussieu Paris Rive-Gauche,  UMR 7586,  LabEx SMP
Université Pierre et Marie Curie,
75005 Paris, France}
\email{ariane.mezard@upmc.fr}

\title[Variétés de Kisin stratifiées]{Variétés de Kisin stratifiées et\\déformations potentiellement Barsotti--Tate}

\begin{document} 

\begin{abstract}
Soient $F$ une extension finie non ramifiée de $\Qp$ et $\rhobar$ une 
représentation modulo $p$ irréductible de dimension $2$ du groupe de 
Galois absolu de $F$.
L'objet de ce travail est la détermination de la variété de Kisin 
qui paramètre les modules de Breuil--Kisin associés à certaines 
familles de déformations potentiellement Barsotti--Tate de $\rhobar$. 
Nous démontrons que cette variété est une réunion finie de produits de 
$\P^1$ qui s'identifie à une sous-variété explicite connexe de 
$(\P^1)^{[F:\Qp]}$. Nous définissons une stratification de la variété 
de Kisin en sous-schémas localement fermés et expliquons enfin comment
la variété de Kisin ainsi stratifiée peut aider à déterminer l'anneau
des déformations potentiellement Barsotti--Tate de $\rhobar$.
\end{abstract}

\begin{altabstract}
Let $F$ be a unramified finite extension of $\Qp$ and $\rhobar$ be
an irreducible mod $p$ two-dimensional representation of the absolute
Galois group of $F$.
The aim of this article is the explicit computation of the Kisin
variety parameterizing the Breuil--Kisin modules associated to certain families
of potentially Barsotti--Tate deformations of $\rhobar$. We prove that
this variety is a finite union of products of $\P^1$. Moreover,  it
appears as an explicit closed subvariety of $(\P^1)^{[F:\Qp]}$. We
define a stratification of the Kisin variety by locally closed
subschemes and explain how the Kisin variety equipped with its
stratification may help in determining the ring of Barsotti--Tate
deformations of $\rhobar$.
\end{altabstract}

\maketitle

\setcounter{tocdepth}{1}
\tableofcontents

\section*{Introduction}\label{intro}

Soient $p$ un nombre premier et $F$ une extension finie de $\Qp$ dont le 
groupe de Galois absolu est noté $G_F$. Nous savons depuis Mazur 
\cite{Ma} que l'ensemble des déformations de déterminant fixé d'une 
représentation absolument irréductible $\rhobar$ de $G_F$ à coefficients 
dans un corps fini de caractéristique $p$ est muni d'une structure 
géométrique. \'Etant donnés une extension finie $E$ de $\Qp$ de corps 
résiduel $k_E$ et d'anneau des entiers $\oE$ et un caractère $\psi$ de 
$G_F$ dans $\oE^\times$, Mazur a construit dans \emph{loc. cit.} une 
$\oE$-algèbre $R^{\psi}(\rhobar)$ dont l'ensemble des points à valeurs 
dans une $\oE$-algèbre locale complète noetherienne $R$ de corps 
résiduel $k_E$ s'identifie fonctoriellement à l'ensemble des 
$R$-représentations de $G_F$ de déterminant $\psi$ qui se réduisent sur 
$\rhobar$ modulo l'idéal maximal de $R$.

Récemment, Kisin \cite{Ki1,Ki4} a démontré que certaines conditions 
issues de la théorie de Hodge $p$-adique définissent des sous-schémas fermés de 
$\Spec R^{\psi} (\rhobar)$. Plus précisément, étant donnés un type de 
Hodge $\vv$ et un type galoisien  $\ttt$, le résultat de 
Kisin établit l'existence d'un unique quotient $R^{\psi}(\vv, \ttt, 
\rhobar)$ de $R^{\psi}(\rhobar)$ qui est réduit, sans $p$-torsion et 
vérifie la condition suivante : pour toute extension finie $E'$ de $E$ 
d'anneaux des entiers $\oEp$, un morphisme de $R^{\psi}(\rhobar)$ dans $\oEp$ se 
factorise par $R^{\psi}(\vv, \ttt, \rhobar)$ si et seulement si la 
représentation qui lui est associée est potentiellement cristalline de 
poids de Hodge--Tate $\vv$ et la représentation de Weil--Deligne qui 
lui est associée par Fontaine (\emph{cf} \cite{Fo1}) est isomorphe à 
$\ttt$. Pour démontrer ce théorème, Kisin construit un schéma 
muni d'un morphisme vers $\Spec R^{\psi}( \rhobar)$ dont l'adhérence 
schématique de l'image se trouve être le spectre de l'anneau 
$R^{\psi}(\vv, \ttt, \rhobar)$\footnote{\emph{Stricto sensu}, cette description rapide de 
la démonstration de Kisin n'est correcte que dans les cas les plus
simples, à savoir le cas Barsotti--Tate discuté par la suite. Il est
toutefois possible de reformuler la preuve de Kisin dans les autres cas en suivant les
grandes lignes que nous esquissons ici.}.
Ce schéma, que nous notons ici $\GR^{\psi}(\vv, \ttt, 
\rhobar)$, est obtenu comme espace de modules de \emph{réseaux de 
Breuil--Kisin}.

Le cas initialement étudié par Kisin dans \cite{Ki1} est le cas  
Barsotti--Tate pour lequel le type de Hodge ne fait intervenir 
que les entiers $0$ et $1$ et le type galoisien est trivial. Les 
représentations correspondantes sont alors  associées aux groupes 
de Barsotti--Tate (éventuellement tronqués). Sous ces 
hypothèses, Kisin s'intéresse aux fibres
$$\vK^{\psi}(\vv, \ttt, \rhobar) = \Spec k_E \times_{\Spec 
R^{\psi}(\rhobar)} \GR^{\psi}(\vv, \ttt, \rhobar)$$
qui sont définies purement en caractéristique $p$ et correspondent à un 
problème de modules plus facile à appréhender.

Dans leur article \cite{PR}, Pappas et Rapoport ont donné le nom de 
\emph{variétés de Kisin} aux variétés $\vK^{\psi}(\vv, \ttt, \rhobar)$. Depuis, 
plusieurs auteurs se sont intéressés aux propriétés géométriques des 
variétés de Kisin. En lien avec la connexité, Hellmann a notamment 
démontré dans \cite{He} que les variétés de Kisin correspondant aux 
déformations Barsotti--Tate d'une représentation irréductible de 
dimension $2$ sont connexes, ce qui implique la connexité de la fibre 
générique de l'espace de déformations associé. Les dimensions de 
certaines variétés de Kisin ont également été étudiées par Hellmann 
\cite{He1}, Imai \cite{Im} et Caruso \cite{Ca3}.

Pour la première fois dans \cite{CDM} est apparue une variété de Kisin 
associée à un type galoisien non trivial. Cet article faisait suite à 
\cite{BM} et se plaçait dans la situation particulière suivante : 
l'extension $F$ est \emph{non ramifiée} de degré $f$ sur $\Qp$ avec $p \geq 5$, le 
type de Hodge $\vv$ est égal à $(0,1)$ en toutes les places, le type galoisien 
$\ttt$ est la somme directe de deux caractères \emph{modérément 
ramifiés} de niveau $f$ de $G_F$ et la représentation galoisienne $\rhobar$ est 
absolument irréductible de dimension $2$. Les résultats obtenus dans 
\cite{CDM} --- qui concernent essentiellement le cas où $F$ est de degré 
$2$ sur $\Qp$ --- suggèrent un lien encore plus étroit que celui 
exprimé par Kisin entre la variété de Kisin 
$\vK^{\psi}(\vv, \ttt, \rhobar)$ et (la fibre 
générique de) l'espace de déformations $\Spec R^{\psi}(\vv, \ttt, 
\rhobar)$.

Afin d'étudier ce lien hypothétique, la première étape est de calculer 
les variétés de Kisin. C'est l'objet du présent article. Nous nous 
plaçons dans le cadre de \cite{CDM} (pour tout degré $f$) et, sous ces 
hypothèses, nous donnons une description complète des variétés 
$\vK^{\psi}(\vv, \ttt,\rhobar)$. Nous montrons le théorème suivant, où 
$\varepsilon$ désigne le caractère cyclotomique $p$-adique de $G_F$ dans 
$\Z_p^\times$ (nous renvoyons le lecteur au théorème 
\ref{thequationsKisin} dans le corps du texte pour un énoncé détaillé).

\begin{thm''}[Théorème \ref{thequationsKisin}]
\label{thmintro:varKisin}
Nous supposons  que
\begin{itemize}
\item le corps $F$ est une extension non ramifiée de degré $f$ de $\Qp$ avec $p \geq 5$,
\item le type de Hodge $\vv$ vaut $(0,1)$ en chaque place,
\item le type galoisien $\ttt=\eta\oplus \eta'$ est la somme directe de deux caractères
modérément ramifiés de niveau $f$ du sous-groupe d'inertie de $G_F$ à valeurs dans
$\oE^\times$,
\item la représentation $\rhobar : G_F \to \GL_2(k_E)$ est absolument
irréductible,
\item le caractère $\psi$ restreint au sous-groupe d'inertie coïncide 
avec le produit $\eta \eta'\varepsilon$.
\end{itemize}
Alors la variété de Kisin $\vK^{\psi}(\vv, \ttt,\rhobar)$ apparaît comme 
un sous-schéma fermé réduit de $(\P^1_{k_E})^{[F:\Qp]}$ défini par une famille 
d'équations explicites.
\end{thm''}

Comme conséquence du théorème \ref{thmintro:varKisin}, nous déduisons 
diverses propriétés géométriques des variétés de Kisin et des espaces de 
déformations, notamment :

\begin{cor''}[Corollaire \ref{corcon}]
\label{corintro:connexite}
Sous les hypothèses du théorème \ref{thmintro:varKisin}, la variété de 
Kisin $\vK^{\psi}(\vv, \ttt, \rhobar)$ est connexe.
\end{cor''}

Nous 
donnons également des conditions nécessaires et suffisantes sur le 
couple $(\rhobar, t)$ pour que la variété $\vK^{\psi}(\vv, \ttt, 
\rhobar)$ soit non vide (\emph{cf} corollaire \ref{corvide}). Notons que 
ceci se produit si et seulement si l'anneau $R^{\psi}(\vv, \ttt, \rhobar)$ est non nul.

Nous déduisons de notre étude qu'il est trop optimiste de penser 
que la variété de Kisin $\vK^{\psi}(\vv, \ttt, \rhobar)$ à elle 
seule détermine $\Spec R^{\psi}(\vv, \ttt, \rhobar)$ ou même 
seulement sa fibre générique. Toutefois, nous proposons une version 
raffinée plausible de cette idée. Pour ce faire, 
reprenant les techniques de \cite{BM,CDM}, nous définissons une 
stratification de $\vK^{\psi}(\vv, \ttt, \rhobar)$ par des 
sous-schémas localement fermés et formulons la conjecture suivante.

\begin{conj''}[Conjecture \ref{conj:D}]
\label{conj'':D}
Si le type galoisien $\ttt$ est non dégénéré\footnote{C'est
une hypothèse faible de généricité précisée dans la définition  \ref{hypnondegenere}.},
la variété de Kisin $\vK^{\psi}(\vv, \ttt, \rhobar)$ munie de sa
stratification détermine l'anneau $R^\psi(\vv,\ttt,\rhobar)[1/p]$.
\end{conj''}

Cette conjecture est vraie si $f=2$ (hormis peut-être pour quelques cas 
très particuliers) par les travaux de \cite{CDM}. Elle l'est également 
lorsque la variété de Kisin est réduite à un point, ce qui se produit 
pour une représentation générique. Nous concluons cet article en 
proposant un candidat pour (la variété rigide ayant pour anneau) 
$R^\psi(\vv,\ttt,\rhobar)[1/p]$ qui, conformément à la conjecture 
\ref{conj'':D}, est construit uniquement à partir de la variété de 
Kisin stratifiée.

Pour démontrer le théorème \ref{thmintro:varKisin}, notre méthode 
consiste d'abord à associer à $\rhobar$ et $\ttt$, une donnée combinatoire que nous appelons le \emph{gène} de 
$(\rhobar, \ttt)$ et que nous notons $X$. Concrètement, 
il s'agit d'une suite de $2 \cdot [F:\Qp]$ symboles de l'ensemble $\{\gA, 
\gB, \gAB, \gO\}$ qu'il est commode de représenter sur un ruban de Moebius. 
Ensuite, nous démontrons que les équations de la variété de 
Kisin se lisent sur $X$ à l'aide de manipulations 
combinatoires élémentaires. En d'autres termes, le gène $X$ est une donnée épurée et extrêmement simple qui capture 
intégralement la géométrie de la variété de Kisin $\vK^{\psi}(\vv, 
\ttt, \rhobar)$ --- qui, elle, peut être compliquée.
À partir de là, le corollaire \ref{corintro:connexite} découle d'une 
étude combinatoire portant sur les gènes $X$. La stratification sur
$\vK^{\psi}(\vv,\ttt, \rhobar)$ s'obtient, elle aussi, aisément, à
partir du gène, de même que le candidat que nous proposons pour 
$R^\psi(\vv,\ttt,\rhobar)[1/p]$.

Le plan de l'article est le suivant. Dans le \S \ref{sec:Definitions}, 
nous rappelons toutes les notions utiles à la définition rigoureuse des 
variétés de Kisin $\vK^{\psi}(\vv, \ttt, \rhobar)$ dans le cadre qui nous 
intéresse. Le \S \ref{sec:Genes} est consacré à la définition des gènes 
$X$ et à l'énoncé précis du théorème \ref{thmintro:varKisin}. Pour familiariser 
le lecteur avec les gènes, nous détaillons en outre quelques exemples que 
nous pensons représentatifs. La démonstration du théorème 
\ref{thmintro:varKisin} est reportée au \S \ref{sec:Demonstrations} et 
les conséquences géométriques, et en particulier le corollaire 
\ref{corintro:connexite}, sont discutées au \S \ref{sec:Connexite}. 
Enfin, dans le \S \ref{sec:Stratification}, nous définissons la 
stratification par le genre sur les variétés de Kisin et expliquons en 
quoi nous pensons qu'elle est liée à la géométrie des espaces de 
déformations potentiellement Barsotti--Tate.

Les auteurs remercient Bernard Le Stum et Alberto Vezzani pour leurs
explications et leurs réponses toujours pertinentes sur les questions
de géométrie $p$-adique.

Les recherches menant aux présents résultats ont bénéficié d'un soutien financier du septième programme-cadre de l'Union européenne (7ePC/2007-2013) en vertu de la convention de subvention numéro 266638.

\section{Variétés de Kisin avec donnée de descente}
\label{sec:Definitions}

Toutes les extensions de $\Qp$ considérées sont supposées 
contenues une clôture algébrique $\Qpbar$ fixée de $\Qp$. Pour $K$ une 
telle extension, nous notons $G_K=\Gal(\Qpbar/K)$, $\oK$ son anneau 
d'entiers, $\pi_K$ une uniformisante et $k_K$ le corps r\'esiduel de 
$\oK$.

\subsection{Les données : représentation et type galoisiens}
\label{ssec:donnees}

Soient $E$ une extension finie de $\Qp$ et $F$ une extension finie non 
ramifiée de $\Qp$ de degré $f\geq 2$. Posons $q = p^f$ et $e=p^f-1$. 
Soit $F'$ l'unique extension quadratique non ramifiée de $F$ dans 
$\Qpbar$. Nous supposons $F'\subset E$. Nous fixons un plongement 
$\tau_0$ de $F$ dans $E$ et pour $0\leq i\leq f-1$, nous notons $\tau_i$ 
le plongement $\tau_0 \circ \varphi^{-i}$ où $\varphi$ désigne 
l'endomorphisme de Frobenius sur $F$. Nous notons $F^\nr$ la plus grande 
extension de $F$ non ramifiée dans $\Qpbar$ et $G_F^{\ab}$ le plus grand 
quotient abélien de $G_F$.

Soit $L$ le corps obtenu en adjoignant à $F$ une racine $e$-ième de 
$-p$, notée $\sqrt[e]{-p}$. Il s'agit d'une extension totalement ramifiée de $E$ et la projection à gauche sur $\Gal(L/F)$ 
et à droite sur les repr\'esentants multiplicatifs $[\Fq^{\times}]\cong 
\Fq^{\times}$ (en envoyant $p^{\Z}(1+p\oF)$ sur $1$) induit 
l'isomorphisme 
\begin{eqnarray}\label{fondamental}
\nonumber \omega_f :  \Gal(L/F) & \buildrel\sim\over\longrightarrow & (\oF/p)^{\times}=k_F^{\times}\\
g &\longmapsto & \overline{\frac{g(\sqrt[e]{-p})}{\sqrt[e]{-p}}}
\end{eqnarray}
par lequel nous voyons tout caract\`ere de $k_F^{\times}$ comme un 
caract\`ere de $\Gal(L/F)$ et r\'eciproquement.
Notons $\omega_f: G_F\rightarrow k_E^\times$ le caractère fondamental de 
niveau $f$ induit sur $G_F$ par (\ref{fondamental}) et le plongement 
$\tau_0$. De fa\c{c}on analogue, nous notons 
$\omega_{2f}:G_{F'}\rightarrow k_E^\times$ le caractère fondamental de 
niveau $2f$ de $F'$ (après avoir choisi un plongement 
$\tau'_0:F'\rightarrow E$ qui prolonge $\tau_0$ à $F'$).
Soit $\varepsilon:G_F\longrightarrow \Z_p^\times$ le caractère 
cyclotomique $p$-adique et $\omega$ sa réduction modulo $p$. Pour 
$\theta$ dans $k_E^\times$, nous notons enfin 
$\nr'(\theta):G_{F'}\rightarrow k_E^\times$ l'unique caract\`ere non 
ramifié qui envoie le Frobenius arithmétique de $G_{F'}$ sur $\theta$.
Soit $\rhobar:G_F\longrightarrow\GL_2(k_E)$ la représentation 
galoisienne (continue) irréductible
$$\rhobar \simeq 
\Ind_{G_{F'}}^{G_F} \Big( \omega_{2f}^h \cdot \nr'(\theta)\Big),$$
avec $h$ entre $0$ et $p^{2f}-2$ et $\theta$ dans $k_E^\times$. Comme 
$\rhobar$ est supposée irréductible, l'entier $h$ n'est pas un multiple 
de $q+1$. Par conséquent, il existe des entiers $h_i$ ($0 \leq i \leq 
f-1$) dans l'intervalle $\llbracket 0, p-1 \rrbracket$ uniquement 
déterminés tels que :
\begin{equation}
h \equiv 1 + \sum_{i=0}^{f-1} h_i p^{f-1-i} \pmod{q+1}.
\end{equation}

Fixons deux caractères modérés de niveau $f$ distincts $\eta,\eta': I_F 
\rightarrow \oE^\times$ qui s'étendent à~$G_F$. Le type galoisien 
$\ttt=\eta\oplus\eta'$ est une représentation de noyau ouvert $I_F 
\rightarrow\GL_2(E)$.
Notons $\bar{\eta}$ (resp. $\bar{\eta}'$) la réduction modulo $p$ de 
$\eta$ (resp. $\eta'$).  Il existe donc $c\in\llbracket 0, 
p^f-2\rrbracket$ tel que $\bar{\eta}\cdot (\bar{\eta}')^{-1}=\tau_0^{c}$ et pour 
$0\leq i<f$ nous notons $c_i\in\llbracket 0,p-1\rrbracket$ les entiers 
définis par l'égalité $c=\sum_{i=0}^{f-1}c_ip^i$. Nous introduisons une 
notion de dégénérescence pour les représentations et les types 
galoisiens.

\begin{definit} 
\label{hypnondegenere}
La représentation $\rhobar$ est dite \emph{non dégénérée} s'il existe 
un entier $i_0\in\llbracket 0,f-1\rrbracket$ tel que 
$h_{i_0}\not\in\{0,p-1\}$.

Le type galoisien $\ttt$ est dit \emph{non dégénéré} s'il existe 
un entier $j_0\in\llbracket 0,f-1\rrbracket$ tel que 
$c_{j_0}\not\in\{0,1,p-2,p-1\}$.
\end{definit}

Pour l'instant, nous ne supposons aucune propriété de non dégénérescence 
ni sur $\rhobar$, ni sur $\ttt$. Nous explicitons ces hypothèses 
 dans les énoncés lorsqu'elles sont nécessaires. Remarquons que, lorsque 
$p$ est fixé et $f$ tend vers l'infini, la proportion de représentations 
dégénérées (resp. de types galoisiens dégénérés) tend vers $0$. Ceci 
contraste avec la notion de représentations génériques qui avait été 
considérée dans \cite{Br} puisque, lorsque $p$ est fixé et $f$ tend vers 
l'infini, la proportion de représentations non génériques tend, elle, 
vers $1$.

Nous fixons le type de Hodge $\vv = (0,1)^f$ ainsi qu'un 
caractère continu $\psi:G_F\rightarrow \oE^\times$ tel que 
$\psi_{|I_F}=\varepsilon\det \ttt.$ Une représentation continue de $G_F$ 
sur un $E$-espace vectoriel de dimension $2$ est dite potentiellement 
Barsotti--Tate de type $(\vv,\ttt)$ si elle est potentiellement 
cristalline avec poids de Hodge--Tate $\vv$ et si la 
représentation de Weil--Deligne qui lui est attachée par \cite{Fo1} est 
isomorphe à $\ttt$ en restriction à $I_F$.
Une condition nécessaire d'existence d'un relèvement potentiellement 
Barsotti--Tate de type $(\vv,\ttt)$ de $\rhobar$ est donc
\begin{eqnarray}
\det\rhobar_{|I_F}=(\bar{\eta}\bar{\eta}'\omega)_{|I_F}.
\end{eqnarray}
Nous supposons désormais satisfaite cette hypothèse sur le type 
galoisien $\ttt$.

\subsection{Les $\varphi$-modules étales} 
\label{secdescente}

Par commodité pour le lecteur, nous rappelons ici les notions de théorie 
de Hodge $p$-adique indispensables pour définir les modules de 
Breuil--Kisin qui paramètrent les déformations géométriques de 
$\rhobar$. Pour une version plus détaillée, nous renvoyons à \cite{Fo2}, 
\cite{Ki2}, 
\cite{Ca1}. Posons $W = \oF$ et notons $\varphi$ l'endomorphisme de 
Frobenius agissant sur $W$ et $F$. Fixons également un système 
compatible $(\pi_s)_{s \in \N}$ de racines $p^s$-ièmes de $(-p)$ dans 
$\Qpbar$ et, pour tout entier $s$, posons $F_s = F(\pi_s)$ et $L_s = 
L(\pi_s)$. Définissons également $F_\infty = \bigcup_s F_s$ et $L_\infty 
= \bigcup_s L_s$. Les groupes de Galois correspondants sont notés
$G_{F_\infty}$ et $G_{L_\infty}$. Le quotient $G_{L_\infty} /
G_{F_\infty} = \Gal(L_\infty/F_\infty)$ s'identifie naturellement
à $\Gal(L/F)$.
Introduisons enfin les anneaux $\SK = W[[u]]$ et
$$\ocE = \left\{ \sum_{i \in \Z} a_i u^i \quad \Big|\quad a_i \in W, 
\lim_{i \to - \infty} a_i = 0 \right\}$$
le complété $p$-adique de $\SK[1/u]$. Ils sont, tous deux, munis d'un 
endomorphisme de Frobenius $\varphi$ défini par
$\varphi\big(\sum_i a_i u^i\big) = \sum_i \varphi(a_i) u^{pi}$.
Ils sont également munis d'une action de $\Gal(L/F)$ définie par la
formule $g \cdot \big(\sum_i a_i u^i\big) = \sum_i [\omega_f(g)]^i \: 
u^i$ où $[\cdot]$ désigne le représentant de Teichmüller.

\begin{definit}\label{defi-phi-module}
Soit $R$ une $\Zp$-algèbre locale complète noetherienne.
Un \emph{$\varphi$-module} sur $R \hat\otimes_{\Zp} \ocE$ est un
$(R \hat\otimes_{\Zp} \ocE)$-module $M$ libre de rang fini muni d'une 
application $\varphi : M \to M$ qui est $\varphi$-semi-linéaire par 
rapport à $\ocE$ et linéaire par rapport à $R$.

Le $\varphi$-module $M$ est dit \emph{étale} si l'image de $\varphi$ 
engendre $M$ comme $(R \hat\otimes_{\Zp} \ocE)$-module.
\end{definit}

Soit $R$ une $\Zp$-algèbre locale noethérienne. Par la théorie de 
Fontaine et Wintenberger \cite{Fo2,Ki2}, nous savons associer à toute 
$R$-représentation $V$ de $G_{F_\infty}$ un $\varphi$-module étale sur 
$(R \hat\otimes_{\Zp} \ocE)$ défini par la formule
$\bbM(V) = ( V(-1)  \hat\otimes_{\Zp}  \ocEnr )^{G_{L_{\infty}}}$.
Il est muni d'une action de $\Gal(L_\infty/F_\infty) \simeq \Gal(L/F)$, 
qui est semi-linéaire par rapport à l'action de $\Gal(L/F)$ sur $\ocE$.

Supposons à présent que $R$ soit une $W$-algèbre. Nous avons une
décomposition canonique de l'anneau $R \hat\otimes_{\Zp} W$ :
\begin{eqnarray*}\label{decomposition}
R \hat\otimes_{\Zp} W & \simeq & \textstyle \prod^{f-1}_{i \in 0} R \\
x \otimes y & \mapsto & (x \cdot \varphi^{-i}(y) )_{0 \leq i < f}.
\end{eqnarray*}
En tensorisant par $\ocE$ sur $W$, nous en déduisons un isomorphisme 
canonique :
\begin{equation}
\label{eq:decompocER}
R  \hat\otimes_{\Zp} \ocE  \simeq 
\textstyle \prod^{f-1}_{i \in 0} R \hat\otimes_{W, \iota \circ \varphi^{-i}}  \ocE,
\end{equation}
o\`u $\iota$ désigne le morphisme structurel faisant de $R$ une $W$-algèbre.
Concrètement, le $i$-ième facteur $R \hat\otimes_{W, \iota \circ 
\varphi^{-i}} \ocE$ admet la description explicite suivante :
\begin{equation}
\label{eq:ocERexpl}
R \hat\otimes_{W, \iota \circ \varphi^{-i}} \ocE \,\, \simeq \,\,
\Bigg\{ \sum_{j \in \Z} a_j u^j \quad  \Big|\quad  a_j \in R, \lim_{j \to - \infty} a_j = 0 \Bigg\},
\end{equation}
l'identification faisant correspondre le tenseur pur $\lambda \otimes  (\sum a_j u^j) $ avec la série $\lambda \sum \varphi^{-i} (a_j) \:u^j$. 
Avec ce choix, l'isomorphisme \eqref{eq:ocERexpl} est compatible à la fois à la
multiplication par les éléments de $W$ agissant sur le facteur $\ocE$
et à la multiplication par les éléments de $R$ agissant sur le facteur
$R$.

En notant $e_i$ l'idempotent de $R \hat\otimes_{\Zp} \ocE$ correspondant 
au $i$-ième facteur de la décomposition \eqref{eq:decompocER}, nous 
voyons que tout module $M$ sur $R \hat\otimes_{\Zp} \ocE$ se décompose 
canoniquement comme une somme directe :
\begin{equation}\label{decompositionmodule}
M = M^{(0)} \oplus M^{(1)} \oplus \cdots \oplus M^{(f-1)}
\end{equation}
où $M^{(i)} = e_i M$ peut être considéré comme un module sur l'anneau
$R \hat\otimes_{W, \iota \circ \varphi^{-i}} \ocE$.

Nous pouvons décrire explicitement le $\varphi$-module correspondant à 
la représentation $\rhobar = \Ind_{G_{F'}}^{G_F} \big( \omega_{2f}^h 
\cdot \nr'(\theta)\big)$  : pour 
tout $i$ dans~$\Z/f\Z$, il existe des bases $(\varepsilon_0^{(i)}$, 
$\varepsilon_1^{(i)})$ du $k_E((u))$-espace vectoriel 
$\bbM(\rhobar)^{(i)}$ qui sont fixées par $\Gal(L/F)$ et dans 
lesquelles, en posant $v = u^e$, les matrices du Frobenius $\varphi : 
\bbM(\rhobar)^{(i)}\longrightarrow\bbM(\rhobar)^{(i+1)} $ sont :
$$
\begin{array}{rl} \Id& \mbox{  pour } 0\leq i\leq f-2, \medskip \\
\left(\begin{array}{ccc} 
0&\theta^{-1} v^h \cr
1& 0\cr\end{array}\right) 
\cdot v^{\frac{e}{p-1} - h - k}&\mbox{ pour } i=f-1 .\cr\end{array}$$

\subsection{Modules de Breuil--Kisin et variétés de Kisin}

Si $\rho : G_F \to \GL_2(\oE)$ est une déformation de $\rhobar$, il est 
possible de lire sur le $\varphi$-module étale $\bbM(\rho)$ si elle est 
potentiellement Barsotti--Tate de type $(\vv,\ttt)$. Le théorème 
\ref{th:modBK} (dont la démonstration est une conséquence simple des 
résultats de Kisin \cite{Ki-Crys} et est détaillée dans 
\cite[Proposition 3.1.4]{CDM}) précise ce point.

\begin{thm}
\label{th:modBK}
Soit $R$ l'anneau des entiers d'une extension finie de $E$ et soit
$\rho$ une déformation de $\rhobar$ à $R$ de déterminant $\psi$. 
Alors $\rho$ est potentiellement Barsotti--Tate de type $(\vv,\ttt)$ si, 
et seulement s'il existe un sous-$(R \hat\otimes_{\Zp} \SK)$-module 
$\MK \subset \bbM(\rho)$ tel que :

\begin{enumerate}[(i)]
\item
$\MK$ est projectif de rang $2$ sur $R \hat\otimes_{\Zp} \SK$ ;
\item
$\MK$ engendre $\bbM(\rho)$ comme $(R \hat\otimes_{\Zp} \ocE)$-module ;
\item
$\MK$ est stable par les actions de $\varphi$ et $\Gal(L/F)$ ;
\item
l'idéal déterminant de $\varphi : \MK \to \MK$ est l'idéal principal engendré 
par $u^e + p$ ;
\item
l'action de $\Gal(L/F)$ sur le quotient $\MK / u \MK$ est donnée par 
$\ttt$.
\end{enumerate}

\noindent
De plus, si un tel $\MK$ existe, il est unique.
\end{thm}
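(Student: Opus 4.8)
The plan is to extract this from Kisin's classification of Barsotti--Tate representations by Breuil--Kisin modules, via the field-of-norms formalism; this is carried out in detail in \cite[Prop.~3.1.4]{CDM}, and I indicate the three steps. \emph{First, one unravels the condition.} The extension $L/F$ is tamely and totally ramified, and the characters $\eta,\eta'$, being tame of level $f$, factor through $\Gal(L/F)$ via $(\ref{fondamental})$; hence $\ttt$ is trivial on $I_L$, and any crystalline representation of $G_L$ has vanishing monodromy operator. Consequently, a deformation $\rho$ of $\rhobar$ to $R$ with $\det\rho=\psi$ is potentially Barsotti--Tate of type $(\vv,\ttt)$ if and only if $\rho|_{G_L}$ is crystalline with Hodge--Tate weights $(0,1)$ at each embedding --- i.e.\ ``$\rho|_{G_L}$ is Barsotti--Tate'' --- and the residual action of $\Gal(L/F)=I_F/I_L$ on the attached module is isomorphic to $\ttt$; the compatibilities $\psi_{|I_F}=\varepsilon\det\ttt$ and $\det\rhobar_{|I_F}=(\bar{\eta}\bar{\eta}'\omega)_{|I_F}$ built into the hypotheses keep the determinant consistent throughout.

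\emph{Second, one applies Kisin's theorem over $L$.} By construction $\ocE$ is the coefficient ring attached to $L_\infty/L$ (its reduction modulo $p$ is the field of norms of $L_\infty/L$), so $\bbM(\rho)$ is --- up to the Tate twist built into the definition of $\bbM$ --- the étale $\varphi$-module attached to $\rho|_{G_{L_\infty}}$ over $R\hat\otimes_{\Zp}\ocE$, equipped with the semilinear action of $\Gal(L_\infty/F_\infty)\simeq\Gal(L/F)$ coming from the fact that $\rho$ is defined over $G_{F_\infty}$. The uniformiser $\sqrt[e]{-p}$ of $L$ has Eisenstein polynomial $u^e+p$ over $F$, and $R$ is the ring of integers of a finite extension of $E$, a coefficient setting covered by Kisin's work. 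Kisin's classification of Barsotti--Tate representations (\cite{Ki1}, with \cite{Ki-Crys} for the general formalism) then gives: $\rho|_{G_L}$ is Barsotti--Tate if and only if $\bbM(\rho)$ contains a $\varphi$-stable $(R\hat\otimes_{\Zp}\SK)$-submodule $\MK$ that is projective of rank $2$, generates $\bbM(\rho)$ over $R\hat\otimes_{\Zp}\ocE$, and has $\det\varphi$ generating the ideal $(u^e+p)$ --- and such an $\MK$ is \emph{unique}. These are the conditions (i), (ii), (iv) and the $\varphi$-stability half of (iii), the determinant condition (iv) being Kisin's encoding in this setting of the height-$\le1$, i.e.\ Barsotti--Tate, condition.

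\emph{Third, one accounts for the descent datum and the type.} Since $\omega_f(g)$ has order dividing $e=\#k_F^\times$, its Teichmüller lift satisfies $[\omega_f(g)]^e=1$, so the action $g\cdot u=[\omega_f(g)]\,u$ preserves both $\SK=W[[u]]$ and the ideal $(u^e+p)$; hence for $g\in\Gal(L/F)$ the translate $g(\MK)$ again satisfies (i), (ii), (iv) and is $\varphi$-stable, and the uniqueness from the previous step forces $g(\MK)=\MK$. Thus $\MK$ is automatically $\Gal(L/F)$-stable, which completes (iii), and the semilinear action of $\Gal(L/F)$ induced on $\MK/u\MK$ is, by the usual recipe for descent data in Kisin's theory, precisely the inertial type of $\rho|_{G_L}$ read through $\Gal(L/F)\simeq I_F/I_L$; demanding it to equal $\ttt$ is condition (v), and the uniqueness of $\MK$ is the one just used. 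Everything here is a careful but essentially formal assembling of the field-of-norms and descent-data dictionary, exactly as in \cite{CDM}; the only non-formal ingredient, and the real crux, is Kisin's uniqueness of the Barsotti--Tate Breuil--Kisin lattice inside a given étale $\varphi$-module, on which both the $\Gal(L/F)$-stability of $\MK$ and the final uniqueness assertion rely.
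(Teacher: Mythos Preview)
Your proposal is correct and follows essentially the same approach as the paper, which itself does not give a proof in the text but simply refers to \cite[Proposition~3.1.4]{CDM} and Kisin's results in \cite{Ki-Crys}. Your three-step outline (reduce to the Barsotti--Tate condition over $L$, apply Kisin's classification to get the lattice with its $\varphi$-conditions, then use uniqueness to force $\Gal(L/F)$-stability and read off the type on $\MK/u\MK$) is exactly the argument of \emph{loc.~cit.}
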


D'après le théorème \ref{th:modBK}, lorsque $R$ est l'anneau des entiers 
d'une extension finie de $E$, l'étude des $R$-points de 
$R^\psi(\vv,\ttt,\rhobar)$ se ramène à un problème de classification 
d'algèbre semi-linéaire qui paraît plus abordable. Afin de rendre cette 
idée plus précise, fixons momentanément une $\oE$-algèbre locale 
artinienne $R$ de corps résiduel $k_E$ ainsi qu'une déformation $\rho_R 
: G_F \to \GL_2(R)$ de $\rhobar$ de déterminant $\psi$. Pour définir le 
schéma formel qui nous intéresse (\ref{eq:GRhat}), nous introduisons la définition 
suivante.

\begin{definit}\label{defmoduleBK}
Soit $S$ une $R$-algèbre.
Un \emph{réseau de Breuil--Kisin de type $(\vv,\ttt)$} de $S \otimes_R
\bbM(\rho_R)$ est la donnée d'un sous-$(S \otimes_{\Zp} \SK)$-module 
$\MK$ de $S \otimes_R \bbM(\rho_R)$ tel que 
\begin{enumerate}[(i)]
\item le module $\MK$ est projectif de rang $2$ ;
\item $\MK$ engendre
$S \otimes_R \bbM(\rho_R)$ comme $(S \otimes_{\Zp} \ocE)$-module ;
\item $\MK$ est stable par $\varphi$ et $\Gal(L/F)$ ;
\item l'idéal déterminant de $\varphi : \MK \to \MK$ est 
l'idéal principal engendré par $u^e + p$ ;
\item pour tout $g \in \Gal(L/F)$ :
\begin{align*}
\tr(g \,|\, \MK / u \MK) & = \big(\eta(g) +\eta'(g)\big) \otimes 1 
\in S \otimes_{\Zp} W \\
\det(g \,|\, \MK / u \MK) & = \eta(g)\eta'(g) \otimes 1
\in S \otimes_{\Zp} W.
\end{align*}
\end{enumerate}
\end{definit}

\begin{rem}
Dans la définition \ref{defmoduleBK}, le 
produit tensoriel $S \otimes_{\Zp} \SK$ n'est \emph{pas} complété. Il
s'agit donc d'une variante algébrique --- par opposition à formelle ---
des plus traditionnels modules de Breuil--Kisin définis lorsque $S$
est une $\Zp$-algèbre locale noethérienne complète.
\end{rem}

\begin{rem}
Lorsque $R$ est une $W$-algèbre, tout réseau de Breuil--Kisin 
$\MK$ de type $(\vv,\ttt)$ de $S \otimes_R \bbM(\rho_R)$ admet une
décomposition similaire à \eqref{decompositionmodule} qui s'écrit :
\begin{equation}\label{decompositionmoduleBK}
\MK = \MK^{(0)} \oplus \MK^{(1)} \oplus \cdots \oplus \MK^{(f-1)}.
\end{equation}
Pour tout indice $i$ dans $\Z / f \Z$, le facteur $\MK^{(i)}$ est un 
module projectif de rang $2$ sur $S \otimes_{W,\iota{\circ}\varphi^{-i}} 
\SK$ et $\varphi$ envoie $\MK^{(i)}$ dans $\MK^{(i+1)}$.
\end{rem}

Nous pouvons à présent considérer le foncteur $L_{\rho_R}$ qui, à une 
$R$-algèbre $S$ associe l'ensemble des réseaux de type $(\vv,\ttt)$ dans 
$S \otimes_R \bbM(\rho_R)$. 
Nous avons alors un résultat de représentabilité.

\begin{prop}
\label{prop:repLVR}
\begin{enumerate}[(1)]
\item Le foncteur $L_{\rho_R}$ est représentable par un $R$-schéma 
projectif $\cL_{\rho_R}$.
\item Si $R'$ est une $R$-algèbre locale artinienne de corps résiduel 
$k_E$ et $\rho_{R'}$ est une $R'$-représentation qui étend $\rho_R$ 
alors il existe un isomorphisme canonique de $\Spec R' \times_{\Spec R}
\cL_{\rho_R}$ dans $\cL_{\rho_{R'}}$.
\item Le schéma $\cL_{\rho_R}$ est muni, fonctoriellement en $R$, d'un 
fibré en droite très ample canonique.
\end{enumerate}
\end{prop}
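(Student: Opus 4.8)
The plan is to realize $\cL_{\rho_R}$ as a closed subscheme of a product of (partial) affine Grassmannians, and to prove representability by identifying the conditions (i)--(v) of Definition \ref{defmoduleBK} with closed conditions on that ambient space. First I would use the decomposition \eqref{decompositionmodule}: choosing an $(R\hat\otimes_{\Zp}\ocE)$-basis of $\bbM(\rho_R)$ adapted to the factors, a Breuil--Kisin lattice $\MK$ amounts to the data of the $f$ projective rank-$2$ sub-$(S\otimes_{W,\iota\circ\varphi^{-i}}\SK)$-modules $\MK^{(i)}\subset S\otimes_R\bbM(\rho_R)^{(i)}$ generating the ambient module over the localized ring (conditions (i)--(ii)), which is exactly a point of a twisted affine Grassmannian $\mathrm{Gr}^{(i)}$ for $\GL_2$ over $\Spec R$; the $\varphi$-stability and the determinant condition (iv) bound the relative position, so one may replace $\mathrm{Gr}^{(i)}$ by a suitable finite-type (even projective) bounded locus --- this is where the ``$u^e+p$'' comes in, forcing the elementary divisors to be controlled. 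Thus $\prod_i \mathrm{Gr}^{(i)}$ is an honest projective $R$-scheme, and $L_{\rho_R}$ is cut out inside it.

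Next I would check that each remaining condition defines a closed subfunctor. Condition (iii): $\varphi$-stability, $\varphi(\MK^{(i)})\subset \MK^{(i+1)}$, and $\Gal(L/F)$-stability are incidence conditions between sublattices, hence closed; condition (iv), that the determinant ideal of $\varphi:\MK\to\MK$ is generated by $u^e+p$, is a closed condition once one is in the bounded locus (the inclusion of ideals one way is automatic from boundedness, the other is the vanishing of finitely many coefficients). Condition (v): the action of the finite group $\Gal(L/F)$ on the finite locally free $S\otimes_{\Zp}W$-module $\MK/u\MK$ has a trace and determinant which are global functions, and requiring them to equal the fixed elements $(\eta+\eta')\otimes 1$ and $\eta\eta'\otimes 1$ is visibly closed. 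Intersecting these closed subschemes of $\prod_i\mathrm{Gr}^{(i)}$ gives a projective $R$-scheme $\cL_{\rho_R}$ representing $L_{\rho_R}$, proving (1).

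For part (2), base change: if $\rho_{R'}$ extends $\rho_R$ then $\bbM(\rho_{R'})=R'\otimes_R\bbM(\rho_R)$ (the functor $\bbM$ commutes with the relevant base change since $\rho_{R'}$ restricted to $G_{F_\infty}$ is obtained from $\rho_R$ by scalar extension), and tensoring a Breuil--Kisin lattice by $R'$ over $R$ is a bijection onto the Breuil--Kisin lattices of the extended module --- this is essentially the uniqueness/existence already present in the theory (compare the uniqueness clause of Theorem \ref{th:modBK}); concretely the conditions (i)--(v) are stable under and reflected by the faithfully flat extension $S\mapsto S\otimes_R R'$ on the level of $S$-algebras, so $L_{\rho_R}(S')=L_{\rho_{R'}}(S')$ functorially for $R'$-algebras $S'$, whence the canonical isomorphism $\Spec R'\times_{\Spec R}\cL_{\rho_R}\xrightarrow{\sim}\cL_{\rho_{R'}}$. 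For part (3), the affine Grassmannian carries the determinant (Plücker) line bundle, which is relatively very ample on each bounded projective piece; pulling back and tensoring over the $f$ factors yields a canonical very ample line bundle on $\cL_{\rho_R}$, and its formation commutes with the base change of (2) since the Plücker bundle does, giving functoriality in $R$.

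The main obstacle I anticipate is the first step: making precise the ``bounded locus'' and checking it is genuinely projective and of finite type over $R$, i.e.\ that conditions (i), (ii), (iv) together pin down the relative position of $\MK^{(i)}$ inside $\bbM(\rho_R)^{(i)}$ to a closed bounded region of the affine Grassmannian, uniformly in the (artinian, hence $u$-adically well-behaved but not noetherian-complete) base $S$. Once one knows that $\MK$ is squeezed between $u^N\bbM$ and $u^{-N}\bbM$ for an explicit $N$ depending only on $\rho_R$ and $e$, everything reduces to linear algebra over $S\otimes_{\Zp}W[u]/u^{2N}$ and the closedness statements become elementary; the care needed is in extracting that bound from (iv) and the shape of the Frobenius matrices recorded above. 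The rest is a routine, if slightly lengthy, verification that finitely many polynomial identities cut out the subfunctor.
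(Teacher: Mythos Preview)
Your approach is correct and is essentially the one the paper takes: the authors simply cite Kisin's Proposition~1.3 in \cite{Ki4} (which is exactly the affine-Grassmannian/boundedness argument you sketch) and then observe that the extra conditions~(iv) and~(v) of Definition~\ref{defmoduleBK} are closed, splitting~(iv) into the two containments of ideals just as you do. Your ``main obstacle'' --- the uniform bound squeezing $\MK$ between $u^N$-multiples of a fixed lattice --- is precisely what Kisin's original argument supplies, so the paper does not revisit it.
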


\begin{proof}
Il s'agit de l'analogue dans notre contexte de la proposition~1.3 de 
\cite{Ki4}. La démonstration est similaire, l'unique 
différence étant que nous devons justifier en outre que les conditions (iv) 
et (v) qui apparaissent dans la définition \ref{defmoduleBK} sont 
fermées. La 
condition (iv) est équivalente à la
conjonction des deux conditions suivantes :
\begin{itemize}
\item[(iv-a)] le conoyau de $\det (\text{id} \otimes \varphi) : 
\SK \otimes_\varphi \bigwedge^2 \MK \to \bigwedge^2 \MK$ est 
annulé par $u^e + p$ ;
\item[(iv-b)] $u^e + p$ appartient à l'idéal déterminant de $\varphi :
\MK \to \MK$.
\end{itemize}
La 
condition (iv-a) est fermée d'après 
\cite{Ki4}. Les conditions (iv-b) et (v) sont clairement fermées. 
\end{proof}

Nous faisons maintenant varier $R$ (et $\rho_R$) et appliquons la proposition \ref{prop:repLVR} aux algèbres 
artiniennes $R_n = R^\psi(\vv,\ttt, \rhobar)/\m^n$ où $\m$ désigne 
l'idéal maximal de $R^\psi(\vv,\ttt, \rhobar)$. La représentation
$\rho_{R_n}$ que nous considérons est le pullback de la déformation 
universelle $\rho^{\text{univ}} : G_F \to \GL_2(R^\psi(\rhobar))$ par le 
morphisme naturel $R^\psi(\rhobar) \to R_n$. La proposition 
\ref{prop:repLVR} fournit, pour tout entier $n$, un schéma 
projectif $\cL_{\rho_{R_n}}$ sur $\Spec R_n$ qui représente le foncteur
$L_{\rho_{R_n}}$. Il découle des énoncés de la proposition 
\ref{prop:repLVR} que ces schémas se recollent en un schéma formel :
\begin{equation}
\label{eq:GRhat}
\widehat{\GR}^\psi(\vv,\ttt,\rhobar) \longrightarrow \Spf 
R^\psi(\vv,\ttt,\rhobar).
\end{equation}
Il résulte de GAGA formel (voir corollaire 1.5.1 de \cite{Ki4}) que 
$\widehat{\GR}^\psi(\vv,\ttt,\rhobar)$ est algébrisable dans le sens 
où il s'identifie au complété formel d'un schéma $\GR^\psi(\vv,\ttt,
\rhobar)$ sur $\Spec R^\psi(\vv,\ttt,\rhobar)$ 
le long de sa fibre au-dessus de $\Spec k_E$. Par ailleurs, si $R$ est 
l'anneau des entiers d'une extension finie de $E$, il résulte du
théorème \ref{th:modBK} que la donnée d'un $R$-point de 
$\widehat{\GR}^\psi(\vv,\ttt,\rhobar)$ est équivalente à celle d'une 
déformation de $\rhobar$ à $R$ qui est potentiellement Barsotti--Tate de 
type $(\vv,\ttt)$. En copiant la démonstration de la proposition 1.6.4 
de \cite{Ki4}, nous déduisons le théorème suivant.

\begin{thm}
\label{th:isomgeneric}
Le morphisme structurel $\GR^\psi(\vv,\ttt,\rhobar) \to \Spec
R^\psi(\vv,\ttt,\rhobar)$ induit un isomorphisme après avoir
inversé $p$.
\end{thm}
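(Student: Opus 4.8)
The plan is to copy, mutatis mutandis, the argument of Kisin in \cite[Proposition 1.6.4]{Ki4}, whose logical skeleton is the following: after inverting $p$, the formal scheme $\widehat{\GR}^\psi(\vv,\ttt,\rhobar)$ becomes ``rigid-analytically nearby'' to the generic fibre of $\Spec R^\psi(\vv,\ttt,\rhobar)$, and the structural morphism is shown to be an isomorphism by comparing their points valued in rings of integers of finite extensions of $E$. First I would pass from the algebraic setting to the rigid-analytic one: algebraize $\widehat{\GR}^\psi(\vv,\ttt,\rhobar)$ as in the excerpt (GAGA formel, \cite[Corollaire 1.5.1]{Ki4}) to obtain the proper morphism $\GR^\psi(\vv,\ttt,\rhobar) \to \Spec R^\psi(\vv,\ttt,\rhobar)$, and then take associated rigid spaces over $E$. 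Since $R^\psi(\vv,\ttt,\rhobar)$ is by construction $p$-torsion free, its generic fibre $\Spec R^\psi(\vv,\ttt,\rhobar)[1/p]$ is well-behaved; we want to see that $\GR^\psi(\vv,\ttt,\rhobar)[1/p] \to \Spec R^\psi(\vv,\ttt,\rhobar)[1/p]$ is an isomorphism of (the underlying rigid spaces of) these affinoid-type schemes.

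The key input is Theorem~\ref{th:modBK}: for $R$ the ring of integers of a finite extension of $E$, a deformation $\rho$ of $\rhobar$ to $R$ of determinant $\psi$ is potentially Barsotti--Tate of type $(\vv,\ttt)$ \emph{if and only if} the Breuil--Kisin lattice $\MK\subset\bbM(\rho)$ with properties (i)--(v) exists, \emph{and} in that case it is \emph{unique}. Unravelling the definitions, this says exactly that for such $R$ the natural map
\[
\GR^\psi(\vv,\ttt,\rhobar)(R) \;\longrightarrow\; R^\psi(\vv,\ttt,\rhobar)(R)
\]
is a bijection: surjectivity because a map $R^\psi(\rhobar)\to R$ factors through $R^\psi(\vv,\ttt,\rhobar)$ precisely when the associated $\rho$ is potentially Barsotti--Tate of type $(\vv,\ttt)$ (this is the defining property of $R^\psi(\vv,\ttt,\rhobar)$, recalled in the introduction), hence such a $\MK$ exists and gives an $R$-point of $\GR^\psi$ lying above it; injectivity because of the uniqueness clause in Theorem~\ref{th:modBK}. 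So the structural morphism induces a bijection on $R$-points for all such $R$.

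To upgrade this pointwise bijection into an isomorphism of rigid spaces, I would argue as follows. The morphism $\GR^\psi(\vv,\ttt,\rhobar)[1/p]\to\Spec R^\psi(\vv,\ttt,\rhobar)[1/p]$ is proper (it is the generic fibre of a proper, in fact projective, morphism by Proposition~\ref{prop:repLVR}(1), which is where projectivity of $\cL_{\rho_R}$ is used) and it is a monomorphism: indeed a proper monomorphism of (rigid or scheme-theoretic) spaces is a closed immersion, so it suffices to see it is a monomorphism and surjective, whence it is a closed immersion which is surjective on points, and then (the target being reduced, as $R^\psi(\vv,\ttt,\rhobar)$ is reduced by Kisin's theorem) it is an isomorphism. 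The monomorphism property and the surjectivity are both deduced from the bijection on $\calO_{E'}$-points for all finite $E'/E$ — by a standard valuative-type argument, points of a finite-type rigid space over $E$ with values in such rings of integers are Zariski-dense and detect monomorphisms and surjections of proper morphisms. Alternatively, one checks directly that the induced map on complete local rings at each closed point is an isomorphism, using again Theorem~\ref{th:modBK} applied to the Artinian quotients and the representability/base-change statements of Proposition~\ref{prop:repLVR}(2).

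\textbf{Main obstacle.} The genuinely delicate step is not the formal reduction but the passage from ``bijection on $\calO_{E'}$-points, all $E'$'' to ``isomorphism of rigid spaces'': one must know that these points are sufficiently dense and that the morphism is nice enough (proper, with reduced and well-controlled target $R^\psi(\vv,\ttt,\rhobar)[1/p]$) for such a pointwise statement to propagate. This is exactly the content of Kisin's \cite[Proposition 1.6.4]{Ki4} and \cite[Corollaire 1.5.1]{Ki4}, and in our setting nothing new happens: the only place where the specific shape of the data $(\vv,\ttt,\rhobar)$, the descent datum $\Gal(L/F)$, and conditions (iv)--(v) of Definition~\ref{defmoduleBK} enter is in establishing Proposition~\ref{prop:repLVR} (projectivity, ampleness, closedness of the conditions) and Theorem~\ref{th:modBK} (the if-and-only-if together with uniqueness), both of which are already in hand. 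Hence the proof is obtained by transcribing Kisin's argument verbatim, citing Theorem~\ref{th:modBK}, Proposition~\ref{prop:repLVR}, and the algebraization discussed above in place of their analogues in \cite{Ki4}.
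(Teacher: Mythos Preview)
Your proposal is correct and follows exactly the same approach as the paper: the paper's proof consists entirely of the sentence ``En copiant la démonstration de la proposition 1.6.4 de \cite{Ki4}, nous déduisons le théorème suivant,'' and you have correctly identified that Theorem~\ref{th:modBK} (existence and uniqueness of the Breuil--Kisin lattice over $\mathcal{O}_{E'}$) and Proposition~\ref{prop:repLVR} (projectivity and base change for $\mathcal{L}_{\rho_R}$) are precisely the inputs that make Kisin's argument go through verbatim in this setting. Your expanded sketch of Kisin's argument is accurate and in fact more detailed than what the paper provides.
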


\begin{definit}
 La \emph{variété de Kisin} $\vK^\psi(\vv,\ttt,\rhobar)$ associée à la donnée $(\vv,\ttt,\rhobar)$ est la fibre spéciale de 
$\widehat{\GR}^\psi(\vv,\ttt,\rhobar)$ :
\begin{align*}
\vK^\psi(\vv,\ttt,\rhobar) 
& = 
\Spec k_E \times_{\Spf R^\psi(\vv,\ttt,\rhobar)} \widehat{\GR}^\psi(\vv,\ttt,\rhobar) \\
& =
\Spec k_E \times_{\Spec R^\psi(\vv,\ttt,\rhobar)} \GR^\psi(\vv,\ttt,\rhobar).
\end{align*}
Il s'agit d'une variété projective sur $\Spec k_E$ qui s'identifie 
canoniquement à $\cL_{\rhobar}$.
\end{definit}

\section{Gènes et équations de la variété de Kisin}
\label{sec:Genes}

\def\ph{\vphantom{$A^0_0$}}
\definecolor{fond}{rgb}{0.95,0.95,0.85}

L'objectif de cette partie est d'énoncer une version précise du théorème 
\ref{thmintro:varKisin} de l'introduction qui fournit une description 
complète et explicite des variétés de Kisin $\vK^{\psi} (\vv, \ttt, 
\rhobar)$. Pour ce faire, nous commençons par introduire une donnée 
combinatoire associée au couple $(\ttt, \rhobar)$ que nous appelons le 
\emph{gène}.

Le paragraphe \S \ref{secrep} est consacré à la définition des gènes. Il inclut 
également plusieurs résultats combinatoires que 
nous serons amenés à utiliser fréquemment dans la suite. L'énoncé du 
théorème \ref{thequationsKisin} apparaît au \S \ref{secDetermination}. 
Il est suivi de plusieurs exemples qui donnent un aperçu complet des 
différentes situations qui peuvent se produire.
La démonstration du théorème \ref{thequationsKisin}, quant à elle, est 
reportée au \S \ref{sec:Demonstrations} alors que ses applications 
font l'objet du \S \ref{sec:Connexite}.

\subsection{Gènes}\label{secrep}

Posons :
$$\nu = \frac{e}{p-1} - 1 = p + p^2 + \cdots + p^{f-1}.$$
Rappelons que $\rhobar \simeq \Ind_{G_{F'}}^{G_F} \Big( \omega_{2f}^h \cdot 
\nr'(\theta)\Big)$ pour un certain entier $h$ défini modulo $p^{2f}-1
= q^2 - 1$ et qu'il existe des 
entiers $h_i$ ($0 \leq i \leq f-1$) dans l'intervalle $\llbracket 0, p-1 
\rrbracket$ uniquement déterminés tels que
\begin{equation}\label{defhachi}
h \equiv 1 + \sum_{i=0}^{f-1} h_i p^{f-1-i} \pmod{q+1}.
\end{equation}
Nous étendons la suite des $h_i$, dans un premier temps, à $\llbracket 
f, 2f-1 \rrbracket$ en posant $h_i = p - 1 - h_{i-f}$ pour tout entier 
$i$ dans cet intervalle puis, dans un second temps, à $\N$ tout entier 
par $(2f)$-périodicité.

La réduction modulo $\pi_E$ du type galoisien $\ttt=\eta\oplus\eta'$ 
est, quant à elle, encodée par deux éléments $\gamma, \gamma' \in 
\Z/e\Z$ définis par $\bar\eta =\tau_0^{\gamma}$ et $\bar\eta' = 
\tau_0^{\gamma'}$
où nous rappelons que $\tau_0$ désigne le plongement de $k_F$ dans
$k_E$ que nous avons fixé. 

\begin{definit}\label{defalphai}
Avec les notations précédentes, pour tout entier $i\geq 0$, nous 
définissons :
\begin{enumerate}[1)]
\item l'entier $\alpha_i$ comme l'unique élément de $\llbracket 
0, e-1\rrbracket$ vérifiant la congruence :
$$\alpha_i \equiv 
\left\lfloor \frac{p^i h}{q+1} \right\rfloor - p^i \gamma' \pmod e,$$
\item le symbole $X_i \in \{\gA, \gB, \gAB, \gO\}$
par :
$$\begin{array}{rcll}
X_i & = & \gA & \text{si } 
\alpha_i \in \big[0, \, \frac 1 p \: \nu + \epsilon_{i+f} \big[ 
\medskip \\
& = & \gAB & \text{si }
\alpha_i \in \big[\frac 1 p \: \nu + \epsilon_{i+f}, 
\, \frac {p-1} p \: \nu - \epsilon_i \big] \medskip \\
& = & \gB & \text{si }
\alpha_i \in \, \big]\frac {p-1} p \: \nu - 
\epsilon_i, \, \nu\big] \medskip \\
& = & \gO & \text{si }
\alpha_i \in \, \big]\nu, e \big[
\end{array}$$
où $\epsilon_i=\left\{\begin{array}{cl}1 &\mbox{si } h_i = p-1,\cr
 0&\mbox{sinon}.\cr
\end{array}\right.$
\end{enumerate}
La suite $(X_i)_{i \in \N}$ est appelée le \emph{gène} du triplet $(h, 
\gamma, \gamma')$.
\end{definit}

Lorsqu'une confusion est possible, nous précisons la dépendance 
des $\alpha_i$ et des $X_i$ vis-à-vis des paramètres $h$, $\gamma$ et 
$\gamma'$ en notant $\alpha_i(h, \gamma, \gamma')$ et $X_i(h, \gamma,
\gamma')$.

\begin{rem} \label{remgammagamma'}
Le type galoisien $\ttt=\eta\oplus\eta'$ satisfait 
$\det\rhobar_{|I_F}=(\bar{\eta}\bar{\eta}'\omega)_{|I_F}$. Ainsi
$\bar{\eta}_{|I_F}$ détermine $\bar{\eta}'_{|I_F}$. Ceci justifie la 
dissymétrie apparente dans les rôles de $\eta$ et $\eta'$ dans les 
notations introduites dans ce paragraphe. Nous renvoyons au lemme
\ref{lem:depchoix} pour de plus amples précisions.
\end{rem}

Le gène $(X_i)_{i \in \N}$ est une suite $(2f)$-périodique.
Dans la suite, nous le représentons systématiquement sur un ruban de 
Moebius comme le montre la figure \ref{fig:moebius}.
\begin{figure}
\raisebox{1.5cm}{
\begin{tikzpicture}
\draw[thick, fill=fond] (-0.5,-0.5) rectangle (5.8,1.5);
\draw[thick,->>] (-0.5,-0.5)--(-0.5,0.5);
\draw[thick,->>] (5.8,1.5)--(5.8,0.5);

\node[scale=0.8] at (0,1) { \ph $X_0$ };
\node[scale=0.8] at (1,1) { \ph $X_1$ };
\node[scale=0.8] at (2,1) { \ph $X_2$ };
\node[scale=1.5] at (3,1) { \ph $\cdots$ };
\node[scale=0.8] at (4,1) { \ph $X_{f-2}$ };
\node[scale=0.8] at (5,1) { \ph $X_{f-1}$ };

\node[scale=0.8] at (0,0) { \ph $X_f$ };
\node[scale=0.8] at (1,0) { \ph $X_{f+1}$ };
\node[scale=0.8] at (2,0) { \ph $X_{f+2}$ };
\node[scale=1.5] at (3,0) { \ph $\cdots$ };
\node[scale=0.8] at (4,0) { \ph $X_{2f-2}$ };
\node[scale=0.8] at (5,0) { \ph $X_{2f-1}$ };
\end{tikzpicture}
}
\hfill
\includegraphics[width=7cm]{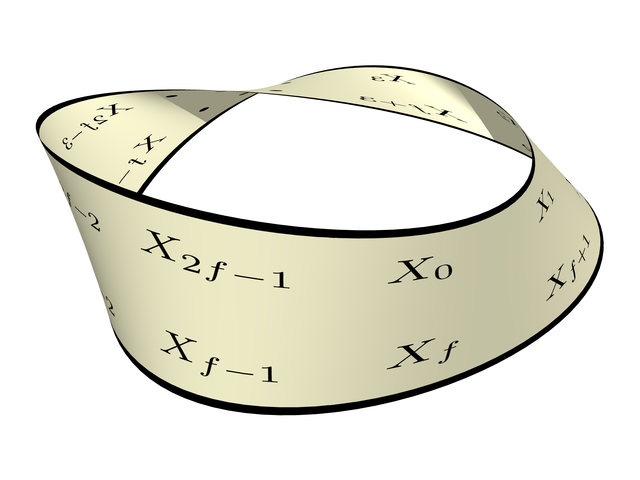}
\vspace{-0.8cm}
\caption{Représentation d'un gène sur un ruban de Moebius}
\label{fig:moebius}
\end{figure}
Ce faisant, les termes $X_i$ qui composent le gène restent écrits dans 
l'ordre les uns à la suite des autres et, de plus, la coordonnée $X_i$ 
\og tombe en face \fg\ de $X_{i+f}$ pour tout $i$. De même, lorsque nous 
considérons le couple, dit \emph{couple d'allèles}, $(X_i, X_{i+f})$, nous le 
notons $\binom{X_i}{X_{i+f}}$. Cette écriture a un intérêt pour notre 
propos car elle nous permet d'avoir une lecture immédiate des 
équations de la variété de Kisin à partir du gène correspondant (voir théorème \ref{thequationsKisin}).

Les suites $(\alpha_i)_{i\in\N}$ et $(X_i)_{i \in \N}$ sont $(2f)$-périodiques. Pour des raisons de commodité, nous 
les prolongeons à $\Z$ par $(2f)$-périodicité. Elles sont soumises à des 
propriétés combinatoires qui contraignent leur structure. Les lemmes 
suivants en sont des exemples.  Le lemme \ref{lem:relalphai} résulte 
d'un calcul direct.

\begin{lem}
\label{lem:relalphai}
Avec les notations précédentes, nous avons :
$$
\begin{array}{rcll}
\displaystyle
\left\lfloor \frac{p^i h}{q+1} \right\rfloor & \equiv & 
\displaystyle
\sum_{j=0}^{i-1}h_{i-j-1}p^j \pmod e
& \text{si } i\in\llbracket 0,f-1\rrbracket \medskip \\
& \equiv & \displaystyle
p^{i-f}-1+\sum_{j=i-f}^{f-1}h_{i-j-1}p^j \pmod e
& \text{si } i\in\llbracket f,2f-1\rrbracket.
\end{array}$$
De plus, $\alpha_{i+1} \equiv p \alpha_i + h_{i} \pmod e$ pour tout 
entier $i$ et cette congruence est une égalité si et seulement 
$X_i \in \{\gA, \gAB\}$. Enfin, dans le cas où $X_i = \gB$, nous
avons $\alpha_{i+1} = p \alpha_i + h_{i} - e$.
\end{lem}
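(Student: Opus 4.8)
The statement is essentially a bookkeeping computation, so the plan is to unwind the definitions of $\alpha_i$ and of the $h_i$ and verify the three assertions in turn. First I would establish the two congruences for $\lfloor p^i h/(q+1)\rfloor$. Write $h \equiv 1 + \sum_{j=0}^{f-1} h_j p^{f-1-j} \pmod{q+1}$, so $p^i h \equiv p^i + \sum_{j=0}^{f-1} h_j p^{f-1-j+i} \pmod{p^i(q+1)}$; dividing by $q+1$ and taking the floor amounts to tracking which powers $p^{f-1-j+i}$ exceed $p^f$ (hence ``wrap around'' modulo $q+1 = p^f+1$, contributing to the integer part) versus which stay below. For $i \in \llbracket 0, f-1\rrbracket$ the relevant indices give $\sum_{j=0}^{i-1} h_{i-1-j} p^j$ after reindexing; for $i \in \llbracket f, 2f-1\rrbracket$ one extra carry from the leading $p^i$ term produces the $p^{i-f}-1$ summand, and the convention $h_{i} = p-1-h_{i-f}$ for $i \in \llbracket f, 2f-1\rrbracket$ is exactly what makes the second formula consistent with $(2f)$-periodicity. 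This is a direct manipulation of base-$p$ expansions; the only care needed is the reduction modulo $e = q-1$ (not modulo $q+1$), which is harmless since $e \mid p^i(q+1) - (\text{appropriate multiple})$ is not needed — one simply reduces the final integer expression mod $e$.

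Next, for the recursion $\alpha_{i+1} \equiv p\alpha_i + h_i \pmod e$: by definition $\alpha_i \equiv \lfloor p^i h/(q+1)\rfloor - p^i \gamma' \pmod e$, so $p\alpha_i \equiv p\lfloor p^i h/(q+1)\rfloor - p^{i+1}\gamma' \pmod e$, and it remains to check $\lfloor p^{i+1} h/(q+1)\rfloor \equiv p\lfloor p^i h/(q+1)\rfloor + h_i \pmod e$. Writing $p^i h = (q+1)\lfloor p^i h/(q+1)\rfloor + r_i$ with $0 \le r_i \le q$, one gets $p^{i+1} h = (q+1)\big(p\lfloor p^i h/(q+1)\rfloor\big) + p\, r_i$, so $\lfloor p^{i+1}h/(q+1)\rfloor = p\lfloor p^i h/(q+1)\rfloor + \lfloor p r_i/(q+1)\rfloor$, and one identifies $\lfloor p r_i/(q+1)\rfloor$ with $h_i$ using the first part. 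The difference with $p\alpha_i + h_i$ being an actual equality (not merely a congruence) is controlled by the size of $\alpha_{i+1}$ relative to $e$: it is an equality precisely when $0 \le p\alpha_i + h_i < e$, i.e.\ when $\alpha_i < (e - h_i)/p$; and the thresholds in Definition \ref{defalphai} defining the cases $X_i = \gA, \gAB$ versus $X_i = \gB, \gO$ are designed so that $X_i \in \{\gA, \gAB\}$ corresponds to $\alpha_i \le \frac{p-1}{p}\nu - \epsilon_i$, which one checks implies $p\alpha_i + h_i < e$ (here the role of $\epsilon_i = 1$ when $h_i = p-1$ is exactly to absorb the largest possible value of $h_i$), whereas $X_i = \gB$ forces $p\alpha_i + h_i \in [e, 2e)$, giving $\alpha_{i+1} = p\alpha_i + h_i - e$.

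I do not anticipate a genuine obstacle — as the text says, ``le lemme \ref{lem:relalphai} résulte d'un calcul direct.'' The only mildly delicate point is the interplay between the two moduli $q+1$ and $e$ together with the boundary value $h_i = p-1$: one must be careful that the inequalities defining the symbols $X_i$ are the \emph{exact} range in which $p\alpha_i + h_i$ stays in a fixed interval of length $e$, and in particular that the case $X_i = \gAB$ does not secretly overlap the case $X_i = \gB$ near the threshold $\frac{p-1}{p}\nu$. Once the arithmetic of the floor function is set up cleanly, each of the three assertions falls out by comparing the computed value of $\alpha_{i+1}$ with the interval in which $p\alpha_i + h_i$ lies.
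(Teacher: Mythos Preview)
Your plan is correct and matches the paper's approach exactly: the paper states only that ``le lemme \ref{lem:relalphai} résulte d'un calcul direct'' and gives no further details, so your unwinding of the base-$p$ expansion of $\lfloor p^i h/(q+1)\rfloor$, the Euclidean-division argument for the recursion, and the threshold check against the intervals in Definition~\ref{defalphai} constitute precisely the direct computation the authors have in mind. The one place to be slightly careful when you write it out is the identification of $\lfloor p\,r_i/(q+1)\rfloor$ with $h_i$: rather than invoking the first part, it is cleanest to observe that $r_i$ is the residue of $p^i h$ modulo $q+1$, whose leading base-$p$ digit is exactly $h_i$ by the defining congruence for $h$ and the relation $p^f \equiv -1 \pmod{q+1}$.
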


\begin{lem}
\label{proprelgene}
En conservant les notations ci-dessus, le gène $X$ vérifie les 
propriétés suivantes :
\begin{enumerate}[i)]
\item Pour tout entier $i$, l'égalité $X_i = \gAB$ implique 
$X_{i+1} = \gO$.
\item Pour tout entier $i$, l'égalité $X_{i+1} = \gO$ implique 
$X_i\in\{\gO,\gAB\}$.
\end{enumerate}
\end{lem}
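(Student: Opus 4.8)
L'idée est d'exploiter systématiquement la congruence $\alpha_{i+1} \equiv p\alpha_i + h_i \pmod e$ du lemme \ref{lem:relalphai}, combinée aux bornes définissant les symboles $X_i$ dans la définition \ref{defalphai}, en traitant chaque assertion par analyse des intervalles où vit $\alpha_i$. Pour l'assertion (i), je pars de l'hypothèse $X_i = \gAB$, c'est-à-dire $\alpha_i \in \big[\frac 1 p \nu + \epsilon_{i+f}, \frac{p-1}{p}\nu - \epsilon_i\big]$. D'après le lemme \ref{lem:relalphai}, on a alors l'\emph{égalité} $\alpha_{i+1} = p\alpha_i + h_i$ (car $X_i \in \{\gA, \gAB\}$), sans réduction modulo $e$. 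Il s'agit donc de minorer et majorer $p\alpha_i + h_i$ pour montrer que $\alpha_{i+1} \in \,]\nu, e[$. La minoration donne $p\alpha_i + h_i \geq \nu + p\,\epsilon_{i+f} + h_i$ ; il faut vérifier que ceci est strictement supérieur à $\nu$, ce qui est immédiat si $h_i \geq 1$, et demande un examen du cas $h_i = 0$ où l'on utilise $\epsilon_{i+f} = 1$ (car $h_i = 0$ force $h_{i+f} = p-1-h_i = p-1$ par la règle d'extension des $h_i$, donc $\epsilon_{i+f} = 1$) : alors $p\,\epsilon_{i+f} + h_i = p > 0$. La majoration donne $p\alpha_i + h_i \leq (p-1)\nu - p\,\epsilon_i + h_i$ ; comme $\nu = p + \cdots + p^{f-1}$, on a $(p-1)\nu = p^f - p = e + 1 - p$, d'où $(p-1)\nu - p\epsilon_i + h_i \leq e + 1 - p + h_i - p\epsilon_i \leq e - 1$ dès que $h_i \leq p - 2 - p\epsilon_i + p$, soit $h_i \leq p-2$ si $\epsilon_i = 0$ et toujours si $\epsilon_i = 1$ (puisque $h_i = p-1$) — un léger soin sur le cas $\epsilon_i = 0, h_i = p-1$ : mais $h_i = p-1$ implique $\epsilon_i = 1$ par définition de $\epsilon_i$, donc ce cas ne se présente pas. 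On conclut $\nu < \alpha_{i+1} \leq e-1$, c'est-à-dire $X_{i+1} = \gO$.

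Pour l'assertion (ii), je procède par contraposée sur les valeurs possibles de $X_i$, en supposant $X_{i+1} = \gO$, c'est-à-dire $\alpha_{i+1} \in \,]\nu, e[$, et en éliminant les cas $X_i = \gA$ et $X_i = \gB$. Si $X_i = \gA$, alors $\alpha_i \in \big[0, \frac 1 p \nu + \epsilon_{i+f}\big[$ et l'égalité $\alpha_{i+1} = p\alpha_i + h_i$ vaut ; la majoration donne $\alpha_{i+1} < \nu + p\,\epsilon_{i+f} + h_i$, et il faut montrer que cette borne ne peut pas dépasser $e$ tout en ayant $\alpha_{i+1} > \nu$. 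Ici le point délicat est que $p\,\epsilon_{i+f} + h_i$ peut valoir jusqu'à $p + (p-1)$ : mais si $h_i \leq p - 2$ alors $\alpha_i \leq \frac 1 p \nu$ strictement (la borne $\epsilon_{i+f}$ joue), et un calcul plus fin sur $p\alpha_i + h_i < \nu + \cdots$ montre $\alpha_{i+1} \leq \nu$, contradiction ; le cas $h_i = p-1$ (donc $\epsilon_{i+f} = 0$ puisque $h_{i+f} = 0$) donne $\alpha_{i+1} < \nu + (p-1) \leq e$ mais il faut exclure $\alpha_{i+1} \in \,]\nu, \nu + p - 1[$ — c'est là que réside le cœur du raisonnement, et il faut vraisemblablement revenir à l'expression explicite de $\lfloor p^i h/(q+1)\rfloor$ du lemme \ref{lem:relalphai} pour obtenir la contradiction. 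Si $X_i = \gB$, on utilise l'égalité $\alpha_{i+1} = p\alpha_i + h_i - e$ du lemme \ref{lem:relalphai} ; comme $\alpha_i \leq \nu$, on a $\alpha_{i+1} \leq p\nu + h_i - e = p\nu + h_i - (p-1)\nu - 1 + \cdots$ ; en utilisant $p\nu - e = p\nu - (p-1)\nu - 1 = \nu - 1$, wait — recalculons : $e = (p-1)\nu + p - 1$ puisque $e = p^f - 1$ et $(p-1)\nu = p^f - p$, donc $e - (p-1)\nu = p - 1$, soit $e = (p-1)\nu + (p-1)$. Alors $p\nu - e = p\nu - (p-1)\nu - (p-1) = \nu - p + 1 \leq \nu$, d'où $\alpha_{i+1} \leq \nu - p + 1 + h_i \leq \nu$ dès que $h_i \leq p - 1$, ce qui est toujours vrai : donc $\alpha_{i+1} \leq \nu$, contredisant $X_{i+1} = \gO$. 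Restent donc $X_i \in \{\gO, \gAB\}$.

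\textbf{Principal obstacle.} La difficulté centrale est le cas $X_i = \gA$ avec $h_i = p-1$ dans l'assertion (ii) : la simple manipulation des inégalités larges ne suffit pas à exclure $\alpha_{i+1} \in \,]\nu, \nu+p-1[$, et il faudra soit raffiner les bornes de l'intervalle définissant $\gA$ en tenant compte de la valeur précise de $\epsilon_{i+f}$ et de la divisibilité, soit invoquer l'expression close de $\lfloor p^i h/(q+1)\rfloor$ du lemme \ref{lem:relalphai} pour contrôler $\alpha_i$ de manière exacte. Les autres cas sont des vérifications arithmétiques directes à partir de l'identité $e = (p-1)\nu + (p-1)$ et des congruences déjà établies.
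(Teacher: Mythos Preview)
Your approach is exactly the paper's---direct interval manipulation from $\alpha_{i+1} = p\alpha_i + h_i$ (resp.\ $p\alpha_i + h_i - e$)---and your treatment of (i) and of the case $X_i = \gB$ in (ii) is complete and correct.

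The ``principal obstacle'' you flag in (ii), case $X_i = \gA$, is not real. The point you are missing is that $\frac{\nu}{p} = 1 + p + \cdots + p^{f-2}$ is an \emph{integer}; since $\alpha_i$ is also an integer, the strict inequality $\alpha_i < \frac{\nu}{p} + \epsilon_{i+f}$ defining $X_i = \gA$ is equivalent to $\alpha_i \leq \frac{\nu}{p} + \epsilon_{i+f} - 1$. Multiplying by $p$ and adding $h_i$ gives
\[
\alpha_{i+1} = p\alpha_i + h_i \leq \nu + p\,\epsilon_{i+f} - p + h_i.
\]
Now either $\epsilon_{i+f} = 0$, and then $\alpha_{i+1} \leq \nu - p + h_i \leq \nu - 1$; or $\epsilon_{i+f} = 1$, which means $h_{i+f} = p-1$ and hence $h_i = p-1-h_{i+f} = 0$, giving $\alpha_{i+1} \leq \nu$. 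In either case $\alpha_{i+1} \leq \nu$, contradicting $X_{i+1} = \gO$. No appeal to the explicit formula for $\lfloor p^i h/(q+1)\rfloor$ is needed, and the paper accordingly dismisses (ii) in one line as ``analogue''.
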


\begin{proof}
Supposons $X_i=\gAB$. Par définition, nous obtenons
$$\frac 1 p \: \nu + \epsilon_{i+f} \leq \alpha_i\leq 
 \frac {p-1} p \: \nu - \epsilon_i. $$
où $\epsilon_i$ vaut $1$ si $h_{i} = p-1$ et $0$ sinon. Du lemme 
\ref{lem:relalphai}, nous déduisons
$\nu + p\epsilon_{i+f} + h_i \leq \alpha_{i+1}\leq 
(p-1)\nu -p \epsilon_i+h_i$ et donc $\nu<\alpha_{i+1}\leq e-1$. Il en
résulte que $X_{i+1}=\gO$.

La démonstration de la propriété \emph{ii)} est analogue.
\end{proof}

\begin{lem}
\label{lem:ilyaO}
Nous conservons les notations ci-dessus et supposons en outre que
$\rhobar$ n'est pas dégénérée. Alors le gène $X$ 
contient au moins une occurrence du symbole $\gO$.
\end{lem}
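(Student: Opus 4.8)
The idea is to argue by contradiction: suppose that $\rhobar$ is non dégénérée but the gène $X$ contains no occurrence of $\gO$. By definition of the symbols, the absence of $\gO$ means that $\alpha_i \in [0,\nu]$ for every $i$, i.e. $\alpha_i \leq \nu$ for all $i \in \Z$. I would exploit the recursion from Lemme \ref{lem:relalphai}, namely $\alpha_{i+1} \equiv p\alpha_i + h_i \pmod e$, together with the exact (non-congruential) statements in that lemma: the congruence is an equality exactly when $X_i \in \{\gA,\gAB\}$, and when $X_i = \gB$ one has $\alpha_{i+1} = p\alpha_i + h_i - e$. The constraint $\alpha_i \leq \nu = p + p^2 + \cdots + p^{f-1}$ for all $i$ is very rigid because $\nu$ is roughly $e/p$ while $p\alpha_i$ can be as large as $p\nu \approx e$; so the iteration essentially forces each $\alpha_i$ to be small, and the error terms $h_i$ must be small too.

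More precisely, first I would rule out the case $X_i = \gB$ occurring, or rather analyze it: if $X_i = \gB$ then $\alpha_i > \frac{p-1}{p}\nu - \epsilon_i$, so $\alpha_i$ is close to $\nu$, and then $\alpha_{i+1} = p\alpha_i + h_i - e$; plugging in $\alpha_i \leq \nu$ gives $\alpha_{i+1} \leq p\nu + (p-1) - e = p\nu + p - 1 - (p-1)\nu - (p-1) = \nu$, with equality forcing $\alpha_i = \nu$ and $h_i = p-1$. So the $\gB$ case is extremely constrained. The main case is when $X_i \in \{\gA,\gAB\}$ (the $\gO$ case being excluded and the $\gB$ case being nearly rigid), in which case $\alpha_{i+1} = p\alpha_i + h_i$ exactly. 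Iterating this over a full period, starting from the explicit value of $\lfloor p^i h/(q+1)\rfloor$ given in Lemme \ref{lem:relalphai}, I get a closed-form expression for the $\alpha_i$ in terms of the $h_j$'s, and the condition $\alpha_i \leq \nu$ for all $i$ translates into a system of inequalities on the digits $h_j$. The claim is that this system forces $h_j \in \{0, p-1\}$ for all $j$ — which, recalling how the $h_i$ are extended to $\llbracket f, 2f-1\rrbracket$ by $h_i = p-1-h_{i-f}$ and then by $(2f)$-periodicity, is exactly the negation of non-dégénérescence of $\rhobar$ (Définition \ref{hypnondegenere}).

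Concretely, I expect the argument to go as follows. Consider the sum $S = \sum_{i} \alpha_i$ over one period, or better, sum the recursion $\alpha_{i+1} = p\alpha_i + h_i - (\text{correction})$ telescopically; the corrections are multiples of $e$ coming from the $\gB$ and $\gAB\to\gO$ transitions — but there are no $\gO$'s, hence by Lemme \ref{proprelgene}(i) no $\gAB$'s either, so $X_i \in \{\gA,\gB\}$ for all $i$. Summing $\alpha_{i+1} = p\alpha_i + h_i - e\cdot[X_i = \gB]$ over $i$ in $\Z/(2f)\Z$ gives $\sum \alpha_i = p\sum\alpha_i + \sum h_i - e\cdot\#\{i : X_i = \gB\}$, i.e. $(p-1)\sum\alpha_i = e\cdot\#\{i : X_i = \gB\} - \sum_{i=0}^{2f-1} h_i$. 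Now $\sum_{i=0}^{2f-1} h_i = \sum_{i=0}^{f-1}(h_i + (p-1-h_i)) = f(p-1)$, and $e = p^f - 1 = (p-1)(1 + p + \cdots + p^{f-1}) = (p-1)(\nu+1)$, so this reads $\sum_{i=0}^{2f-1}\alpha_i = (\nu+1)\cdot\#\{i : X_i = \gB\} - f$. On the other hand each $\alpha_i \leq \nu$, so $\sum \alpha_i \leq 2f\nu$, giving $\#\{i : X_i = \gB\} \leq \frac{2f\nu + f}{\nu+1} < 2f$ — not yet a contradiction. The finer point is a pigeonhole/averaging argument combined with the exact $\gB$-rigidity above: whenever $X_i = \gB$ we showed $\alpha_i = \nu$ and $h_i = p-1$ are forced *if* $\alpha_{i+1} = \nu$, and more generally the slack $\nu - \alpha_{i+1} = e - p\alpha_i - h_i$ for a $\gB$-step is tied to $\nu - \alpha_i$; chasing these slacks around the Moebius band and using that the total slack must close up forces all $h_i \in \{0,p-1\}$ unless some $\alpha_i > \nu$.

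The step I expect to be the main obstacle is precisely this last combinatorial bookkeeping: turning the single telescoped identity into a statement about *every* digit $h_i$. I anticipate that the clean way is to track the quantity $\beta_i := \nu - \alpha_i \geq 0$, write the recursion for $\beta_i$ in the $\gA$ and $\gB$ cases separately ($\beta_{i+1} = p\beta_i - (p-1)\nu + \cdots$), observe it is a contracting-type dynamical system on $[0,\nu]$, and deduce that it can only stay in $[0,\nu]$ forever if the forcing terms $h_i$ sit at the extreme values $0$ or $p-1$ — at which point, if all $h_i \in \{0,p-1\}$, we contradict the non-dégénérescence hypothesis on $\rhobar$. The bounds $p \geq 5$ (hence $p - 1 \geq 4$, giving genuine room between $\nu/p$ and $\nu$) and $f \geq 2$ are what make the estimates strict rather than borderline.
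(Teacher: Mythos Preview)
Your setup is correct: arguing by contradiction, the absence of $\gO$ together with Lemme~\ref{proprelgene} forces $X_i \in \{\gA, \gB\}$ for all $i$, hence the exact recursion $\alpha_{i+1} = p\alpha_i + h_i - e\,\1_B(X_i)$ under the constraint $0 \leq \alpha_i \leq \nu$ throughout. Your telescoped identity over a full period is also right, and you correctly observe that it alone does not yield a contradiction.

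The gap is in your finishing step. The recursion $\alpha_{i+1} = p\alpha_i + \cdots$ is \emph{expanding}, not ``contracting-type''; passing to $\beta_i = \nu - \alpha_i$ does not change this, and the appeal to ``slack closing up around the band'' is not an argument. Inequality-chasing alone cannot force $h_i \in \{0,p-1\}$: for any choice of the $h_i$ and of the $\gA/\gB$ pattern the linear recursion has a unique $2f$-periodic real solution, and whether that solution happens to be integral and lie in $[0,\nu]$ is an \emph{arithmetic} constraint, not a metric one.

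The paper's proof makes this integrality explicit. Solving the recursion over one full period gives
\[
\alpha_i \;=\; \frac{1}{p^{2f}-1}\sum_{j=0}^{2f-1} p^{\,2f-1-j}\bigl(e\,\1_B(X_{i+j}) - h_{i+j}\bigr),
\]
and pairing the terms $j$ and $j+f$ using $h_{i+j+f} = p-1 - h_{i+j}$ rewrites this as $\alpha_i = (\text{integer}) - A_i/(p^f{+}1)$ for an explicit integer $A_i$. Since $\alpha_i \in \Z$, we must have $(p^f{+}1) \mid A_i$; crude bounds on the summands then force $A_i \in \{0,\, p^f{+}1\}$. Isolating the $p^{f-1}$-digit in each of these two cases gives respectively $h_i = 0$ and $h_i = p-1$. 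Applied at the index $i_0$ supplied by non-dégénérescence, this is the desired contradiction. The divisibility-by-$(p^f{+}1)$ step is precisely the idea missing from your plan.
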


\begin{proof}
Supposons que le gène $X$ ne contienne pas d'occurrence du symbole 
$\gO$. D'après le lemme \ref{proprelgene}, $X$ ne contient que des 
symboles $\gA$ et $\gB$. Ainsi, d'après le lemme \ref{lem:relalphai}, 
pour tout $i\in\llbracket 0,2f-1\rrbracket$, nous avons
$\alpha_{i+1}=p\alpha_i+h_i-e \1_B(X_i)$
où $\1_B$ désigne la fonction indicatrice de $B$.
En résolvant le système précédent, nous obtenons pour tout $i\in\llbracket0,f-1\rrbracket$,
$$\alpha_i=\frac{1}{p^{2f}-1}\sum_{j=0}^{2f-1}p^{2f-1-j}(e\1_B(X_{i+j})-h_{i+j}).$$
En regroupant les termes en $j$ et $j+f$ et en utilisant $h_{i+j+f}=p-1-h_{i+j}$, il vient :
\begin{align*}
\alpha_i 
 & =\frac{1}{p^{f}+1}\Big(-1+\sum_{j=0}^{f-1}p^{f-1-j}(p^f\1_B(X_{i+j})+\1_B(X_{i+j+f})-h_{i+j})\Big) \\
 & =\sum_{j=0}^{f-1}p^{f-1-j}\1_B(X_{i+j}) - \frac{1}{p^{f}+1}
    \underbrace{\Big(1+\sum_{j=0}^{f-1}p^{f-1-j}(\1_B(X_{i+j})-\1_B(X_{i+j+f})+h_{i+j})\Big)}_{A_i}.
\end{align*}
Comme $-1\leq \1_B(X_{i+j})-\1_B(X_{i+j+f})+h_{i+j}\leq p$, nous avons
$$-(p^f+1)<1-\frac{p^f-1}{p-1}\leq A_i\leq 1-\frac{p}{p-1}(p^f-1)<2(p^f+1)$$
et, par conséquent, $A_i\in\{0,p^f+1\}$.
Si $A_{i_0}=p^f+1$, alors
$$p^f-p^{f-1}h_{i_0}=\sum_{j=0}^{f-1}p^{f-1-j}\Big(\1_B(X_{i_0+j})-\1_B(X_{i_0+j+f})\Big)+\sum_{j=1}^{f-1}p^{f-1-j}h_{i_0+j},$$
d'où on déduit que $|p-h_{i_0}|\leq \frac{1}{p^f-1}\Big(\frac{p^f-1}{p-1}+p^{f-1}-1\Big)<2$ et $h_{i_0}=p-1$, ce qui est exclu.
De la même manière, si $A_{i_0}=0$, nous trouvons :
$$p^{f-1}h_{i_0}=-1-\sum_{j=0}^{f-1}p^{f-1-j}\Big(\1_B(X_{i_0+j+f})-\1_B(X_{i_0+j})\Big)-\sum_{j=1}^{f-1}p^{f-1-j}h_{i_0+j},$$
puis, en utilisant que $h_{i_0}<p-1$, $|h_{i_0}|\leq \frac{1}{p^f-1}\Big(\frac{p^f-1}{p-1}+p^{f-1}-p^{f-1}\Big)<1$ et $h_{i_0}=0$.
Dans tous les cas, nous aboutissons alors à une contradiction. 
Le gène $X$ contient donc au moins une occurrence du symbole $\gO$.
\end{proof}

\begin{lem}\label{pasABBA}
Le cas où $\{X_i, X_{i+f}\} = \{\gA, \gB\}$ pour tout 
$i\in\llbracket 0,f-1\rrbracket$ est exclu.
\end{lem}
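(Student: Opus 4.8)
Le plan est de raisonner par l'absurde. Supposons $\{X_i,X_{i+f}\}=\{\gA,\gB\}$ pour tout $i\in\llbracket 0,f-1\rrbracket$ ; par $(2f)$-périodicité des suites $(X_i)$ et $(\alpha_i)$, ceci vaut alors pour tout $i\in\Z$, de sorte que $X_i\in\{\gA,\gB\}$ pour tout $i$ et $\1_B(X_{i+f})=1-\1_B(X_i)$ pour tout $i$. On produit ensuite deux expressions incompatibles de $\alpha_i+\alpha_{i+f}$.

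Première expression (structure rigide). Comme le gène ne comporte que des $\gA$ et des $\gB$, le lemme~\ref{lem:relalphai} fournit la récurrence \emph{exacte} $\alpha_{j+1}=p\alpha_j+h_j-e\,\1_B(X_j)$ pour tout $j$. En l'itérant de $i$ à $i+f$ on obtient $\alpha_{i+f}=p^f\alpha_i+S_i$ avec $S_i=\sum_{j=0}^{f-1}p^{f-1-j}\bigl(h_{i+j}-e\,\1_B(X_{i+j})\bigr)$ ; en l'itérant de $i+f$ à $i+2f$ et en substituant $h_{i+j+f}=p-1-h_{i+j}$ et $\1_B(X_{i+j+f})=1-\1_B(X_{i+j})$ on obtient de même $\alpha_{i+2f}=p^f\alpha_{i+f}+(p-1-e)\frac{p^f-1}{p-1}-S_i$. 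En reportant $S_i=\alpha_{i+f}-p^f\alpha_i$, puis en utilisant $\alpha_{i+2f}=\alpha_i$ et $p^f-1=e$, toute l'information combinatoire disparaît et il reste
\[
\alpha_i+\alpha_{i+f}=\frac{e}{p-1}-1=\nu\qquad\text{pour tout } i.
\]

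Seconde expression (calcul direct à partir des définitions). D'après le lemme~\ref{lem:relalphai} (sa première congruence en $i=0$, sa seconde en $i=f$) et $p^f\equiv1\pmod e$, on a $\alpha_0\equiv-\gamma'\pmod e$ et $\alpha_f\equiv\sum_{k=0}^{f-1}h_kp^{f-1-k}-\gamma'\pmod e$. L'hypothèse (maintenue dans tout l'article) $\det\rhobar_{|I_F}=(\bar\eta\bar\eta'\omega)_{|I_F}$ se traduit, une fois tous les caractères exprimés comme puissances de $\tau_0$ (en particulier $\omega_{|I_F}=\tau_0^{\nu+1}$ et $\det\rhobar_{|I_F}=\tau_0^{h}$), par la relation linéaire $h\equiv\gamma+\gamma'+\nu+1\pmod e$ ; jointe à l'égalité $h=1+\sum_{k=0}^{f-1}h_kp^{f-1-k}$ — vraie au sens strict, car le lemme~\ref{lem:relalphai} en $i=0$ force $\lfloor h/(q+1)\rfloor=0$, c'est-à-dire $h\le q$ — elle donne $\sum_{k=0}^{f-1}h_kp^{f-1-k}\equiv\gamma+\gamma'+\nu\pmod e$, d'où $\alpha_0+\alpha_f\equiv(\gamma-\gamma')+\nu=c+\nu\pmod e$. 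La comparaison avec $\alpha_0+\alpha_f=\nu$ impose $c\equiv0\pmod e$, donc $c=0$ puisque $c\in\llbracket 0,e-1\rrbracket$ ; ainsi $\bar\eta=\bar\eta'$, donc $\eta=\eta'$ (un caractère modérément ramifié est déterminé par sa réduction modulo $p$), ce qui contredit l'hypothèse $\eta\ne\eta'$.

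Le point délicat est le calcul de la constante dans la congruence $\alpha_0+\alpha_f\equiv c+\nu\pmod e$ de la seconde expression : la partie entière $\lfloor p^ih/(q+1)\rfloor$ qui figure dans la définition de $\alpha_i$ dépend du représentant entier de $h$, et c'est précisément la normalisation $0\le h\le q$ (c'est-à-dire $h=1+\sum_k h_kp^{f-1-k}$, imposée par le lemme~\ref{lem:relalphai}) qui fait apparaître la constante $\nu$ et non $\nu-2\lfloor h/(q+1)\rfloor$. La première expression, elle, n'est qu'une manipulation routinière de la récurrence du lemme~\ref{lem:relalphai}, et la contradiction finale est immédiate une fois les deux égalités obtenues.
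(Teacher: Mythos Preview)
Your proof is correct and follows the same two-step strategy as the paper: first establish $\alpha_i+\alpha_{i+f}=\nu$ for all $i$, then use the determinant condition to force $\gamma=\gamma'$. The bookkeeping differs in both places --- the paper bounds $|\alpha_i+\alpha_{i+f}-\nu|<\nu/p+1$ and observes that this quantity scales by $p$ under $i\mapsto i+1$ (hence vanishes), where you iterate the exact recurrence over $2f$ steps and solve the resulting linear equation; and for the second step the paper invokes the congruence $\alpha_i(h,\gamma,\gamma')+\alpha_{i+f}(h,\gamma',\gamma)\equiv\nu\pmod e$ (cf.\ the proof of Lemme~\ref{lem:depchoix}) to get $\alpha_i(h,\gamma,\gamma')=\alpha_i(h,\gamma',\gamma)$ directly, where you compute $\alpha_0+\alpha_f$ explicitly --- but the content is the same.

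One small wrinkle in your second step: you justify the \emph{equality} $h=1+\sum_k h_kp^{f-1-k}$ by saying ``le lemme~\ref{lem:relalphai} en $i=0$ force $\lfloor h/(q+1)\rfloor=0$''. This is a bit circular, because the explicit congruences of Lemme~\ref{lem:relalphai} for $\lfloor p^ih/(q+1)\rfloor$ are themselves only literally correct once $h$ has been normalised into $\llbracket 1,q\rrbracket$ (test $h=q+2$: then all $h_k=0$ but $\lfloor h/(q+1)\rfloor=1$). A normalisation-free route to your congruence is the elementary identity $\lfloor h/(q+1)\rfloor+\lfloor qh/(q+1)\rfloor=h-1$ (valid since $h\not\equiv 0\pmod{q+1}$), which gives $\alpha_0+\alpha_f\equiv(h-1)-2\gamma'\pmod e$ immediately; combined with $h\equiv\gamma+\gamma'+\nu+1\pmod e$ this yields $\alpha_0+\alpha_f\equiv c+\nu$ as desired.
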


\begin{proof}
De $\{X_i, X_{i+f}\} = \{\gA, \gB\}$ pour tout $i$, nous déduisons que :
$$|\alpha_i+\alpha_{f+i}-\nu|<\nu/p+1$$
pour tout $i$. De plus, nous avons
$\alpha_{i+1}+\alpha_{i+f+1}-\nu=p(\alpha_i+\alpha_{i+f}-\nu)$
d'après le lemme \ref{lem:relalphai}.
Par induction, nous obtenons alors :
$$|\alpha_i+\alpha_{i+f}-\nu| < \frac 1 {p^n} \cdot (\nu/p+1)$$ 
pour tout $i$ et tout $n\in\N^\star$. Donc $\alpha_{i+f}+\alpha_i=\nu$.
Un calcul élémentaire utilisant $\det\rhobar_{|I_F} = 
(\bar{\eta}\bar{\eta}'\omega)_{|I_F}$ nous amène alors à l'égalité
$\alpha_{i}(h,\gamma,\gamma') = \alpha_i(h,\gamma',\gamma)$ valable
pour tout $i$. Il en résulte que $\gamma = \gamma'$, et enfin
$\eta = \eta'$, ce que nous avons exclu.
\end{proof}

\subsubsection*{Transformations du gène par isomorphisme}

La défintion du gène ne dépend pas uniquement 
du couple $(\rhobar, \ttt)$ mais du triplet d'entiers $(h, \gamma, 
\gamma')$. Or la donnée de $(h, \gamma, \gamma')$ n'est pas équivalente 
à celle de $(\rhobar, \ttt)$. En effet, le 
premier détermine le second mais la réciproque n'est pas valable. 
Précisément, changer $h$ en $qh$ ou échanger $\gamma$ et $\gamma'$ sont 
deux modifications (involutives) qui laissent inchangés $\rhobar$ et 
$\ttt$. Le lemme \ref{lem:depchoix} précise comment se comporte 
le gène suite à ces modifications.

\begin{lem}
\label{lem:depchoix}
Soit $\tau$ la transposition de l'ensemble $\{\gA, \gB, \gAB, \gO\}$ qui
échange $\gA$ et $\gB$. Alors, nous avons :
\begin{enumerate}[i)]
\item $X_i(h, \gamma, \gamma') = X_{i+f}(qh, \gamma, \gamma')$
\item $X_i(h, \gamma', \gamma) = \tau\big(X_{i+f}(h, \gamma, 
\gamma')\big)$
\end{enumerate}
pour tout entier $i$.
\end{lem}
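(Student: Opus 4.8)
The plan is to unwind Definition \ref{defalphai} and track how the three integer parameters $(h,\gamma,\gamma')$ enter the congruence defining $\alpha_i$, then feed the result back through the classification of the symbols $X_i$.

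First I would treat assertion \emph{i)}. Replacing $h$ by $qh=p^{2f}h$ does not change $h$ modulo $q^2-1$, so $\rhobar$ is unchanged; what changes is the normalisation of the $h_i$. Concretely, since $p^{2f} \equiv 1 \pmod{q+1}$, the congruence \eqref{defhachi} for $qh$ is the same as for $h$, but the bookkeeping of which block of indices plays the role of $\llbracket 0,f-1\rrbracket$ is shifted by $f$; by the $(2f)$-periodic extension rule $h_{i+f} = p-1-h_i$ one checks that the sequence $(h_i)$ attached to $qh$ is exactly $(h_{i+f})$. Then in the congruence $\alpha_i(qh,\gamma,\gamma') \equiv \lfloor p^i qh/(q+1)\rfloor - p^i\gamma' \pmod e$, I compute $\lfloor p^{i+2f}h/(q+1)\rfloor = p^{2f}\lfloor p^i h/(q+1)\rfloor + \lfloor p^i h p^{2f}/(q+1)\rfloor$-type adjustment, but it is cleaner to invoke the recursion $\alpha_{i+1}\equiv p\alpha_i + h_i \pmod e$ from Lemma \ref{lem:relalphai}: since both $(\alpha_i(qh,\gamma,\gamma'))$ and $(\alpha_{i+f}(h,\gamma,\gamma'))$ satisfy the \emph{same} first-order recursion modulo $e$ (the driving term for the first is $h_i^{(qh)} = h_{i+f}$, which matches the driving term $h_{i+f}$ for the shifted sequence) and they are both $(2f)$-periodic, it suffices to check equality at one index, say $i=0$, which is a direct computation from the explicit formulas in Lemma \ref{lem:relalphai}. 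Once $\alpha_i(qh,\gamma,\gamma') = \alpha_{i+f}(h,\gamma,\gamma')$ for all $i$, the symbols agree by their defining intervals provided the boundary parameters $\epsilon$ also match up after the shift: $\epsilon_i$ depends only on whether $h_i = p-1$, and the shifted $h$-sequence sends $\epsilon_i^{(qh)} = \epsilon_{i+f}$ and $\epsilon_{i+f}^{(qh)} = \epsilon_{i+2f} = \epsilon_i$, so the interval endpoints for $X_i(qh,\cdot)$ are exactly those for $X_{i+f}(h,\cdot)$. This gives \emph{i)}.

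For assertion \emph{ii)}, exchanging $\gamma$ and $\gamma'$ leaves $\ttt = \eta\oplus\eta'$ unchanged and does not touch $h$ or $\rhobar$. Using the relation $\det\rhobar_{|I_F} = (\bar\eta\bar\eta'\omega)_{|I_F}$, which (as in the proof of Lemma \ref{pasABBA}) translates into a linear identity expressing $\gamma+\gamma'$ in terms of $h$ modulo $e$, I would show $\alpha_i(h,\gamma',\gamma) \equiv \nu - \alpha_{i+f}(h,\gamma,\gamma') \pmod e$: the reflection $\alpha \mapsto \nu - \alpha$ is exactly what swaps the roles of the two characters, and the shift by $f$ comes from the fact that the determinant condition relates $\gamma'$ to $\gamma$ through a factor that contributes the half-period. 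Then I must check that the symbol assignment is compatible with the map $\alpha \mapsto \nu - \alpha$ followed by the index shift $i \mapsto i+f$: the interval $[0,\tfrac1p\nu + \epsilon_{i+f}[$ for $\gA$ at index $i$ corresponds under $\alpha\mapsto\nu-\alpha$ to $]\tfrac{p-1}p\nu - \epsilon_{i+f},\nu]$, which is precisely the interval for $\gB$ at index $i+f$ (note $\epsilon$ indices also get shifted by $f$, matching); the $\gAB$ interval is symmetric under $\alpha\mapsto\nu-\alpha$ up to swapping its two $\epsilon$-endpoints, which again corresponds to the index shift; and $]\nu,e[$ maps to itself, so $\gO$ goes to $\gO$. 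Collecting these four cases yields $X_i(h,\gamma',\gamma) = \tau(X_{i+f}(h,\gamma,\gamma'))$.

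The main obstacle will be getting the two bookkeeping shifts exactly right: in both parts the answer involves a shift of the index by $f$, and one has to be careful that the $(2f)$-periodic extension convention $h_{i+f} = p-1-h_i$, the floor-function formulas of Lemma \ref{lem:relalphai} (which are stated with a case split at $i=f$), and the $\epsilon_i$ endpoints in Definition \ref{defalphai} all transform consistently. I expect the cleanest route is to avoid manipulating floors directly and instead argue entirely via the recursion $\alpha_{i+1}\equiv p\alpha_i + h_i \pmod e$ together with $(2f)$-periodicity, reducing each identity to a single base-case verification, and then to handle the symbol assignment purely by comparing interval endpoints.
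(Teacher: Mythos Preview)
Your approach is essentially the paper's: for \emph{i)} the key fact is $h_i^{(qh)}=h_{i+f}$, and for \emph{ii)} it is the congruence $\alpha_i(h,\gamma',\gamma)+\alpha_{i+f}(h,\gamma,\gamma')\equiv\nu\pmod e$ coming from the determinant condition. The paper records only these two lines and leaves the passage from the $\alpha$-relation to the symbols $X_i$ implicit; your interval-by-interval verification that $\alpha\mapsto\nu-\alpha$ together with the index shift $i\mapsto i+f$ swaps the $\gA$ and $\gB$ intervals and fixes the $\gAB$ and $\gO$ intervals is correct and is exactly the check the paper expects the reader to supply.

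There is one slip to correct in your treatment of \emph{i)}. You write $qh=p^{2f}h$ and say this leaves $h$ unchanged modulo $q^2-1$; both claims are false. Here $q=p^f$, so $qh=p^f h$, and since $p^f\equiv -1\pmod{q+1}$ one gets $qh\equiv -h\pmod{q+1}$, which is precisely why the digits become $p-1-h_i=h_{i+f}$ (as you then correctly state). The invariance of $\rhobar$ under $h\mapsto qh$ is not because the residue class of $h$ is unchanged, but because induction from $G_{F'}$ identifies $\omega_{2f}^h$ with its Galois conjugate $\omega_{2f}^{qh}$. Once this is fixed, note also that the direct route is shorter than your recursion argument: from $qh=p^fh$ one has literally $\lfloor p^i(qh)/(q+1)\rfloor=\lfloor p^{i+f}h/(q+1)\rfloor$, and $p^i\gamma'\equiv p^{i+f}\gamma'\pmod e$ since $e=p^f-1$, giving $\alpha_i(qh,\gamma,\gamma')=\alpha_{i+f}(h,\gamma,\gamma')$ at once.
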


\begin{proof}
\emph{i)} s'obtient en remarquant que 
$$qh\equiv -h\equiv p^f- \sum_{i=0}^{f-1}h_{i}p^{f-1-i}\equiv 1+\sum_{i=0}^{f-1}(p-1-h_{i})p^{f-1-i} \pmod{q+1}.$$
L'hypothèse sur le type galoisien $\det\rhobar_{|I_F}=\omega_f^h=(\bar{\eta}\bar{\eta}'\omega)_{|I_F}$ impose la congruence
 $\alpha_{i}(h,\gamma,\gamma')+\alpha_{i+f}(h,\gamma',\gamma)\equiv \nu \pmod e$. 
De là découle \emph{ii)}.
\end{proof}

Un autre choix que nous avons fait est celui du plongement $\tau_0$. Si 
nous en choisissons un autre, disons $\sigma_0$, il existe un entier $n$ 
tel que $\sigma_0 = \tau_0 \circ \varphi^{-n}$ où $\varphi$ est le 
Frobenius arithmétique. Un calcul simple montre alors que, si $(h, 
\gamma, \gamma')$ encode le couple $(\rhobar, \ttt)$ par rapport au 
plongement $\tau_0$, alors ce même couple est encodé par $(p^n h, p^n 
\gamma, p^n \gamma')$ relativement au plongement $\sigma_0$. Nous en 
déduisons que le gène relatif à $\sigma_0$ s'obtient à partir du gène 
relatif à $\tau_0$ simplement en décalant les indices de $n$.

En conclusion, nous pouvons dire que le gène lui-même ne dépend pas 
uniquement du couple $(\rhobar, \ttt)$ mais également de quelques choix 
supplémentaires. Toutefois, cette dépendance est facile à cerner et
n'influence aucunement les équations de la variété de Kisin que nous
obtenons (voir théorème \ref{thequationsKisin}). 

\subsubsection*{Décoration des gènes}

À partir d'un gène, nous définissons des données supplémen\-taires, que 
nous appelons \emph{décorations}, qui sont utiles pour lire les 
équations de la variété de Kisin.

\begin{definit} \label{defdominance}
Soit $X$ un gène pour lequel il existe $i_0\in\llbracket 0,f-1\rrbracket$ 
avec 
$$\binom{X_{i_0}}{X_{i_0+f}}\not\in\left\{\binom{\gO}{\gO},\binom{\gO}{\gAB},\binom{\gAB}{\gO},\binom{\gAB}{\gAB},\binom{\gA}{\gB},\binom{\gB}{\gA}\right\}.$$
Pour $i$ prenant successivement les valeurs $i_0,i_0-1\ldots,0, f-1,
\ldots,i_0+1$ :

\smallskip

\begin{itemize}
\item[$\bullet$] si $\binom{X_i}{X_{i+f}} \in \left\{\binom{\gA}{\gAB},
\binom{\gAB}{\gA}, \binom{\gA}{\gA}, \binom{\gA}{\gO},
\binom{\gO}{\gA}\right\}$, nous disons que $\gA$ est \emph{dominant} 
en $i$ dans $X$, \smallskip
\item[$\bullet$] si $\binom{X_i}{X_{i+f}} \in \left\{\binom{\gB}{\gAB},
\binom{\gAB}{\gB}, \binom{\gB}{\gB}, \binom{\gB}{\gO},
\binom{\gO}{\gB}\right\}$, nous disons que $\gB$ est \emph{dominant}
en $i$ dans $X$, \smallskip
\item[$\bullet$] sinon, si $\gA$ (resp. $\gB$) est dominant dans $(i+1)$ dans $X$, 
nous disons que $\gA$ (resp. $\gB$) est \emph{dominant} en $i$ dans $X$.
\end{itemize}
\end{definit}

Quand l'hypothèse de la définition \ref{defdominance} est vérifiée, nous 
disons que $X$ a un caractère dominant. Il se peut que le gène associé à 
un couple $(\rhobar, \ttt)$ n'ait pas de caractère dominant ; toutefois, 
en vertu des lemmes \ref{proprelgene} et \ref{pasABBA}, ces cas sont 
très circonscrits. Plus précisément, le gène $X(h, \gamma, \gamma')$ n'a 
pas de caractère dominant uniquement dans les deux cas suivants :
\begin{itemize}
\item[$\bullet$] soit, il contient un couple d'allèles $\binom \gO \gO$
\item[$\bullet$] soit, quitte à échanger les rôles de $\eta$ et $\eta'$, il est de 
la forme :

\smallskip

\begin{center}
\begin{tikzpicture}[xscale=0.8,yscale=0.7]
\draw [fill=fond, thick] (-0.5,-0.5) rectangle (6,1.5);
\draw[thick,->>] (-0.5,-0.5)--(-0.5,0.5);
\draw[thick,->>] (6,1.5)--(6,0.5);
\node at (0, 0) { $\gAB$ };
\node at (1, 0) { $\gO$ };
\node at (2, 0) { $\gAB$ };
\node[scale=1.5] at (3.25,0) { \ph$\cdots$ };
\node at (4.5, 0) { $\gO$ };
\node at (5.5, 0) { $\gAB$ };
\node at (0, 1) { $\gO$ };
\node at (1, 1) { $\gAB$ };
\node at (2, 1) { $\gO$ };
\node[scale=1.5] at (3.25,1) { \ph$\cdots$ };
\node at (4.5, 1) { $\gAB$ };
\node at (5.5, 1) { $\gO$ };
\end{tikzpicture}
\end{center}

\noindent (et donc, en particulier, il est de longueur impaire).
\end{itemize}
\smallskip
Pour démontrer cette alternative, supposons que $X(h, \gamma, \gamma')$ ne
contienne pas de couple d'allèles $\binom \gO \gO$. Alors, d'après le
lemme \ref{proprelgene}, il ne contient pas non plus le couple d'allèles
$\binom \gAB \gAB$. Ainsi, quitte à translater, il contient le couple
$\binom \gO \gAB$. Mais, étant donné que $\gAB$ est nécessairement suivi
de $\gO$ (lemme \ref{proprelgene}), cela impose que le couple 
d'allèles suivant est $\binom \gAB \gO$. En répétant l'argument, nous
aboutissons à la conclusion annoncée.

\begin{definit}\label{defdecoration}
Soit $X$ un gène ayant un caractère dominant.
Pour un entier $i\in\llbracket 0, f-1\rrbracket$ tel que $Y \in \{\gA, 
\gB\}$ soit dominant dans $\binom{X_i}{X_{i+f}}$ et 
$\binom{X_{i+1}}{X_{i+f+1}}$ :
\begin{itemize}
\item[$\bullet$] nous relions $X_i$ et $X_{i+1+f}$ si $X_i=Y$, et
\item[$\bullet$] nous relions $X_{i+f}$ et $X_{i+1}$ si $X_{i+f}=Y$.
\end{itemize}

\smallskip

\noindent
La \emph{décoration} de $X$ est l'ensemble des liens ci-dessus.
\end{definit}

\begin{ex}\label{exMoebius}
Nous considèrons $f = 11$ (\emph{i.e.} $F = \Q_{p^{11}}$) et 
les paramètres 
$$\begin{array}{lcl}
\rhobar = \Ind_{G_{F'}}^{G_F}(\omega_{22}^h) & \text{avec} & h = \frac{p-1} 2 + (p-1) p^3 + (p-1) p^5 + p^8 + p^9 + p^{10} \medskip \\
\bar\eta = \tau_0^{\gamma} & \text{avec} & \gamma = -\frac{p+3} 2 - p^2 - 2p^3 - p^5 \medskip \\
\bar\eta' = \tau_0^{\gamma'} & \text{avec} & \gamma' = -p^5 - p^7 
\end{array}$$
Pour ces données, les valeurs prises par les $\alpha_i$ ainsi 
que les \emph{allèles} $X_i$ correspondants sont représentés dans le 
tableau de la figure \ref{fig:tableexemple}.
\begin{figure}
\renewcommand{\arraystretch}{1.3}
$$\scriptsize
\begin{array}{|c|c|c|c|c|c|c|}
\cline{1-3} \cline{5-7}
i & \alpha_i & X_i &&
i & \alpha_i & X_i \\
\cline{1-3} \cline{5-7}
0 & p^5 + p^7 & \gA &&
11 & \frac{p-3} 2 + (p-1)p^3 + p^6 + p^7 + p^8 + p^9 + p^{10} & \gB \\
1 & 1 + p^6 + p^8 & \gA &&
12 & (p-1) + \frac{p-3}2 \: p + (p-1)p^4 + p^7 + p^8 + p^9 + p^{10} & \gB \\
2 & 1 + p + p^7 + p^9 & \gA &&
13 & (p^2-1) + \frac{p-3}2 \: p^2 + (p-1)p^5 + p^8 + p^9 + p^{10} & \gB \\
3 & 1 + p + p^2 + p^8 + p^{10} & \gB &&
14 & (p^3-1) + \frac{p-3}2 \: p^3 + (p-1)p^6 + p^9 + p^{10} & \gB \\
4 & 1 + p + p^2 + p^3 + p^9 & \gA &&
15 & \frac{p-1} 2 \: p^4 + (p-1) p^7 + p^{10} & \gB \\
5 & p + p^2 + p^3 + p^4 + p^{10} & \gB &&
16 & p + \frac{p-1} 2 \: p^5 + (p-1) p^8 & \gA \\
6 & p + p^2 + p^3 + p^4 + p^5 & \gA &&
17 & p^2 + \frac{p-1} 2 \: p^6 + (p-1) p^9 & \gAB \\
7 & p^2 + p^3 + p^4 + p^5 + p^6 & \gA &&
18 & (p-1) + p^3 + \frac{p-1} 2 \: p^7 + (p-1) p^{10} & \gO \\
8 & (p-1) + p^3 + p^4 + p^5 + p^6 + p^7 & \gA &&
19 & (p^2-1) + p^4 + \frac{p-1} 2 \: p^8 & \gA \\
9 & (p-1)p + p^4 + p^5 + p^6 + p^7 + p^8 & \gA &&
20 & (p^3-1) + p^5 + \frac{p-1} 2 \: p^9 & \gAB \\
10 & (p-1)p^2 + p^5 + p^6 + p^7 + p^8 + p^9 & \gA &&
21 & (p^4-1) + p^6 + \frac{p-1} 2 \: p^{10} & \gO \\
\cline{1-3} \cline{5-7}
\end{array}$$
\renewcommand{\arraystretch}{1}
\caption{Valeurs des $\alpha_i$ et des $X_i$ dans le cas
de l'exemple \ref{exMoebius}}
\label{fig:tableexemple}
\end{figure}
En revenant aux définitions, nous en déduisons 
le gène associé ainsi que sa décoration. Voici comment il se représente 
sur un ruban de Moebius :

\medskip

\begin{center}
\begin{tikzpicture}[scale=0.8]
\draw [fill=fond, thick] (-0.5,-0.5) rectangle (10.5,1.5);
\draw[thick,->>] (-0.5,-0.5)--(-0.5,0.5);
\draw[thick,->>] (10.5,1.5)--(10.5,0.5);
\node[color=colB] at (0, 0) { $\gB$ };
\node[color=colB] at (0, 1) { $\gA$ };
\node[color=colB] at (1, 0) { $\gB$ };
\node[color=colB] at (1, 1) { $\gA$ };
\node[color=colB] at (2, 0) { $\gB$ };
\node[color=colB] at (2, 1) { $\gA$ };
\node[color=colB] at (3, 0) { $\gB$ };
\node[color=colB] at (3, 1) { $\gB$ };
\node[color=colA] at (4, 0) { $\gB$ };
\node[color=colA] at (4, 1) { $\gA$ };
\node[color=colA] at (5, 0) { $\gA$ };
\node[color=colA] at (5, 1) { $\gB$ };
\node[color=colA] at (6, 0) { $\gAB$ };
\node[color=colA] at (6, 1) { $\gA$ };
\node[color=colA] at (7, 0) { $\gO$ };
\node[color=colA] at (7, 1) { $\gA$ };
\node[color=colA] at (8, 0) { $\gA$ };
\node[color=colA] at (8, 1) { $\gA$ };
\node[color=colA] at (9, 0) { $\gAB$ };
\node[color=colA] at (9, 1) { $\gA$ };
\node[color=colA] at (10, 0) { $\gO$ };
\node[color=colA] at (10, 1) { $\gA$ };
\draw[color=colB,thick] (0.2,0.2)--(0.8,0.8);
\draw[color=colB,thick] (1.2,0.2)--(1.8,0.8);
\draw[color=colB,thick] (2.2,0.2)--(2.8,0.8);
\draw[color=colA,thick] (4.2,0.8)--(4.8,0.2);
\draw[color=colA,thick] (5.2,0.2)--(5.8,0.8);
\draw[color=colA,thick] (6.2,0.8)--(6.8,0.2);
\draw[color=colA,thick] (7.2,0.8)--(7.8,0.2);
\draw[color=colA,thick] (8.2,0.8)--(8.8,0.2);
\draw[color=colA,thick] (8.2,0.2)--(8.8,0.8);
\draw[color=colA,thick] (9.2,0.8)--(9.8,0.2);
\end{tikzpicture}
\end{center}

\noindent
Dans la représentation ci-dessus, les colonnes dans lesquelles $\gA$ 
(resp. $\gB$) est dominant ont été dessinées en bleu (resp. en vert). 
Conformément à la définition, les liens s'obtiennent en faisant partir 
des segments en diagonale vers la droite à partir des $\gA$ bleus et des 
$\gB$ verts, pourvu que la colonne située à droite soit de la même 
couleur. Nous insistons en particulier sur le fait qu'en aucun cas, un 
lien ne peut relier deux colonnes de couleurs différentes.
\end{ex}

\subsection{Détermination de la variété de Kisin} \label{secDetermination}

Nous énonçons les résultats de détermination de la variété de Kisin 
$\vK^{\psi}(\vv, \ttt, \rhobar)$ en termes du gène décoré $X$. Les 
preuves font l'objet de la partie suivante (\S 
\ref{sec:Demonstrations}).

\begin{thm}
\label{thequationsKisin}
La variété de Kisin $\vK^{\psi}(\vv, \ttt, \rhobar)$ est isomorphe
à une sous-variété fermée $\mathcal K(\ttt, \rhobar)$ de 
$\prod_{i=0}^{f-1} \P^1_{k_E}$.

Soit $X$ un gène associé au couple $(\rhobar,\ttt)$. En notant $[x_i : 
x_{i+f}]$ les coordonnées homogènes sur le $i$-ième facteur {\rm (}$0 
\leq i < f${\rm )} et en convenant que $x_i = x_{i \text{\rm\ mod } 2f}$ 
pour $i$ entier, la variété $\mathcal K(\ttt, \rhobar)$ est définie par 
les équations suivantes :

\medskip

\begin{itemize}
\item[(A)] $x_i = 0$ pour tout $i \in \llbracket 0, 2f-1 \rrbracket$ tel
que $X_i = \gO$

\medskip

\item[(B1)] $x_i x_{i+f+1} = x_{i+1} x_{i+f}$ {\rm (}\emph{i.e.} $[x_i : x_{i+f}]
= [x_{i+1} : x_{i+f+1}]${\rm )} chaque fois que nous observons,
dans $X$, le motif

\smallskip

\begin{center}
\begin{tikzpicture}[xscale=1.8,yscale=0.8]
\fill[color=fond] (-0.5,-0.5) rectangle (1.5,1.5);
\shade[left color=white, right color=fond] (-1,-0.5) rectangle (-0.5,1.5);
\shade[left color=fond, right color=white] (1.5,-0.5) rectangle (2,1.5);
\fill (-0.5,-0.48) rectangle (1.5,-0.52);
\shade[left color=black, right color=white] (1.5,-0.48) rectangle (2,-0.52);
\shade[left color=white, right color=black] (-1,-0.48) rectangle (-0.5,-0.52);
\fill (-0.5,1.48) rectangle (1.5,1.52);
\shade[left color=black, right color=white] (1.5,1.48) rectangle (2,1.52);
\shade[left color=white, right color=black] (-1,1.48) rectangle (-0.5,1.52);
\node at (0,1) { \ph$X_{i}$ };
\node at (1,1) { \ph$X_{i+1}$ };
\node at (0,0) { \ph$X_{i+f}$ };
\node at (1,0) { \ph$X_{i+f+1}$ };
\draw[thick] (0.25,0.7)--(0.65,0.2);
\draw[thick] (0.25,0.2)--(0.65,0.7);
\end{tikzpicture}
\end{center}

\medskip

\item[(B2)] $x_i x_{i+f+1} = 0$ chaque fois que nous observons, dans $X$, le motif

\smallskip

\begin{center}
\begin{tikzpicture}[xscale=1.8,yscale=0.8]
\fill[color=fond] (-0.5,-0.5) rectangle (1.5,1.5);
\shade[left color=white, right color=fond] (-1,-0.5) rectangle (-0.5,1.5);
\shade[left color=fond, right color=white] (1.5,-0.5) rectangle (2,1.5);
\fill (-0.5,-0.48) rectangle (1.5,-0.52);
\shade[left color=black, right color=white] (1.5,-0.48) rectangle (2,-0.52);
\shade[left color=white, right color=black] (-1,-0.48) rectangle (-0.5,-0.52);
\fill (-0.5,1.48) rectangle (1.5,1.52);
\shade[left color=black, right color=white] (1.5,1.48) rectangle (2,1.52);
\shade[left color=white, right color=black] (-1,1.48) rectangle (-0.5,1.52);
\node at (0,1) { \ph$X_{i}$ };
\node at (1,1) { \ph$X_{i+1}$ };
\node at (0,0) { \ph$X_{i+f}$ };
\node at (1,0) { \ph$X_{i+f+1}$ };
\draw[thick] (0.25,0.7)--(0.65,0.2);
\end{tikzpicture}
\end{center}

\medskip
\item[(B3)] $x_{i+1} x_{i+f} = 0$ chaque fois que nous observons, dans $X$, le motif

\smallskip

\begin{center}
\begin{tikzpicture}[xscale=1.8,yscale=0.8]
\fill[color=fond] (-0.5,-0.5) rectangle (1.5,1.5);
\shade[left color=white, right color=fond] (-1,-0.5) rectangle (-0.5,1.5);
\shade[left color=fond, right color=white] (1.5,-0.5) rectangle (2,1.5);
\fill (-0.5,-0.48) rectangle (1.5,-0.52);
\shade[left color=black, right color=white] (1.5,-0.48) rectangle (2,-0.52);
\shade[left color=white, right color=black] (-1,-0.48) rectangle (-0.5,-0.52);
\fill (-0.5,1.48) rectangle (1.5,1.52);
\shade[left color=black, right color=white] (1.5,1.48) rectangle (2,1.52);
\shade[left color=white, right color=black] (-1,1.48) rectangle (-0.5,1.52);
\node at (0,1) { \ph$X_{i}$ };
\node at (1,1) { \ph$X_{i+1}$ };
\node at (0,0) { \ph$X_{i+f}$ };
\node at (1,0) { \ph$X_{i+f+1}$ };
\draw[thick] (0.25,0.2)--(0.65,0.7);
\end{tikzpicture}
\end{center}

\end{itemize}
\end{thm}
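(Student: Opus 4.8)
The plan is to unwind the moduli problem defining the Kisin variety into completely explicit linear algebra. By Proposition~\ref{prop:repLVR} (and the identification of $\vK^\psi(\vv,\ttt,\rhobar)$ with $\cL_{\rhobar}$), it suffices to describe, functorially in a $k_E$-algebra $S$, the set of Breuil--Kisin lattices $\MK$ of type $(\vv,\ttt)$ inside $S\otimes_{k_E}\bbM(\rhobar)$, using the explicit presentation of $\bbM(\rhobar)$ recalled in \S\ref{secdescente}; once this set is shown to be $\bigl(\prod_{i=0}^{f-1}\P^1_{k_E}\bigr)(S)$ cut out by $(A)$, $(B1)$, $(B2)$, $(B3)$, Yoneda yields the asserted isomorphism of schemes. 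Two gradings organise the computation: the idempotents of \eqref{eq:decompocER} give $\MK=\bigoplus_i\MK^{(i)}$ as in \eqref{decompositionmoduleBK}, and — since $e=p^f-1$ is invertible in $S$ and $k_F\subset k_E$ — the semilinear action of $\Gal(L/F)\cong k_F^\times$ equips each $S[[u]]$-module $\MK^{(i)}$ and each $S((u))$-module $\bbM(\rhobar)^{(i)}$ with a $\Z/e\Z$-grading for which $\varepsilon_0^{(i)},\varepsilon_1^{(i)}$ have degree $0$ and $u$ has degree $1$.

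First I would establish a normal form for a single $\MK^{(i)}$. Condition~(v) of Definition~\ref{defmoduleBK}, read on the graded module $\MK^{(i)}/u\MK^{(i)}$, forces $\MK^{(i)}$ to be generated over $S[[u]]$ by two homogeneous elements of degrees $\gamma$ and $\gamma'$ modulo $e$ (outside those two degrees one has $\MK^{(i)}_w=u\,\MK^{(i)}_{w-1}$, no new generator). Conditions~(i)--(ii), together with $\varphi$-stability and the determinant condition~(iv) (recall $u^e+p=v$ since $p=0$ in $k_E$), rigidify the $v$-adic valuations of these generators: this is exactly where the integers $\alpha_i$ enter, the recursion $\alpha_{i+1}\equiv p\alpha_i+h_i\pmod e$ of Lemma~\ref{lem:relalphai} expressing that the Frobenius $\bbM(\rhobar)^{(i)}\to\bbM(\rhobar)^{(i+1)}$ is the identity up to $\varphi$-semilinearity for $i\le f-2$ (so it multiplies degrees by $p$) and carries the antidiagonal twist for $i=f-1$. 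The only remaining degree of freedom at index $i$ is then the position of $\MK^{(i)}$ between two fixed lattices with two-dimensional quotient, i.e.\ a line, hence a point $[x_i:x_{i+f}]$ of $\P^1_{k_E}(S)$; here $x_i$ records the ``$\eta$-direction'' and $x_{i+f}$ the ``$\eta'$-direction'', the asymmetry and the rôle of the shift by $f$ being governed by Lemma~\ref{lem:depchoix}. This produces the closed embedding into $\prod_{i=0}^{f-1}\P^1_{k_E}$.

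Next I would impose conditions~(iii)--(iv) index by index and read off the equations. For $0\le i\le f-2$ the graded Frobenius multiplies degrees by $p$, so compatibility of $\varphi(\MK^{(i)})$ with $\MK^{(i+1)}$ and the determinant condition say that the lines at $i$ and $i+1$ must match, and — according to the position of $\alpha_i,\alpha_{i+1}$ relative to the thresholds $\frac1p\nu$ and $\frac{p-1}p\nu$, i.e.\ according to the symbols $X_i,X_{i+1}$ and the allele couples $\binom{X_i}{X_{i+f}},\binom{X_{i+1}}{X_{i+f+1}}$ — this is precisely one of: the coordinate hyperplane $x_i=0$ (case $X_i=\gO$: the generator is forced and no lattice exists unless this coordinate vanishes), the Segre equation $x_ix_{i+f+1}=x_{i+1}x_{i+f}$ (graded Frobenius an isomorphism on both relevant pieces), or a degeneration $x_ix_{i+f+1}=0$ resp.\ $x_{i+1}x_{i+f}=0$ (isomorphism on only one side). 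The index $i=f-1$ is identical except that the antidiagonal Frobenius matrix of \S\ref{secdescente} (with its power of $v$ and its twist by $\theta^{-1}$) intervenes: the $v$-power is absorbed into the $\alpha_i$-bookkeeping, while the swap $\varepsilon_0^{(i)}\leftrightarrow\varepsilon_1^{(i)}$ interchanges the $\eta$- and $\eta'$-directions, which is exactly the ``$i\mapsto i+f$'' identification — this is what folds the $f$ factors into a Moebius band and makes the equations close up cyclically. The decoration of $X$ (Definitions~\ref{defdominance}, \ref{defdecoration}) is the device that tracks, uniformly around the band, which coordinate is the pivot, hence which of $(B1)$--$(B3)$ occurs where; Lemmas~\ref{proprelgene} and~\ref{pasABBA} guarantee consistency of this tracking.

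Finally, the resulting relations are bihomogeneous of bidegree $\leq(1,1)$ or are coordinate hyperplanes, so $\mathcal K(\ttt,\rhobar)$ is a closed subscheme of $\prod_{i=0}^{f-1}\P^1_{k_E}$, and the correspondence above, being an isomorphism of functors on $k_E$-algebras, gives $\cL_{\rhobar}\cong\mathcal K(\ttt,\rhobar)$ as schemes. The main difficulty lies in the middle two steps: proving the normal form for $\MK^{(i)}$ over an \emph{arbitrary} base $S$ (not just over a field), pinning down all $v$-adic valuations through the $\alpha_i$, and — the genuinely delicate point — carrying out the wrap-around at $i=f-1$ with the antidiagonal twist so that the local alternatives glue into the global list $(A)$--$(B3)$ with the placement dictated by the decoration, all \emph{without} any non-degeneracy hypothesis on $\rhobar$ or $\ttt$, which forces a separate treatment of the boundary configurations (the genes without dominant character catalogued after Definition~\ref{defdominance}, in particular those containing an allele couple $\binom\gO\gO$ or of the alternating $\gAB/\gO$ shape).
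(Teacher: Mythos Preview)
Your outline follows essentially the same route as the paper: decompose along the idempotents, use the $\Gal(L/F)$-action to isolate homogeneous generators, use the determinant condition to pin down $u$-valuations via the $\alpha_i$, parametrise the remaining freedom at each index by a $\P^1$-point, then impose $\varphi$-stability and match the resulting relations against the decoration of the gene. Two points in your sketch are thinner than in the paper and deserve attention.

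First, the paper does not obtain the upper bound on $\MK^{(i)}$ from the grading and $\varphi$-stability alone as you suggest; it constructs an explicit maximal $\varphi$-stable lattice $\MK(\rhobar)\subset\bbM(\rhobar)$ (Proposition~\ref{prop:modulemax}) and shows any Breuil--Kisin lattice lies inside it. This is what guarantees that the transition matrices $P^{(i)}$ have entries in $S[[u]]$ rather than $S((u))$; only then does the determinant computation (Lemma~\ref{lem:valdet}) fix the valuation to $\nu$ and give $u^e e_j^{(i)}\in\MK^{(i)}$, which is what lets one take the $a_i,b_i,a'_i,b'_i$ to be constants. Second, you locate the main difficulty at the wrap-around $i=f{-}1$ and at genes without dominant character; in the paper both are routine (the antidiagonal $G^{(f-1)}$ just permutes the rows in the formulas for $A_i,B_i,A'_i,B'_i$, and the non-dominant cases are either empty or have all coordinates forced). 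The genuine technical core is Lemma~\ref{lemcasequa}, which characterises exactly when each exponent appearing in $B_i$ or $B'_i$ drops below $\nu$ in terms of dominance at $i$ and $i{+}1$ and membership of $X_i,X_{i+f}$ in $\{\gA,\gAB\}$ or $\{\gB,\gO\}$; the case-by-case table that follows then shows these relations coincide with $(B1)$--$(B3)$. Your phrase ``according to the position of $\alpha_i,\alpha_{i+1}$ relative to the thresholds\ldots this is precisely one of\ldots'' hides exactly this lemma, and it is where the decoration of Definition~\ref{defdecoration} does its real work.
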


\begin{rem}
L'écriture dans le jeu d'équations \emph{(B2)} (resp. le jeu d'équations 
\emph{(B3)}) sous-entend qu'il n'y a pas de lien entre $X_{i+1}$ et 
$X_{i+f}$ (resp. entre $X_i$ et $X_{i+f+1})$. Ainsi, pour un $i$ donné,
les cas \emph{(B1)}, \emph{(B2)} et \emph{(B3)} s'excluent mutuellement.
Nous remarquons de plus que les jeux d'équations \emph{(B2)} et \emph{(B3)} 
sont équivalents et se déduisent l'un de l'autre par la transformation 
$i \mapsto i+f$. Nous avons toutefois préféré mentionner explicitement 
cette répétition pour plus de clarté.
\end{rem}

En particulier, nous remarquons que la variété de Kisin 
$\vK^{\psi}(\vv, \ttt, \rhobar)$ est vide dès lors qu'un gène
associé à $(\ttt, \rhobar)$ contient le couple d'allèles $\binom
\gO \gO$. Nous verrons dans la suite (\emph{cf} proposition
\ref{corvide}) que la réciproque est également vraie.

Notons que l'isomorphisme $\mathcal K(\ttt, \rhobar) \to \vK^\psi(\vv, \ttt, 
\rhobar)$ est entièrement explicite\footnote{Nous renvoyons au \S 
\ref{subsec:Ktrhobar} pour sa construction.} ; autrement dit, nous 
pouvons également lire sur le gène, la structure du réseau de 
Breuil--Kisin correspondant à chacun des points de la variété de Kisin.
Le théorème \ref{thequationsKisin} permet également d'établir des 
propriétés fines sur la géométrie de la variété de Kisin ; ces aspects
sont discutés au \S \ref{sec:Geometrie}.

\subsection{Premiers exemples}
\label{quelquesexemples}

Dans ce paragraphe, nous presentons quelques exemples d'utilisation du 
théorème \ref{thequationsKisin} de détermination de la variété de Kisin et en profitons pour illustrer 
différentes propriétés géométriques des variétés de Kisin (\S 
\ref{exchaineP1}, \S \ref{subsubnonequi}, \S \ref{sssec:filrouge}). 
Ensuite (\S \ref{subsubgenerique}), sous l'hypothèse de généricité de la 
représentation $\rhobar$ (au sens de \cite{BM}), nous montrons que la 
variété de Kisin est toujours réduite à un point.

\subsubsection{Petites valeurs de $f$}

Pour les petites valeurs de $f$, il est possible de lister tous les 
gènes possibles (à symétrie près) ne contenant pas le couple d'allèles 
$\binom \gO \gO$ et satisfaisant aux conditions des lemmes 
\ref{proprelgene} et \ref{pasABBA} puis, pour chacun d'eux, de 
déterminer la variété de Kisin correspondante à l'aide du théorème 
\ref{thequationsKisin}.

Nous avons fait l'exercice pour $f=2$ et $f=3$ et avons reporté les 
résultats obtenus respectivement dans les tableaux de la figure 
\ref{fig:f2} (page \pageref{fig:f2}) et de la figure \ref{fig:f3} (page 
\pageref{fig:f3}). Pour $f=2$, nous avons indiqué en outre un ou 
plusieurs couples $(\ttt, \rhobar)$ qui conduisent à chaque gène 
présenté. Les résultats auxquels nous aboutissons sont en accord avec 
ceux de \cite{CDM} qui avaient été obtenus par des méthodes différentes 
qui ne s'étendent \emph{a priori} pas à $f > 2$.
Signalons enfin que, dans le cas $f=3$, nous avons fait apparaître la stratification par le genre sur les variétés de Kisin ; 
celle-ci est définie et étudiée au \S \ref{sec:Stratification}.

\renewcommand{\arraystretch}{1.5}
\begin{figure}
\begin{center}
\small
\begin{tabular}{|>{\centering}p{4.2cm}|>{\centering}p{2.9cm}|>{\centering}p{2.3cm}|p{2.9cm}|}
\hline
   $\rhobar = \Ind_{G_{F'}}^{G_F} (\omega_4^{1+r_0} \nr' (\theta) )$&Type $\ttt = \eta \oplus \eta'$  & Gène & \hfill{}Variété de Kisin\hfill\null \\
\hline
&&&\\
\vspace{-0.7cm} $r_0=0$&
\vspace{-0.7cm} $\omega_2^{r_0} \oplus \omega_2^{-p}$ & 
\vspace{-0.9cm}
\begin{tikzpicture}[xscale=0.8,yscale=0.5]
\draw [fill=fond, thick] (-0.5,-0.5) rectangle (1.5,1.5);
\node[color=colB] at (0, 0) { $\gB$ };
\node[color=colB] at (0, 1) { $\gB$ };
\node[color=colB] at (1, 0) { $\gB$ };
\node[color=colB] at (1, 1) { $\gA$ };
\draw[color=colB,thick] (-0.8,0.8)--(-0.2,0.2);
\draw[color=colB,thick] (0.2,0.8)--(0.8,0.2);
\draw[color=colB,thick] (0.2,0.2)--(0.8,0.8);
\draw[color=colB,thick] (1.2,0.2)--(1.8,0.8);
\end{tikzpicture}
& \vspace{-0.7cm}\hfill{}$[1:0]\times[1:0]$\hfill\null \\
\hline
&&&\\
\vspace{-0.7cm} $1 \leq r_0 \leq p-2$ & 
\vspace{-0.7cm} $ \omega_2^{r_0} \oplus \omega_2^{-p}$ & 
\vspace{-0.9cm}
\begin{tikzpicture}[xscale=0.8,yscale=0.5]
\draw [fill=fond, thick] (-0.5,-0.5) rectangle (1.5,1.5);
\node[color=colB] at (0, 0) { $\gO$ };
\node[color=colB] at (0, 1) { $\gB$ };
\node[color=colB] at (1, 0) { $\gO$ };
\node[color=colB] at (1, 1) { $\gAB$ };
\draw[color=colB,thick] (0.2,0.8)--(0.8,0.2);
\end{tikzpicture}
& \vspace{-0.7cm}\hfill{}$[1:0]\times[1:0]$\hfill\null\\
\hline
&&&\\
\vspace{-0.7cm} $1 \leq r_0 \leq p-2$ &
\vspace{-0.7cm} $\omega_4^{r_0 - p} \oplus \1$  & 
\vspace{-0.9cm}
\begin{tikzpicture}[xscale=0.8,yscale=0.5]
\draw [fill=fond, thick] (-0.5,-0.5) rectangle (1.5,1.5);
\node[color=colA] at (0, 0) { $\gAB$ };
\node[color=colA] at (0, 1) { $\gA$ };
\node[color=colA] at (1, 0) { $\gO$ };
\node[color=colA] at (1, 1) { $\gA$ };
\draw[color=colA,thick] (-0.8,0.2)--(-0.2,0.8);
\draw[color=colA,thick] (0.2,0.8)--(0.8,0.2);
\draw[color=colA,thick] (1.2,0.8)--(1.8,0.2);
\end{tikzpicture}
& \vspace{-0.7cm}\hfill{}$[0:1]\times[1:0]$\hfill\null \\
\hline
&&&\\
\vspace{-0.7cm} $0 \leq r_0 \leq p-3$ & 
\vspace{-0.7cm} $ \omega_2^{1 + r_0 - p} \oplus \omega_2^{-1}$ & 
\vspace{-0.9cm}
\begin{tikzpicture}[xscale=0.8,yscale=0.5]
\draw [fill=fond, thick] (-0.5,-0.5) rectangle (1.5,1.5);
\node[color=colA] at (0, 0) { $\gAB$ };
\node[color=colA] at (0, 1) { $\gA$ };
\node[color=colB] at (1, 0) { $\gO$ };
\node[color=colB] at (1, 1) { $\gB$ };
\end{tikzpicture}
& \vspace{-0.7cm}\hfill{}$\P^1_{k_E}\times[1:0]$\hfill\null \\
\hline
\end{tabular}\end{center}
\caption{Variété de Kisin de la représentation  $\rhobar = \Ind_{G_{F'}}^{G_F} (\omega_4^{1+r_0}\cdot \nr'(\theta)). $}
\label{fig:f2}
\end{figure}
\renewcommand{\arraystretch}{1}

\subsubsection{Chaînes de $\P^1$}
\label{exchaineP1}

Considérons le système d'équations donné par le fragment de diagramme de 
longueur $\ell+1$ (avec $\ell \leq f-1$) représenté ci-dessous.

\begin{center}
\begin{tikzpicture}[xscale=1.4,yscale=0.8]
\fill[color=fond] (-0.5,-0.5) rectangle (6.3,1.5);
\shade[left color=white, right color=fond] (-1,-0.5) rectangle (-0.5,1.5);
\shade[left color=fond, right color=white] (6.3,-0.5) rectangle (6.8,1.5);
\fill (-0.5,-0.48) rectangle (6.3,-0.52);
\shade[left color=black, right color=white] (6.3,-0.48) rectangle (6.8,-0.52);
\shade[left color=white, right color=black] (-1,-0.48) rectangle (-0.5,-0.52);
\fill (-0.5,1.48) rectangle (6.3,1.52);
\shade[left color=black, right color=white] (6.3,1.48) rectangle (6.8,1.52);
\shade[left color=white, right color=black] (-1,1.48) rectangle (-0.5,1.52);
\node at (0,1) { \ph$x_{i}$ };
\node at (1,1) { \ph$x_{i+1}$ };
\node at (2,1) { \ph$x_{i+2}$ };
\node[scale=1.5] at (3.25,1) { \ph$\cdots$ };
\node at (4.5,1) { \ph$x_{i+\ell-1}$ };
\node at (5.7,1) { \ph$x_{i+\ell}$ };
\node at (0,0) { \ph$x_{i+f}$ };
\node at (1,0) { \ph$x_{i+f+1}$ };
\node at (2,0) { \ph$x_{i+f+2}$ };
\node[scale=1.5] at (3.25,0) { \ph$\cdots$ };
\node at (4.5,0) { \ph$x_{i+\ell+f-1}$ };
\node at (5.7,0) { \ph$x_{i+\ell+f}$ };
\draw[thick] (0.25,0.2)--(0.75,0.7);
\draw[thick] (1.25,0.2)--(1.75,0.7);
\draw[thick] (2.25,0.2)--(2.75,0.7);
\draw[thick] (3.75,0.2)--(4.25,0.7);
\draw[thick] (4.85,0.2)--(5.35,0.7);
\end{tikzpicture}
\end{center}

Les équations correspondantes sont :
\begin{equation}
\label{eq:chaine}
x_{i+j+1} x_{i+j+f} = 0 \quad 
\text{pour } j \in \llbracket 0, \ell \rrbracket
\end{equation}
Ainsi si, pour un certain $j_0$, la coordonnée $x_{i+j_0}$ ne s'annule 
pas, nous déduisons que $x_{i+f+j_0-1} = 0$ et par suite que le point 
projectif $[x_{i+j_0-1} : x_{i+f+j_0-1}]$ est égal à $[1:0]$. De proche 
en proche, nous obtenons alors $[x_{i+j} : x_{i+j+f}] = [1:0]$ pour tout $j 
\in \llbracket 0, j_0 - 1 \rrbracket$. De manière similaire, si la 
coordonnée $x_{i+j_0+f}$ est non nulle, alors nous avons $[x_{i+j} : 
x_{i+j+f}] = [0:1]$ pour tout $j \in \llbracket j_0+1, \ell \rrbracket$. 
Il résulte de ceci que les solutions du système \eqref{eq:chaine} sont 
les $\ell$-uplets $([x_{i+j} : x_{i+j+f}])_{1 \leq j \leq \ell}$ de 
points projectifs de la forme :
$$[1:0], \, \ldots, \,[1:0], \, [x_{i+j_0} : x_{i+j_0+f}], \,
[0:1], \, \ldots, \, [0:1].$$
Géométriquement cela correspond à une chaîne de $\P^1_{k_E}$ de longueur
$\ell+1$.

\subsubsection{Un cas non équidimensionnel}
\label{subsubnonequi}

L'exemple précédent montre que les variétés de Kisin ne sont pas 
irréductibles en général. Elles peuvent même ne pas être 
équidimensionnelles. L'exemple le plus simple de ce phénomène est donné
par le diagramme suivant ($f=4$):

\begin{center}
\begin{tikzpicture}[xscale=1.3,yscale=0.8]
\draw [thick,fill=fond] (-0.6,-0.5) rectangle (3.6,1.5);
\draw[thick,->>] (-0.6,-0.5)--(-0.6,0.5);
\draw[thick,->>] (3.6,1.5)--(3.6,0.5);
\node at (0,1) { \ph$x_0$ };
\node at (1,1) { \ph$x_1$ };
\node at (2,1) { \ph$x_2$ };
\node at (3,1){\ph$x_3$};
\node at (0,0) { \ph$x_4$ };
\node at (1,0) { \ph$x_5$ };
\node at (2,0) { \ph$x_7$ };
\node at (3,0) {\ph$0$};
\draw[thick] (0.25,0.7)--(0.75,0.2);
\draw[thick] (1.25,0.2)--(1.75,0.7);
\end{tikzpicture}
\end{center}

\noindent
La variété de Kisin correspondante est l'union de $\{[0:1]\} \times \P^1_{k_E}
\times\{[0:1]\}\times \{[1:0]\}$ et $\P^1_{k_E}\times\{[1:0]\}\times\P^1_{k_E}\times \{[1:0]\}$, c'est-à-dire d'une 
copie de $\P^1_{k_E}$ et d'une copie de $\P^1_{k_E} \times \P^1_{k_E}$ s'intersectant 
transversalement en un point.

\subsubsection{Un produit}
\label{sssec:filrouge}

Reprenons à présent l'exemple \ref{exMoebius} dont le gène décoré a déjà 
été déterminé. En remplaçant les allèles $\gA$, $\gB$, $\gAB$ par des 
variables $x_i$ et l'allèle $\gO$ par $0$ comme le veut le théorème 
\ref{thequationsKisin}, nous obtenons le diagramme ci-après qui code les 
équations de la variété de Kisin correspondante.

\begin{center}
\begin{tikzpicture}[yscale=0.8]
\draw [fill=fond, thick] (-0.6,-0.5) rectangle (10.6,1.5);
\draw[thick,->>] (-0.6,-0.5)--(-0.6,0.5);
\draw[thick,->>] (10.6,1.5)--(10.6,0.5);
\node[color=colB] at (0, 0) { \ph $x_{11}$ };
\node[color=colB] at (0, 1) { \ph $x_0$ };
\node[color=colB] at (1, 0) { \ph $x_{12}$ };
\node[color=colB] at (1, 1) { \ph $x_1$ };
\node[color=colB] at (2, 0) { \ph $x_{13}$ };
\node[color=colB] at (2, 1) { \ph $x_2$ };
\node[color=colB] at (3, 0) { \ph $x_{14}$ };
\node[color=colB] at (3, 1) { \ph $x_3$ };
\node[color=colA] at (4, 0) { \ph $x_{15}$ };
\node[color=colA] at (4, 1) { \ph $x_4$ };
\node[color=colA] at (5, 0) { \ph $x_{16}$ };
\node[color=colA] at (5, 1) { \ph $x_5$ };
\node[color=colA] at (6, 0) { \ph $x_{17}$ };
\node[color=colA] at (6, 1) { \ph $x_6$ };
\node[color=colA] at (7, 0) { \ph $0$ };
\node[color=colA] at (7, 1) { \ph $x_7$ };
\node[color=colA] at (8, 0) { \ph $x_{19}$ };
\node[color=colA] at (8, 1) { \ph $x_8$ };
\node[color=colA] at (9, 0) { \ph $x_{20}$ };
\node[color=colA] at (9, 1) { \ph $x_9$ };
\node[color=colA] at (10, 0) { \ph $0$ };
\node[color=colA] at (10, 1) { \ph $x_{10}$ };
\draw[color=colB,thick] (0.2,0.2)--(0.8,0.7);
\draw[color=colB,thick] (1.2,0.2)--(1.8,0.7);
\draw[color=colB,thick] (2.2,0.2)--(2.8,0.7);
\draw[color=colA,thick] (4.2,0.7)--(4.8,0.2);
\draw[color=colA,thick] (5.2,0.2)--(5.8,0.7);
\draw[color=colA,thick] (6.2,0.7)--(6.8,0.2);
\draw[color=colA,thick] (7.2,0.7)--(7.8,0.2);
\draw[color=colA,thick] (8.2,0.7)--(8.8,0.2);
\draw[color=colA,thick] (8.2,0.2)--(8.8,0.7);
\draw[color=colA,thick] (9.2,0.7)--(9.8,0.2);
\end{tikzpicture}
\end{center}

À ce niveau, plusieurs simplifications élémentaires peuvent être faites. Par 
exemple, le lien qui relie $x_6$ à $0$ est inutile car le produit de ces 
deux valeurs est toujours nul. De même, comme $[x_7:0]$ désigne un point de 
l'espace projectif, nous pouvons, sans perte de généralité, supposer que 
$x_7$ vaut $1$. Le lien entre $x_7$ et $x_{19}$ implique alors que 
$x_{19}$ s'annule nécessairement et, par suite, comme précédemment, que 
$x_8$ peut être supposé égal à $1$. Les décorations nous apprennent 
également que les points projectifs $[x_8:x_{19}] = [1:0]$ et $[x_9: 
x_{20}]$ doivent coïncider ; ainsi, nous avons $x_{20} = 0$ et nous 
pouvons supposer que $x_9 = 1$. Enfin, la coordonnée $x_{10}$ peut être, 
elle aussi, supposée être égale à $1$ car elle est
face à un $0$. Après ces simplifications, nous obtenons :

\begin{center}
\begin{tikzpicture}[yscale=0.8]
\draw [fill=fond, thick] (-0.6,-0.5) rectangle (10.6,1.5);
\draw[thick,->>] (-0.6,-0.5)--(-0.6,0.5);
\draw[thick,->>] (10.6,1.5)--(10.6,0.5);
\node[color=colB] at (0, 0) { \ph $x_{11}$ };
\node[color=colB] at (0, 1) { \ph $x_0$ };
\node[color=colB] at (1, 0) { \ph $x_{12}$ };
\node[color=colB] at (1, 1) { \ph $x_1$ };
\node[color=colB] at (2, 0) { \ph $x_{13}$ };
\node[color=colB] at (2, 1) { \ph $x_2$ };
\node[color=colB] at (3, 0) { \ph $x_{14}$ };
\node[color=colB] at (3, 1) { \ph $x_3$ };
\node[color=colA] at (4, 0) { \ph $x_{15}$ };
\node[color=colA] at (4, 1) { \ph $x_4$ };
\node[color=colA] at (5, 0) { \ph $x_{16}$ };
\node[color=colA] at (5, 1) { \ph $x_5$ };
\node[color=colA] at (6, 0) { \ph $x_{17}$ };
\node[color=colA] at (6, 1) { \ph $x_6$ };
\node[color=colA] at (7, 0) { \ph $0$ };
\node[color=colA] at (7, 1) { \ph $1$ };
\node[color=colA] at (8, 0) { \ph $0$ };
\node[color=colA] at (8, 1) { \ph $1$ };
\node[color=colA] at (9, 0) { \ph $0$ };
\node[color=colA] at (9, 1) { \ph $1$ };
\node[color=colA] at (10, 0) { \ph $0$ };
\node[color=colA] at (10, 1) { \ph $1$ };
\draw[color=colB,thick] (0.2,0.2)--(0.8,0.7);
\draw[color=colB,thick] (1.2,0.2)--(1.8,0.7);
\draw[color=colB,thick] (2.2,0.2)--(2.8,0.7);
\draw[color=colA,thick] (4.2,0.7)--(4.8,0.2);
\draw[color=colA,thick] (5.2,0.2)--(5.8,0.7);
\end{tikzpicture}
\end{center}

En nous appuyant sur les deux exemples traités précédemment, nous 
pouvons affirmer que la variété de Kisin qui nous intéresse est 
isomorphe à un produit $\mathcal V_1 \times \mathcal V_2$ où $\mathcal 
V_1$ est une chaîne de $\P^1_{k_E}$ de longueur $4$ et $\mathcal V_2$ est
l'union d'une copie de $\P^1_{k_E}$ et d'une copie de $\P^1_{k_E} \times \P^1_{k_E}$
qui s'intersectent transversalement en un point.

\subsubsection{Le cas d'une représentation générique}
\label{subsubgenerique}
Rappelons $\rhobar \simeq \Ind_{G_{F'}}^{G_F} \Big( \omega_{2f}^h \cdot \nr'(\theta)\Big)$ avec
$h\equiv 1+\sum_{i=0}^{f-1}h_ip^{f-1-i}\mod (q+1)$ (voir (\ref{defhachi})).
\begin{definit}
La représentation $\rhobar$ est dite générique si pour tout $i\in\llbracket 0,f-1\rrbracket$, 
$1\leq h_i\leq p-2$.
\end{definit}

\newcommand{\ancienalpha}{\beta}

\begin{cor}\label{corgenerique}
Si $\rhobar$ est générique, la variété de Kisin $\vK^{\psi}(\vv, \ttt, 
\rhobar)$ est soit vide, soit réduite à un point.
\end{cor}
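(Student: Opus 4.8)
The plan is to deduce the corollary from the explicit description of Theorem~\ref{thequationsKisin}. By that theorem we may replace $\vK^\psi(\vv,\ttt,\rhobar)$ by the reduced closed subscheme $\mathcal K(\ttt,\rhobar)\subseteq\prod_{i=0}^{f-1}\P^1_{k_E}$ cut out, in terms of a gene $X$ attached to $(\rhobar,\ttt)$, by the equations (A), (B1), (B2), (B3); so it suffices to prove that this scheme is empty or a single (reduced) point. First I record the effect of genericity on the gene: since $1\le h_i\le p-2$ for all $i$ we have $\epsilon_i=0$ for every $i$, so the intervals of Definition~\ref{defalphai} simplify, and $\rhobar$ is in particular non-degenerate, so by Lemma~\ref{lem:ilyaO} the gene contains at least one occurrence of $\gO$.

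Next I would describe the shape of a generic gene. Using the recursion $\alpha_{i+1}\equiv p\alpha_i+h_i\pmod e$ of Lemma~\ref{lem:relalphai} together with $1\le h_i\le p-2$, one checks that when $X_i\in\{\gA,\gAB,\gB\}$ one has $0<\alpha_{i+1}<e$ with no "large" wrap-around, and that, along a maximal run of indices $i$ with $X_i\neq\gO$, the base-$p$ expansion of $\alpha_i$ evolves by the elementary operation "delete the leading digit, shift, and append a digit lying in $\llbracket 1,p-1\rrbracket$" (the appended digit being $h_i$ if $X_i\in\{\gA,\gAB\}$ and $h_i+1$ if $X_i=\gB$, both in $\llbracket 1,p-1\rrbracket$ thanks to genericity). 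Combined with Lemma~\ref{proprelgene}, this shows that a generic gene is a cyclic alternation of runs of $\gO$ and of "non-$\gO$ blocks" of the form $\{\gA,\gB\}^\ast\,\gAB$, and that each non-$\gO$ block has length at most $f$ (a block of length $\ge f+1$ would produce an $\alpha_i$ all of whose $f$ digits are the appended ones, hence $\alpha_i\ge\nu+1$, contradicting $X_i\neq\gO$).

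The key combinatorial input is then: \emph{for a generic gene, at least one of $X_i$ and $X_{i+f}$ equals $\gO$, for every $i\in\llbracket 0,f-1\rrbracket$.} Granting this, each projective coordinate $[x_i:x_{i+f}]$ of a point of $\mathcal K(\ttt,\rhobar)$ has one of its two homogeneous coordinates annihilated by an equation of type (A), hence equals $[1:0]$ or $[0:1]$; thus $\mathcal K(\ttt,\rhobar)$ has at most one point, so, being reduced, it is empty or reduces to a single $k_E$-rational point, which is exactly the assertion. (Concretely: if some column of the gene is $\binom{\gO}{\gO}$ both homogeneous coordinates of the corresponding factor vanish and $\mathcal K(\ttt,\rhobar)=\varnothing$; otherwise every column has exactly one $\gO$, the candidate point is uniquely determined, and the equations (B1)–(B3) either all hold on it, giving that point, or not, giving $\varnothing$.)

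The main obstacle is the key combinatorial input, and this is where the bulk of the work lies. I would argue by contradiction: if $X_i\neq\gO$ and $X_{i+f}\neq\gO$ then $\alpha_i$ and $\alpha_{i+f}$ both lie in $\llbracket 0,\nu\rrbracket$ and sit in two \emph{distinct} non-$\gO$ blocks (distinct because such a block has length $\le f$ and so cannot contain two indices differing by $f$). Writing down the base-$p$ digit patterns of $\alpha_i$ and of $\alpha_{i+f}$ produced by the "shift-and-append" description of each of these two blocks, and comparing them through the identity $\alpha_i(h,\gamma,\gamma')+\alpha_{i+f}(h,\gamma',\gamma)\equiv\nu\pmod e$ (which comes from $\det\rhobar_{|I_F}=(\bar{\eta}\bar{\eta}'\omega)_{|I_F}$ exactly as in the proof of Lemma~\ref{lem:depchoix}), one is forced to conclude that some $h_j$ equals $0$ or $p-1$, contradicting genericity. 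This digit-chasing step is elementary but delicate; it is best organised as a finite case analysis according to the position of $i$ (resp.\ $i+f$) relative to the $\gAB$ ending its block, and according to whether the two blocks overlap, are adjacent, or are separated by runs of $\gO$.
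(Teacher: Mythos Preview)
Your target is exactly right: both you and the paper reduce to the claim that for every $i$ at least one of $X_i,X_{i+f}$ equals $\gO$, and then invoke Theorem~\ref{thequationsKisin}. The difference is entirely in how this claim is proved. The paper does it in three lines: iterating the recursion $\alpha_{j+1}\equiv p\alpha_j+h_j\pmod e$ of Lemma~\ref{lem:relalphai} exactly $f$ times gives
\[
\alpha_{i+f}-\alpha_i\equiv\sum_{j=0}^{f-1}h_{i+j}\,p^{f-1-j}\pmod e,
\]
and genericity ($1\le h_j\le p-2$) places the right side in $]\nu,e-\nu[$, which is incompatible with both $\alpha_i,\alpha_{i+f}\in[0,\nu]$. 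No block decomposition, no digit patterns, no case analysis is needed. Your step~2 (structure of non-$\gO$ blocks, length bounds, the shift-and-append picture) is correct and pleasant combinatorics, but it is doing a lot of work that the single formula above makes unnecessary.

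There is also a concrete gap in your proposed argument for the key claim. The identity you invoke, $\alpha_i(h,\gamma,\gamma')+\alpha_{i+f}(h,\gamma',\gamma)\equiv\nu\pmod e$, relates $\alpha_i$ of the gene for $(\gamma,\gamma')$ to $\alpha_{i+f}$ of the gene for the \emph{swapped} pair $(\gamma',\gamma)$; it does not directly compare $\alpha_i$ and $\alpha_{i+f}$ of the same gene, which is what you need. You could try to push through by translating back via Lemma~\ref{lem:depchoix}, but this quickly becomes the same iteration-of-the-recursion computation dressed up in extra notation. I would replace the entire digit-chasing plan by the direct formula for $\alpha_{i+f}-\alpha_i$ above; with that, your proof becomes complete and is essentially the paper's.
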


\begin{proof} 
Soit $i\in\llbracket 0,f-1\rrbracket$.
Un calcul direct à partir des formules du lemme \ref{lem:relalphai} 
aboutit à :
$$\alpha_{i+f} - \alpha_i \equiv \sum_{j=0}^{f-1} h_{i+j} p^{f-1-j}
\pmod e.$$
Or l'hypothèse de généricité implique que $1 \leq h_j \leq p-2$ pour 
tout $j \in \Z$ et, par suite, que le membre de droite dans la
congruence précédente est dans l'intervalle $[\nu, e-\nu]$. De plus,
s'il existe un entier $i$ tel que $\alpha_{i+f} - \alpha_i \equiv
\pm \nu \pmod e$, nous déduisons que $\eta = \eta'$ (\emph{cf}
démonstration du lemme \ref{pasABBA}), ce qui est exclu. Ainsi, pour tout
$i$, la différence $\alpha_{i+f} - \alpha_i$ est congrue modulo $e$ à un 
entier de l'intervalle $]\nu, e-\nu[$. Il résulte que ceci que $\alpha_i 
\geq \nu$ ou $\alpha_{i+f} \geq \nu$ et, par conséquent, qu'au moins
l'un des deux allèles $X_i$ ou $X_{i+f}$ est égal à $\gO$. Par le
théorème \ref{thequationsKisin}, ceci implique que $\vK^{\psi}(\vv, 
\ttt, \rhobar)$ est soit vide, soit réduite à un point.
\end{proof}

\section{Le calcul de la variété de Kisin}
\label{sec:Demonstrations}

Le but de cette partie est de démontrer le théorème 
\ref{thequationsKisin}.

\subsection{Module de Breuil--Kisin maximal.}
\label{paramax} 

Nous reprenons les notations des paragraphes précédents $(\alpha_i)_{i\in\Z}$, 
$(X_i)_{i\in\Z}$ (voir \S \ref{secrep}) et 
$\bbM(\rhobar)=(\bbM(\rhobar)^{(i)})_{0\leq i\leq f-1}$ de base 
$(\varepsilon_0^{(i)},\varepsilon_1^{(i)})_{0\leq i\leq f-1}$ (voir \S 
\ref{secdescente}) associées à $ \rhobar \simeq \Ind_{G_{F'}}^{G_F}\Big( 
\omega_{2f}^h \cdot \nr'(\theta)\Big)$.
Effectuons les changements de base 
$$
e^{(i)}_0=u^{-\lfloor \frac{p^ih}{q+1}\rfloor}\varepsilon _0^{(i)},\;\;\; 
e^{(i)}_1=u^{-\lfloor \frac{p^{i+f}h}{q+1}\rfloor - e \lfloor \frac{p^ih}{q+1}\rfloor}
\varepsilon _1^{(i)},\;\;\;  0\leq i\leq f-1.
$$
Dans les bases $(e_0^{(i)},e_1^{(i)})_{0 \leq i \leq f-1}$, les matrices 
de $\varphi: \bbM(\rhobar)^{(i)} \rightarrow \bbM(\rhobar)^{(i+1)}$ 
sont
\begin{equation}
\label{equaGi}
\begin{array}{rl} 
\left(\begin{matrix}u^{h_i}&0\cr 0&u^{h_{i+f}} \end{matrix}\right)
& \text{pour } 0\leq i\leq f-2, \medskip \\
\left(\begin{matrix} 0&\theta^{-1} u^{h_{2f-1}}\cr u^{h_{f-1}}& 0 \end{matrix}\right)& \text{pour } i=f-1.
\end{array}
\end{equation}
De plus, $\omega_f$ étant d'ordre $e$, la donn\'ee de descente agit par 
$g\in\Gal(L/F)$,
\begin{align}
[g]\cdot(e_0^{(i)}) &=(\tau_i\circ\omega_f)(g)^{-\lfloor \frac{p^ih}{q+1}\rfloor}e_0^{(i)} \label{eq:g0} \\ 
[g]\cdot(e_1^{(i)}) &=(\tau_i\circ\omega_f)(g)^{-\lfloor \frac{p^{i+f}h}{q+1}\rfloor}e_1^{(i)}. \label{eq:g1}
\end{align}

Notons $\MK(\bar{\rho})$ le sous $k_F[[u]]$-module de 
$\bbM(\bar{\rho})$ engendr\'e par les $e_0^{(i)},e_1^{(i)}$, $0\leq 
i\leq f-1$. Comme $\MK(\bar{\rho})$ est stable sous l'action de $k_E$, 
c'est un $\big(k_E\otimes_{\F_p}k_F[[u]]\big)$-module. La proposition
suivante est un analogue de la Proposition 3.6.7 de \cite{CL}.

\begin{prop}\label{prop:modulemax}
Soit $R$ une $k_E$-algèbre.
Le $\big(R \otimes_{\F_p}k_F[[u]]\big)$-module $R \otimes_{k_E} 
\MK(\rhobar)$ est maximal pour l'inclusion parmi les 
$\big(R \otimes_{\F_p}k_F[[u]]\big)$-réseaux de $R \otimes_{\F_p} 
\bbM(\rhobar)$ qui sont stables par $\varphi$.
\end{prop}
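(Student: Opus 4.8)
The plan is to prove the stronger statement that $R\otimes_{k_E}\MK(\rhobar)$ \emph{contains} every $\varphi$-stable $(R\otimes_{\F_p}k_F[[u]])$-lattice of $R\otimes_{\F_p}\bbM(\rhobar)$. Since $R\otimes_{k_E}\MK(\rhobar)$ is itself $\varphi$-stable --- immediate from \eqref{equaGi}, all the $h_i$ being $\geq 0$ --- this gives the maximality. Let $G_i$ denote the matrix of $\varphi\colon\bbM(\rhobar)^{(i)}\to\bbM(\rhobar)^{(i+1)}$ of \eqref{equaGi}, indices read modulo $f$. Every lattice $\mathcal N\subset R\otimes_{\F_p}\bbM(\rhobar)$ inherits the decomposition $\mathcal N=\bigoplus_{i=0}^{f-1}\mathcal N^{(i)}$, and if it is $\varphi$-stable then it is in particular stable under $\varphi^{2f}$, which maps each $\bbM(\rhobar)^{(i)}$ into itself; so it suffices to show that every $\varphi^{2f}$-stable lattice lies inside $R\otimes_{k_E}\MK(\rhobar)$.

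First I would compute the action of $\varphi^{2f}$ on a fixed component $\bbM(\rhobar)^{(i)}$ in the basis $(e_0^{(i)},e_1^{(i)})$. Composing the maps of \eqref{equaGi} once around the cycle (with the relevant Frobenius twists) expresses $\varphi^f$ on $\bbM(\rhobar)^{(i)}$ as a product of $f-1$ diagonal matrices and the single antidiagonal one, hence with antidiagonal matrix $\left(\begin{smallmatrix}0&\beta_i\\\gamma_i&0\end{smallmatrix}\right)$, where $\beta_i,\gamma_i$ are each a power of $u$ times a unit of $R$ (a power of $\theta$). The symmetry $h_j+h_{j+f}=p-1$, valid for all $j$, makes every $G_j$ have determinant equal to $u^{p-1}$ up to a unit of $R$, and tracking the twists yields $v(\beta_i)+v(\gamma_i)=(p-1)(1+p+\cdots+p^{f-1})=e$, where $v$ is the $u$-adic order. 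Composing once more, $\varphi^{2f}$ has on $\bbM(\rhobar)^{(i)}$ the \emph{diagonal} matrix $\mathrm{diag}\bigl(\beta_i\,\varphi^f(\gamma_i),\,\gamma_i\,\varphi^f(\beta_i)\bigr)$, whose two diagonal entries have $u$-adic orders $s_i=v(\beta_i)+q\,v(\gamma_i)$ and $t_i=v(\gamma_i)+q\,v(\beta_i)$. As $v(\beta_i),v(\gamma_i)\geq 0$ with sum $e=q-1\geq 1$, at least one of them is $\geq 1$, whence $s_i\geq 1$ and $t_i\geq 1$; and since $s_i+t_i=(q+1)e=q^2-1$ we also have $s_i,t_i\leq q^2-2$.

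The decisive step is a valuation estimate. Assume $\mathcal N$ is $\varphi^{2f}$-stable and $\mathcal N^{(i)}\not\subseteq(R\otimes_{k_E}\MK(\rhobar))^{(i)}$ for some $i$. For $x=x_0e_0^{(i)}+x_1e_1^{(i)}$ put $w(x)=\min(v(x_0),v(x_1))$; then $(R\otimes_{k_E}\MK(\rhobar))^{(i)}=\{x:w(x)\geq 0\}$ and, $\mathcal N^{(i)}$ being a lattice, $w$ is bounded below on it. Choose $x\in\mathcal N^{(i)}$ with $w(x)=-N<0$. Since $\varphi^{2f}$ is $R$-linear, fixes $k_F$ and sends $u$ to $u^{q^2}$, the two coordinates of $\varphi^{2f}(x)$ have orders $q^2v(x_0)+s_i$ and $q^2v(x_1)+t_i$; whichever of $x_0,x_1$ attains the order $-N$ thus yields a coordinate of $\varphi^{2f}(x)$ of order $\leq -q^2N+\max(s_i,t_i)$, so that $w(\varphi^{2f}(x))\leq -q^2N+\max(s_i,t_i)<-N=w(x)$, the strictness coming from $\max(s_i,t_i)\leq q^2-2<(q^2-1)N$. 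But $\varphi^{2f}(x)\in\mathcal N^{(i)}$, so iterating produces elements of $\mathcal N^{(i)}$ whose $w$-value tends to $-\infty$ --- impossible. Hence $\mathcal N\subseteq R\otimes_{k_E}\MK(\rhobar)$, as desired.

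The one genuinely delicate point is the bookkeeping of the middle step, above all the positivity $s_i,t_i\geq 1$: it is exactly here that the symmetry $h_j+h_{j+f}=p-1$ and the inequality $e\geq 1$ are used, and it is what makes the valuation estimate strict --- were one of $s_i,t_i$ to vanish, $R\otimes_{k_E}\MK(\rhobar)$ would fail to be the maximal lattice. The remaining verifications (behaviour of the component decomposition and of $\varphi$-stability over an arbitrary $k_E$-algebra $R$, and boundedness of $w$ on a lattice) are routine.
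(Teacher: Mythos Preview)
Your proof is correct and follows essentially the same route as the paper: both arguments compute that $\varphi^{2f}$ acts diagonally on each $\bbM(\rhobar)^{(i)}$ in the basis $(e_0^{(i)},e_1^{(i)})$, with diagonal entries of strictly positive $u$-order bounded above by $q^2-2$, and then observe that iterating $\varphi^{2f}$ on an element of negative $w$-value drives that value to $-\infty$, contradicting finite generation. The paper states this more tersely (``un calcul direct montre que $\varphi^{2f}(u^{-v}e_j^{(i)})=\alpha_j u^{-w_j}e_j^{(i)}$''), while you spell out the antidiagonal form of $\varphi^f$, the identity $v(\beta_i)+v(\gamma_i)=e$, and the resulting bounds $1\leq s_i,t_i\leq q^2-2$; but the underlying mechanism is identical.
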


\begin{proof}
Il suffit de montrer que si $x$ est un élément de $R \otimes_{\F_p}
\bbM(\rhobar)$ qui n'appartient pas à $R \otimes_{k_E}
\MK(\rhobar)$, alors le $\big(R \otimes_{\F_p}k_F[[u]]\big)$-module
engendré par les $\varphi^n(x)$ pour $n \geq 0$ n'est pas de type
fini. Or, un calcul direct montre que si $v > 0$, alors :
$$\varphi^{2f}(u^{-v} e_0^{(i)}) = \alpha_0 u^{-w_0} e_0^{(i)}
\quad \text{et} \quad
\varphi^{2f}(u^{-v} e_1^{(i)}) = \alpha_1 u^{-w_1} e_1^{(i)}$$
où $\alpha_0$ et $\alpha_1$ sont des éléments non nuls de $k_E$ et $w_0$ 
et $w_1$ sont des entiers strictement inférieurs à $v$. La propriété 
annoncée et, par suite, la proposition en résultent.
\end{proof}

\subsection{Détermination des réseaux}
\label{subsecDet}

Dans ce paragraphe \S \ref{subsecDet}, nous fixons une $k_E$-algèbre $S$ 
ainsi qu'un réseau de Breuil--Kisin $\MK$ de type $(\vv,\ttt)$ (voir 
définition \ref{defmoduleBK}) sur $S \otimes_{\Zp} \SK$ inclus dans $S 
\otimes_{\F_p} \bbM(\rhobar)$. Pour alléger les notations, nous posons
à partir de maintenant $\SK_S = S \otimes_{\Zp} \SK$.
De la proposition \ref{prop:modulemax}, nous déduisons que $\MK$ est 
inclus dans $S \otimes_{k_E} \MK(\rhobar)$. Nous rappelons
qu'étant donné que $S$ est une $k_E$-algèbre, les modules $\MK$ et 
$\MK(\rhobar)$ se décomposent de façon canonique comme suit :
$$\MK = \MK^{(0)} \oplus \cdots \oplus \MK^{(f-1)}
\quad \text{et} \quad
\MK(\rhobar) = \MK(\rhobar)^{(0)} \oplus \cdots \oplus \MK(\rhobar)^{(f-1)}$$
où les $\MK^{(i)}$ (resp. $\MK(\rhobar)^{(i)}$) sont des modules sur 
$\SK_S^{(i)} = S \otimes_{k_F, \tau_i} k_F[[u]]$ (resp. sur $S 
\otimes_{k_F, \tau_i} k_F[[u]]$). L'inclusion $\MK 
\subset S\otimes_{k_E} \MK(\rhobar)$ se lit alors facteur par facteur : 
elle implique que, pour tout $i$, nous avons $\MK^{(i)} \subset S 
\otimes_{k_E} \MK(\rhobar)^{(i)}$.

Dans ce paragraphe \S \ref{subsecDet}, nous supposons de surcroît que 
$\MK$ est libre (de rang $2$) comme module sur $\SK_S$. Ceci est 
équivalent à demander la liberté des $\MK^{(i)}$ sur $S \otimes_{k_E} 
k_E[[u]]$.

\subsubsection{Action de la donnée de descente}\label{subsubPi}

De l'inclusion $\MK^{(i)} \subset S \otimes_{k_E} 
\MK(\rhobar)^{(i)}$, nous déduisons que, pour tout $i$ entre $0$ et 
$f-1$, toute $\SK_S^{(i)}$-base de $\MK^{(i)}$ a des coordonnées dans la 
base $(e_0^{(i)},e_1^{(i)})$ qui appartiennent à $\SK_S^{(i)}$.
Soit $i$ un indice entre $0$ et $f-1$. Comme $\MK$ est de type 
$(\vv_0,\ttt)$, il existe une $\SK_S^{(i)}$-base $(e_\eta^{(i)}, 
e_{\eta'}^{(i)})$ de $\MK^{(i)}$ sur laquelle l'action de $g \in 
\Gal(L/K)$ est donnée par
$g \cdot e_{\eta}^{(i)} = \bar{\eta}(g) e_{\eta}^{(i)}$ et
$g \cdot e_{\eta'}^{(i)} = \bar{\eta}'(g) e_{\eta'}^{(i)}$.
Fixons une telle base $(e_\eta^{(i)},e_{\eta'}^{(i)})$ et notons 
$P^{(i)}$ la matrice de passage de $(e_\eta^{(i)},e_{\eta'}^{(i)})$ à 
$(e_0^{(i)},e_1^{(i)})$.

\begin{lem}\label{lemPi}
La matrice $P^{(i)}$ est de la forme :
$$P^{(i)}=
\left(\begin{array}{cc} 
u^{\alpha_{i}}a_i&u^{\alpha'_{i}}b'_i \cr
u^{\alpha_{i+f}}b_i&u^{\alpha'_{i+f}}a'_i \cr
\end{array}\right),$$
avec $a_i$, $a'_i$, $b_i$ et $b'_i$ dans $S \otimes_{k_E} k_E[[u^e]]$ 
et $(\alpha'_{i})_{i\geq 0}$ la suite d'éléments de $\llbracket 
0,e-1\rrbracket$ définis par les congruences
\begin{equation}\label{alpha'}
\alpha'_{i}+\alpha_{i+f}\equiv \nu\pmod e.
\end{equation}
\end{lem}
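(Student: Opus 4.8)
The plan is to pin down $P^{(i)}$ by combining the inclusion $\MK^{(i)}\subseteq S\otimes_{k_E}\MK(\rhobar)^{(i)}$, already established above from Proposition~\ref{prop:modulemax}, with the $\Gal(L/F)$-equivariance of the two bases involved. Because of that inclusion, expressing the $\SK_S^{(i)}$-basis $(e_\eta^{(i)},e_{\eta'}^{(i)})$ of $\MK^{(i)}$ in the basis $(e_0^{(i)},e_1^{(i)})$ produces coefficients lying in $\SK_S^{(i)}=S\otimes_{k_F,\tau_i}k_F[[u]]$; these are the four entries of $P^{(i)}$, so $P^{(i)}$ carries no negative power of $u$.

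The core of the argument is the equivariance. The descent action of $\Gal(L/F)$ on $\SK_S^{(i)}$ is $S$-linear and scales $u$ by the order-$e$ (hence faithful) character $g\mapsto(\tau_i\circ\omega_f)(g)$; consequently its ring of invariants is $S\otimes_{k_E}k_E[[u^e]]$, and any $c=\sum_j c_j u^j\in\SK_S^{(i)}$ satisfying $g\cdot c=\chi(g)\,c$ for all $g\in\Gal(L/F)$, where $\chi$ is a character of $\Gal(L/F)$, is supported on a single residue class modulo $e$; hence $c$ equals $u^r$ times an element of $S\otimes_{k_E}k_E[[u^e]]$, where $r\in\llbracket 0,e-1\rrbracket$ is the unique exponent with $(\tau_i\circ\omega_f)^r=\chi$. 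Now $e_\eta^{(i)}$ and $e_{\eta'}^{(i)}$ are eigenvectors for the descent action, with characters $\bar\eta$ and $\bar\eta'$ by construction, while $e_0^{(i)}$ and $e_1^{(i)}$ are eigenvectors with the characters made explicit in \eqref{eq:g0}--\eqref{eq:g1}. Writing each of $e_\eta^{(i)},e_{\eta'}^{(i)}$ in the basis $(e_0^{(i)},e_1^{(i)})$ and letting $\Gal(L/F)$ act, each of the four coefficients is forced to transform under the quotient of the two corresponding eigencharacters, hence to be $u^{\text{exponent}}$ times an element of $S\otimes_{k_E}k_E[[u^e]]$. This already yields the structural shape of $P^{(i)}$ and produces $a_i,b_i,a_i',b_i'$.

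There remains to identify the four exponents. Unwinding the eigencharacter quotients through \eqref{eq:g0}--\eqref{eq:g1} and comparing with Définition~\ref{defalphai}, the two exponents attached to $e_\eta^{(i)}$ come out to be $\alpha_i$ and $\alpha_{i+f}$ modulo $e$, and those attached to $e_{\eta'}^{(i)}$ to be $\alpha_i'$ and $\alpha_{i+f}'$; this uses the hypothesis $\det\rhobar_{|I_F}=(\bar\eta\bar\eta'\omega)_{|I_F}$ — which is exactly what links $\bar\eta'$, hence the integers $\gamma,\gamma'$ entering Définition~\ref{defalphai}, to $\bar\eta$ and to $h$ (compare Remarque~\ref{remgammagamma'} and the computation in the proof of Lemme~\ref{lem:depchoix}) — together with the elementary observation that, $\rhobar$ being irreducible, $q+1$ does not divide $h$, so that $\lfloor p^i h/(q+1)\rfloor+\lfloor p^{i+f} h/(q+1)\rfloor=p^i h-1$. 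The relation $\alpha_i'+\alpha_{i+f}\equiv\nu\pmod e$ defining the $\alpha_j'$ is then consistent with the equivariance computation: reduced modulo $e$, it records that the character by which $\Gal(L/F)$ acts on $e_\eta^{(i)}\wedge e_{\eta'}^{(i)}$ differs from the one on $e_0^{(i)}\wedge e_1^{(i)}$ by the $\nu$-th power of $\tau_i\circ\omega_f$, which is precisely the contribution of the $\omega$ occurring in the determinant hypothesis. Reducing all exponents into $\llbracket 0,e-1\rrbracket$ gives the matrix of the statement.

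The main obstacle is this last identification: the careful bookkeeping modulo $e$ of the exponents — keeping track of the Frobenius twists relating the characters $\tau_i\circ\omega_f$ for varying $i$, of the conventions built into the fundamental characters and into $\gamma,\gamma'$, and of the precise way the determinant hypothesis ties $\bar\eta'$ to $\bar\eta$ and to $h$ — so that the residues produced by the equivariance argument coincide with the combinatorially defined $\alpha_i$ and $\alpha_i'$. The other two steps (absence of poles, structural shape) are formal.
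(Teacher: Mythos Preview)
Your proof is correct and follows essentially the same approach as the paper: use the $\Gal(L/F)$-equivariance of the two bases together with the explicit descent action on $u$ to force each entry of $P^{(i)}$ to be a power of $u$ times an element of $S\otimes_{k_E}k_E[[u^e]]$, then identify the exponents with the $\alpha_j$ and $\alpha'_j$ using the determinant hypothesis. The paper phrases the identification of the second column more succinctly by observing directly that $\alpha'_i \equiv \lfloor p^i h/(q+1)\rfloor - p^i\gamma \pmod e$, i.e.\ that the $\alpha'_i$ are simply the $\alpha_i$ with the roles of $\gamma$ and $\gamma'$ swapped; your floor-function computation and the remark on $(q+1)\nmid h$ amount to the same thing, just unpacked in more detail.
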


\begin{proof}
Les équations \eqref{eq:g0} et \eqref{eq:g1} assurent que $g \in 
\Gal(L/K)$ agit sur $u^{\alpha_i} e_0^{(i)}$ et sur $u^{\alpha_{i+f}} 
e_1^{(i)}$ par multiplication par $\bar\eta(g)$. Ainsi, étant donnés 
$s_0, s_1 \in \SK_S^{(i)}$, le groupe de Galois $\Gal(L/K)$ agit sur 
$s_0 e_0^{(i)} + s_1 e_1^{(i)}$ \emph{via} le caractère $\bar\eta$ si, 
et seulement s'il fixe $u^{\alpha_i} s_0$ et $u^{\alpha_{i+f}} s_1$,
c'est-à-dire si, et seulement si ces deux éléments appartiennent à 
$S \otimes_{k_E} k_E[[u^e]]$. Nous en déduisons la forme annoncée
pour la première colonne de la matrice $P^{(i)}$.
Pour la deuxième colonne, il suffit de remarquer que la condition 
$\det\rhobar_{|I_F}=\omega_f^{h} = (\bar{\eta}\bar{\eta}'\omega)_{|I_F}$ 
implique :
$$\alpha'_i \equiv
\left\lfloor \frac{p^i h}{q+1} \right\rfloor - p^i \gamma \pmod e$$
c'est-à-dire que $\alpha'_i$ est construit de la même manière que
$\alpha_i$ après avoir échangé les roles de $\eta$ et $\eta'$.
\end{proof}

\begin{rem}\label{remgamma'}
D'après la démonstration ci-dessus, les entiers $\alpha'_i$ sont égaux 
aux $\alpha_i(h,\gamma', \gamma)$ (voir lemme \ref{lem:depchoix}). Ceci 
justifie la notation que nous avons utilisée et éclaire sur la 
complémentarité des rôles joués par $\alpha_{i}$ (associé à $\eta'$) et 
$\alpha'_{i}$ (associé à $\eta$).
\end{rem}

\subsubsection{La condition sur le déterminant}
\label{subsec:conddet}

Pour tout $i$ dans $\Z / f \Z$ , la matrice de l'application
$\varphi : \MK^{(i)} \to \MK^{(i+1)}$ dans les bases respectives
$(e_\eta^{(i)}, e_{\eta'}^{(i)})$ et 
$(e_\eta^{(i+1)}, e_{\eta'}^{(i+1)})$ s'écrit :
\begin{equation}\label{eq:Hi}
H^{(i)}=(P^{(i+1)})^{-1} G^{(i)}\varphi(P^{(i)})
\end{equation}
où $G^{(i)}$ est la matrice donnée par la formule \eqref{equaGi}.

\begin{lem}
\label{lem:valdet}
Pour tout $i$ dans $\llbracket 0 , f-1 \rrbracket $, nous avons
$$\det P^{(i)} \equiv a_i u^\nu \pmod{u^{\nu+1} \SK_S}$$
où $a_i$ est un élément inversible de $S$ et où nous rappelons que 
$\nu = \frac{p^f-1}{p-1} - 1$.
\end{lem}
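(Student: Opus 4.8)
The plan is to establish two things: that $u^\nu$ divides $\det P^{(i)}$, and that the coefficient of $u^\nu$ in $\det P^{(i)}$ — the element called $a_i$ in the statement — is invertible in $S$. For the divisibility I would expand the determinant from the shape of $P^{(i)}$ given by Lemma~\ref{lemPi}: it equals $u^{\alpha_i+\alpha'_{i+f}}a_i a'_i - u^{\alpha_{i+f}+\alpha'_i}b_i b'_i$ with $a_i,a'_i,b_i,b'_i \in S\otimes_{k_E}k_E[[u^e]]$. By~\eqref{alpha'}, applied both at $i$ and at $i+f$ and using the $(2f)$-periodicity of the sequences, both exponents are congruent to $\nu$ modulo $e$; since the $\alpha$'s lie in $\llbracket 0,e-1\rrbracket$ and $0<\nu<e$, each exponent is $\nu$ or $\nu+e$. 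Hence $\det P^{(i)}$ lies in $u^\nu\big(S\otimes_{k_E}k_E[[u^e]]\big)$, and $a_i$ is precisely its constant coefficient after division by $u^\nu$.

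The substance is the invertibility of $a_i$. Since an element of $S$ is a unit exactly when it has nonzero image in every residue field, and since reducing our data along any $k_E$-algebra map $S\to k$ yields a free rank-$2$ Breuil--Kisin lattice of type $(\vv,\ttt)$ over $k$ inside $k\otimes_{\F_p}\bbM(\rhobar)$ — freeness, the Frobenius and descent conditions (iii), (v), the generation condition (ii) and the determinant condition (iv) all base change without difficulty — I may assume $S=k$ is a field. Then (ii) forces $\det P^{(i)}\neq 0$, so $d_i:=\val_u(\det P^{(i)})$ is finite and, by the previous paragraph, $d_i\equiv\nu\pmod e$.

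Next I would set up a recursion for the $d_i$. From $H^{(i)}=(P^{(i+1)})^{-1}G^{(i)}\varphi(P^{(i)})$ we get $\det H^{(i)}=\det G^{(i)}\cdot\varphi(\det P^{(i)})/\det P^{(i+1)}$. Reading off $\val_u(\det G^{(i)})=p-1$ from~\eqref{equaGi} (which uses $h_i+h_{i+f}=p-1$) and $\val_u(\det H^{(i)})=e$ from condition (iv), and using $\val_u(\varphi(x))=p\,\val_u(x)$, this becomes $d_{i+1}=p\,d_i+(p-1)-e$ for all $i\in\Z/f\Z$. Since $(p-1)\nu=p^f-p=e-(p-1)$, the value $\nu$ is the unique fixed point of this affine map and $d_{i+1}-\nu=p(d_i-\nu)$; iterating once around the cycle gives $d_i-\nu=p^f(d_i-\nu)$, hence $d_i=\nu$ for every $i$. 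Thus over $k$ the coefficient of $u^\nu$ in $\det P^{(i)}$ is nonzero, i.e.\ a unit, which is exactly what we needed.

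The one delicate point is the use of condition (iv): I would need the fact that the determinant ideal of $\varphi\colon\MK\to\MK$ splits as a product over the factors $\MK^{(i)}$ of the decomposition~\eqref{decompositionmoduleBK}, so that it really forces $\val_u(\det H^{(i)})=e$ componentwise. Granting that, everything else is the elementary $p$-adic contraction argument above together with the harmless reduction to a field; I do not expect any further obstacle.
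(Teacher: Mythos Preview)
Your argument is correct and takes the same route as the paper: from $H^{(i)}=(P^{(i+1)})^{-1}G^{(i)}\varphi(P^{(i)})$ together with condition~(iv) one extracts the recursion $e=p\,\nu_i-\nu_{i+1}+(p-1)$ on the $u$-valuations $\nu_i$ of $\det P^{(i)}$, and its unique $f$-periodic solution is $\nu_i=\nu$. The paper is terser --- it bypasses your congruence computation and your reduction to residue fields by directly invoking the invertibility of $P^{(i)}$ over $\SK_S^{(i)}[1/u]$ to produce a unit leading coefficient at some degree $\nu_i$ --- and your worry about condition~(iv) dissolves once you note that under the product decomposition $\SK_S=\prod_i\SK_S^{(i)}$ the ideal $(u^e+p)=(u^e)$ already equals $(u^e)$ in each factor.
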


\begin{proof}
Soit $i \in \Z/f\Z$.
Le fait que $\MK^{(i)}$ et $\MK(\rhobar)^{(i)}$ engendrent le même 
$\SK_S^{(i)}[1/u]$-module implique que la matrice $P^{(i)}$ est inversible dans 
$M_2(\SK_S^{(i)}[1/u])$. Ainsi son déterminant vérifie
$\det P^{(i)} \equiv a_i u^{\nu_i} \pmod{u^{\nu_i+1} \SK_S}$
pour un certain entier positif ou nul $\nu_i$ et un certain élément
$a_i$ qui est inversible dans $S$. En outre, vue la forme de $G^{(i)}$,
l'égalité \eqref{eq:Hi} entraîne l'égalité numérique
$e = p \nu_i - \nu_{i+1} + p-1$. Comme $\nu_f$ et $\nu_0$ coïncident, 
tous les $\nu_i$ sont égaux au point fixe $\nu$.
\end{proof}

Du lemme \ref{lem:valdet}, il résulte que, pour tout entier $d$ supérieur ou égal à $\nu$, les vecteurs $u^d e_0^{(i)} $ et $u^d e_1^{(i)} $ sont dans l'image de $P^{(i)}$, c'est-à-dire dans $\MK^{(i)}$.
Ceci est en particulier vrai pour $d = e$ et implique que nous pouvons, sans perte de généralité, choisir $P^{(i)}$ de la forme
$$
P^{(i)}=
\left(
\begin{array}{ccc} 
u^{\alpha_{i}}a_i&u^{\alpha'_{i}}b'_i \cr
u^{\alpha_{i+f}}b_i&u^{\alpha'_{i+f}}a'_i \cr
\end{array}\right),$$
avec $a_i$, $b_i$, $a'_i$ et $b'_i$ dans $S$. En vertu du lemme
\ref{lem:valdet}, nous obtenons :
\begin{equation}\label{equadet}
 \alpha_{i}+\alpha'_{i+f}=\nu
 \quad \text{ou} \quad 
 \alpha_{i+f}+\alpha'_{i}=\nu.
\end{equation}

\subsubsection{Une forme normale pour les $P^{(i)}$}
\label{subsec:formenormale}

Soit $i \in\llbracket 0,f-1\rrbracket$ un entier fixé.
Supposons, dans un premier temps, que $\alpha_{i}+\alpha'_{i+f} = \nu 
<\alpha_{i+f}+\alpha'_{i}$. Alors $\alpha_{i+f}+\alpha'_{i}$ est
nécessairement supérieur ou égal à $\nu + e$, étant donné qu'il doit
être congru à $\nu$ modulo $e$. Comme $\alpha_{i+f}<e$ et $\alpha'_{i}<e$, 
nous en déduisons que $\alpha_{i}\leq\nu<\alpha'_{i}$ et $\alpha'_{i+f}
\leq\nu<\alpha_{i+f}$. Ainsi, quitte à modifier la base de $\MK^{(i)}$
--- ce qui revient à faire des opérations sur les colonnes de $P^{(i)}$
--- nous pouvons supposer que $P^{(i)}$ prend la forme :
\begin{equation}\label{eq:Pi1}
P^{(i)}=\left(\begin{array}{cc} u^{\alpha_{i}}& 0\cr 
0& u^{\nu-\alpha_{i}}\cr\end{array}\right).
\end{equation}

De la même manière, si $\alpha'_{i}+\alpha_{i+f} = \nu <\alpha'_{i+f}+
\alpha_{i}$, nous pouvons supposer que :
\begin{equation}\label{eq:Pi2}
P^{(i)}=\left(\begin{array}{cc} 0 & u^{\nu-\alpha_{i+f}}\cr 
u^{\alpha_{i+f}}&0\cr\end{array}\right).
\end{equation}

Il reste à examiner le cas où $\alpha_{i}+\alpha'_{i+f} = \alpha_{i+f} + 
\alpha'_{i} = \nu$. Posons $\delta = \nu - (\alpha_i + \alpha_{i+f})$. 
De même que dans la démonstration du lemme \ref{pasABBA}, nous voyons 
que la condition $\eta \neq \eta'$ implique $\delta \neq 0$. Par
ailleurs, la matrice $P^{(i)}$ s'écrit :
$$P^{(i)}= \left( \begin{array}{cc} 
u^{\alpha_{i}}a_i & u^{\alpha_{i}+\delta}b'_i \cr
u^{\alpha_{i+f}}b_i & u^{\alpha_{i+f} + \delta}a'_i
\end{array}\right).$$
Ainsi, dans le cas où $\delta > 0$, l'image de $P^{(i)}$ ne dépend
pas des valeurs de $a'_i$ et $b'_i$ dès lors que celles-ci vérifie
la condition :
\begin{equation}
\label{eq:ptproj}
a_i a'_i - b_i b'_i \in S^\times
\end{equation}
où $S^\times$ désigne le groupe des unités de $S$. De plus, remplacer 
le couple $(a_i, b_i)$ par $(s a_i, s b_i)$ avec $s \in S^\times$ ne 
modifie pas non plus l'image de $P^{(i)}$. 
Par ailleurs, l'existence des éléments $a'_i$ et $b'_i$ vérifiant la 
condition \eqref{eq:ptproj} entraîne que le sous-module 
de $S^2$ engendré par $(a_i, b_i)$ définit un point de l'espace 
projectif $\P^1(S)$ (\emph{i.e.} le quotient $S^2 / (a_i,b_i)S$ est 
projectif). Dans la suite, nous utilisons la notation classique 
$[a_i:b_i]$ pour désigner ce point.
En conclusion, le réseau $\MK^{(i)}$ est entièrement caractérisé par
le $S$-point $[a_i:b_i]$ de la droite projective.
De la même manière, si $\delta < 0$, nous trouvons que $\MK^{(i)}$ est
déterminé par le point projectif $[a'_i:b'_i] \in \P^1(S)$.

\subsubsection{Stabilité par $\varphi$}
\label{subsec:stabphi}

Examinons à présent à quelle condition sur $a_i, b_i, a'_i, b'_i$ le
réseau $\MK$ est stable par $\varphi$, c'est-à-dire à quelle condition
les matrices $H^{(i)}$ définies par l'égalité \eqref{eq:Hi} sont à
coefficients dans $\SK_S$. Un calcul immédiat montre que la matrice 
$H^{(i)}$ est de la forme :
\begin{equation}
\label{hachi}
H^{(i)}  = 
(P^{(i+1)})^{-1} G^{(i)}\varphi(P^{(i)}) 
= (\det(P^{(i+1)}))^{-1} K^{(i)} 
= (\det(P^{(i+1)}))^{-1}
\begin{pmatrix}
A_i & B'_i\\
 B_i & A'_i
\end{pmatrix}
\end{equation}
où les coefficients de la matrice $K^{(i)}$ sont donnés par les formules 
suivantes :
\begin{itemize}
\item[$\bullet$] $A_i = a_{i} a'_{i+1} u^{\alpha'_{i+1+f} + p \alpha_{i} + h_{i}} - b_i b'_{i+1}u^{\alpha'_{i+1} + p \alpha_{i+f} + h_{i+f}}, $
\item[$\bullet$] $B'_i = b'_{i} a'_{i+1} u^{\alpha'_{i+1+f} + p \alpha'_{i} + h_{i}} - a'_i b'_{i+1}u^{\alpha'_{i+1} + p \alpha'_{i+f} + h_{i+f}}, $
\item[$\bullet$] $B_i = - a_{i} b_{i+1} u^{\alpha_{i+1+f} + p \alpha_{i} + h_{i}} + b_i a_{i+1}u^{\alpha_{i+1} + p \alpha_{i+f} + h_{i+f}}, $
\item[$\bullet$] $A'_i = - b'_{i} b_{i+1} u^{\alpha_{i+1+f} + p \alpha'_{i} + h_{i}} + a'_i a_{i+1}u^{\alpha_{i+1} + p \alpha'_{i+f} + h_{i+f}},$
\end{itemize}
lorsque $i$ est entre $0$ et $f-2$ et par une formule analogue qui 
tient compte de la forme antidiagonale de $G^{(f-1)}$ (\emph{cf} 
Eq.~\eqref{equaGi}) lorsque $i = f-1$. Comme le déterminant de 
$P^{(i+1)}$ est de valuation $\nu$, la matrice $H^{(i)}$ est à 
coefficients dans $\SK_S^{(i+1)}$ si, et seulement si tous les coefficients de 
$K^{(i)}$ de degré strictement inférieur à $\nu$ s'annulent. Ceci
fournit des conditions algébriques qui déterminent les équations de
la variété de Kisin, comme nous allons le préciser dans la partie \S 
\ref{subsec:Ktrhobar} suivante.

\subsubsection{La variété $\mathcal K(\ttt,\rhobar)$}
\label{subsec:Ktrhobar}

Considérons l'espace $\mathcal K = \prod_{i=0}^{f-1}
\P^1_{k_E}$ et notons $[x_i:x_{i+f}]$ les coordonnées homogènes sur la $i$-ième 
copie de $\P^1_{k_E}$ en convenant, de même que dans l'énoncé du théorème
\ref{thequationsKisin}, que $x_i = x_{i \text{ mod } 2f}$ pour $i \in
\N$. Soit $\mathcal K(\ttt,\rhobar)$ la sous-variété de $\mathcal K$ 
définie par les familles d'équations suivantes :

\begin{itemize}
\item[\textit{(A)}] $x_i = 0$ lorsque $\alpha_i + \alpha'_{i+f} > \nu$, \medskip
\item[\textit{(A')}] $x_{i+f} = 0$ lorsque $\alpha_{i+f} + \alpha'_i > \nu$, \medskip
\item[\textit{(B)}] l'annulation des coefficients de degré strictement inférieur
à $\nu$ dans $A_i$, $A'_i$, $B_i$ et $B'_i$ après avoir posé
lorsque $\alpha_i + \alpha'_{i+f} = \alpha_{i+f} + \alpha'_i = \nu$ :
$$\begin{array}{ll}
x_i = a_i \text{ et } x_{i+f} = b_i &
\text{si } \alpha_i + \alpha_{i+f} < \nu, \smallskip \\ 
x_i = a'_i \text{ et } x_{i+f} = b'_i &
\text{si } \alpha_i + \alpha_{i+f} > \nu. 
\end{array}$$
\end{itemize}

Nous allons construire un morphisme $\mathcal K(\ttt, \rhobar) \to 
\vK^\psi(\vv, \ttt, \rhobar)$ à l'aide du foncteur des points. Fixons une 
$k_E$-algèbre $T$ et considérons un point $(L_0, \ldots, L_{f-1}) \in 
\mathcal K(T)$. Chaque $L_i$ est donc un sous-$T$-module projectif de 
rang $1$ de $T^2$ tel que le quotient $T^2/L_i$ soit aussi projectif.
À cette donnée, nous associons un nouveau $f$-uplet $(\Lambda^{(0)}, 
\ldots, \Lambda^{(f-1)})$ où $\Lambda^{(i)}$ est le $\SK_T^{(i)}$-réseau 
de $\MK(\rhobar)^{(i)}$ défini ainsi :

\begin{itemize}
\item[\textit{(A)}] si $\alpha_i + \alpha'_{i+f} > \nu$,
$\Lambda^{(i)} = u^{\alpha_i} e_0^{(i)} + u^{\nu - \alpha_i} e_1^{(i)}$ ;
\smallskip

\item[\textit{(A')}] si $\alpha'_i + \alpha_{i+f} > \nu$,
$\Lambda^{(i)} = u^{\nu-\alpha_{i+f}} e_0^{(i)} + u^{\alpha_{i+f}} e_1^{(i)}$ ;
\medskip

\item[\textit{(B)}] si $\alpha_i + \alpha'_{i+f} = \alpha'_i + 
\alpha_{i+f} = \nu$ et $\alpha_i + \alpha_{i+f} < \nu$ :
$$\Lambda^{(i)} = \iota\big(L_i \otimes_T \SK_T^{(i)} + 
u^{\nu - \alpha_i - \alpha_{i+f}} \: (\SK_T^{(i)})^2\big)$$
où $\iota : (\SK_T^{(i)})^2 \to \MK(\rhobar)^{(i)}, (s_0, s_1) \to
u^{\alpha_i} s_0 e_0^{(i)} + u^{\alpha_{i+f}} s_1 e_1^{(i)}$ ;
\medskip

\item[\textit{(B')}] si $\alpha_i + \alpha'_{i+f} = \alpha'_i + 
\alpha_{i+f} = \nu$ et $\alpha_i + \alpha_{i+f} > \nu$ :
$$\Lambda^{(i)} = \iota'\big(L_i \otimes_T \SK_T^{(i)} + 
u^{\nu - \alpha'_i - \alpha'_{i+f}} \: (\SK_T^{(i)})^2\big)$$
où $\iota' : (\SK_T^{(i)})^2 \to \MK(\rhobar)^{(i)}, (s_0, s_1) \to
u^{\alpha'_i} s_0 e_0^{(i)} + u^{\alpha'_{i+f}} s_1 e_1^{(i)}$.
\end{itemize}

Cette association définit un morphisme $\mathcal K$ dans la 
grassmanienne affine adéquate qui induit, par restriction et 
corestriction, un morphisme entre variétés algébriques $\mathcal 
K(\ttt,\rhobar) \to \vK^\psi(\vv, \ttt, \rhobar)$. Nous vérifions 
immédiatement que ce dernier est injectif sur les $T$-points pour 
toute $k_E$-algèbre $T$. Considérons à présent une $k_E$-algèbre
$T$, ainsi que $(\Lambda^{(0)}, \ldots, \Lambda^{(f-1)})$ un 
$T$-point de la variété de Kisin $\vK^\psi(\vv, \ttt, \rhobar)$. Il
existe un recouvrement ouvert $T \to S$ tel que les $S \otimes_T
\Lambda^{(i)}$ soient tous libres. Des résultats des \S
\ref{subsubPi}--\ref{subsec:stabphi}, nous déduisons alors que le 
$S$-point $(S \otimes_T \Lambda^{(i)})_{0\leq i < f}$ de
$\vK^\psi(\vv, \ttt, \rhobar)$ provient d'un 
$S$-point de $\mathcal K(\ttt,\rhobar)$. Il en résulte que le morphisme 
$\mathcal K(\ttt,\rhobar) \to \vK^\psi(\vv, \ttt, \rhobar)$ est un 
isomorphisme et nous avons ainsi démontré la première assertion du 
théorème \ref{thequationsKisin}.

\subsection{Traduction génétique}

Afin d'obtenir une démonstration complète du théorème 
\ref{thequationsKisin}, il nous reste à réinterpréter les équations 
définissant $\mathcal K (\ttt,\rhobar)$ dans le langage génétique. C'est 
l'objet de cette partie. Nous choisissons et fixons un gène associé au 
couple $(\rhobar,\ttt)$, que nous noterons $X = (X_i)_{i\in\Z}$.

\begin{lem}\label{lem:alphaidiag}
Pour $i\in\llbracket 0,f-1\rrbracket$, nous avons les équivalences
suivantes :
\begin{enumerate}[i)]
\item $\alpha_{i}+\alpha'_{i+f}>\nu$ si et seulement si 
$X_i = \gO$
\item $\alpha'_{i}+\alpha_{i+f}>\nu$ si et seulement si 
$X_{i+f} = \gO$
\end{enumerate}
\end{lem}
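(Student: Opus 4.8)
\emph{Plan.} The plan is to deduce both equivalences from one elementary remark: in each case the quantity under consideration is the sum of two integers of $\llbracket 0,e-1\rrbracket$ which is congruent to $\nu$ modulo $e$; since $0\leq\nu<e$ (immediate from $\nu=p+p^2+\cdots+p^{f-1}$ and $e=p^f-1$ when $f\geq 2$), such a sum equals either $\nu$ or $\nu+e$, and being $>\nu$ means exactly that it equals $\nu+e$, which forces one of the two summands to exceed $\nu$, i.e.\ to meet the condition defining the symbol $\gO$ in Définition~\ref{defalphai}.

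First I would treat \emph{ii)}. By the congruence \eqref{alpha'} we have $\alpha'_i+\alpha_{i+f}\equiv\nu\pmod e$, and since $\alpha'_i,\alpha_{i+f}\in\llbracket0,e-1\rrbracket$ the sum $\alpha'_i+\alpha_{i+f}$ lies in $\llbracket0,2e-2\rrbracket$, hence equals $\nu$ or $\nu+e$. Thus $\alpha'_i+\alpha_{i+f}>\nu$ if and only if $\alpha'_i+\alpha_{i+f}=\nu+e$. If this holds, then $\alpha_{i+f}=\nu+e-\alpha'_i\geq\nu+e-(e-1)=\nu+1$, so $\alpha_{i+f}\in\,]\nu,e[$ and $X_{i+f}=\gO$. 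Conversely, if $X_{i+f}=\gO$ then $\alpha_{i+f}>\nu$, hence $\alpha'_i+\alpha_{i+f}\geq\alpha_{i+f}>\nu$. This gives \emph{ii)}.

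For \emph{i)} I would reduce to the same situation: replacing $i$ by $i+f$ in \eqref{alpha'} yields $\alpha'_{i+f}+\alpha_{i+2f}\equiv\nu\pmod e$, and, the sequence $(\alpha_j)_{j\in\Z}$ being $(2f)$-periodic, $\alpha_{i+2f}=\alpha_i$, so $\alpha_i+\alpha'_{i+f}\equiv\nu\pmod e$. Running the previous argument verbatim — the sum of the two elements $\alpha_i,\alpha'_{i+f}$ of $\llbracket0,e-1\rrbracket$ equals $\nu$ or $\nu+e$; it is $>\nu$ iff it equals $\nu+e$; and $\alpha_i+\alpha'_{i+f}=\nu+e$ is equivalent to $\alpha_i>\nu$ — shows $\alpha_i+\alpha'_{i+f}>\nu$ iff $\alpha_i>\nu$ iff $X_i=\gO$.

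I do not expect any genuine obstacle: the argument is a short bookkeeping computation once one observes that the two integers being added are each bounded by $e-1$ and that their sum is pinned down modulo $e$ by \eqref{alpha'}. The only facts invoked are the bound $0\leq\nu<e$, the congruence \eqref{alpha'}, the $(2f)$-periodicity of $(\alpha_j)_j$ (used solely in \emph{i)}), and the definition of the symbol $X_i$.
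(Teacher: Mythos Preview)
Your argument is correct and is essentially the same as the paper's: both rest on the single observation that the sum in question is congruent to $\nu$ modulo $e$ (via \eqref{alpha'}, shifted by $f$ for part \emph{i)}) and lies in $\llbracket 0,2e-2\rrbracket$, so being strictly greater than $\nu$ forces it to equal $\nu+e$, hence one summand exceeds $\nu$. The only cosmetic difference is that you treat \emph{ii)} first and reduce \emph{i)} to it, whereas the paper does the opposite.
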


\begin{proof}
Par définition, si $X_i = \gO$ alors $\alpha_i > \nu$ et 
donc \emph{a fortiori} nous avons $\alpha_{i}+\alpha'_{i+f}>\nu$.
Réciproquement, supposons $\alpha_{i} + \alpha'_{i+f} > \nu$. Alors,
de la congruence \eqref{alpha'} appliquée à $i+f$, nous déduisons
que $\alpha_{i}+\alpha'_{i+f} \geq \nu+e$ et, par suite, $\alpha_i
\geq \nu + e - \alpha'_{i+f} > \nu$ car $\alpha'_{i+f} < e$. Ainsi
$X_i = \gO$.

La démonstration du \emph{ii)} est similaire.
\end{proof}

Il résulte du lemme \ref{lem:alphaidiag} que les jeux 
d'équations \emph{(A)} et \emph{(A')} apparaissant dans la définition de 
$\mathcal K(\ttt, \rhobar)$ (\S \ref{subsec:Ktrhobar}) 
correspondent aux équations \emph{(A)} du 
théorème \ref{thequationsKisin}. Il ne reste donc plus qu'à démontrer
que le jeu \emph{(B)} fournit les mêmes équations que les jeux \emph{(B1)}, 
\emph{(B2)} et \emph{(B3)} du théorème \ref{thequationsKisin}.
D'après la discussion qui suit la définition \ref{defdominance}, si $X$ n'a pas de caractère dominant, le résultat est clair. Nous 
supposons donc, à partir de maintenant, que $X$ a un caractère dominant.

\begin{lem}\label{lemgenereseau}
Soit $i\in\llbracket 0,f-1\rrbracket$ tel que $X_i \neq \gO$ et $X_{i+f} 
\neq \gO$. Alors :

\begin{enumerate}[i)]
\item
$\alpha_i + \alpha'_{i+f} = \alpha'_i + \alpha_{i+f} = \nu$ \smallskip
\item
$\gA$ (resp. $\gB$) est dominant en $i$ ssi
$\alpha_i + \alpha_{i+f} < \nu$ (resp. $\alpha_i + \alpha_{i+f} > \nu$).
\end{enumerate}
\end{lem}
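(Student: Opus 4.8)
The plan is to reduce both assertions to elementary integer arithmetic involving $\alpha_i,\alpha_{i+f},\alpha'_i,\alpha'_{i+f}$ (all in $\llbracket 0,e-1\rrbracket$), using the congruences already recorded together with the defining inequalities of Definition~\ref{defalphai}.

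For \emph{i)}, I would first invoke Remark~\ref{remgamma'}: $\alpha'_i=\alpha_i(h,\gamma',\gamma)$, and since the thresholds $\epsilon_\bullet$ depend only on $h$, the symbol attached to $\alpha'_i$ is $X_i(h,\gamma',\gamma)$, which by Lemma~\ref{lem:depchoix}~\emph{ii)} equals $\tau(X_{i+f})$, where $\tau$ exchanges $\gA$ and $\gB$ and fixes $\gO$. Hence $\alpha'_i>\nu$ iff $X_{i+f}=\gO$, and symmetrically $\alpha'_{i+f}>\nu$ iff $X_i=\gO$ (using $X_{i+2f}=X_i$). Under the hypotheses $X_i\neq\gO$, $X_{i+f}\neq\gO$ this gives the four bounds $\alpha_i,\alpha_{i+f},\alpha'_i,\alpha'_{i+f}\le\nu$, so the sums $\alpha_i+\alpha'_{i+f}$ and $\alpha'_i+\alpha_{i+f}$ are both $\le 2\nu<\nu+e$; being congruent to $\nu$ mod $e$ by~\eqref{alpha'}, each equals $\nu$, which is \emph{i)}.

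For \emph{ii)} I would set $\delta_i=\nu-(\alpha_i+\alpha_{i+f})$ and note, using \emph{i)}, that $\delta_i=\alpha'_i-\alpha_i=\alpha'_{i+f}-\alpha_{i+f}$; as in \S\ref{subsec:formenormale}, $\eta\neq\eta'$ forces $\delta_i\neq 0$, so only the sign of $\delta_i$ matters. Since $X$ has a dominant character it contains no pair $\binom\gO\gO$, hence (Lemma~\ref{proprelgene}) no pair $\binom\gAB\gAB$; together with $X_i,X_{i+f}\neq\gO$, the pair $\binom{X_i}{X_{i+f}}$ is then one of eight. For the six ``direct'' pairs of Definition~\ref{defdominance} ($\binom\gA\gA,\binom\gA\gAB,\binom\gAB\gA$ and their $\gB$-analogues), I would just add the two interval inequalities for $\alpha_i$ and $\alpha_{i+f}$ furnished by Definition~\ref{defalphai}, using $\epsilon_i+\epsilon_{i+f}\le 1$ (as $h_i$ and $h_{i+f}=p-1-h_i$ cannot both equal $p-1$), $p\ge 5$ and $\nu\ge p$: this fixes the sign of $\delta_i$ to agree with the declared dominant symbol. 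For the two ``propagation'' pairs $\binom\gA\gB,\binom\gB\gA$, Lemma~\ref{lem:relalphai} gives $\alpha_{i+1}=p\alpha_i+h_i$ for the $\gA$-entry and $\alpha_{i+1+f}=p\alpha_{i+f}+h_{i+f}-e$ for the $\gB$-entry; summing and using $p\nu+(p-1)=\nu+e$ yields $\delta_{i+1}=p\,\delta_i$, so the signs agree, and the equivalence for $i$ follows from that for $i+1$ by induction along the processing order of Definition~\ref{defdominance}.

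The main obstacle I anticipate is bookkeeping rather than conceptual: carrying the $\epsilon$-corrections and the $2f$-periodicity of $(\epsilon_\bullet)$ and $(h_\bullet)$ through the eight cases, and checking that the induction in the propagation step is well-founded, i.e.\ that in the order $i_0,i_0-1,\dots,0,f-1,\dots,i_0+1$ of Definition~\ref{defdominance} the column $i+1$ is always treated before $i$, the only exception $i=i_0$ being a direct pair. One must also use that $\nu<e$ and $p\nu+(p-1)=\nu+e$ are genuine equalities of integers, which is exactly where the explicit value $\nu=p+p^2+\cdots+p^{f-1}$ enters.
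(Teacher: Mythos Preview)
Your proposal is correct and follows essentially the same route as the paper. For \emph{ii)} the argument is identical in structure: the paper also derives $\delta_{i+1}=p\,\delta_i$ from Lemma~\ref{lem:relalphai} when $\{X_i,X_{i+f}\}=\{\gA,\gB\}$ and then treats the remaining direct pairs by the interval bounds of Definition~\ref{defalphai}. The only minor difference is in \emph{i)}: rather than bounding each of $\alpha_i,\alpha_{i+f},\alpha'_i,\alpha'_{i+f}$ individually via Lemma~\ref{lem:depchoix}, the paper simply invokes Lemma~\ref{lem:alphaidiag}, which already states that $\alpha_i+\alpha'_{i+f}>\nu$ iff $X_i=\gO$ (and symmetrically for $i+f$), making \emph{i)} a one-line consequence.
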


\begin{proof}
L'énoncé \emph{i)} est une conséquence directe du lemme 
\ref{lem:alphaidiag}.
Supposons à présent que $\gA$ soit dominant dans $\binom{X_i}{X_{i+f}}$ 
et démontrons que $\alpha_i + \alpha_{i+f} < \nu$. S'il y a parmi $X_i$ 
et $X_{i+f}$ exactement un $\gA$ et un $\gB$, le lemme 
\ref{lem:relalphai} implique que
$$\nu-\alpha_{i+1}-\alpha_{i+f+1}=p \cdot (\nu-\alpha_i-\alpha_{i+f})$$
et nous sommes ramenés à l'énoncé analogue où $i$ est remplacé
par $i+1$. Ainsi, nous pouvons supposer que le couple d'allèles
$\binom{X_i}{X_{i+f}}$ est $\binom \gA \gA$, $\binom \gA \gAB$ ou
$\binom \gAB \gA$. Mais alors :
$$
\begin{array}{rcll}
\nu-\alpha_{i} - \alpha_{i+f} & > &
\frac{p-2}{p} \nu-\epsilon_i-\epsilon_{i+f} & 
\mbox{si } \binom{X_i}{X_{i+f}}=\binom{\gA}{\gA} \medskip \cr
\nu-\alpha_{i} - \alpha_{i+f} & > & 0 & \text{sinon.}
\end{array}$$
Dans tous les cas, nous avons bien $\nu-\alpha_{i} - \alpha_{i+f} > 0$ 
puisque $\frac{p-2}{p} \nu-\epsilon_i - \epsilon_{i+f} = 
(p-2)(1+\cdots+p^{f-2})-2 > 0$ pour $p\geq 5$.
De la même manière, nous démontrons que si $\gB$ est dominant dans
$\binom{X_i}{X_{i+f}}$, alors $\alpha_i + \alpha_{i+f} > \nu$. Les
équivalences de \emph{ii)} en découlent.
\end{proof}

Rappelons que le jeu d'équations \emph{(B)} s'obtient en écrivant
que les coefficients de degré strictement inférieur à $\nu$ dans les
quatre expressions suivantes s'annulent :
\begin{itemize}
\item[$\bullet$] $A_i = a_{i} a'_{i+1} u^{\alpha'_{i+1+f} + p \alpha_{i} + h_{i}} - b_i b'_{i+1}u^{\alpha'_{i+1} + p \alpha_{i+f} + h_{i+f}}, $
\item[$\bullet$] $B'_i = b'_{i} a'_{i+1} u^{\alpha'_{i+1+f} + p \alpha'_{i} + h_{i}} - a'_i b'_{i+1}u^{\alpha'_{i+1} + p \alpha'_{i+f} + h_{i+f}}, $ 
\item[$\bullet$] $B_i = - a_{i} b_{i+1} u^{\alpha_{i+1+f} + p \alpha_{i} + h_{i}} + b_i a_{i+1}u^{\alpha_{i+1} + p \alpha_{i+f} + h_{i+f}}, $ 
\item[$\bullet$] $A'_i = - b'_{i} b_{i+1} u^{\alpha_{i+1+f} + p \alpha'_{i} + h_{i}} + a'_i a_{i+1}u^{\alpha_{i+1} + p \alpha'_{i+f} + h_{i+f}}. $
\end{itemize}
Nous remarquons que toutes les puissances de $u$ qui apparaissent dans 
les coefficients diagonaux $A_i$ et $A'_i$ sont (positives et) congrues 
à $\nu$ modulo $e$. Ces puissances sont donc supérieures ou égales à 
$\nu$ et les coefficients $A_i$ et $A'_i$ n'apportent donc aucune 
contrainte. Pour $B_i$ et $B'_i$, nous aurons besoin du lemme suivant.

\begin{lem}\label{lemcasequa}
Pour $i\in\llbracket0, f-1\rrbracket$, nous avons :

\smallskip

\begin{itemize}
\item[$\bullet$]  
$\alpha_{i+1+f} + p \alpha_{i} + h_{i} < \nu$ ssi
$\gA$ est dominant en $(i+1)$ et $X_i\in\{\gA,\gAB\}$.\\
Lorsque c'est le cas, $X_{i+f} = \gO$ ou $\gA$ est dominant en $i$. \smallskip

\item[$\bullet$]
$\alpha_{i+1} + p \alpha_{i+f} + h_{i+f} < \nu$ ssi
$\gA$ est dominant en $(i+1)$ et $X_{i+f}\in\{\gA,\gAB\}$.\\
Lorsque c'est le cas, $X_i = \gO$ ou $\gA$ est dominant en $i$. \smallskip

\item[$\bullet$] 
$\alpha'_{i+1+f} + p \alpha'_{i} + h_{i} < \nu$ ssi
$\gB$ est dominant en $(i+1)$ et $X_{i+f}\in\{\gB,\gO\}$.\\
Lorsque c'est le cas, $X_i = \gO$ ou $\gB$ est dominant en $i$. \smallskip

\item[$\bullet$]
$\alpha'_{i+1} + p \alpha'_{i+f} + h_{i+f} < \nu$ ssi
$\gB$ est dominant en $(i+1)$ et $X_i \in\{\gB,\gO\}$.\\
Lorsque c'est le cas, $X_{i+f} = \gO$ ou $\gB$ est dominant en $i$. \smallskip
\end{itemize}

\noindent
De plus, dans chacun des quatre cas précédents, nous avons
$X_{i+1} \neq \gO$ et $X_{i+1+f} \neq \gO$.
\end{lem}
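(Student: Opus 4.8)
The four assertions are interchanged by the two symmetries built into the combinatorics of the gene, so the plan is to prove only the first one. The second is obtained from the first by the substitution $i\mapsto i+f$, which exchanges $h_i\leftrightarrow h_{i+f}$, $\alpha_i\leftrightarrow\alpha_{i+f}$ and the two rows of every allele pair while preserving the notion of dominance. The third and fourth are the first and the second applied to the triple $(h,\gamma',\gamma)$ instead of $(h,\gamma,\gamma')$: by Remark~\ref{remgamma'} the sequence $(\alpha'_i)$ is the $\alpha$-sequence attached to $(h,\gamma',\gamma)$, and by Lemma~\ref{lem:depchoix} the gene of $(h,\gamma',\gamma)$ is $i\mapsto\tau(X_{i+f})$, so that ``$\gA$ is dominant'' turns into ``$\gB$ is dominant'' and the condition bearing on $X_i$ is transported through $\tau$. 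Throughout, the engine is Lemma~\ref{lem:relalphai}: one has $\alpha_{i+1}\equiv p\alpha_i+h_i\pmod e$, with equality precisely when $X_i\in\{\gA,\gAB\}$, and $\alpha_{i+1}=p\alpha_i+h_i-e$ when $X_i=\gB$.

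For the direct implication of the first bullet I would argue as follows. Assume $\alpha_{i+1+f}+p\alpha_i+h_i<\nu$. Both terms on the left are $\geq 0$, so already $p\alpha_i+h_i<\nu$. This excludes $X_i=\gO$ (then $\alpha_i>\nu$, hence $p\alpha_i>p\nu>\nu$) and $X_i=\gB$ (then $\alpha_i>\frac{p-1}{p}\nu-1$, hence $p\alpha_i+h_i>(p-1)\nu-p>\nu$, using $(p-2)\nu\geq 3p$ for $p\geq 5$). Hence $X_i\in\{\gA,\gAB\}$, so by Lemma~\ref{lem:relalphai} $\alpha_{i+1}=p\alpha_i+h_i$ and the hypothesis reads $\alpha_{i+1}+\alpha_{i+1+f}<\nu$; in particular $\alpha_{i+1}<\nu$ and $\alpha_{i+1+f}<\nu$, i.e.\ $X_{i+1}\neq\gO$ and $X_{i+1+f}\neq\gO$, which is the last assertion of the lemma. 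Lemma~\ref{lemgenereseau} now applies at the index $i+1$ and, because $\alpha_{i+1}+\alpha_{i+1+f}<\nu$, gives that $\gA$ is dominant at $i+1$. If moreover $X_{i+f}\neq\gO$, then also $X_{i+f}\neq\gAB$ (otherwise $X_{i+1+f}=\gO$ by Lemma~\ref{proprelgene}), and likewise $X_i=\gA$, so $\binom{X_i}{X_{i+f}}$ is $\binom{\gA}{\gA}$ or $\binom{\gA}{\gB}$; running the one‑step recursion backwards gives $\nu-\alpha_{i+1}-\alpha_{i+1+f}=\nu-p(\alpha_i+\alpha_{i+f})-(p-1)$ in the first case and $\nu-\alpha_{i+1}-\alpha_{i+1+f}=p(\nu-\alpha_i-\alpha_{i+f})$ in the second, so in either case $\alpha_i+\alpha_{i+f}<\nu$, and Lemma~\ref{lemgenereseau} at $i$ shows that $\gA$ is dominant at $i$.

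For the converse, assume $\gA$ is dominant at $i+1$ and $X_i\in\{\gA,\gAB\}$; in the setting where the lemma is used one has $X_{i+1}\neq\gO$ — automatic when $X_i=\gA$, by Lemma~\ref{proprelgene} — and $X_{i+1+f}\neq\gO$, so $X_i=\gAB$ is impossible (it would force $X_{i+1}=\gO$), whence $X_i=\gA$ and $\alpha_{i+1}=p\alpha_i+h_i$; Lemma~\ref{lemgenereseau}~ii) at $i+1$ then turns ``$\gA$ is dominant at $i+1$'' into $\alpha_{i+1}+\alpha_{i+1+f}<\nu$, i.e.\ $p\alpha_i+h_i+\alpha_{i+1+f}<\nu$. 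The genuinely delicate point is the clause about index $i$ in the direct implication: one must unwind the propagation rule of Definition~\ref{defdominance} together with the structural constraints of Lemma~\ref{proprelgene} to be sure that, once $X_{i+1}$ and $X_{i+1+f}$ are not $\gO$, the only allele pairs $\binom{X_i}{X_{i+f}}$ that can occur are exactly those for which the elementary recursion of Lemma~\ref{lem:relalphai} transports $\alpha_{i+1}+\alpha_{i+1+f}<\nu$ back to $\alpha_i+\alpha_{i+f}<\nu$. Once that is in place the remaining three bullets cost nothing, being the images of the first under the substitutions $i\mapsto i+f$ and $\eta\leftrightarrow\eta'$.
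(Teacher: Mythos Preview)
Your forward implication for the first bullet is exactly the paper's argument: from the inequality deduce $p\alpha_i+h_i<\nu$, hence $X_i\in\{\gA,\gAB\}$, whence $\alpha_{i+1}=p\alpha_i+h_i$ by Lemma~\ref{lem:relalphai}; the hypothesis becomes $\alpha_{i+1}+\alpha_{i+1+f}<\nu$, giving $X_{i+1},X_{i+1+f}\neq\gO$ and, via Lemma~\ref{lemgenereseau}, $\gA$ dominant at $i+1$. Your case analysis for the clause ``$X_{i+f}=\gO$ or $\gA$ dominant at $i$'' is more explicit than the paper's \emph{analogues} and is correct.

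Two points, however. First, your symmetry reduction to bullets three and four does not reproduce the lemma as stated. Transporting bullet one through Lemma~\ref{lem:depchoix} sends ``$X_i\in\{\gA,\gAB\}$'' to ``$\tau(X_{i+f})\in\{\gA,\gAB\}$'', i.e.\ $X_{i+f}\in\{\gB,\gAB\}$, whereas the paper's third bullet reads $X_{i+f}\in\{\gB,\gO\}$. The paper does \emph{not} argue by this symmetry: it instead establishes directly an analogue of Lemma~\ref{lem:relalphai} for the primed sequence and reruns the same chain of inequalities. (In fact the sharp conclusion of the forward direction is $X_{i+f}=\gB$, so neither set is wrong for that direction; but they are genuinely different, and your route does not land on the statement you are asked to prove.)

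Second, your converse is not self-contained. You assume $X_{i+1+f}\neq\gO$ ``in the setting where the lemma is used'' in order to invoke Lemma~\ref{lemgenereseau} at $i+1$, but this does not follow from the bare hypotheses ``$\gA$ dominant at $i+1$ and $X_i\in\{\gA,\gAB\}$'': take $X_i=\gA$, $X_{i+f}\in\{\gO,\gAB\}$ (forced by Lemma~\ref{proprelgene}), and $\binom{X_{i+1}}{X_{i+1+f}}=\binom{\gA}{\gO}$; then $\gA$ is dominant at $i+1$ yet $\alpha_{i+1+f}>\nu$ and the inequality fails. The paper's short proof does not treat the converse separately either; in the application just after the lemma, the converse is only invoked in the presence of a decoration between $i$ and $i+1$, which already pins down $\binom{X_i}{X_{i+f}}$ to the five configurations of the table and rules the bad cases out. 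If you want a clean biconditional, the right fix is to sharpen the conclusion of the forward direction to $X_i=\gA$ (which your own inequality $\alpha_i<\nu/p$ already gives) and add $X_{i+1+f}\neq\gO$ on the right-hand side.
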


\begin{proof}
Supposons $\alpha_{i+1+f} + p \alpha_{i} + h_{i} < \nu$. Alors $p 
\alpha_{i} + h_{i} < \nu$ et, par suite, $\alpha_{i} < \nu/p < 
\nu_{\rhobar,i}$. Ainsi $X_i\in\{\gA,\gAB\}$. 
De plus $\alpha_{i+1}\equiv p \alpha_{i} + h_{i} \pmod e$ et, donc, 
d'après le lemme \ref{lem:relalphai}, $ \alpha_{i+1}=p \alpha_{i} + 
h_{i}<\nu$. Ainsi $\alpha_{i+1} + \alpha'_{i+1+f} = \alpha'_{i+1} + 
\alpha_{i+1+f}=\nu$ et $\alpha'_{i+1} - \alpha_{i+1} = \nu - 
\alpha_{i+1+f} - \alpha_{i+1}>0$. Donc $\gA$ est dominant en $(i+1)$
d'après le lemme \ref{lemgenereseau}.
Les autres résultats sont analogues, en constatant qu'un calcul 
similaire à celui du lemme \ref{lem:relalphai} montre que les 
coefficients $(\alpha'_{i})_{i\geq 0}$, définis par la congruence 
(\ref{alpha'}) satisfont 
$p\alpha'_{i+f}+h_{i}=\alpha'_{i+1+f}$ si et seulement si
$X_{i+f}\in\{\gO,\gB\}$.
\end{proof}

Supposons à présent qu'il apparaisse dans $B_i$ un coefficient de degré 
strictement inférieur à $\nu$. Alors, $\alpha_{i+1+f} + p \alpha_{i} + 
h_{i} < \nu$ ou $\alpha_{i+1} + p \alpha_{i+f} + h_{i+f} < \nu$ et, par 
suite, d'après les deux premières assertions du lemme \ref{lemcasequa}, 
$\gA$ est dominant en $(i+1)$. Nous remarquons, de plus, que le 
coefficient devant $u^{\alpha_{i+1+f} + p \alpha_{i} + h_{i}}$ (resp. 
devant $u^{\alpha_{i+1} + p \alpha_{i+f} + h_{i+f}}$) s'annule si 
$X_{i+f} = \gO$ (resp. $X_i = \gO$). En utilisant à nouveau le lemme 
\ref{lemcasequa}, nous déduisons que $\gA$ est aussi dominant en $i$.
Ces conclusions, combinées au lemme \ref{proprelgene} et au fait que
le couple d'allèles $\binom \gO \gO$ ne peut apparaître, restreignent les
possibilités pour $\binom {X_i}{X_{i+f}}$ de la manière suivante :
$$\binom{X_{i}}{X_{i+f}} \in \left\{
\binom{\gA}{\gA}, \binom{\gA}{\gB}, \binom{\gB}{\gA},
\binom{\gO}{\gA}, \binom{\gA}{\gO}
\right\}.$$
Le tableau ci-dessous présente, pour chacune des valeurs possibles de 
$\binom {X_i}{X_{i+f}}$, l'équation donnée par $B_i$ ainsi que la 
décoration entre les positions $i$ et $i+1$. (Remarquons que, si les 
deux puissances de $u$ qui apparaissent dans $B_i$ sont strictement 
inférieures à $\nu$, alors elles sont nécessairement égales car congrues 
modulo $e$.)

\newcommand{\tikzcross}[1]{
\begin{tikzpicture}[scale=#1]
\draw[thick] (0,0)--(1,1);
\draw[thick] (0,1)--(1,0);
\end{tikzpicture}}
\newcommand{\tikzne}[1]{
\begin{tikzpicture}[scale=#1]
\draw[thick] (0,0)--(1,1);
\end{tikzpicture}}
\newcommand{\tikzse}[1]{
\begin{tikzpicture}[scale=#1]
\draw[thick] (0,1)--(1,0);
\end{tikzpicture}}

\begin{center}
\renewcommand{\arraystretch}{1.5}
\noindent
\begin{tabular}{| c | c | c |c| c|c|}
\hline
$\binom{X_i}{X_{i+f}}$& Équation& Décoration \\ \hline
$\binom{\gA}{\gA}$ & $a_i b_{i+1} =  a_{i+1} b_i$ & \tikzcross{0.25} \\ \hline
$\binom{\gA}{\gO}$ & $b_{i+1} = 0 $  & \tikzse{0.25} \\ \hline
$\binom{\gA}{\gB}$ & $a_i b_{i+1} = 0$ & \tikzse{0.25} \\ \hline
$\binom{\gO}{\gA}$ & $a_{i+1} = 0$ & \tikzne{0.25} \\ \hline
$\binom{\gB}{\gA}$ & $a_{i+1} b_i = 0$ & \tikzne{0.25} \\ \hline
\end{tabular}
\renewcommand{\arraystretch}{1}
\end{center}

\noindent
En gardant à l'esprit qu'étant donné que $\gA$ est dominant en $i$ et 
$i+1$, les variables $a_i$, $b_i$, $a_{i+1}$ et $b_{i+1}$ correspondent 
respectivement à $x_i$, $x_{i+f}$, $x_{i+1}$ et $x_{i+f+1}$, nous 
constatons que les équations provenant de $B_i$ sont un sous-ensemble 
de celles des jeux \emph{(B1)}, \emph{(B2)} et \emph{(B3)}.

Réciproquement, considérons une équation $E$ donnée par l'un des trois 
jeux précédents correspondant à une décoration entre les positions $i$ 
et $(i+1)$ où $\gA$ est dominant. Si $X_i \in \{\gA, \gAB\}$, la 
première équivalence du lemme \ref{lemcasequa} nous apprend que 
$\alpha_{i+1+f} + p \alpha_{i} + h_{i} < \nu$, d'où nous déduisons que 
l'équation $E$ est bien une équation de la variété $\mathcal K(\ttt, 
\rhobar)$. Nous concluons de la même manière si $X_{i+f} \in \{\gA, 
\gAB\}$ en utilisant la deuxième équivalence du lemme \ref{lemcasequa}. 
Enfin, nous remarquons que ces deux cas regroupent toutes les 
possibilités puisque nous avons supposé que $\gA$ est dominant en $i$.
Nous avons ainsi démontré que les équations provenant de $B_i$ 
coïncident exactement avec les équations des jeux \emph{(B1)}, 
\emph{(B2)} et \emph{(B3)} qui correspondent à une décoration entre
deux positions où $\gA$ est dominant.

Exactement de la même manière, nous démontrons que les équations
correspondant à une décoration entre deux positions où $\gB$ est
dominant coïncident avec celles provenant $B'_i$. Étant donné que,
par définition, les décorations sur $X$ n'existent qu'entre deux
indices $i$ et $(i+1)$ ayant même allèle dominant, le théorème
\ref{thequationsKisin} est démontré.

\section{Géométrie de la variété de Kisin}
\label{sec:Connexite} \label{sec:Geometrie}

Le théorème \ref{thequationsKisin} présente les variétés de Kisin
$\vK^\psi(\vv, \ttt, \rhobar)$ comme des sous-variétés de $\prod_{i=0}^{f-1} \P^1_{k_E}$ définies
par des équations explicites. Le but de ce paragraphe (\S \ref{sec:Geometrie}) est d'expliquer
comment appréhender la géométrie des
variétés de Kisin qui n'est pas si simple en général. En effet, les variétés $\vK^\psi(\vv, \ttt,
\rhobar)$ peuvent avoir de multiples composantes irréductibles de
dimension variable.

Dans toute cette partie, la représentation $\rhobar$ et le type 
galoisien $\ttt$ sont fixés, ainsi que des entiers $h$, $\gamma$ et 
$\gamma'$ qui les représentent. Comme précédemment, nous notons $(X_i)_{i 
\in \Z}$ le gène décoré associé à ces données. 

\subsection{Réduction des équations de la variété de Kisin}
\label{ssec:reduction}\label{paraKisinvide}

Comme nous l'avons déjà vu sur l'exemple du \S \ref{sssec:filrouge}, le 
système d'équations donné par le théorème \ref{thequationsKisin} se 
simplifie par des considérations successives immédiates dont nous 
faisons maintenant la liste :
\begin{enumerate}[i)]
\item si deux {\it colonnes} consécutives du gène   sont reliées par deux liens (en forme 
de croix), les points projectifs correspondant sont égaux et nous 
pouvons fusionner les deux colonnes en question ;
\item si la valeur de la variable $x_i$ est imposée égale à $0$, nous
pouvons supposer, sans perte de généralité, que $x_{i+f}$ vaut $1$ ;
\item nous pouvons supprimer tout lien partant ou arrivant sur une
variable dont nous savons déjà qu'elle est contrainte égale à $0$ ;
\item si un lien relie les variables $x_i$ et $x_j$ et que nous
savons déjà que $x_i$ est contrainte égale à $1$, nous pouvons en
déduire que $x_j$ s'annule et donc ajouter l'équation
$x_j = 0$, ce qui permet par suite de supprimer le lien entre $x_i$
et $x_j$.
\end{enumerate}
La règle \emph{i)} permet de supposer qu'il n'y a plus de lien en forme 
de croix. Cette étape ne conduit à aucune contradiction car deux symboles 
$X_i=X_{i+f+1} =\gO$ qui imposent les équations $x_i=x_{i+f+1}=0$ n'ont 
pas de lien entre eux.
La règle \emph{ii)} induit une contradiction si $X_i=X_{i+f}=\gO$, cas 
pour lequel la variété de Kisin est vide. 
Après la réduction \emph{iv)}, il peut être nécessaire d'itérer 
l'ensemble des réductions \emph{ii)} à \emph{iv)} jusqu'à ce que le 
procédé se stabilise. Par exemple, si nous plaçons un $0$ en $x_i$, 
celui-ci implique un $1$ en $x_{i+f}$ (par l'étape \emph{ii)}) qui, à son tour, 
implique $x_{i+1}=0$ s'il y a un lien entre $X_{i+f}$ et 
$X_{i+1}$.

\begin{ex}
Nous avons déjà expliqué au \S \ref{sssec:filrouge} comment appliquer 
les règles ci-dessus sur un exemple. Nous présentons ci-après un autre 
exemple qui montre que les simplifications peuvent se propager de façon 
importante. Voici son gène\footnote{Il correspond aux paramètres $h = 
\frac{p-1} 2 + (p-1) p^3 + (p-1) p^5 + p^8 + p^9 + p^{10}$, $\gamma = - 
1 - p^4 - p^5 - p^7$ et $\gamma' = - \frac{p+1} 2 - p^2 - 2 p^3$.} :

\begin{center}
\begin{tikzpicture}[scale=0.8]
\draw [fill=fond, thick] (-0.5,-0.5) rectangle (10.5,1.5);
\draw[thick,->>] (-0.5,-0.5)--(-0.5,0.3);
\draw[thick,->>] (10.5,1.5)--(10.5,0.7);
\node[scale=0.5] at (0, 1.8) { $0$ };
\node[scale=0.5] at (1, 1.8) { $1$ };
\node[scale=0.5] at (2, 1.8) { $2$ };
\node[scale=0.5] at (3, 1.8) { $3$ };
\node[scale=0.5] at (4, 1.8) { $4$ };
\node[scale=0.5] at (5, 1.8) { $5$ };
\node[scale=0.5] at (6, 1.8) { $6$ };
\node[scale=0.5] at (7, 1.8) { $7$ };
\node[scale=0.5] at (8, 1.8) { $8$ };
\node[scale=0.5] at (9, 1.8) { $9$ };
\node[scale=0.5] at (10, 1.8) { $10$ };
\node[color=colA] at (0, 0) { $\gB$ };
\node[color=colA] at (0, 1) { $\gA$ };
\node[color=colA] at (1, 0) { $\gB$ };
\node[color=colA] at (1, 1) { $\gA$ };
\node[color=colA] at (2, 0) { $\gB$ };
\node[color=colA] at (2, 1) { $\gA$ };
\node[color=colA] at (3, 0) { $\gA$ };
\node[color=colA] at (3, 1) { $\gA$ };
\node[color=colA] at (4, 0) { $\gB$ };
\node[color=colA] at (4, 1) { $\gA$ };
\node[color=colA] at (5, 0) { $\gA$ };
\node[color=colA] at (5, 1) { $\gAB$ };
\node[color=colA] at (6, 0) { $\gA$ };
\node[color=colA] at (6, 1) { $\gO$ };
\node[color=colB] at (7, 0) { $\gB$ };
\node[color=colB] at (7, 1) { $\gO$ };
\node[color=colB] at (8, 0) { $\gB$ };
\node[color=colB] at (8, 1) { $\gB$ };
\node[color=colB] at (9, 0) { $\gB$ };
\node[color=colB] at (9, 1) { $\gAB$ };
\node[color=colA] at (10, 0) { $\gA$ };
\node[color=colA] at (10, 1) { $\gO$ };
\draw[color=colA,thick] (-0.8,0.8)--(-0.2,0.2);
\draw[color=colA,thick] (0.2,0.8)--(0.8,0.2);
\draw[color=colA,thick] (1.2,0.8)--(1.8,0.2);
\draw[color=colA,thick] (2.2,0.8)--(2.8,0.2);
\draw[color=colA,thick] (3.2,0.8)--(3.8,0.2);
\draw[color=colA,thick] (3.2,0.2)--(3.8,0.8);
\draw[color=colA,thick] (4.2,0.8)--(4.8,0.2);
\draw[color=colA,thick] (5.2,0.2)--(5.8,0.8);
\draw[color=colB,thick] (7.2,0.2)--(7.8,0.8);
\draw[color=colB,thick] (8.2,0.8)--(8.8,0.2);
\draw[color=colB,thick] (8.2,0.2)--(8.8,0.8);
\draw[color=colA,thick] (10.2,0.2)--(10.8,0.8);
\end{tikzpicture}
\end{center}

\noindent
Le lien qui apparaît entre la colonne $10$ et la colonne $0$ impose la
valeur de $[x_0 : x_{11}]$ à
$[1:0]$. Mais, de proche en proche, cela entraîne que $[x_i, x_{i+11}]
= [1:0]$ pour $i \in \llbracket 0, 5 \rrbracket$. Pour $i \in \{6,7,
10\}$, la valeur de $[x_i, x_{i+11}]$ est imposée, égale à $[0:1]$ par
la présence de l'allèle $\gO$. Enfin, les liens entre les colonnes $7$
et $8$ et les colonnes $8$ et $9$ conduisent à $[x_8 : x_{19}] =
[x_9 : x_{20}] = [0,1]$. Ainsi la valeur de tous les points projectifs
$[x_i, x_{i+11}]$ est uniquement déterminée (sans qu'il y ait de
contradiction) : la variété de Kisin correspondante est donc réduite à un 
unique point $\{[0:1]\}^6\times\{[1:0]\}^5$.
\end{ex}

Lorsque le gène ne contient pas de couple $\binom{\gO}{\gO}$, ce procédé de simplification ne conduit à aucune 
contradiction (une variable à la fois contrainte égale à $0$ et égale à 
$1$). En effet si, d'une part, le gène $X$ ne contient pas de $\gO$, la réduction se limite à l'étape i) (de fusion des colonnes reliées par deux liens).
D'autre part, si le gène contient un $\gO$, quitte à changer le 
plongement $\tau_0$, nous pouvons supposer que $X_0=\gO$ ou $X_{f}=\gO$. 
Si $X_0=\gO$, d'après le lemme \ref{proprelgene}, $X_{2f-1} \in 
\{\gO,\gAB\}$ et il n'y a pas de lien entre $X_{2f-1}$ et $X_f$. Donc les 
modifications ne se propagent pas de $\binom{X_{f-1}}{X_{2f-1}}$ à 
$\binom{X_0}{X_f}$. Le cas $X_{f}=\gO$ est analogue.
Enfin le lemme \ref{lem:tjsdroite} ci-dessous garantit que les 
modifications ii) à iv) se propagent toujours vers la droite.  

\begin{lem}
\label{lem:tjsdroite}
Si $X_i = \gO$ pour un certain entier $i$, il n'y a pas de lien
entre $X_{i+f}$ et $X_{i-1}$.
\end{lem}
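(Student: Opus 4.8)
The plan is to read the statement off the combinatorial shape of the links produced by a décoration (Definition~\ref{defdecoration}). If $X$ has no dominant character there are no links and there is nothing to prove, so I would assume throughout that $X$ has a dominant character.

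First I would establish a normal form for the links. At a décoration index $j\in\llbracket 0,f-1\rrbracket$ at which a symbol $Y\in\{\gA,\gB\}$ is dominant both in position $j$ and in position $j+1$, Definition~\ref{defdecoration} produces, possibly, the link $X_j\leftrightarrow X_{j+f+1}$ (when $X_j=Y$) or the link $X_{j+f}\leftrightarrow X_{j+1}$ (when $X_{j+f}=Y$). Setting $k=j$ in the first case and $k=j+f$ in the second, and remembering that indices live modulo $2f$ so that $(k+f)+f+1\equiv k+1\pmod{2f}$, both links take the form $X_k\leftrightarrow X_{k+f+1}$ with $X_k=Y\in\{\gA,\gB\}$. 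Moreover, for $f\geq 2$ the assignment $k\mapsto\{k,\,k+f+1\}$ is injective on unordered pairs: if $\{k,k+f+1\}=\{k',k'+f+1\}$ with $k\neq k'$, then $k\equiv k'+f+1$ and $k'\equiv k+f+1$, whence $2\equiv 0\pmod{2f}$, which is absurd. So every link of the décoration determines unambiguously an index $k$ --- its ``$Y$-end'' --- and the link occurs only if $X_k\in\{\gA,\gB\}$.

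Next I would feed a hypothetical link between $X_{i+f}$ and $X_{i-1}$ into this normal form: its $Y$-end $k$ satisfies $\{k,\,k+f+1\}=\{i+f,\,i-1\}$ modulo $2f$. The choice $k=i+f$ is excluded, for it would give $k+f+1\equiv i+1\pmod{2f}$, forcing $i+1\equiv i-1$, i.e. $2\equiv 0\pmod{2f}$, against $f\geq 2$. Hence $k=i-1$, and the link can exist only if $X_{i-1}\in\{\gA,\gB\}$. But $X_i=\gO$ forces $X_{i-1}\in\{\gO,\gAB\}$ by Lemma~\ref{proprelgene}~ii), and $\{\gA,\gB\}\cap\{\gO,\gAB\}=\emptyset$. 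This contradiction shows no link between $X_{i+f}$ and $X_{i-1}$ can occur.

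I do not anticipate a real obstacle here: the argument is elementary bookkeeping of indices modulo $2f$. The two points needing a little care are the injectivity of $k\mapsto\{k,k+f+1\}$ for $f\geq 2$ (this is what pins the $Y$-end down to $k=i-1$ and rules out the second potential preimage) and the reduction at the outset to the case where $X$ has a dominant character, so that a décoration --- hence the very notion of ``link'' --- is defined.
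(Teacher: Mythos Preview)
Your proof is correct and follows the same idea as the paper's: the key observation is that any link has a ``$Y$-end'' $k$ with $X_k\in\{\gA,\gB\}$, and a link $X_{i+f}\leftrightarrow X_{i-1}$ would force this end to sit at $k=i-1$, contradicting $X_{i-1}\in\{\gO,\gAB\}$ from Lemma~\ref{proprelgene}. The paper's proof is the same one-liner (``aucun lien ne part de $X_{i-1}$''), leaving implicit the index-modulo-$2f$ bookkeeping that you spell out carefully.
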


\begin{proof}
Sous l'hypothèse $X_i = \gO$, le lemme \ref{proprelgene} assure que 
$X_{i-1} \in \{\gAB, \gO\}$ d'où nous déduisons qu'aucun lien ne part de 
$X_{i-1}$.
\end{proof}

Une fois que la série des réductions \emph{i)} à \emph{iv)} est 
achevée, il est facile d'établir la condition suffisante et nécessaire de 
non vacuité de la variété de Kisin.

\begin{prop}\label{corvide}
La variété de Kisin $\vK^\psi(\vv_0,\ttt,\rhobar)$ est non vide si et 
seulement si pour tout $i$, $\binom{X_i}{X_{i+f}}\not=\binom{\gO}{\gO}$.
\end{prop}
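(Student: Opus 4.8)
Proposition \ref{corvide} claims that the Kisin variety is non-empty if and only if the gene does not contain the allele pair $\binom{\gO}{\gO}$.

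The plan is to prove the two implications separately. For the \emph{necessity} (non-empty $\Rightarrow$ no $\binom{\gO}{\gO}$), I would argue by contraposition: if there is an index $i$ with $X_i = X_{i+f} = \gO$, then the set of equations (A) in Theorem \ref{thequationsKisin} forces $x_i = 0$ and $x_{i+f} = 0$ simultaneously, which is impossible for a point $[x_i : x_{i+f}]$ of $\P^1$. Hence $\mathcal K(\ttt,\rhobar)$, and therefore $\vK^\psi(\vv,\ttt,\rhobar)$, is empty. This direction is immediate and was already observed in the remark following Theorem \ref{thequationsKisin}.

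For the \emph{sufficiency} (no $\binom{\gO}{\gO}$ $\Rightarrow$ non-empty), I would exhibit an explicit point. The natural approach is to run the reduction procedure of \S\ref{ssec:reduction} (rules \emph{i)} to \emph{iv)}) and check that it terminates without contradiction, i.e.\ without ever forcing a variable to be both $0$ and $1$. The discussion in \S\ref{ssec:reduction} already established that, under the hypothesis that $X$ contains no $\binom{\gO}{\gO}$, the propagation of the constraints always proceeds to the right (this is where Lemma \ref{lem:tjsdroite} intervenes, together with Lemma \ref{proprelgene} to handle the ``wrap-around'' at the Moebius seam by choosing $\tau_0$ so that $X_0 = \gO$ or $X_f = \gO$). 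So the only thing left is: once the reductions stabilize, every point projective $[x_i : x_{i+f}]$ that is still unconstrained may be assigned an arbitrary value (for instance $[1:1]$), and the remaining linked columns, being joined by crosses, have been merged and thus carry a single free parameter each. Concretely, after reduction one is left with a disjoint union of ``chains of $\P^1$'' of the type analyzed in \S\ref{exchaineP1}, together with isolated factors, none of which is empty; hence the product is non-empty.

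The \textbf{main obstacle} is to make rigorous the claim that the reduction process never produces a contradiction when $\binom{\gO}{\gO}$ is absent. One must verify that the two possible sources of a clash — two columns $X_i = X_{i+f+1} = \gO$ joined by a link, or a variable forced to $1$ by rule \emph{ii)} that is then forced to $0$ by a link coming from the \emph{left} — cannot occur. The first is excluded because an $\gO$ in position $i$ and an $\gO$ in position $i+f+1$ are never linked (a link out of $X_i = \gO$ is impossible by Lemma \ref{proprelgene}, since $\gO$ is preceded only by $\gO$ or $\gAB$, hence cannot be a dominant $\gA$ or $\gB$). The second is exactly the content of Lemma \ref{lem:tjsdroite}: no link joins $X_{i+f}$ to $X_{i-1}$ when $X_i = \gO$, so the ``forced $1$'' in position $i+f$ never propagates backwards. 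Since the gene is a finite $(2f)$-periodic word and the propagation is strictly rightward once it starts at an $\gO$ (which exists by hypothesis after possibly relabelling via the choice of $\tau_0$), the process terminates; and since it never clashes, the resulting constrained subscheme of $\prod_{i=0}^{f-1}\P^1_{k_E}$ has at least one $k_E$-point. This shows $\mathcal K(\ttt,\rhobar) \neq \emptyset$, hence by Theorem \ref{thequationsKisin} the Kisin variety $\vK^\psi(\vv,\ttt,\rhobar)$ is non-empty, completing the proof.
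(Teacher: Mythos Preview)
Your approach is the same as the paper's --- run the reduction procedure of \S\ref{ssec:reduction}, argue that it cannot produce a contradiction when $\binom{\gO}{\gO}$ is absent, and then exhibit a point on the reduced system --- and your treatment of the ``no contradiction'' step (via Lemma~\ref{lem:tjsdroite} and Lemma~\ref{proprelgene}) is essentially what the paper does. The necessity direction is also fine.

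There are however two genuine gaps in the ``exhibit a point'' step. First, your suggestion to assign $[1:1]$ to every unconstrained column is wrong: after rules \emph{i)}--\emph{iv)} stabilize, there can remain \emph{single} links (not crosses) between two entirely free columns, and such a link imposes an equation of the form $x_i x_{i+f+1}=0$, which $[1:1]\times[1:1]$ violates. You seem to realize this when you pivot to ``chains of $\P^1$'', but the remaining free blocks are in general more complicated than the chains of \S\ref{exchaineP1}: the links can alternate in slope, giving the general \emph{séquences d'allèles} of \S\ref{paradimension}. One must actually specify a consistent $\{0,1\}$-assignment on each such block; the paper does this by an explicit left-to-right sweep that switches between the patterns $(x'_i,x'_{i+f})=(0,1)$ and $(1,0)$ each time a forced $0$ is encountered.

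Second, and more seriously, you assume that the gene contains at least one $\gO$ (``which exists by hypothesis after possibly relabelling''). This is not given: Proposition~\ref{corvide} does not assume $\rhobar$ non-degenerate, so Lemma~\ref{lem:ilyaO} is unavailable, and the gene may consist entirely of $\gA$, $\gB$, $\gAB$. In that case there is no anchor from which to start the rightward propagation, and after merging crosses the reduced gene may form a closed loop of linked free columns (see Remark~\ref{allelesaboucle}). The paper handles this case separately by a direct global assignment; your argument as written does not cover it.
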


\begin{proof}

 Notons $X'$ le gène obtenu à partir de $X$ après stabilisation de 
l'ensemble des réductions \emph{i)} à \emph{iv)}. Notons $x_i'$ les variables correspondantes au gène $X'$, elles forment un sous-ensemble des variables $x_i$ obtenu après l'étape \emph{i)} en fusionnant les colonnes reliées par des croix.
Pour obtenir un point 
de la variété de Kisin, il suffit de spécifier dans $X'$ les variables 
$x'_i$ encore \og libres \fg\ (en accord avec les équations liées aux liens 
restants et en évitant d'avoir simultanément $x'_i=0$ et $x'_{i+f}=0$) et, ensuite, de
se souvenir de l'ensemble des réductions effectuées à l'étape \emph{i)} (pour 
reconstruire un point compatible avec le gène $X$ à partir d'un point 
compatible avec le gène réduit $X'$).

Si $X$ ne contient pas de $\gO$, alors pour tout $i\in\llbracket 0,f-1 
\rrbracket$, nous spécifions $x'_i$ à la valeur $0$ (resp. $1$) et 
$x'_{i+f}$ à la valeur $1$ (resp. $0$) si $x'_{2f-1}$ et $x'_f$ (resp. 
$x'_{f-1}$ et $x'_0$) ne sont pas liés. Ceci définit un point de la 
variété de Kisin.

Si $X$ contient un $\gO$, quitte à changer le plongement 
initial, nous pouvons supposer que $X_0=\gO$ ou $X_{f-1}=\gO$. Traitons 
le cas $X_0=\gO$. Le cas $X_{f-1}=\gO$ est analogue.
Pour la spécification des variables, nous partons de $x'_0=0$ et tant que 
tant que la variable $x'_{i+f}$ n'a pas une valeur contrainte à $0$, nous 
spécifions les variables libres $x'_i$ à la valeur $0$ et les variables 
libres $x'_{i+f}$ à la valeur $1$.
Si pour tout $i\in\llbracket 1,f-1\rrbracket$, $x'_{i+f}=0$, nous 
spécifions ainsi toutes les variables libres du gène décoré $X'$. Sinon 
pour l'indice $i_0\in\llbracket 1,f-1\rrbracket$ minimum tel que 
$x'_{i_0+f}=0$, nous posons $x'_{i_0}=1$. Puis nous procédons de même à 
partir de $x'_{i_0+f}=0$ en spécifiant la valeur des variables libres 
$x'_{i_0+i+f}$ à 0 (resp. $x'_{i_0+i}$ à 1) tant que $x'_{i_0+i}\neq0$.
Nous recommençons ensuite le même processus à partir de $x'_{i_0+i_1}=0$
où $i_1$ est l'indice minimum pour lequel $x'_{i_0+i_1}=0$, et ainsi de
suite. Nous parcourons ainsi tout le gène, en spécifiant successivement 
les valeurs des variables libres à $1$ ou à $0$.

L'élément ainsi construit est compatible avec les liens. En effet, d'une 
part, après réduction, les liens n'apparaissent plus qu'entre des 
couples de variables libres (car les règles \emph{ii)} et \emph{iv)} 
permettent de supprimer tous les liens dont la valeur d'une extrémité 
est spécifiée à $1$ ou $0$). D'autre part, si nous avons deux variables 
libres $x'_i$ et $x'_{i+f+1}$, alors les deux autres variables 
$x'_{i+f}$ et $x'_{i+1}$ sont libres et l'affectation des valeurs 
$x'_i=x'_{i+1}=0$ (ou $x'_{i+f}=x'_{i+f+1}=0$) est compatible avec tout 
lien éventuel.
\end{proof}

\begin{rem} \label{allelesaboucle}
Dans le cas où le gène initial $X$ ne contient pas d'occurence $\gO$, il 
est possible que le gène réduit $X'$ (introduit dans la démonstration de 
la proposition \ref{corvide}) soit constituté d'une unique suite 
de couples de variables libres, chacun étant relié à son successeur
par un ou plusieurs liens, de façon à ce que la séquence 
d'allèles correspondantes présente une boucle. Dans la suite (\S 
\ref{paradimension}), nous détaillons le cas général d'une séquence 
d'allèles sans boucle en indiquant comment adapter les constructions et 
les démonstrations au cas à boucle.
\end{rem}

\subsection{Composantes irréductibles et dimension}
\label{ssec:compirred}\label{paradimension}

Après avoir effectué la réduction du \S \ref{ssec:reduction}, nous 
pouvons découper le gène réduit $X'$ en morceaux de taille minimale (sans 
couper de lien !) ne comprenant qu'un couple de variables à valeurs 
spécifiées ($\binom{0}{1}$ ou $\binom{1}{0}$) ou qu'une suite (dite 
séquence d'allèles) de couples de variables libres tels que deux couples 
successifs sont reliés par un lien.

La variété de Kisin $\vK^\psi(\vv_0, \ttt, \rhobar)$ s'écrit alors comme 
un produit de variétés, dites \emph{variétés facteur}, correspondant
aux séquences d'allèles obtenues à partir de la réduction du gène $X$ 
décoré associé à $\rhobar$ et $\tau$.

Une séquence d'allèles de longueur $\ell$ est de la forme :
\begin{center}
\begin{tikzpicture}[xscale=1.2,yscale=0.8]
\draw[thick,fill=fond] (-0.6,-0.5) rectangle (6.4,1.5);
\node at (0, 0) { \ph $z_{1}$ };
\node at (0, 1) { \ph $y_1$ };
\node at (1, 0) { \ph $z_{2}$ };
\node at (1, 1) { \ph $y_{2}$ };
\node at (2, 0) { \ph $z_{3}$ };
\node at (2, 1) { \ph $y_{3}$ };
\node at (3, 0) { \ph $z_{4}$ };
\node at (3, 1) { \ph $y_{4}$ };
\node at (4, 0) { \ph $\cdots$ };
\node at (4, 1) { \ph $\cdots$ };
\node at (5, 0) { \ph $z_{\ell-1}$ };
\node at (5, 1) { \ph $y_{\ell-1}$ };
\node at (6, 0) { \ph $z_{\ell}$ };
\node at (6, 1) { \ph $y_{\ell}$ };
\draw[thick] (0.2,0.7)--(0.8,0.2);

\draw[thick] (1.2,0.2)--(1.8,0.7);
\draw[thick] (2.2,0.2)--(2.8,0.7);
\draw[thick] (3.2,0.7)--(3.8,0.2);
\draw[thick] (4.2,0.7)--(4.8,0.2);
\draw[thick] (5.2,0.2)--(5.8,0.7);
\end{tikzpicture}
\end{center}

\noindent
où les variables $y_i$ et les $z_i$ correspondent à certains $x_i$.  Dans le cas où le gène initial $X$ ne contient pas d'occurence $\gO$,  il est possible qu'il y ait un lien supplémentaire (dit boucle) entre $y_1$ et $y_\ell$ ou entre $z_1$ et $z_\ell$ (voir remarque \ref{allelesaboucle}). \\

Soit $\mathcal V$ une variété facteur correspondant à une séquence 
d'allèles fixée (sans boucle) de longueur $\ell$. Notons $j_1, \ldots, j_n$ les indices 
(s'ils existent)  de $\llbracket 2,\ell -1\rrbracket$ présentant une 
alternance de pente. S'il n'y a pas d'alternance de pente, nous posons $n=0$ et $j_1=\ell$.
Posons $j_0=1$ et $j_{n+1}=\ell$ et définissons les intervalles :
$$
I_m = [j_m, j_{m+1}] \text{ pour } m \in \llbracket 0, n \rrbracket.$$
Nous convenons de plus que $I_{n+1}=\emptyset$.
\begin{definit}
Une partie $S \subset \llbracket 1, \ell \rrbracket$ est dite
\emph{exprimable}  si 
pour tout $m \in \llbracket 0, n+1 \rrbracket$, l'intersection $S \cap
I_m$ est de cardinal au plus $1$.
\end{definit}
Une partie exprimable $S=\{r_1,\ldots,r_s\}\subset\llbracket 
1,\ell\rrbracket$ définit une sous-variété ${\mathcal V}_S$ de la 
variété facteur $\mathcal V$ isomorphe à $(\P^1_{k_E})^s$ de la façon suivante.
Supposons que $y_1$ et $z_2$ sont liés. Alors, en convenant que
$[y_0:z_0]=[0:1]$, la sous-variété ${\mathcal V}_S$ est l'ensemble des 
$([y_i:z_i])_{1\leq i\leq \ell}$ avec, tout d'abord :
\begin{itemize}
\item[$\bullet$] $[y_{i}:z_{i}]\in\P^1_{k_E}$ si $i\in S$,
\item[$\bullet$] $[y_{i}:z_{i}]=[0:1]$ si $1\leq i<r_1,$
\end{itemize}
puis, pour $u$ prenant successivement les valeurs $1, \ldots, s-1$ :
\begin{itemize}
\item[$\bullet$] si $r_u\in\{j_1,\ldots,j_{n}\}$, alors
$[y_{i}:z_{i}]=[y_{r_u-1}:z_{r_u-1}]$ pour $i\in\llbracket r_u+1,r_{u+1}-1\rrbracket$
\item[$\bullet$] si $r_u\not\in\{j_1,\ldots,j_n\}$, alors
pour $i\in\llbracket r_u+1,r_{u+1}-1\rrbracket$ :
     $$[y_{i}:z_{i}]=\left\{\begin{array}{rcl} [0:1]&\mbox{ si }&[y_{r_u-1}:z_{r_u-1}]=[1:0]\cr
 [1:0]&\mbox{ si }&[y_{r_u-1}:z_{r_u-1}]=[0:1]\cr\end{array}\right.$$
\end{itemize}
et, enfin :
\begin{itemize}
\item[$\bullet$] si $r_s \in\{j_1,\ldots,j_n\}$, 
$[y_{i}:z_{i}]=[y_{r_u-1}:z_{r_u-1}]$ pour $i\in\llbracket r_s+1,\ell \rrbracket$
\item[$\bullet$] si $r_s\not\in\{j_1,\ldots,j_n\}$, alors
pour $i\in\llbracket r_s+1,\ell\rrbracket$ :
     $$[y_{i}:z_{i}]=\left\{\begin{array}{rcl} [0:1]&\mbox{ si }&[y_{r_s-1}:z_{r_s-1}]=[1:0]\cr
 [1:0]&\mbox{ si }&[y_{r_s-1}:z_{r_s-1}]=[0:1].\cr\end{array}\right.$$
\end{itemize}
La construction de ${\mathcal V}_S$ est analogue si $z_{1}$ et $y_{2}$ sont liés, en initialisant 
les valeurs par  $[y_{r}:z_{r}]=[1:0]$ pour $-1\leq r<r_1$.\\

Autrement dit, ${\mathcal V}_S$ s'obtient comme la sous-variété de $(\P^1_{k_E})^{\ell}$ produit de $\P^1_{k_E}$ aux indices $i\in S$ et des valeurs fixées à $[0:1]$ ou $[1:0]$ aux autres indices. L'alternance des valeurs fixées et le caractère exprimable de $S$ garantissent l'inclusion ${\mathcal V}_S\subset{\mathcal V}$.

\begin{rem} Dans le cas d'une séquence d'allèles à boucle (voir remarque \ref{allelesaboucle}), il convient d'adapter la définition de partie exprimable $S\subset\llbracket 1,\ell\rrbracket$ à la présence d'un lien supplémentaire, \emph{i.e.} $S\cap I_{n+1}$ est de cardinal au plus 1 pour 
$$I_{n+1}=\left\{\begin{array}{lll} 
\{1,\ell\}&\mbox{ s'il y a une alternance de pente en $1$ et $\ell$,}\cr
\{1\}\cup[j_n,\ell]&\mbox{ s'il y a une alternance de pente en $1$, mais pas en $\ell$,}\cr
\{\ell\}\cup[1,j_1]&\mbox{ s'il y a une alternance de pente en $\ell$, mais pas en 1,}\cr
[1,j_1]\cup[j_n,\ell]&\mbox{ s'il n'y a pas d'alternance de pente en $1$, ni en $\ell$. }\cr
\end{array}\right.$$
La construction de la sous-variété ${\mathcal V}_S$ associée à une partie exprimable $S$ est analogue, avec une initialisation adaptée des valeurs des variables pour $0\leq r<r_1$. 
\end{rem}

\begin{prop}
\label{prop:dimV}
Les composantes irréductibles de $\mathcal V$ sont en bijection avec les 
parties exprimables $S$ maximales pour l'inclusion.

La dimension de $\mathcal V$ est égale au cardinal maximal d'une partie 
exprimable et est toujours inférieure ou égale à $\frac {\ell+1} 2$.
\end{prop}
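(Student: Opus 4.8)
The strategy is to describe explicitly the irreducible components of $\mathcal V$ and then read off the dimension. First I would fix a sequence of alleles of length $\ell$ (I treat the loop-free case, the loop case being handled by the variant of the definition of exprimable subset) and observe that $\mathcal V$ is the subvariety of $(\P^1_{k_E})^\ell$ cut out by the equations attached to the links, which are of exactly two shapes: an equality $[y_i:z_i]=[y_{i+1}:z_{i+1}]$ when two consecutive columns have a link of the same slope, and a vanishing equation $y_{i+1}z_i = 0$ (or $z_{i+1}y_i = 0$) at a point of slope alternation. The indices $j_1,\dots,j_n$ of slope alternation partition $\llbracket 1,\ell\rrbracket$ into the intervals $I_0,\dots,I_n$; inside each $I_m$ all the links have the same slope, so the equations force $[y_i:z_i]$ to be constant along $I_m$ \emph{until} one of the two homogeneous coordinates is set to $0$, after which it stays $0$ for the rest of that interval and the \og complementary\fg\ coordinate is forced to $1$ at the start of the next interval. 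This is precisely the mechanism already analysed in the example of \S \ref{exchaineP1} for a chain of $\P^1$; the present situation is a concatenation of such chains glued at the alternation points.

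Next I would make the correspondence with exprimable subsets precise. Given a point of $\mathcal V$, let $S$ be the set of indices $i$ at which $[y_i:z_i]$ is \og free\fg, i.e. not forced to be $[0:1]$ or $[1:0]$ by the above propagation. A direct inspection of the two types of equations inside a single interval $I_m$ shows that at most one index of $I_m$ can be free: as soon as a free coordinate is followed, within $I_m$, by a link, the vanishing/equality equation propagates a $0$ and kills all subsequent freedom in that interval. Hence $S$ is exprimable. Conversely, the construction of $\mathcal V_S$ given just before the statement produces, for each exprimable $S = \{r_1 < \dots < r_s\}$, a closed subvariety of $\mathcal V$ isomorphic to $(\P^1_{k_E})^s$: one checks that the alternating pattern of fixed values $[0:1]$, $[1:0]$ dictated by the rules satisfies every link equation (an equality link connects two equal fixed values or two copies of the free $\P^1$; a vanishing link $y_{i+1}z_i=0$ is satisfied because one of the two fixed values on either side is $[0:1]$ or $[1:0]$ with the relevant coordinate $0$). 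Therefore $\mathcal V = \bigcup_S \mathcal V_S$, the union ranging over exprimable $S$, and $\mathcal V_S \subset \mathcal V_{S'}$ whenever $S \subset S'$. Since each $\mathcal V_S \cong (\P^1_{k_E})^{|S|}$ is irreducible, the irreducible components of $\mathcal V$ are exactly the $\mathcal V_S$ for $S$ maximal exprimable, which gives the first assertion; and $\dim \mathcal V = \max_S |S|$, the maximum of $|S|$ over exprimable subsets.

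Finally, the bound $\dim \mathcal V \leq \frac{\ell+1}{2}$ follows from a counting argument on exprimable subsets. The intervals $I_0,\dots,I_n$ cover $\llbracket 1,\ell\rrbracket$ and two consecutive ones overlap in exactly their common endpoint $j_m$, so $\sum_{m=0}^n |I_m| = \ell + n$. An exprimable $S$ meets each $I_m$ in at most one element and (in the loop-free case) meets $I_{n+1} = \emptyset$ in none, whence $|S| \leq n+1$; on the other hand, counting lengths, $\ell = \sum (|I_m| - 1) + 1 \geq \sum 1 + \dots$ gives $\ell \geq 2n+1$ (each interval has length $\geq 2$ except possibly the extreme ones), so $n \leq \frac{\ell-1}{2}$ and $|S| \leq n+1 \leq \frac{\ell+1}{2}$; one handles the boundary intervals $I_0$, $I_n$ and the loop case by the same bookkeeping using the adapted definition of $I_{n+1}$. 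The main obstacle I anticipate is not any single deep point but the careful case analysis needed to verify that the propagation rules genuinely force exactly the claimed pattern — in particular checking, at the alternation indices $j_m$, that the shape of the link (which of $y$ or $z$ is annihilated) is compatible with the value carried over from $I_{m-1}$, and dealing cleanly with the loop case where the cyclic gluing can create one extra constraint; this is exactly what the adapted definition of $I_{n+1}$ in the preceding remark is designed to absorb.
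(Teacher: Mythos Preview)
Your approach to the first assertion --- covering $\mathcal V$ by the $\mathcal V_S$, noting each is irreducible as a product of copies of $\P^1_{k_E}$, and identifying the maximal exprimable $S$ as giving the components --- is essentially the paper's proof. One minor correction: after the reduction of \S\ref{ssec:reduction} no crosses remain, so in a s\'equence d'all\`eles every link contributes only a vanishing-product equation $y_i z_{i+1}=0$ or $z_i y_{i+1}=0$; there are no equality equations $[y_i:z_i]=[y_{i+1}:z_{i+1}]$. Inside an interval $I_m$ of constant slope the picture is therefore the chain of $\P^1$'s of \S\ref{exchaineP1}, not a constant-value constraint. This does not break your decomposition, but your description of the mechanism is inaccurate.

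Your argument for the bound $\dim \mathcal V \le \tfrac{\ell+1}{2}$, however, has a genuine gap. You deduce $|S| \le n+1$ and then claim $n+1 \le \tfrac{\ell+1}{2}$ via $\ell \ge 2n+1$. The first step is fine, but the second fails: take $\ell=4$ with slope alternations at $j_1=2$ and $j_2=3$ (links alternate at every step); then $n=2$ and $2n+1=5>4=\ell$. Your length identity $\ell=\sum_m(|I_m|-1)+1$ is correct, but since each $|I_m|\ge 2$ it only yields $\ell \ge n+2$, not $\ell \ge 2n+1$; and in this example the maximal exprimable cardinal is $2<n+1=3$, so the two-step route cannot be repaired. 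The paper's argument avoids this detour entirely: an exprimable subset can never contain two consecutive integers (if $i,i+1\in S$, the overlap $I_{m-1}\cap I_m=\{j_m\}$ forces both into a common interval, contradicting $|S\cap I_m|\le 1$), whence $|S|\le\lceil\ell/2\rceil\le\tfrac{\ell+1}{2}$ immediately.
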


\begin{proof}
Un examen des équations définissant $\mathcal V$ montre que $\mathcal V$
est égale à la réunion des $\mathcal V_S$ pour $S$ parcourant l'ensemble
des parties exprimables maximales de $\llbracket 0, \ell \rrbracket$. Par
ailleurs, les variétés $\mathcal V_S$ sont toutes irréductibles car elles
s'identifient à des produits de copies de $\P^1_{k_E}$. Il résulte de ceci que
les composantes irréductibles de $\mathcal V$ sont à compter parmi les
$\mathcal V_S$. Par ailleurs, aucune des variétés $\mathcal V_S$ (pour
$S$ parcourant l'ensemble des parties exprimables maximales) n'est 
incluse dans une autre à cause de la condition de maximalité. Nous en
déduisons la première assertion de la proposition. 

Le fait que la dimension de $\mathcal V$ soit égale au cardinal maximal 
d'une partie exprimable de $\llbracket 1, \ell \rrbracket$ est alors 
évident après avoir remarqué qu'une partie exprimable de cardinal maximal 
est \emph{a fortiori} maximale pour l'inclusion. Enfin la majoration
$\dim \mathcal V \leq \frac {\ell + 1} 2$ résulte du fait qu'une partie
exprimable de $\llbracket 1, \ell \rrbracket$ ne peut pas contenir deux
entiers consécutifs.
\end{proof}

\begin{ex}
Examinons le cas où l'ensemble de valeurs prises par les $j_m$ est 
$\llbracket 1, \ell \rrbracket$, c'est-à-dire, graphiquement, où les 
liens montent et descendent alternativement :

\begin{center}
\begin{tikzpicture}[xscale=1.2,yscale=0.8]
\draw [thick, fill=fond] (-0.5,-0.5) rectangle (6.3,1.5);
\node at (0,1) { \ph$y_1$ };
\node at (1,1) { \ph$y_2$ };
\node at (2,1) { \ph$y_3$ };
\node[scale=1.5] at (3.25,1) { \ph$\cdots$ };
\node at (4.5,1) { \ph$y_{\ell-1}$ };
\node at (5.7,1) { \ph$y_{\ell}$ };
\node at (0,0) { \ph$z_1$ };
\node at (1,0) { \ph$z_2$ };
\node at (2,0) { \ph$z_3$ };
\node[scale=1.5] at (3.25,0) { \ph$\cdots$ };
\node at (4.5,0) { \ph$z_{\ell-1}$ };
\node at (5.7,0) { \ph$z_{\ell}$ };
\draw[thick] (0.25,0.2)--(0.75,0.7);
\draw[thick] (1.25,0.7)--(1.75,0.2);
\draw[thick] (2.25,0.2)--(2.75,0.7);
\draw[thick] (3.75,0.7)--(4.25,0.2);
\draw[thick] (4.85,0.2)--(5.35,0.7);
\end{tikzpicture}
\end{center}

\noindent
Dans cette situation, une partie de $\llbracket 1, \ell \rrbracket$ est 
exprimable si et seulement si elle ne contient pas deux entiers 
consécutifs. Une partie exprimable de cardinal maximal est donc obtenue 
en ne gardant que les entiers impairs de $\llbracket 1, \ell \rrbracket$ ; 
elle compte $\lfloor \frac{\ell + 1} 2 \rfloor$ éléments. La dimension 
de la variété facteur $\mathcal V_\ell$ correspondante est donc, dans ce 
cas, égale à $\lfloor \frac{\ell + 1} 2 \rfloor$.

Nous constatons, par ailleurs, qu'au moins pour $\ell \geq 3$, il existe 
d'autres parties exprimables maximales ayant des cardinaux moindres. Un 
exemple est donné par la partie $S \subset \llbracket 1, \ell \rrbracket$ 
qui regroupe les entiers congrus à $2$ modulo $3$. Cette partie a pour 
cardinal $\lfloor \frac{\ell + 1} 3 \rfloor$ ; la variété $\mathcal 
V_\ell$ possède donc au moins une composante irréductible de cette 
dimension.

Nous pouvons aisément être plus précis et dénombrer les
composantes irréductibles de $\mathcal V_\ell$. Pour ce faire,
posons :
$$P_\ell(X) = \sum_{d=0}^{\dim \mathcal V_\ell} a_{\ell, d} X^d$$
où $a_{\ell,d}$ est la nombre de composantes irréductibles de $\mathcal
V_\ell$ de dimension $d$. En remarquant qu'une partie exprimable 
maximale de $\llbracket 1, \ell \rrbracket$ contient nécessairement
soit $1$, soit $2$, nous obtenons la relation de récurrence suivante :
\begin{equation}
\label{eq:recPl}
P_\ell(X) = X \cdot \big( P_{\ell-2}(X) + P_{\ell-3}(X) \big).
\end{equation}
Par ailleurs, les premiers termes de la suite des $P_\ell(X)$ sont
aisés à calculer : nous trouvons $P_0(X) = 1$, $P_1(X) = X$ et $P_2(X) 
= 2X$. Les $P_\ell(X)$ suivants s'obtiennent alors en itérant 
\eqref{eq:recPl}. Par exemple, nous trouvons :
$$P_{20}(X) = 11 \cdot X^{10} + 120 \cdot X^9 + 126 \cdot X^8 + 
8 \cdot X^7$$
et constatons que la géométrie des variétés
$\mathcal V_\ell$ --- et donc, par corollaire, celle des variétés de 
Kisin --- peut être assez complexe. Remarquons, pour conclure, que la 
valeur $P_\ell(1)$ donne le nombre de composantes irréductibles de 
$\mathcal V_\ell$ ; une étude simple permet de montrer qu'elle se 
comporte asymptotiquement comme $c \cdot \alpha^\ell$ où $c$ est une 
constante strictement positive et $\alpha > 1$ est solution de $\alpha^3 
= \alpha + 1$.
\end{ex}

Pourvu que nous ayons gardé trace des réductions successives permettant 
de passer du gène décoré $X$ associé à $\rhobar$ et à $\ttt$ aux 
différentes séquences d'allèles qui composent le gène décoré $X'$ obtenu 
par réduction, nous déduisons immédiatement de la proposition 
\ref{prop:dimV} les composantes irréductibles (produit des composantes 
irréductibles des variétés facteur) et la dimension de la variété de 
Kisin $\vK^\psi(\vv_0, \ttt, \rhobar)$.

\subsection{Connexité des variétés de Kisin}
\label{paraKisinconnexe}

L'objectif de cette partie est de démontrer la connexité des variétés de 
Kisin $\vK^\psi(\vv_0, \ttt, \rhobar)$. D'après la construction effectuée au paragraphe précédent,  il suffit d'établir la connexité des variétés {\it facteur}
$\mathcal V$ du \S \ref{ssec:compirred} dont nous reprenons 
intégralement les notations. La proposition suivante est la clé de la 
démonstration. 

\begin{prop}
\label{prop:interirred}
Soient $S$ (resp. $T$) une partie exprimable maximale de $\llbracket
1, \ell \rrbracket$ et $\mathcal V_S$ (resp. $\mathcal V_T$) la
composante irréductible de $\mathcal V$ qui lui correspond.
Alors $\mathcal V_S \cap \mathcal V_T$ est non vide si et seulement
si pour tout $m \in \llbracket 0, n+1 \rrbracket$, $a \in S \cap I_m$ 
et $b \in T \cap I_m$, nous avons {\rm (}$|a-b| \leq 1${\rm )} ou 
{\rm (}$|a-b|=\ell-1$ et $m=n+1${\rm )}.
\end{prop}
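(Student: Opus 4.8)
The key observation is that $\mathcal V_S$ and $\mathcal V_T$ are both products of copies of $\P^1_{k_E}$ sitting inside $(\P^1_{k_E})^{\ell}$, so their intersection is non-empty if and only if the "fixed'' coordinates imposed by the two exprimable sets are compatible at every index. I would therefore begin by making explicit, for a single maximal exprimable set $S = \{r_1, \dots, r_s\}$, the pattern of fixed coordinates: on the portion of $\llbracket 1, \ell\rrbracket$ lying strictly between two consecutive elements $r_u$ and $r_{u+1}$ of $S$ that lie in a common interval-free stretch (no alternation of slope), the value $[y_i:z_i]$ is forced to alternate between $[0:1]$ and $[1:0]$ as dictated by the construction of $\mathcal V_S$, whereas across an alternation index $j_m$ the forced value is constant. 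The first step is thus to record that, away from the free coordinates of $S$, the coordinate $[y_i:z_i]$ takes a value that depends only on the position of $i$ relative to the $r_u$'s and the $j_m$'s.

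Next I would translate "$\mathcal V_S \cap \mathcal V_T \neq \emptyset$'' into a pointwise statement. A point in the intersection must satisfy simultaneously the fixed-coordinate constraints of $S$ \emph{and} those of $T$. At an index $i$ that is free for both $S$ and $T$ there is no obstruction. At an index $i$ that is fixed for $S$ but free for $T$ (or vice versa), again no obstruction, since a free $\P^1_{k_E}$-coordinate can take the prescribed value. The only potential obstruction is at an index $i$ that is fixed for \emph{both} $S$ and $T$: then the two prescribed values in $\P^1_{k_E}$ must coincide. So the plan is to show that this compatibility holds at every doubly-fixed index \emph{if and only if} the combinatorial condition in the statement holds — namely that any $a \in S \cap I_m$ and $b \in T \cap I_m$ are either adjacent ($|a-b|\le 1$) or, in the looped/boundary interval $I_{n+1}$, satisfy $|a-b| = \ell-1$.

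The heart of the argument is the following local analysis on a single interval $I_m = [j_m, j_{m+1}]$ on which the slopes do not alternate. On such an interval the forced values of $\mathcal V_S$ alternate parity-style away from the unique element $a = S \cap I_m$ (if it exists), and likewise for $\mathcal V_T$ away from $b = T \cap I_m$. If $a$ and $b$ are present and $|a-b| \geq 2$, then strictly between $a$ and $b$ there is an index $i$ that is fixed for both $S$ and $T$; one checks that the two alternating patterns, being "anchored'' at $a$ and $b$ respectively, disagree at some such $i$ precisely because the number of steps from $a$ and the number of steps from $b$ to that $i$ have opposite effect — this is where the parity/alternation bookkeeping is done, and it is the step I expect to be the main obstacle, since one must handle carefully the behaviour at the endpoints $j_m, j_{m+1}$ and the initialisation conventions $[y_0:z_0]=[0:1]$ (or $[1:0]$). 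Conversely, if $|a-b|\le 1$, or if one of $a,b$ is absent, the alternating patterns of $S$ and $T$ agree on every doubly-fixed index of $I_m$ — essentially because they are then anchored compatibly (adjacent anchors produce the same alternation), so one can extend to an actual point of $\mathcal V_S \cap \mathcal V_T$. Finally, for the interval $I_{n+1}$ in the looped case, the indices $1$ and $\ell$ are identified by the extra link, which is exactly why $|a-b| = \ell-1$ is also allowed there; I would handle this case by the same local analysis after "unrolling'' the loop, checking that the supplementary link makes positions $1$ and $\ell$ play the role of adjacent indices. Assembling the local statements over all $m \in \llbracket 0, n+1\rrbracket$ yields the proposition.
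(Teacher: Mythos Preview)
Your plan is correct and follows essentially the same route as the paper: reduce to the chain-of-$\P^1$ case on each interval $I_m$ (where the structure is exactly that of \S\ref{exchaineP1}), and assemble the local compatibility conditions over all $m$. The paper's own proof is extremely terse --- it merely states that the proposition is clear in the absence of alternation of slope and that the general case follows by working successively on each $I_m$ --- so your write-up is in fact a fleshed-out version of the same argument. One small wording issue: within a single $I_m$ the forced coordinates of $\mathcal V_S$ do not ``alternate'' but are constant on each side of the unique free index $a$ (this is precisely the description in \S\ref{exchaineP1}); with that correction your local analysis goes through exactly as you outline.
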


\begin{proof}
La proposition est claire si la séquence d'allèles définissant $\mathcal V$ ne contient pas d'alternance de pente (voir \S \ref{exchaineP1}). Le résultat général s'obtient en travaillant successivement sur chaque fragment de diagrammes d'indice $I_m$ pour $m\in\llbracket 0,n+1\rrbracket$.
\end{proof}

\begin{cor}
La variété $\mathcal V$ est connexe.
\end{cor}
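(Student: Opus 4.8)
Le plan est de ramener l'énoncé à la connexité d'un graphe combinatoire, que l'on établira ensuite par un argument de déformation. D'abord, on écrira $\mathcal V$ comme réunion de ses composantes irréductibles : d'après la proposition \ref{prop:dimV}, ce sont les $\mathcal V_S$, où $S$ parcourt les parties exprimables maximales de $\llbracket 1, \ell \rrbracket$, et chacune est connexe puisqu'elle s'identifie à un produit de copies de $\P^1_{k_E}$. La connexité de $\mathcal V$ équivaudra donc à celle du \emph{graphe des intersections} $\Gamma$, ayant pour sommets les parties exprimables maximales et une arête entre $S$ et $T$ chaque fois que $\mathcal V_S \cap \mathcal V_T \neq \emptyset$. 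Le cas sans alternance de pente ($n = 0$) sera traité à part : $\mathcal V$ est alors une chaîne de $\P^1_{k_E}$ (\S \ref{exchaineP1}), donc connexe ; on supposera ensuite $n \geq 1$.

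Ensuite, on exhibera des \emph{mouvements élémentaires} entre parties exprimables maximales qui sont automatiquement des arêtes de $\Gamma$. On en prévoit deux types : \emph{(a)} décaler de $\pm 1$, à l'intérieur d'un intervalle $I_m$, l'unique élément de $S$ qui s'y trouve ; \emph{(b)} échanger un élément de $S$ égal à une extrémité commune $j_m = I_{m-1} \cap I_m$ contre la paire $\{j_m - 1, j_m + 1\}$, ou inversement. Dans chacun de ces cas, les deux parties $S$ et $S'$ concernées vérifieront $|a - b| \leq 1$ pour tous $m$, $a \in S \cap I_m$ et $b \in S' \cap I_m$, de sorte que la proposition \ref{prop:interirred} fournira aussitôt $\mathcal V_S \cap \mathcal V_{S'} \neq \emptyset$. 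On vérifiera au passage que ces opérations préservent à la fois l'exprimabilité (absence de deux entiers consécutifs) et la maximalité ; ce sera un point de routine, guidé par l'observation que la maximalité de $S$ force chaque intervalle $I_m$ (pour $0 \leq m \leq n$) à rencontrer $S$.

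Enfin, on montrera que $\Gamma$ est connexe en reliant toute partie exprimable maximale $S$ à une partie canonique $S_\star$ fixée --- par exemple celle obtenue gloutonnement en balayant $I_0, \ldots, I_n$ de gauche à droite ---, par une suite finie de mouvements \emph{(a)} et \emph{(b)}. La construction se fera par récurrence de gauche à droite : une fois $S$ et $S_\star$ rendues égales sur $I_0 \cup \cdots \cup I_{m-1}$, on fera glisser l'élément de $S$ situé dans $I_m$ vers sa position dans $S_\star$ sans perturber ce qui a déjà été aligné à gauche. L'obstacle principal que j'anticipe est le traitement des extrémités communes $j_m$, où un même élément recouvre deux intervalles : c'est précisément pour le contourner que le mouvement \emph{(b)} a été introduit, et il faudra contrôler soigneusement que le dédoublement de $j_m$ en $\{j_m - 1, j_m + 1\}$ reste à chaque étape dans le domaine des parties exprimables maximales. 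Le cas d'une séquence d'allèles à boucle (remarque \ref{allelesaboucle}) se traitera en parallèle : l'intervalle supplémentaire $I_{n+1}$ ajoute un cran au balayage, et la clause additionnelle de la proposition \ref{prop:interirred} relative au cas $m = n+1$ permet de tenir compte de la boucle dans le même formalisme. La connexité de $\Gamma$, donc celle de $\mathcal V$, en résultera.
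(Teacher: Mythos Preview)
Your approach is essentially the paper's: the paper's proof is a single sentence asserting that any two maximal expressible parts $S$ and $T$ can be joined by a chain $S_0=S,\ldots,S_d=T$ of maximal expressible parts with each consecutive pair satisfying the criterion of Proposition~\ref{prop:interirred}, and your plan simply makes this chain explicit via the elementary moves~(a) and~(b).

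One small correction: your guiding observation that maximality of $S$ forces $S\cap I_m\neq\emptyset$ for every $m\in\llbracket 0,n\rrbracket$ is false when $|I_m|=2$ --- for instance with $I_0=[1,2]$, $I_1=[2,3]$, $I_2=[3,5]$, the part $S=\{1,4\}$ is maximal expressible yet misses $I_1$. This does not undermine the strategy (the condition of Proposition~\ref{prop:interirred} is vacuous on empty intersections, and a move landing on a non-maximal expressible part can be followed by a maximal enlargement), but you should allow for this case rather than rely on the observation when checking that your moves stay within maximal expressible parts.
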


\begin{proof}
Étant donné deux parties exprimables maximales $S$ et $T$ de $\llbracket 
1, \ell \rrbracket$, il est facile de construire une suite $S_0, \ldots, 
S_d$ de parties exprimables maximales de $\llbracket 1, \ell \rrbracket$ telles que
$S_0 = S$, $S_d = T$ et le couple $(S_i, S_{i+1})$ vérifie la condition
de la proposition \ref{prop:interirred} pour tout $i$. Le corollaire
s'en déduit.
\end{proof}

\begin{cor}
\label{corcon}
La variété de Kisin $\vK^\psi(\vv_0, \ttt, \rhobar)$ est connexe.
\end{cor}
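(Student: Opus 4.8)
Le plan est de déduire la connexité de $\vK^\psi(\vv_0, \ttt, \rhobar)$ de celle des variétés facteur $\mathcal V$ établie dans le corollaire précédent, en s'appuyant sur la décomposition en produit obtenue au \S \ref{ssec:compirred}.

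Nous commençons par écarter le cas vide. Si un gène $X$ associé au couple $(\rhobar, \ttt)$ contient le couple d'allèles $\binom{\gO}{\gO}$, alors la proposition \ref{corvide} assure que $\vK^\psi(\vv_0, \ttt, \rhobar)$ est vide et il n'y a rien à démontrer. Nous supposons donc désormais que $\binom{X_i}{X_{i+f}} \neq \binom{\gO}{\gO}$ pour tout $i$ ; la proposition \ref{corvide} garantit alors que la variété est non vide et, comme rappelé au \S \ref{ssec:reduction}, le procédé de réductions \emph{i)} à \emph{iv)} se stabilise sans produire de contradiction en un gène décoré réduit $X'$.

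Nous invoquons ensuite la description structurelle du \S \ref{ssec:compirred} : en découpant $X'$ en ses blocs minimaux, la variété $\vK^\psi(\vv_0, \ttt, \rhobar)$ devient isomorphe à un produit fini dont les facteurs sont, d'une part, des points (provenant des blocs $\binom{0}{1}$ et $\binom{1}{0}$) et, d'autre part, les variétés facteur $\mathcal V$ associées aux séquences d'allèles apparaissant dans $X'$ --- y compris, lorsque le gène initial ne contient pas de $\gO$, les séquences à boucle de la remarque \ref{allelesaboucle}. Chacune de ces variétés $\mathcal V$ est une réunion de produits de copies de $\P^1_{k_E}$, donc géométriquement connexe (les intersections non vides $\mathcal V_S \cap \mathcal V_T$ contiennent des points $k_E$-rationnels explicites) et connexe d'après le corollaire qui précède ; un point est trivialement connexe. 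Comme un produit fini de $k_E$-schémas géométriquement connexes est géométriquement connexe, donc connexe, la variété $\vK^\psi(\vv_0, \ttt, \rhobar)$ est connexe.

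Tous les ingrédients ayant déjà été établis --- le critère de non-vacuité, le procédé de réduction, la décomposition en produit et la connexité de chaque facteur $\mathcal V$ ---, nous ne prévoyons pas d'obstacle véritable. Le seul point demandant un peu d'attention est de s'assurer que la décomposition en produit du \S \ref{ssec:compirred} rend bien compte de tous les facteurs, et en particulier que les séquences d'allèles à boucle sont couvertes par l'énoncé de connexité des $\mathcal V$, ce qui est le cas puisque la démonstration de la proposition \ref{prop:interirred} est rédigée de manière à inclure ce cas.
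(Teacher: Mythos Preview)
La proposition est correcte et suit exactement la même approche que l'article : déduire la connexité de la décomposition en produit de variétés facteur $\mathcal V$ établie au \S\ref{ssec:compirred} et de la connexité de chacune de ces dernières. Tu es simplement plus explicite que l'article (traitement du cas vide, mention de la connexité géométrique pour justifier que le produit reste connexe), ce qui est bienvenu puisque la preuve de l'article se contente d'une phrase.
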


\begin{proof}
C'est immédiat étant donné que 
$\vK^\psi(\vv_0, \ttt, \rhobar)$ s'écrit comme un produit
de variétés facteurs $\mathcal V$ de la forme précédente.
\end{proof}
Notons que ce corollaire répond, dans notre cadre, à la conjecture~2.4.16 de \cite{Ki1}.

\section{Stratification et espace de déformations}
\label{sec:Stratification}

Le but de cette dernière partie est de donner des indications indiquant 
que l'anneau de déformations $R^\psi(\vv, \ttt, \rhobar)$ et la variété 
de Kisin $\vK^\psi(\vv, \ttt, \rhobar)$ sont deux objets intimement 
liés.

\subsection{Spécialisation et genre}

Soit $D^\psi(\vv,\ttt,\rhobar)$ l'espace rigide qui a pour anneau des 
fonctions $R^\psi(\vv, \ttt, \rhobar)[1/p]$. Le théorème 
\ref{th:isomgeneric} nous autorise à penser à $D^\psi(\vv,\ttt, 
\rhobar)$ également comme à la fibre générique de 
$\widehat{\GR}^\psi(\vv, \ttt, \rhobar)$. Nous prendrons garde toutefois 
au fait que cette dernière assertion n'a \emph{a priori} pas un sens 
précis étant donné que le schéma formel $\widehat{\GR}^\psi( 
\vv,\ttt,\rhobar)$ n'entre pas dans le cadre de la géométrie rigide 
classique car il n'est pas muni de la topologie $p$-adique (mais de la 
topologie $\m$-adique où $\m$ est l'idéal maximal de $R^\psi(\vv, \ttt, 
\rhobar)$).

Malgré tout, nous pouvons construire une application de spécialisation 
$$\sp : D^\psi(\vv,\ttt,\rhobar) \to \vK^\psi(\vv, \ttt, \rhobar)$$
qui est définie, au moins, de façon ensembliste sur les points fermés. 
Dans la suite, lorsque 
$E'$ est une extension finie de $E$,  $\oEp$ fait référence à son 
anneau des entiers, $\m_{E'}$ à son idéal maximal, $k_{E'} = \oEp / 
\m_{E'}$ à son corps résiduel et $\pi_{E'}$ à l'une de ses 
uniformisantes. Afin de construire le morphisme de spécialisation, nous 
remarquons en premier lieu que la donnée d'un point $x \in D^\psi(\vv,
\ttt,\rhobar)$ de corps résiduel $E'$ est équivalente à celle d'un 
couple $(\rho,\MK)$ où :
\begin{itemize}
\item[$\bullet$] $\rho$ est une classe d'isomorphisme de représentations 
$\rho : G_F \to \GL_2(\oEp)$ potentiellement Barsotti--Tate de type 
$(\vv,\ttt)$, de déterminant $\psi$ et dont la réduction modulo 
$\pi_{E'}$ est isomorphe à $\rhobar$, et
\item[$\bullet$] $\MK$ est un réseau de type $(\vv,\ttt)$ à 
l'intérieur de $\bbM(E' \otimes_{\oEp} \rho)$ (qui est uniquement 
déterminé). \smallskip
\end{itemize}
Le quotient $\MK / \pi_{E'} \MK$ est alors un réseau de type $(\vv, 
\ttt)$ dans $\bbM( \rhobar)$. Il apparaît donc comme un $k_{E'}$-point 
de $\vK^\psi(\vv, \ttt, \rhobar)$ et en détermine ainsi un point fermé. 
Cette construction définit une application ensembliste $\sp : 
D^\psi(\vv,\ttt,\rhobar) \to \vK^\psi(\vv, \ttt, \rhobar)$.

La théorie des genres introduite et développée dans \cite{Br} puis reprise 
dans \cite{CDM} permet de décrire l'image réciproque d'un point fermé de 
$\vK^\psi(\vv, \ttt,\rhobar)$ par l'application de spécialisation. Rappelons 
tout d'abord la définition du genre d'un module de Breuil--Kisin.
Soit $R = \oEp$ ou $R = k_{E'}$. Soit 
$$\MK = \MK^{(0)} \oplus \cdots \oplus \MK^{(f-1)}$$
 un module de Breuil--Kisin
qui est libre de rang $2$ sur $\SK_R$, de type de Hodge $\vv$ et de 
type galoisien $\ttt$. Pour tout $i \in \llbracket 0, f-1\rrbracket$, il existe 
une base $(e^{(i)}_\eta, e^{(i)}_{\eta'})$ dans laquelle $\Gal(L/K)$ agit par 
la matrice $\Big(\begin{matrix} \eta &0 \\ 0 & \eta' \end{matrix}\Big)$.
Notons $G^{(i)}$ la matrice de l'application $\varphi : \MK^{(i)} \to 
\MK^{(i+1)}$ dans les bases précédentes.

\begin{definit} Avec les notations ci-dessus,
le genre $(g_i(\MK))_{0\leq i\leq f-1}$ de $\MK$ est défini par
\begin{itemize}
\item[$\bullet$] $g_i(\MK) = \Ieta$ si le coefficient en
haut à gauche de $G^{(i)}$ est inversible,
\item[$\bullet$] $g_i(\MK) = \Ietap$ si le coefficient en
bas à droite de $G^{(i)}$ est inversible,
\item[$\bullet$] $g_i(\MK) = \II$ sinon.
\end{itemize}
\end{definit}

Suivant la stratégie de la démonstration du lemme 3.1.7 de \cite{CDM}, 
nous constatons que le genre $(g_i(\MK))_{0\leq i\leq f-1}$ ne dépend 
que de $\MK$ --- et notamment pas du choix des bases $(e^{(i)}_\eta, 
e^{(i)}_{\eta'})_{0\leq i\leq f-1}$.

En caractéristique $p$, le genre définit une stratification de la 
variété de Kisin. Plus précisément, considérons l'ensemble $\mathcal G = 
\{\I, \II\}$ et munissons $\mathcal G^f$ de l'ordre partiel $\leq$ 
défini sur chaque composante par $\I \leq \II$. Soit encore $\oubgenre : 
\{\Ieta,\Ietap,\II\} \to \mathcal G$ l'application qui envoie $\Ieta$ et 
$\Ietap$ sur $\I$ et $\II$ sur $\II$.
Avec ces notations, nous avons le théorème suivant dont la démonstration 
est reportée au \S \ref{sssec:demstrat}.

\begin{thm}
\label{th:stratification}
La variété $\vK^\psi(\vv, \ttt,\rhobar)$ admet une stratification par
des sous-variétés réduites et localement fermées 
$\vK^\psi_g(\vv, \ttt,\rhobar)$ {\rm (}$g=(g_i)_{0\leq i< f} \in \mathcal G^f${\rm )} 
caractérisées par :
$$\vK^\psi_g(\vv, \ttt,\rhobar)(k) = 
\left\{ \begin{array}{c}
\text{réseaux } \MK \text{ tels que} \\
\oubgenre(g_i(\MK)) = g_i, \, \forall i
\end{array} \right\}
\subset \vK^\psi(\vv, \ttt,\rhobar)(k)$$
pour toute extension finie $k$ de $k_E$. 

De plus, l'adhérence d'une strate $\vK^\psi_g(\vv, \ttt,\rhobar)$ est
égale à l'union des strates $\vK_{g'}(\vv, \ttt,\rhobar)$ prise sur 
tous les $g' \geq g$.
\end{thm}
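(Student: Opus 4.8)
The plan is to work facteur par facteur, reducing the problem to the combinatorial structure of a single séquence d'allèles, exactly as was done for the connexité in \S\ref{paraKisinconnexe}. First I would observe that the genus $(g_i(\MK))_{0 \leq i < f}$ is \emph{Zariski-locally constructible}: given a $k$-point of $\vK^\psi(\vv, \ttt, \rhobar)$ with associated réseau $\MK$, after passing to a Zariski-open cover on which the $\MK^{(i)}$ are libres, one has the explicit normal forms \eqref{eq:Pi1}, \eqref{eq:Pi2} for the matrices $P^{(i)}$ and hence the matrices $H^{(i)} = (\det P^{(i+1)})^{-1} K^{(i)}$ computed in \S\ref{subsec:stabphi}. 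The condition "le coefficient en haut à gauche de $H^{(i)}$ est inversible" becomes a condition on the vanishing or non-vanishing of certain coordinates $x_j$, so the locus where $\mathfrak{z}(g_i(\MK)) = g_i$ for all $i$ is defined by imposing some $x_j = 0$ (when $g_i = \II$) together with some complementary non-vanishing conditions; this is manifestly locally closed, and the $\vK^\psi_g$ so defined partition $\vK^\psi(\vv, \ttt, \rhobar)(k)$ compatibly with field extension. Reducedness is inherited from the reducedness of $\vK^\psi(\vv, \ttt, \rhobar) \cong \mathcal K(\ttt, \rhobar)$, which is a reduced subscheme of $\prod \P^1_{k_E}$ by Theorem \ref{thequationsKisin}.

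Next I would translate the condition into the decorated-gene language. Using Lemma \ref{lemcasequa} and the table in the Traduction génétique, the top-left coefficient $A_i$ of $K^{(i)}$ has a term of degree $< \nu$ precisely in controlled configurations, and one reads off directly that $g_i(\MK) = \Ieta$, $\Ietap$ or $\II$ according to which of $x_i, x_{i+f}$ vanish at the given point — or, on a séquence d'allèles, according to the "pente" (slope direction) of the corresponding lien. Concretely, after the réductions \emph{i)}–\emph{iv)} of \S\ref{ssec:reduction} I would express, for each facteur $\mathcal V$ attached to a séquence d'allèles of length $\ell$, the stratum $\vK^\psi_g \cap \mathcal V$ as the locally closed subset of $(\P^1_{k_E})^\ell$ where a prescribed subset of the $[y_i : z_i]$ equals $[0:1]$, another prescribed subset equals $[1:0]$, and the rest is free. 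That this is non-empty exactly for the admissible $g$ — and that the partition is indeed a stratification — follows from the same bookkeeping as in Proposition \ref{prop:dimV}: an admissible genus-datum corresponds to choosing, along the chaîne, where the point "switches" between $[1:0]$ and $[0:1]$, i.e. to an exprimable-type constraint on the switching positions.

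Finally, for the closure statement I would argue again facteur by facteur. Fix $g$ and consider the stratum $\vK^\psi_g \cap \mathcal V$; it is a product of $\P^1_{k_E}$'s at the free indices and points at the fixed indices. Its Zariski closure is obtained by \emph{relaxing} the non-vanishing conditions: wherever the stratum requires $x_j \neq 0$, the closure allows $x_j = 0$, which is precisely the passage from $g_i = \I$ to $g_i = \II$ at that index (since $\mathfrak{z}(\Ieta) = \mathfrak{z}(\Ietap) = \I \leq \II$). Hence $\overline{\vK^\psi_g \cap \mathcal V} = \bigsqcup_{g' \geq g} (\vK^\psi_{g'} \cap \mathcal V)$, and taking products over the facteurs gives $\overline{\vK^\psi_g} = \bigcup_{g' \geq g} \vK^\psi_{g'}$. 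I expect the main obstacle to be the careful verification that the genus is genuinely independent of all the auxiliary choices (bases $(e^{(i)}_\eta, e^{(i)}_{\eta'})$, the réductions performed, the embedding $\tau_0$) and that the "relaxing" of inequalities described above really coincides on the nose with the partial order $\leq$ on $\mathcal G^f$ — in particular that no extra incidence relations appear between strata beyond those coming from $g' \geq g$, which requires checking that the boundary of a product of $\P^1$'s inside $\mathcal V$ meets no stratum indexed by a $g'$ not comparable to $g$.
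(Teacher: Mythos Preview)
Your overall architecture is close to the paper's: compute the genre explicitly in terms of the projective coordinates $x_j$, observe that the resulting conditions are locally closed, and deduce the closure statement by relaxing the open conditions. The paper does exactly this, but in a single stroke via Lemma~\ref{lemstrat}, which expresses $g_i(\MK)$ directly in terms of the $x_j$ and the gene at positions $i$ and $i{+}1$; it then simply says that Theorem~\ref{th:stratification} follows.

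There is, however, a genuine error in your explicit description of the strata. You assert that on a séquence d'allèles the stratum $\vK^\psi_g \cap \mathcal V$ is the locus where a prescribed subset of the $[y_i:z_i]$ equals $[0{:}1]$, another equals $[1{:}0]$, and the rest is free. This is too coarse. Look at Lemma~\ref{lemstrat}\,\emph{ii)}: when the dominant allele changes between $i$ and $i{+}1$ (the situation giving rise to the horizontal decorations of Definition~\ref{lemdecorationsupplementaire}), the genre-$\II$ condition can be the \emph{anti-diagonal} $[x_i:x_{i+f}] = [x_{i+f+1}:x_{i+1}]$, or a product condition $x_i x_{i+1} = 0$, neither of which is of the shape ``fix a coordinate to $[0{:}1]$ or $[1{:}0]$''. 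Equivalently, Proposition~\ref{prop:stratP1} gives $g_i(x) = \delta(x_i, x_{i+1}^{-1})$, so the genre-$\II$ locus at index $i$ is the anti-diagonal in two consecutive $\P^1$-factors, not a point. The second-to-last example in Figure~\ref{fig:f3}, where three genuine curves stratify a $\P^1 \times \P^1$, shows this concretely. Your closure argument, which relies on the strata being products of points and free $\P^1$'s, therefore does not go through as written; you need the correct anti-diagonal conditions to see that relaxing $x_i \neq x_{i+1}^{-1}$ lands you exactly in the higher strata.

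A smaller point: Lemma~\ref{lemcasequa} is the wrong reference. That lemma controls when the \emph{off-diagonal} entries $B_i, B'_i$ of $K^{(i)}$ have terms of degree $<\nu$, which is what produces the Kisin-variety equations. The genre is read on the \emph{diagonal} entry $A_i$, whose terms are all of degree $\geq \nu$; what matters is whether the degree-$\nu$ coefficient of $A_i$ is a unit. The relevant computation is the one carried out in the proof of Lemma~\ref{lemstrat}, using Lemma~\ref{lem:relalphai} to compare $\alpha'_{i+1+f} + p\alpha_i + h_i$ and $\alpha'_{i+1} + p\alpha_{i+f} + h_{i+f}$ to $\nu$.
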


\begin{rem}
Dans \cite{Ca3}, dans un contexte différent, une stratification a
également été définie sur certaines variétés de Kisin. Il semble
toutefois difficile \emph{a priori} de comparer ces deux constructions. 
Néanmoins, la question se pose de savoir s'il existe une méthode 
générale pour stratifier les variétés de Kisin qui conduirait, d'une 
part, à celle de cet article et, d'autre part à celle de \cite{Ca3}.
\end{rem}

Dans la suite, étant donné un point $x \in \vK^\psi(\vv,\ttt,\rhobar)$, nous 
notons $g(x) = (g_0(x), \ldots, g_{f-1}(x))$ l'unique élément $g \in 
\mathcal G^f$ pour lequel $x \in \vK^\psi_g(\vv,\ttt,\rhobar)$. Les fonctions 
$g_i$ ainsi définies sont semi-continues inférieurement sur la variété 
de Kisin, l'ensemble d'arrivée $\mathcal G = \{\I, \II\}$ étant muni de 
la topologie discrète.

En caractéristique nulle, le genre joue également un rôle important. 
De la proposition 5.2 de \cite{Br} et de la proposition 3.1.12 de 
\cite{CDM}, nous déduisons le résultat suivant.

\begin{prop}\label{propclassification}
Supposons $\ttt$ non dégénéré (voir définition \ref{hypnondegenere}).
Soit $\MK=\MK^{(0)}\oplus\cdots\oplus\MK^{(f-1)}$ un module de 
Breuil--Kisin de rang $2$ sur $\SK_{\oEp}$ de type de Hodge $\vv$ et 
de type galoisien $\ttt = \eta\oplus \eta'$.

Alors il existe des éléments $\alpha, \alpha'$ dans $\oEp^\times$ ainsi que,
pour tout $i\in \llbracket 0,f-1\rrbracket$, une base $e_\eta^{(i)},
e_{\eta'}^{(i)}$ de $\MK^{(i)}$ et des paramètres $a_i, a'_i$ (vivant
dans un espace précisé ci-après) tels que :
\begin{enumerate}[(1)]
\item pour tout $i$, la donnée de descente agit sur $e_\eta^{(i)}$
(resp. $e_{\eta'}^{(i)}$) par le caractère $\eta$
(resp. $ \eta'$) ;
\item en notant $G^{(i)}$ la matrice de $\varphi_\MK : 
\MK^{(i)}\rightarrow\MK^{(i+1)}$, la matrice $\tilde G^{(i)} = G^{(i)}$ 
pour $0\leq i\leq f-2$ (resp. $\tilde G^{(i)} = 
\begin{pmatrix}\alpha^{-1}&0\\ 0&\alpha'^{-1}\\ \end{pmatrix} G^{(f-1)}$ 
pour $i = f-1$) est de la forme :
$$\begin{array}{lll}
\text{si } g_i(\MK) = \Ieta \text{~:} &
\tilde G^{(i)}=
\begin{pmatrix} u^{e}+p& 0\\a_iu^{\gamma_{i+1}}& 1\\ \end{pmatrix} &
\text{avec } a_i \in \oEp, a'_i = 0, \medskip \\
\text{si } g_i(\MK) = \Ietap\text{~:} &
\tilde G^{(i)} =
\begin{pmatrix} 1&a'_iu^{e-\gamma_{i+1}}\\0& u^e+p\\ \end{pmatrix} &
\text{avec } a_i = 0, a'_i \in \oEp, \medskip \\
\text{si } g_i(\MK) = \II \text{~:} &
\tilde G^{(i)} =
\begin{pmatrix} a_i&u^{e-\gamma_{i+1}}\\u^{\gamma_{i+1}}& a'_i\\ \end{pmatrix} &
\text{avec } a_i, a'_i \in \m_{E'}, a_i a'_i = -p.
\end{array}$$
\end{enumerate}

De plus $(\beta, \beta', b_0, b'_0, \ldots, b_{f-1}, b'_{f-1})$ est une 
autre famille de paramètres vérifiant les conditions ci-dessus si et 
seulement s'il existe un élement $\lambda \in \oEp^\times$ pour lequel
les trois conditions suivantes soient vérifiées :
\begin{itemize}
\item pour tout $i \in \llbracket 0, f-1\rrbracket$,
$b_i = \lambda^{(-1)^{n_i}} a_i$ et $b'_i = \lambda^{-(-1)^{n_i}} a'_i$,
\item si $n_f$ est pair, alors $\alpha = \alpha'$ et $\beta = \beta'$,
\item si $n_f$ est impair, alors $\lambda = \frac \alpha \beta = \frac
{\beta'}{\alpha'}$
\end{itemize}
où, $n_i$ désigne le nombre de $\II$ parmi $g_0(\MK), \ldots, g_{i-1}(\MK)$.
\end{prop}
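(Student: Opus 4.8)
The proof is a descent along the index $i$, combined with the classification of rank-$1$ Breuil--Kisin modules with a single nontrivial Hodge--Tate weight, exactly as in \cite[Prop. 5.2]{Br} and \cite[Prop. 3.1.12]{CDM}. The plan is first to fix, for each $i$, a base $(e_\eta^{(i)}, e_{\eta'}^{(i)})$ in which $\Gal(L/K)$ acts diagonally by $\eta \oplus \eta'$ --- such a base exists by condition (v) of Definition \ref{defmoduleBK} and, as recalled after Remark \ref{remgamma'}, it is unique up to an upper/lower triangular change of basis with scalar diagonal entries (more precisely, the only freedom in $e_\eta^{(i)}$ is scaling by $\oEp^\times$, and similarly for $e_{\eta'}^{(i)}$, since $\eta \neq \eta'$). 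In such a base the matrix $G^{(i)}$ of $\varphi$ has entries whose $u$-valuations are fixed modulo $e$ by the descent data, namely the diagonal entries lie in $\oEp[[u^e]]$ and the off-diagonal entries lie in $u^{\gamma_{i+1}}\oEp[[u^e]]$ and $u^{e-\gamma_{i+1}}\oEp[[u^e]]$ respectively (here $\gamma_{i+1}$ is the reduction datum attached to $\bar\eta (\bar\eta')^{-1}$).

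Next I would use condition (iv) of Definition \ref{defmoduleBK}: the determinant ideal of $\varphi$ is generated by $u^e+p$. Writing $G^{(i)} = \smat{A & B \\ C & D}$ with the valuation constraints above, the determinant $AD-BC$ must equal (a unit times) $u^e+p$. A Weierstrass-type / Newton-polygon analysis then forces, after adjusting the bases by the scalars $\lambda_i \in \oEp^\times$ described above and possibly absorbing a global unit into $\alpha, \alpha'$ at the wrap-around step $i=f-1$, one of exactly three normal forms for $\tilde G^{(i)}$, according to whether $A$ is a unit (case $\Ieta$), $D$ is a unit (case $\Ietap$), or neither (case $\II$). In the first case one can clear $B$ and normalize $A = u^e+p$, $D=1$, $C = a_i u^{\gamma_{i+1}}$ with $a_i \in \oEp$ and $a'_i = 0$; symmetrically in the second; and in the third, $A$ and $D$ both lie in $\m_{E'}$ (since they reduce to $0$ mod $\pi_{E'}$ because the mod-$\pi$ matrix is antidiagonal there), $B$ and $C$ can be normalized to $u^{e-\gamma_{i+1}}$ and $u^{\gamma_{i+1}}$, and expanding $\det = u^e+p$ gives $a_i a'_i = -p$ (the $u^e$ and constant coefficients matching). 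This is precisely the trichotomy recorded by the genus $g_i(\MK)$, and the non-degeneracy hypothesis on $\ttt$ (Definition \ref{hypnondegenere}) is what guarantees $0 < \gamma_{i+1} < e$ so that the three cases are genuinely distinct and the normalization in case $\II$ does not collapse.

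Finally I would determine the residual ambiguity. Having fixed the normal forms, the only remaining freedom is a simultaneous rescaling $e_\eta^{(i)} \mapsto \mu_i e_\eta^{(i)}$, $e_{\eta'}^{(i)} \mapsto \mu'_i e_{\eta'}^{(i)}$ by units $\mu_i, \mu'_i \in \oEp^\times$; imposing that the normal form of $\tilde G^{(i)}$ is preserved for $0 \leq i \leq f-2$ links these scalars by $\mu_{i+1} = \mu_i$ on the components not crossing a $\II$ and $\mu_{i+1} = \mu_i^{-1}$ (up to swapping $\mu$ and $\mu'$) when $g_i(\MK) = \II$ --- this is the origin of the sign $(-1)^{n_i}$, where $n_i$ counts the $\II$'s among $g_0, \ldots, g_{i-1}$. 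Tracking this around the cycle $i \in \Z/f\Z$ and through the extra twist by $\smat{\alpha^{-1} & 0 \\ 0 & \alpha'^{-1}}$ at $i = f-1$ gives the stated compatibility: if $n_f$ is even the cycle closes only if $\alpha = \alpha'$ (and $\beta = \beta'$), while if $n_f$ is odd the single parameter $\lambda$ must satisfy $\lambda = \alpha/\beta = \beta'/\alpha'$. The main obstacle is the bookkeeping at the wrap-around step $i = f-1$, where the antidiagonal shape of $G^{(f-1)}$ (from \eqref{equaGi}) interacts with the genus-dependent sign pattern; everything else is the rank-$1$ classification of \cite{Br} applied componentwise, together with routine Weierstrass preparation. $\square$
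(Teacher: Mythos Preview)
Your proposal is essentially correct and follows the same approach as the paper, which does not give a self-contained proof but simply cites \cite[Prop.~5.2]{Br} and \cite[Prop.~3.1.12]{CDM}; your outline is a faithful sketch of what those references contain. A few minor imprecisions are worth noting: the change of basis preserving the diagonal $\eta\oplus\eta'$ action is \emph{diagonal} (not merely triangular) with entries in $\oEp[[u^e]]^\times$, and it is only after achieving the normal form that the remaining freedom reduces to scalar units; also, the inequality $0<\gamma_{i+1}<e$ already follows from $\eta\neq\eta'$, so the non-degeneracy hypothesis on $\ttt$ is used more subtly (to ensure the normalization in each of the three cases can be carried out without the off-diagonal valuations colliding with the diagonal ones), but this does not affect the validity of your argument.
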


Considérons à présent un point fermé $x \in \vK^\psi(\vv, \ttt, \rhobar)$. 
Il résulte de la proposition \ref{propclassification} ci-dessus que 
l'image inverse de $x$ par l'application de spécialisation $\sp$ est 
en bijection avec $\Spm (W_{\oE}(k_E(x)) \otimes_{\oE} 
R_{g(x)}[1/p])$ où $W_{\oE}(k_E(x)) = \oE \otimes_{W(k_E)} W(k_E(x))$ et,
pour $g = (g_0, \ldots, g_{f-1}) \in \mathcal G^f$ :
\begin{equation}
\label{eq:defRg}
R_g = R_{g_0} \:\hat \otimes \:R_{g_1} \:\hat \otimes \: \cdots\:
\hat \otimes \:R_{g_{f-1}}
\end{equation}
avec $R_\I = \oE[[T_i]]$ (et donc correspondant à une boule ouverte) et 
$R_\II = \frac{\oE[[U_i, V_i]]}{U_iV_i + p}[1/p]$ (et donc correspondant à
une couronne ouverte). Ainsi, géométriquement, l'espace de déformations 
$D^\psi(\vv,\ttt,\rhobar)$ s'obtient comme une union de produits de 
boules et de couronnes, la manière dont cette union se réalise étant 
encodée par la variété de Kisin munie de sa stratification.

Convenons que deux variétés de Kisin stratifiées $\vK^\psi(\vv, \ttt_1, 
\rhobar_1)$ et $\vK^\psi(\vv, \ttt_2, \rhobar_2)$ sont isomorphes si elles 
partagent le même $f$ et s'il existe une permutation $\sigma$ de $\{0, 
\ldots, f-1\}$ ainsi qu'un isomorphisme $\vK^\psi(\vv, \ttt_1, \rhobar_1) \to 
\vK^\psi(\vv, \ttt_2, \rhobar_2)$ qui induit, pour tout $g = (g_0, \ldots, 
g_{f-1}) \in \mathcal G^f$, un isomorphisme entre $\vK^\psi_g(\vv, \ttt_1, 
\rhobar_1)$ et $\vK^\psi_{\sigma \cdot g} (\vv, \ttt_2, \rhobar_2)$ où,
par définition, $\sigma \cdot g = (g_{\sigma(0)}, \ldots,
g_{\sigma(f-1)})$.

\begin{conj}
\label{conj:D}
Supposons que $\ttt$ est non dégénéré.
L'espace de déformations $D^\psi(\vv,\ttt,\rhobar)$ est entièrement
déterminé par la classe d'isomorphisme de la variété de Kisin stratifiée
$\vK^\psi(\vv, \ttt,\rhobar)$.
\end{conj}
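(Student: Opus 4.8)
The plan is to promote the set-theoretic description of $D^\psi(\vv,\ttt,\rhobar)$ furnished by the specialization map $\sp$ into an honest reconstruction of the rigid space out of the combinatorial datum of the stratified Kisin variety. First I would use Theorem \ref{th:isomgeneric} to identify $D^\psi(\vv,\ttt,\rhobar)$ with the generic fibre of $\widehat{\GR}^\psi(\vv,\ttt,\rhobar)$ and then analyse the latter stratum by stratum. For each $g\in\mathcal G^f$, Proposition \ref{propclassification} exhibits the locus of Breuil--Kisin modules of genre $g$ through the explicit normal forms $\tilde G^{(i)}$, which assemble into the ring $R_g$ of \eqref{eq:defRg} --- a completed tensor product of open balls $R_\I=\oE[[T_i]]$ and open annuli $R_\II$. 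The idea is that $\widehat{\GR}^\psi(\vv,\ttt,\rhobar)$ should admit, étale-locally over each stratum $\vK^\psi_g(\vv,\ttt,\rhobar)$, a product decomposition into a factor recording the stratum (a locally closed piece of the Kisin variety, itself read off the decorated gene by Theorem \ref{thequationsKisin}) and a formal factor isomorphic to $\Spf R_g$, whose generic fibre is the product of balls and annuli above. One then glues these local models along the closures of strata, using the incidence relations $\vK^\psi_{g'}(\vv,\ttt,\rhobar)\subset\overline{\vK^\psi_g(\vv,\ttt,\rhobar)}$ for $g'\geq g$ from Theorem \ref{th:stratification}.

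The second step is to make the gluing explicit and to verify that it depends only on the decorated gene. Here I would track, in the spirit of \S\ref{sec:Demonstrations}, how the parameters $a_i,a'_i$ of Proposition \ref{propclassification} degenerate when one of the two off-$u^e$ coefficients of $\tilde G^{(i)}$ becomes invertible, i.e. when the genre jumps from $\II$ to $\Ieta$ or $\Ietap$ along the boundary of a stratum; the point to establish is that the transition charts between the annulus $R_\II$ and the ball $R_\I$ at such a face are governed by exactly the same combinatorics --- dominant characters, links in the gene, couples d'allèles --- that govern the equations (A), (B1)--(B3) of Theorem \ref{thequationsKisin}. Assembling these charts yields a candidate rigid space built purely from the stratified Kisin variety, which one checks to be independent of the auxiliary choices $h\mapsto qh$, $\gamma\leftrightarrow\gamma'$ and of the embedding $\tau_0$, exactly as in Lemma \ref{lem:depchoix} (these modifications only relabel or transpose the gene and hence act trivially on the reconstructed space). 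One then compares this candidate to $D^\psi(\vv,\ttt,\rhobar)$: the case $f=2$ of \cite{CDM} and the case where $\vK^\psi(\vv,\ttt,\rhobar)$ is a single point (generic $\rhobar$, Corollary \ref{corgenerique}) provide consistency checks, and should serve as the base of an induction on the maximal number of $\II$'s occurring in a stratum.

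The hard part will be the second step: showing that the way the balls $R_\I$ and the annuli $R_\II$ attach across strata is genuinely reconstructible from the \emph{stratified} Kisin variety, and not merely from some finer datum secretly carried by $\widehat{\GR}^\psi(\vv,\ttt,\rhobar)$. The non-degeneracy hypothesis on $\ttt$ is presumably precisely what forbids the pathological configurations of the gene (near $\binom{\gAB}{\gO}$-type patterns) where this gluing could fail to be determined; but turning this intuition into a proof requires a local study of $\widehat{\GR}^\psi(\vv,\ttt,\rhobar)$ near the deeper strata, and one must in particular cope with the fact --- noted after Theorem \ref{th:isomgeneric} --- that $\widehat{\GR}^\psi(\vv,\ttt,\rhobar)$ carries the $\m$-adic rather than the $p$-adic topology, so that its ``generic fibre'' is not directly an object of classical rigid geometry. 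The special-fibre techniques developed here do not reach this far, which is why the statement is proposed only as a conjecture, supported by the explicit candidate for $R^\psi(\vv,\ttt,\rhobar)[1/p]$ constructed at the end of \S\ref{sec:Stratification}.
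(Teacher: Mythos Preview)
There is nothing to compare here: the statement is labelled \emph{Conjecture} in the paper and is not proved. The paper offers exactly what you describe at the end of your proposal --- evidence from the case $f=2$ (via \cite{CDM}), from the case where the Kisin variety is a single point (Corollary \ref{corgenerique}), and an explicit candidate for $D^\psi(\vv,\ttt,\rhobar)$ built in \S\ref{subsec:candidats} by a tube construction over the stratified Kisin variety --- but no argument that this candidate actually coincides with $D^\psi(\vv,\ttt,\rhobar)$.

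Your sketch is a reasonable outline of how one might attack the conjecture, and your own final paragraph correctly identifies the genuine obstruction: controlling the gluing of the local models $\Spf R_g$ across strata requires understanding $\widehat{\GR}^\psi(\vv,\ttt,\rhobar)$ in a neighbourhood of the deeper strata, which goes beyond the special-fibre computations of the paper. That is precisely why the authors leave it as a conjecture. Just be aware that what you have written is not a proof proposal in the usual sense but a programme, and you yourself acknowledge as much in the last sentence; presenting it under the heading ``proof'' is misleading.
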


Au \S \ref{subsec:candidats}, nous donnons une recette conjecturale pour 
construire un candidat pour l'espace rigide $D^\psi(\vv,\ttt,\rhobar)$ à 
partir de la variété de Kisin stratifiée.
Nous insistons, par ailleurs, sur le fait que la donnée de la 
stratification est absolument nécessaire : la variété de Kisin $\vK^\psi(\vv, 
\ttt,\rhobar)$ vue comme variété algébrique abstraire ne permet pas, à 
elle 
seule, de reconstruire $D^\psi(\vv,\ttt,\rhobar)$, comme nous le verrons 
au \S \ref{subsec:stratexemples}.
Il n'est également \emph{pas} vrai que la variété de Kisin 
stratifiée détermine l'anneau $R^\psi(\vv,\ttt,\rhobar)$. Nous 
conjecturons toutefois la propriété plus faible suivante.

\begin{conj}
\label{conj:R}
Supposons que $\ttt$ est non dégénéré.
L'anneau de déformations $R^\psi(\vv,\ttt,\rhobar)$ est entièrement
déterminé par un gène (n'importe lequel) associé au couple $(\ttt,\rhobar)$.
\end{conj}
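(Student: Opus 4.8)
On esquisse ci-dessous une stratégie possible pour établir la conjecture \ref{conj:R}. L'idée est de reconstruire, non pas directement l'anneau $R^\psi(\vv,\ttt,\rhobar)$, mais le schéma formel $\widehat{\GR}^\psi(\vv,\ttt,\rhobar)$ de \eqref{eq:GRhat}, puis d'en déduire $R^\psi(\vv,\ttt,\rhobar)$ comme anneau des sections globales de son faisceau structural. Deux ingrédients sont déjà à notre disposition : la fibre spéciale de $\widehat{\GR}^\psi(\vv,\ttt,\rhobar)$, c'est-à-dire la variété de Kisin $\vK^\psi(\vv,\ttt,\rhobar)$ munie de sa stratification par le genre (théorèmes \ref{thequationsKisin} et \ref{th:stratification}), et la description formelle-locale de la proposition \ref{propclassification}, qui identifie, au voisinage d'un point $x$ de genre $g(x)$, le schéma formel $\widehat{\GR}^\psi(\vv,\ttt,\rhobar)$ à un complété convenable de l'anneau $R_g$ de \eqref{eq:defRg}, produit complété de boules et de couronnes. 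La première chose à établir est que ces deux ingrédients ne dépendent que d'un gène $X$ associé à $(\ttt,\rhobar)$ : pour la variété de Kisin c'est le théorème \ref{thequationsKisin} et la construction explicite de l'isomorphisme $\mathcal K(\ttt,\rhobar) \cong \vK^\psi(\vv,\ttt,\rhobar)$ du \S \ref{subsec:Ktrhobar} ; pour la stratification, il faut exprimer le genre $g_i(\MK)$ d'un réseau $\MK$ associé à un point de $\mathcal K(\ttt,\rhobar)$ en fonction de $X$ et de ses décorations, ce qui résulte d'un calcul direct à partir des matrices de passage $P^{(i)}$ mises sous forme normale au \S \ref{subsec:formenormale} --- dont les valuations des coefficients sont les entiers combinatoires $\alpha_i$ et $\alpha'_i$ --- et de la formule $H^{(i)} = (P^{(i+1)})^{-1} G^{(i)}\varphi(P^{(i)})$ de \eqref{eq:Hi}.

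La seconde étape --- et le point le plus délicat --- consiste à recoller les modèles formels-locaux $R_g$ le long de la variété de Kisin stratifiée. Les données de recollement entre strates adjacentes sont gouvernées, d'une part, par les relations de clôture des strates (théorème \ref{th:stratification}, donc lisibles sur le gène grâce à l'étape précédente) et, d'autre part, par le cocycle $\lambda \mapsto \lambda^{\pm 1}$ de la proposition \ref{propclassification}, les exposants $(-1)^{n_i}$ étant eux aussi des invariants combinatoires du gène. Il s'agit alors de montrer que ces recollements déterminent entièrement $\widehat{\GR}^\psi(\vv,\ttt,\rhobar)$ comme schéma formel sur $\oE$ : c'est essentiellement le contenu de la conjecture \ref{conj:D}, renforcé par la structure entière. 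De manière équivalente, on pourrait chercher à démontrer que le candidat construit au \S \ref{subsec:candidats}, bâti uniquement à partir du gène, est isomorphe à $\widehat{\GR}^\psi(\vv,\ttt,\rhobar)$.

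Pour conclure, on déduirait $R^\psi(\vv,\ttt,\rhobar)$ du schéma formel $\widehat{\GR}^\psi(\vv,\ttt,\rhobar)$ : le morphisme $\GR^\psi(\vv,\ttt,\rhobar) \to \Spec R^\psi(\vv,\ttt,\rhobar)$ est un isomorphisme après inversion de $p$ (théorème \ref{th:isomgeneric}), sa fibre spéciale $\vK^\psi(\vv,\ttt,\rhobar)$ est réduite et connexe (théorème \ref{thequationsKisin} et corollaire \ref{corcon}), et $R^\psi(\vv,\ttt,\rhobar)$ est réduit et sans $p$-torsion ; un argument de type factorisation de Stein, joint à l'algébrisation par GAGA formel déjà utilisée après \eqref{eq:GRhat}, devrait alors identifier $R^\psi(\vv,\ttt,\rhobar)$ à l'anneau des sections globales de $\mathcal O_{\widehat{\GR}^\psi(\vv,\ttt,\rhobar)}$, d'où la conclusion. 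Le principal obstacle reste la seconde étape : comprendre la géométrie formelle-locale de $\widehat{\GR}^\psi(\vv,\ttt,\rhobar)$, c'est-à-dire de $R^\psi(\vv,\ttt,\rhobar)$ lui-même, aux points de $\vK^\psi(\vv,\ttt,\rhobar)$ appartenant aux petites strates (beaucoup de $\II$) ou à ses points singuliers, et s'assurer qu'aucune identification cachée ne vient perturber le recollement combinatoire ; ce point n'est connu que pour $f = 2$ (via \cite{CDM}) et lorsque la variété de Kisin est réduite à un point. Une difficulté secondaire est la dernière étape elle-même, où il faut garantir que $R^\psi(\vv,\ttt,\rhobar)$ est exactement l'anneau des sections globales et non, par exemple, une normalisée de celui-ci, ce qui requiert une information de normalité ou de régularité sur $R^\psi(\vv,\ttt,\rhobar)$. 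Enfin, l'exemple du \S \ref{subsec:stratexemples} rappelle que la stratification --- donc le calcul de genre de la première étape --- est indispensable : la variété de Kisin considérée comme variété algébrique abstraite ne suffit pas.
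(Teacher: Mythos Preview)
The statement is a \emph{conjecture} in the paper, not a theorem: the paper offers no proof, and you do not claim one either --- your text is explicitly a strategy sketch (``On esquisse ci-dessous une stratégie possible'') and you flag the second step as the open problem. So there is no paper proof to compare against.

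On the strategy itself, one point deserves sharpening. Your plan routes the reconstruction of $R^\psi(\vv,\ttt,\rhobar)$ through the stratified Kisin variety together with the local models $R_g$, and you write that the recollement ``est essentiellement le contenu de la conjecture~\ref{conj:D}, renforcé par la structure entière''. But the paper states just before Conjecture~\ref{conj:R} that the stratified Kisin variety does \emph{not} determine $R^\psi(\vv,\ttt,\rhobar)$; this is exactly why Conjecture~\ref{conj:R} is phrased in terms of the gene rather than the stratified variety. The $f=3$ examples of \S\ref{subsec:stratexemples} make this concrete: the first six gènes in the table yield isomorphic stratified Kisin varieties, yet Breuil--Mézard multiplicities force some of the corresponding $R^\psi(\vv,\ttt,\rhobar)$ to be non-isomorphic. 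Hence the recollement data you list (closure relations of strata, the cocycle exponents $(-1)^{n_i}$) cannot suffice on their own --- they are invariants of the stratified variety, not of the full gene. Any viable approach to Conjecture~\ref{conj:R} must extract from the gene strictly more than the stratification, for instance the precise pattern of alleles (not just the induced decoration), and feed that into the integral structure; your sketch does not yet isolate what that extra input should be. Your third step inherits the same issue: even granting a Stein-type identification of $R^\psi$ with global sections of $\widehat{\GR}^\psi$, the input formal scheme must already encode the missing integral data.
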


\subsection{Stratification par le genre}
\label{subsec:stratgenre}

Le but de ce paragraphe est de démontrer le théorème 
\ref{th:stratification} puis de donner des équations explicites pour les 
strates $\vK^\psi_g(\vv,\ttt,\rhobar)$ et d'étudier la géométrie des variétés 
de Kisin stratifiées.

\subsubsection{Démonstration du théorème \ref{th:stratification}}
\label{sssec:demstrat}

Nous rappelons que pour tout $i$ dans $\Z / f \Z$ , la matrice de 
$\varphi$ de $\MK^{(i)}$ dans $\MK^{(i+1)}$ est, dans les bases 
$(e_\eta^{(i)}, e_{\eta'}^{(i)})$ donnée par les formules (\ref{hachi})
$$H^{(i)} 
= (\det(P^{(i+1)}))^{-1}
\begin{pmatrix}
A_i & B'_i\\
 B_i & A'_i
\end{pmatrix}.$$
Par conséquent, le genre $g_i(\MK)$ se lit sur la valuation $u$-adique de $A_i$ où pour $0\leq i\leq f-2$ (et une formule analogue pour $i=f-1$) :
\begin{itemize}
\item[$\bullet$] $A_i = a_{i} a'_{i+1} u^{\alpha'_{i+1+f} + p \alpha_{i} + h_i} - b_i b'_{i+1}u^{\alpha'_{i+1} + p \alpha_{i+f} + h_{i+f}}, $
\end{itemize}
et où
$P^{(i)}=
\left(\begin{array}{cc} 
u^{\alpha_{i}}a_i&u^{\alpha'_{i}}b'_i \cr
u^{\alpha_{i+f}}b_i&u^{\alpha'_{i+f}}a'_i \cr
\end{array}\right)$ est la matrice de passage de $(e_\eta^{(i)},e_{\eta'}^{(i)})$ à $(e_0^{(i)},e_1^ {(i)})$ (voir \S \ref{subsubPi}). Rappelons que
d'après le lemme \ref{lem:valdet}, le déterminant de $P^{(i)}$ est de valuation $u$-adique $\nu$. 

\begin{lem}\label{lemstrat}
Soient $k$ une extension finie de $k_E$ et $([x_i:x_{i+f}])_{0\leq i
\leq f-1}$ un $k$-point de la variété de Kisin $\vK^{\psi}(\vv,\ttt,
\rhobar)$. Notons $\MK=(\MK^{(i)})_{0\leq i\leq f-1}$ le $k \otimes_{k_E}\SK$-réseau 
correspondant et $X = (X_i)_{i\in\Z}$ le gène associé.

\begin{enumerate}[i)]
\item Si $\gA$ est dominant en $i$ et $(i+1)$, alors :
\begin{itemize}
\item[$\bullet$] $g_i(\MK) = \Ieta$
s'il existe $j\in\{i,i+f\}$, tel que $X_j=\gB$ et $x_{j+1}\neq 0$,
\item[$\bullet$] $g_i(\MK) = \Ietap$
s'il existe $j\in\{i,i+f\}$, tel que $X_j=\gA$ et $x_j\neq0$,
\item[$\bullet$] $g_i(\MK) = \II$ sinon.
\end{itemize}
\item Si $\gA$ est dominant en $i$ et $\gB$ est dominant en $(i+1)$, 
alors :
\begin{itemize}
\item[$\bullet$] $g_i(\MK) = \II$ dans l'un des trois cas suivants :
\begin{itemize}
\item[a)] $X_i=X_{i+f}=\gA$ et $[x_i: x_{i+f}]=[x_{i+f+1}:x_{i+1}]$
\item[b)] $X_i=\gA$, $X_{i+f}\neq\gA$ et $x_ix_{i+1}=0$
\item[c)] $X_i\neq\gA$, $X_{i+f}=\gA$ et $x_{i+f}x_{i+f+1}=0$
\end{itemize}
\item[$\bullet$] $g_i(\MK) = \Ietap$ sinon.
\end{itemize}
\end{enumerate}
Les résultats précédents sont encore vrais si nous échangeons simultanément $\gA$ et $\gB$ d'une part et $\Ieta$ et $\Ietap$ d'autre part. 
\end{lem}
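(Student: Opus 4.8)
Le principe est de lire le genre directement sur l'identité \eqref{hachi}. Puisque $\det P^{(i+1)}$ est de valuation $u$-adique $\nu$ (lemme \ref{lem:valdet}) et que $H^{(i)} = (\det P^{(i+1)})^{-1} K^{(i)}$, les deux coefficients diagonaux de la matrice de $\varphi$ dans la base $(e_\eta^{(i)}, e_{\eta'}^{(i)})$ sont respectivement inversibles dans $\SK_S$ si et seulement si $v(A_i) = \nu$ et $v(A'_i) = \nu$. Le genre $g_i(\MK)$ est donc entièrement gouverné par le couple $(v(A_i), v(A'_i))$~: l'une des deux configurations $v(A_i) = \nu$, $v(A'_i) = \nu$ correspond (d'après la définition du genre) à $\Ieta$, l'autre à $\Ietap$, et $g_i(\MK) = \II$ dès qu'aucune des deux valuations ne vaut $\nu$. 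On commence par rappeler --- comme dans la démonstration du théorème \ref{thequationsKisin} --- que les exposants de $u$ figurant dans $A_i$ et $A'_i$ sont tous $\geq \nu$ et $\equiv \nu \pmod e$~; ainsi $v(A_i) = \nu$ revient à dire que l'un au moins des deux monômes de $A_i$ a un exposant exactement $\nu$ et que, lorsque les deux y sont, leurs coefficients ne se compensent pas, et de même pour $A'_i$.

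On détermine alors, pour chacune des configurations locales (en nombre fini) de $\binom{X_i}{X_{i+f}}$ et $\binom{X_{i+1}}{X_{i+f+1}}$, lesquels des quatre exposants $\alpha'_{i+1+f}+p\alpha_i+h_i$, $\alpha'_{i+1}+p\alpha_{i+f}+h_{i+f}$ (dans $A_i$) et $\alpha_{i+1+f}+p\alpha'_i+h_i$, $\alpha_{i+1}+p\alpha'_{i+f}+h_{i+f}$ (dans $A'_i$) valent $\nu$. C'est une variante du lemme \ref{lemcasequa}~: en utilisant le lemme \ref{lem:relalphai} pour la suite $(\alpha_i)$, son analogue pour $(\alpha'_i)$ et les identités $\alpha_j+\alpha'_{j+f} = \alpha'_j+\alpha_{j+f} = \nu$ du lemme \ref{lemgenereseau} (valables dès que $X_j \neq \gO$ et $X_{j+f} \neq \gO$), on réécrit chaque exposant sous la forme $\nu + (p\alpha_\bullet+h_\bullet-\alpha_{\bullet+1})$ ou son analogue primé, et le lemme \ref{lem:relalphai} indique exactement quand le terme correctif s'annule~; cela ne dépend que des allèles $X_i$, $X_{i+f}$ et de la lettre dominante en $i+1$. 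Parallèlement, on traduit les paramètres $a_i, b_i, a'_i, b'_i$ à l'aide du dictionnaire du \S\ref{subsec:Ktrhobar}~: lorsque $\gA$ (resp. $\gB$) est dominant en $i$, le réseau $\MK^{(i)}$ est décrit par $[a_i:b_i] = [x_i:x_{i+f}]$ (resp. $[a'_i:b'_i] = [x_i:x_{i+f}]$), tandis que si $X_i = \gO$ (resp. $X_{i+f} = \gO$) la matrice $P^{(i)}$ est la forme normale diagonale ou antidiagonale \eqref{eq:Pi1}--\eqref{eq:Pi2}, de sorte que $x_i = 0$ (resp. $x_{i+f} = 0$) et que la coordonnée complémentaire peut être normalisée à $1$.

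Il ne reste plus qu'à mener l'analyse cas par cas, le décompte des monômes survivants de $A_i$ et $A'_i$ --- joint aux compensations éventuelles des coefficients de tête et aux équations de la variété de Kisin déjà obtenues au théorème \ref{thequationsKisin} --- produisant exactement les trichotomies annoncées. Dans le cas \emph{i)} ($\gA$ dominant en $i$ et en $i+1$), la présence simultanée des deux monômes de $A_i$ au degré $\nu$ impose, par bonne définition du genre et via l'équation de la variété de Kisin attachée au motif correspondant, l'égalité des coefficients de tête, d'où la compensation~; les autres configurations laissent survivre un unique monôme, ce qui se lit en une condition de la forme $x_{j+1} \neq 0$ (pour $A_i$) ou $x_j \neq 0$ (pour $A'_i$), avec $j \in \{i, i+f\}$ et $X_j$ imposé. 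Le cas \emph{ii)} ($\gA$ dominant en $i$, $\gB$ dominant en $i+1$) est similaire, l'ingrédient supplémentaire étant l'identité de transition $\nu-\alpha_{i+1}-\alpha_{i+1+f} = p(\nu-\alpha_i-\alpha_{i+f})$ du lemme \ref{lem:relalphai}, qui gouverne le passage du « régime $\gA$ » au « régime $\gB$ » et fait apparaître, pour $\binom{X_i}{X_{i+f}} = \binom\gA\gA$, la condition $[x_i:x_{i+f}] = [x_{i+f+1}:x_{i+1}]$ traduisant l'égalité des coefficients de tête de $A_i$. Enfin, la dernière assertion résulte de la symétrie échangeant $e_\eta^{(i)}$ et $e_{\eta'}^{(i)}$~: elle échange $A_i \leftrightarrow A'_i$ et $B_i \leftrightarrow B'_i$, donc $\Ieta \leftrightarrow \Ietap$, et correspond, du côté du gène, à l'échange de $\eta$ et $\eta'$ (lemme \ref{lem:depchoix}), c'est-à-dire à l'application de la transposition $\tau$.

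La principale difficulté est le traitement des configurations contenant le symbole $\gO$~: une coordonnée y est forcée à $0$ et $P^{(i)}$ n'a plus la forme « générique », de sorte que les monômes de $A_i$ et de $A'_i$ doivent être recalculés à partir des formes normales \eqref{eq:Pi1}--\eqref{eq:Pi2}, puis l'on vérifie à la main que la réponse reste conforme à l'énoncé uniforme (par exemple que « $X_{i+f} = \gA$ et $x_{i+f} \neq 0$ » demeure la condition pertinente lorsque $X_i = \gO$). Un second point, de cohérence, est de s'assurer que les trois cas de chaque trichotomie sont disjoints, c'est-à-dire que $v(A_i)$ et $v(A'_i)$ ne valent jamais $\nu$ simultanément~; on le déduit du comptage de valuations $v(\det K^{(i)}) = e + 2\nu$ issu de \eqref{hachi} et du lemme \ref{lem:valdet}.
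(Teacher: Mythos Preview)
Your esquisse suit exactement la stratégie du papier : lire le genre sur les valuations $u$-adiques de $A_i$ et $A'_i$ via la formule \eqref{hachi}, ramener le calcul des exposants aux identités du lemme \ref{lem:relalphai} (et de son analogue primé) combinées aux égalités $\alpha_j+\alpha'_{j+f}=\nu$ du lemme \ref{lemgenereseau}, puis conclure cas par cas selon $\binom{X_i}{X_{i+f}}$ en utilisant, le cas échéant, les équations de liaison déjà imposées par le théorème \ref{thequationsKisin}. Le papier se contente de traiter explicitement deux configurations ($\binom{\gA}{\gB}$ et $\binom{\gA}{\gA}$ avec $\gA$ dominant en $i$ et $i{+}1$) et renvoie les autres à l'analogie ; ta présentation est simplement plus structurée (référence à un analogue du lemme \ref{lemcasequa}, traitement séparé des $\gO$ via les formes normales \eqref{eq:Pi1}--\eqref{eq:Pi2}, argument de symétrie par le lemme \ref{lem:depchoix}), mais ne s'en écarte pas.

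Un seul point mérite une précision. L'argument de disjonction que tu proposes --- exclure $v(A_i)=v(A'_i)=\nu$ via $v(\det K^{(i)})=e+2\nu$ --- n'aboutit pas tel quel : la contribution $B_iB'_i$ peut elle aussi être de valuation $2\nu$ et compenser $A_iA'_i$ au degré $2\nu$. Le papier ne passe d'ailleurs pas par là ; il constate directement, dans chaque cas, que les conditions d'inversibilité de $A_i$ et de $A'_i$ s'excluent grâce à l'équation de liaison ($a_ib_{i+1}=0$ dans l'exemple traité) qui force l'un des coefficients de tête à s'annuler dès que l'autre ne l'est pas. Il suffit donc de remplacer ton argument global par cette vérification locale, déjà implicite dans ton « décompte des monômes survivants ».
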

\begin{proof} 
Supposons $\gA$ dominant dans les couples d'allèles $\binom{X_i}{X_{i+f}}$ et
$\binom{X_{i+1}}{X_{i+1+f}}$.

Si $\binom{X_i}{X_{i+f}}=\binom{\gA}{\gB}$. D'après le lemme \ref{lem:relalphai}, nous avons
$$\alpha_{i+1}=p \alpha_{i} + h_i \mbox{ et } \alpha_{i+1+f}=p\alpha_{i+f}+h_{i+f}-e.$$
De plus comme $X_j\not=\gO$ (d'après le lemme \ref{proprelgene}) pour $j\in\{i,i+f,i+1,i+1+f\}$ et que le couple d'allèles $\binom\gO\gO$ ne peut pas apparaître, nous déduisons :
 $$\alpha_{i+1}+\alpha'_{i+f+1}=\alpha_i+\alpha_{i+f}'=\alpha'_i+\alpha_{i+f}=\alpha'_{i+1}+\alpha_{i+1+f}=\nu.$$
Ainsi $A_i=u^{\nu}(a_ia'_{i+1}-b_ib_{i+1}'u^e)$ et $A_i'=u^{\nu}(-b'_ib_{i+1}+a'_ia_{i+1}u^e).$
Le lien entre $X_i$ et $X_{i+1+f}$ implique $a_ib_{i+1}=0$. 
Donc $\MK^{(i)}$ est de genre $\II$ si et seulement si $a_i=b_{i+1}=0$
et il est de genre $\Ieta$ (resp. $\Ietap$) si $b_{i+1}\not=0$ (resp.
$a_i\not=0$).

Si $\binom{X_i}{X_{i+f}}=\binom{\gA}{\gA}$, nous obtenons de même 
$A_i=u^{\nu}(a_ia_{i+1}'-b_ib_{i+1}')$. Or $[a_i:b_i]=[a_{i+1}:b_{i+1}]$ 
et $a_{i+1}a'_{i+1}-b_{i+1}b_{i+1}'\in k_E^*$. Donc $\MK^{(i)}$ est de 
genre $\Ietap$.

Les autres cas sont analogues.
\end{proof}

\begin{rem} 
En particulier, l'énoncé du lemme \ref{lemstrat} implique que si le 
couple d'allèles en $i\in\llbracket 0, f-1\rrbracket$ du gène $X$ 
satisfait 
$\binom{X_i}{X_{i+f}}\in\left\{\binom{\gO}{\gAB},\binom{\gAB}{\gO}\right\}$ 
alors la composante $\MK^{(i)}$ est toujours de genre $\II$.
\end{rem}

Le théorème \ref{th:stratification} découle à présent simplement du 
lemme \ref{lemstrat}.

\subsubsection{Traduction génétique}\label{subsubtraduction}

Le but du paragraphe \S \ref{subsubtraduction} est d'expliquer comment les équations des strates 
de $\vK^\psi(\vv,\ttt,\rhobar)$ peuvent se lire sur le gène. À cette fin, 
nous introduisons une décoration supplémentaire, appelée 
\emph{décoration horizontale} :

\begin{definit} \label{lemdecorationsupplementaire}
Pour $i\in\llbracket 0,f-1\rrbracket$,
\begin{itemize}
\item[$\bullet$] si $\gA$ est dominant en $i$ et $\gB$ est dominant en $i+1$, la 
\emph{décoration horizontale} est composée d'un lien de $X_i$ à $X_{i+1}$ 
si $X_i=\gA$ et d'un lien de $X_{i+f}$ à $X_{i+f+1}$ si $X_{i+f}=\gA$ ;
\item[$\bullet$] si $\gB$ est dominant en $i$ et $\gA$ est dominant en $i+1$, la 
\emph{décoration horizontale} est composée d'un lien de $X_i$ à $X_{i+1}$ 
si $X_i=\gB$ et d'un lien de $X_{i+f}$ à $X_{i+f+1}$ si $X_{i+f}=\gB$.
\end{itemize}
\end{definit}

\begin{ex}
Le gène de l'exemple \ref{exMoebius} (déjà repris dans le \S
\ref{sssec:filrouge}), muni de toutes ses décorations,
se représente comme suit :

\medskip

\begin{center}
\begin{tikzpicture}[scale=0.8]
\draw [fill=fond, thick] (-0.5,-0.5) rectangle (10.5,1.5);
\draw[thick,->>] (-0.5,-0.5)--(-0.5,0.5);
\draw[thick,->>] (10.5,1.5)--(10.5,0.5);
\node[color=colB] at (0, 0) { $\gB$ };
\node[color=colB] at (0, 1) { $\gA$ };
\node[color=colB] at (1, 0) { $\gB$ };
\node[color=colB] at (1, 1) { $\gA$ };
\node[color=colB] at (2, 0) { $\gB$ };
\node[color=colB] at (2, 1) { $\gA$ };
\node[color=colB] at (3, 0) { $\gB$ };
\node[color=colB] at (3, 1) { $\gB$ };
\node[color=colA] at (4, 0) { $\gB$ };
\node[color=colA] at (4, 1) { $\gA$ };
\node[color=colA] at (5, 0) { $\gA$ };
\node[color=colA] at (5, 1) { $\gB$ };
\node[color=colA] at (6, 0) { $\gAB$ };
\node[color=colA] at (6, 1) { $\gA$ };
\node[color=colA] at (7, 0) { $\gO$ };
\node[color=colA] at (7, 1) { $\gA$ };
\node[color=colA] at (8, 0) { $\gA$ };
\node[color=colA] at (8, 1) { $\gA$ };
\node[color=colA] at (9, 0) { $\gAB$ };
\node[color=colA] at (9, 1) { $\gA$ };
\node[color=colA] at (10, 0) { $\gO$ };
\node[color=colA] at (10, 1) { $\gA$ };
\draw[color=colB,thick] (0.2,0.2)--(0.8,0.8);
\draw[color=colB,thick] (1.2,0.2)--(1.8,0.8);
\draw[color=colB,thick] (2.2,0.2)--(2.8,0.8);
\draw[color=colA,thick] (4.2,0.8)--(4.8,0.2);
\draw[color=colA,thick] (5.2,0.2)--(5.8,0.8);
\draw[color=colA,thick] (6.2,0.8)--(6.8,0.2);
\draw[color=colA,thick] (7.2,0.8)--(7.8,0.2);
\draw[color=colA,thick] (8.2,0.8)--(8.8,0.2);
\draw[color=colA,thick] (8.2,0.2)--(8.8,0.8);
\draw[color=colA,thick] (9.2,0.8)--(9.8,0.2);
\draw[color=colstr,thick] (10.2,1)--(10.7,1);
\draw[color=colstr,thick] (-0.7,0)--(-0.2,0);
\draw[color=colstr,thick] (3.2,0)--(3.8,0);
\draw[color=colstr,thick] (3.2,1)--(3.8,1);
\end{tikzpicture}
\end{center}

\noindent
les décorations horizontales étant indiquées en rouge.
\end{ex}
Le lemme \ref{lemstrat} se traduit alors simplement en termes de décorations du gène $X$ :
\begin{prop}
\label{prop:stratgene}
Soient $k$ une extension finie de $k_E$ et $([x_i:x_{i+f}])_{0\leq i
\leq f-1}$ un $k$-point de la variété de Kisin $\vK^{\psi}(\vv,\ttt,
\rhobar)$ associée au gène $X = (X_i)$. Notons $\MK=(\MK^{(i)})_{0\leq i\leq f-1}$ le $\SK_k$-réseau 
correspondant.

\medskip

Soit $i\in\llbracket 0,f-1\rrbracket$.
Si $\{X_i,X_{i+f}\}=\{\gA\gB,\gO\}$, alors $g_i(\MK) = \II$.

\medskip

Dans le cas contraire, nous avons :
\begin{enumerate}[i)]
\item si $\gA$ est dominant en $i$ et $(i+1)$, alors :
\begin{itemize}
\item s'il n'y a pas de lien entre les deux couples d'allèles ou s'il 
n'y a qu'un lien entre $X_i$ et $X_{i+f+1}$ (resp. entre $X_{i+f}$ et 
$X_{i+1}$) et $x_i=x_{i+f+1}=0$ (resp. $x_{i+f}=x_{i+1}=0$), alors 
$g_i(\MK) = \II$,
\item s'il n'y a qu'un lien entre $X_i$ et $X_{i+f+1}$ (resp. entre 
$X_{i+f}$ et $X_{i+1}$) et $x_{i+1+f}\not=0$ (resp. $x_{i+1}\not=0$), 
alors $g_i(\MK) = \Ieta$,
\item sinon $g_i(\MK) = \Ietap$,
\end{itemize}
\item si $\gA$ est dominant en $i$ et $\gB$ est dominant en $(i+1)$,
alors : 
\begin{itemize}
\item s'il y a deux liens entre les deux couples d'allèles en $i$ et en 
$i+1$ et si $[x_i:x_{i+f}]=[x_{i+f+1}:x_{i+1}]$, alors $g_i(\MK) = \II$,
\item s'il n'y a qu'un lien entre $X_i$ et $X_{i+1}$ (resp. entre 
$X_{i+f}$ et $X_{i+f+1}$) et $x_ix_{i+1}=0$ (resp. 
$x_{i+f}x_{i+f+1}=0$), alors $g_i(\MK) = \II$,
\item sinon $g_i(\MK) = \Ietap$.
\end{itemize}
\end{enumerate}
Les résultats précédents sont encore vrais si nous échangeons 
simultanément $\gA$ et $\gB$ d'une part et $\Ieta$ et $\Ietap$ 
d'autre part.
\end{prop}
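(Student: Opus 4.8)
The plan is to obtain Proposition~\ref{prop:stratgene} by translating Lemma~\ref{lemstrat} into the language of decorations; since Theorem~\ref{th:stratification} was itself deduced from Lemma~\ref{lemstrat}, this amounts to rewriting the parameter conditions of that lemma in terms of the links of $X$. First I would dispose of the degenerate case $\{X_i,X_{i+f}\}=\{\gAB,\gO\}$: this configuration falls outside the two generic situations treated by Lemma~\ref{lemstrat}, and the component $\MK^{(i)}$ always has genus $\II$ by the remark following that lemma (alternatively, by a direct valuation computation on $A_i$, using that $X_{i+f}=\gO \Leftrightarrow \alpha'_i+\alpha_{i+f}>\nu$ by Lemma~\ref{lem:alphaidiag}). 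From now on I assume $\binom{X_i}{X_{i+f}}$ is of one of the generic types, so that $\gA$ (say) is dominant at $i$.

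The key dictionary is the following. On the one hand, by \S\ref{subsec:formenormale} and \S\ref{subsec:Ktrhobar}, whenever $\gA$ is dominant at an index $j\in\llbracket 0,f-1\rrbracket$ the homogeneous coordinates $x_j,x_{j+f}$ of a point of $\vK^\psi(\vv,\ttt,\rhobar)$ are the parameters $a_j,b_j$ of the normal form of $P^{(j)}$ (and $a'_j,b'_j$ when $\gB$ is dominant). On the other hand, the decoration of Definition~\ref{defdecoration} records exactly which of $X_i,X_{i+f}$ equals the dominant symbol: when $\gA$ is dominant at $i$ and at $i+1$, there is a vertical link between $X_i$ and $X_{i+f+1}$ iff $X_i=\gA$, and between $X_{i+f}$ and $X_{i+1}$ iff $X_{i+f}=\gA$; similarly, when $\gA$ is dominant at $i$ and $\gB$ at $i+1$, the horizontal decoration of Definition~\ref{lemdecorationsupplementaire} puts a link between $X_i$ and $X_{i+1}$ iff $X_i=\gA$ and between $X_{i+f}$ and $X_{i+f+1}$ iff $X_{i+f}=\gA$. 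Substituting this dictionary into Lemma~\ref{lemstrat}, case~i) becomes case~i) of the proposition: the hypothesis ``$X_j=\gB$ and $x_{j+1}\neq0$'' for some $j\in\{i,i+f\}$ turns into ``there is exactly one vertical link and the coordinate at its end in column $i+1$ is nonzero'', while ``$X_j=\gA$ and $x_j\neq0$'' — which is automatic as soon as some $X_j=\gA$, since $[x_i:x_{i+f}]$ is a point of $\P^1$ — gives the remaining branch. Likewise case~ii) becomes case~ii) of the proposition: subcase~a) is ``two horizontal links and $[x_i:x_{i+f}]=[x_{i+f+1}:x_{i+1}]$'', subcases~b) and~c) are ``exactly one horizontal link and the product of the two coordinates it joins vanishes'', and all else gives $\Ietap$. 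The assertion under the $\gA\leftrightarrow\gB$, $\Ieta\leftrightarrow\Ietap$ symmetry is inherited from the same symmetry in Lemma~\ref{lemstrat} and the manifest symmetry of Definitions~\ref{defdominance}, \ref{defdecoration} and~\ref{lemdecorationsupplementaire}.

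The only real work is combinatorial bookkeeping, and that is also where I expect the single delicate point to lie: I must check that, once the degenerate case is excluded and the allele constraints of Lemmas~\ref{proprelgene} and~\ref{pasABBA} are taken into account (together with the equations of $\vK^\psi(\vv,\ttt,\rhobar)$ established in the proof of Theorem~\ref{thequationsKisin}, e.g.\ $x_{i+1}x_{i+f}=0$ for the pattern $\binom{\gB}{\gA}$), the list of link configurations displayed in the proposition is in exact bijection with the list of parameter configurations of Lemma~\ref{lemstrat}, so that the stated cases are exhaustive, mutually exclusive, and each yields the claimed genus.
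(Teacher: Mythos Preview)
Your approach is the same as the paper's: the proposition is stated immediately after the sentence ``Le lemme~\ref{lemstrat} se traduit alors simplement en termes de décorations du gène $X$'', with no further proof, so the intended argument is exactly the dictionary translation you describe.

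One point in your write-up is incorrect as stated. You claim that the condition ``$X_j=\gA$ and $x_j\neq 0$'' of Lemma~\ref{lemstrat} is \emph{automatic as soon as some $X_j=\gA$}, since $[x_i:x_{i+f}]$ is a projective point. This is false: if $X_i=\gA$, $X_{i+f}=\gB$ and $x_i=0$, then there is no $j\in\{i,i+f\}$ with $X_j=\gA$ and $x_j\neq 0$. What is true is that in the ``sinon'' branch of the proposition (cross pattern, or exactly one diagonal link with $x_{i+f+1}=0$ and $x_i\neq 0$, etc.), the specific constraints force such a $j$ to exist: for the cross this is the projective-point argument (both allèles are $\gA$, one coordinate is nonzero), while for a single link from $X_i$ to $X_{i+f+1}$ with $x_{i+f+1}=0$ and $x_i\neq 0$ it is $j=i$ directly. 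You also need the Kisin-variety equations (e.g.\ $x_{i+f+1}=0$ when $X_{i+f}\in\{\gO,\gAB\}$) to see that certain configurations, such as ``one link from $X_i$ to $X_{i+f+1}$ with $X_{i+f}=\gO$ and $x_{i+f+1}\neq 0$'', simply do not occur on $\vK^\psi(\vv,\ttt,\rhobar)$. Once you replace the incorrect parenthetical by this case-by-case verification (which you already flag as the ``real work''), the argument goes through.
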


\subsubsection{Géométrie des variétés de Kisin stratifiées}
\label{sssec:geomstrat}

Considérons, comme jusqu'à présent, une variété de Kisin 
$\vK^\psi(\vv,\ttt,\rhobar) \subset \prod_{i=0}^{f-1} \P^1_{k_E}$ munie de 
stratification et fixons $X = (X_i)_i$ un gène correspondant. Le but de 
du paragraphe \S \ref{sssec:geomstrat} est de démontrer que $\vK^\psi(\vv,\ttt,\rhobar)$ peut s'écrire 
comme un produit de variétés facteur sur lesquelles la stratification 
\og descend \fg\ et a une forme particulièrement simple.

Pour $i \in \llbracket 0, f-1 \rrbracket$, notons $\pr_i : 
\prod_{i=0}^{f-1} \P^1_{k_E} \to \P^1_{k_E}$ la projection sur la
$i$-ième composante. Soit $S$ l'ensemble des indices $i \in \llbracket 
0, f-1 \rrbracket$ pour lequels l'application composée :
$$\textstyle
\vK^\psi(\vv,\ttt,\rhobar) \hookrightarrow \prod_{i=0}^{f-1} \P^1_{k_E}
\stackrel{\pr_i}{\longrightarrow} \P^1_{k_E}$$
est constante (nécessairement égale à $[0:1]$ ou à $[1:0]$).
Nous supposons que $S$ est non vide. D'après le
lemme \ref{lem:ilyaO}, ceci se produit dès que $\rhobar$ n'est pas
dégénérée. Quitte à modifier le plongement
$\tau_0$, nous pouvons supposer en outre que $0 \in S$. Notons
$0 = i_1 < i_2 < \cdots < i_r$ les éléments de $S$ triés par ordre
croissant et convenons que $i_{r+1} = f$. Posons également $S_j =
\llbracket i_j, i_{j+1}-1 \rrbracket$ pour tout $j \in \llbracket 0, 
r-1 \rrbracket$. Au fragment de gène allant des positions $i_j$ à 
$i_{j+1}-1$, nous pouvons associer une variété facteur $\calV_j$ 
incluse dans $\prod_{i \in S_j} \P^1_{k_E}$, de manière à avoir 
$\vK^\psi(\vv,\ttt,\rhobar) = \calV_1 \times \cdots \times\calV_r$, cette 
identification étant compatible avec le plongement dans 
$\prod_{i=0}^{f-1} \P^1_{k_E}$ (voir \S \ref{ssec:compirred}).

\begin{lem}
\label{lem:stratgi}
Soit $j \in \llbracket 0, r-1 \rrbracket$.
Pour tout indice $i \in S_j$
le genre $g(x)$ d'un point fermé $x \in \vK^\psi(\vv,\ttt,\rhobar)$ ne
dépend que la composante de $x$ sur $\calV_j$.
\end{lem}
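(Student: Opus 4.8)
The plan is to deduce the statement directly from the genetic description of the genus given in Proposition~\ref{prop:stratgene}, the gene $X$ together with its decorations being fixed throughout. Fix the factor $\calV_j$, attached to the fragment of $X$ running from position $i_j$ to position $i_{j+1}-1$, and let $i \in S_j = \llbracket i_j, i_{j+1}-1\rrbracket$. For a closed point $x \in \vK^\psi(\vv,\ttt,\rhobar)$ with homogeneous coordinates $[x_k:x_{k+f}]$ ($0\le k<f$) and associated réseau $\MK$, Proposition~\ref{prop:stratgene} expresses $g_i(\MK)$ as a function of the symbols $X_i, X_{i+f}, X_{i+1}, X_{i+1+f}$ and of the two projective points $[x_i:x_{i+f}]$ and $[x_{i+1}:x_{i+1+f}]$ alone. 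Indeed, going through the cases of that proposition --- the degenerate couple of alleles involving $\gO$, where $g_i(\MK)=\II$ unconditionally, and cases i) and ii) together with their $\gA\leftrightarrow\gB$ symmetric versions, where $g_i(\MK)$ is read off from the decoration between positions $i$ and $i+1$ and from equalities and vanishings of the coordinates $x_i, x_{i+f}, x_{i+1}, x_{i+1+f}$ --- no coordinate of index outside $\{i, i+f, i+1, i+1+f\}$ ever intervenes. So it suffices to show that each of $[x_i:x_{i+f}]$ and $[x_{i+1}:x_{i+1+f}]$ is determined by the component of $x$ on $\calV_j$.

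For $[x_i:x_{i+f}]$ this is clear: since $i\in S_j$ and the decomposition $\vK^\psi(\vv,\ttt,\rhobar)=\calV_1\times\cdots\times\calV_r$ is compatible with the embedding into $\prod_{k=0}^{f-1}\P^1_{k_E}$ (see \S\ref{ssec:compirred} and the start of \S\ref{sssec:geomstrat}), the coordinates indexed by $S_j$ constitute exactly the $\calV_j$-component. For $[x_{i+1}:x_{i+1+f}]$ there are two cases. If $i\le i_{j+1}-2$ then $i+1\in S_j$, since $S_j$ is an interval, and the same argument applies. If $i=i_{j+1}-1$ then $i+1=i_{j+1}\in S$ --- reading indices cyclically, so that for the last factor $i+1=f$ and $[x_f:x_{2f}]=[x_f:x_0]$ is obtained from the $0$-th projective coordinate by the coordinate swap, with $0=i_1\in S$ --- and then, by the very definition of $S$, the corresponding projection $\vK^\psi(\vv,\ttt,\rhobar)\to\P^1_{k_E}$ is constant, so $[x_{i+1}:x_{i+1+f}]$ does not depend on $x$ at all. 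In either case $g_i(\MK)$ is a function of the $\calV_j$-component of $x$ only, which is the assertion.

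The proof is thus a short piece of combinatorial bookkeeping once Proposition~\ref{prop:stratgene} is in hand; the one point deserving attention is the boundary index $i=i_{j+1}-1$, and it is handled precisely because the gene is cut into factors at the positions of $S$, i.e. exactly where the ambient projection to $\P^1_{k_E}$ is constant, so that the coordinate $[x_{i+1}:x_{i+1+f}]$ that is ``absent'' from $\calV_j$ is in fact rigid. I do not expect any real difficulty beyond this.
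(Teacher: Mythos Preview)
Your argument is correct and follows exactly the same line as the paper's own proof: $g_i(x)$ depends only on the coordinates at positions $i$ and $i{+}1$, the first always lies in $S_j$, and at the boundary $i=i_{j+1}-1$ the next coordinate is rigid because $i_{j+1}\in S$ and hence $\pr_{i_{j+1}}$ is constant. The paper states this in two sentences without explicitly invoking Proposition~\ref{prop:stratgene}, but your more detailed version is the same proof.
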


\begin{proof}
Le lemme est clair si $i \neq i_{j+1}-1$ car $g_i(x)$ ne dépend que 
des coordonnées de $x$ aux indices $i$ et $i+1$. Pour $i = i_{j+1}-1$,
il résulte du fait que la fonction $\pr_{i_{j+1}}$ est constante sur 
$\vK^\psi(\vv,\ttt,\rhobar)$.
\end{proof}

Le lemme \ref{lem:stratgi} nous dit exactement que, pour $i \in S_j$, 
les fonctions $g_i$ passent au quotient et définissent des fonctions, 
que nous notons encore $g_i$, sur $\calV_j$. Nous obtenons comme ceci 
une stratification sur les variétés $\calV_j$ et la
décomposition $\vK^\psi(\vv,\ttt,\rhobar) = \calV_1 \times \calV_2 \times 
\cdots \times \calV_r$ est compatible aux stratifications. Forts de ce 
résultat, nous proposons une version raffinée de la conjecture 
\ref{conj:D}.

\begin{conj}
\label{conj:D2}
Avec les notations précédentes, l'espace de déformations
$D^\psi(\vv,\ttt,\rhobar)$ s'écrit :
$$D^\psi(\vv,\ttt,\rhobar) = D_1 \times D_2 \times \cdots \times
D_r$$
où $D_i$ dépend uniquement de la variété stratifiée $\calV_i$.
\end{conj}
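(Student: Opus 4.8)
The strategy is to promote the set-theoretic description of the fibres of $\sp$ into an isomorphism of rigid spaces, and then to show that this isomorphism is compatible with the product decomposition $\vK^\psi(\vv,\ttt,\rhobar) = \calV_1 \times \cdots \times \calV_r$ of \S\ref{sssec:geomstrat}. First I would make precise, following the recipe announced in \S\ref{subsec:candidats}, the construction of $D^\psi(\vv,\ttt,\rhobar)$ already sketched before the statement: over each locally closed stratum $\vK^\psi_g(\vv,\ttt,\rhobar)$ of Theorem \ref{th:stratification}, Proposition \ref{propclassification} provides, after an étale base change trivialising the relevant line bundles, a chart identifying the preimage under $\sp$ with $\Spm$ of the ring obtained from $W_{\oE}(k_E(x)) \otimes_\oE R_g[1/p]$ — built out of open balls $R_\I = \oE[[T_i]]$ at the indices $i$ with $g_i = \I$ and open annuli $R_\II = \frac{\oE[[U_i,V_i]]}{U_iV_i+p}[1/p]$ at those with $g_i = \II$ — the pieces over different strata being glued along the closure relations of Theorem \ref{th:stratification} together with the scalar normalisations of Proposition \ref{propclassification}. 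At this stage the construction, and hence the whole argument, is conditional on Conjecture \ref{conj:D}.

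Next I would use the arc decomposition. By Lemma \ref{lem:stratgi} each genus function $g_i$ with $i \in S_j$ factors through the projection $\vK^\psi(\vv,\ttt,\rhobar) \to \calV_j$, so the stratification of $\vK^\psi(\vv,\ttt,\rhobar)$ is the product of stratifications of the $\calV_j$'s, and correspondingly the decomposition \eqref{eq:defRg} refines to a factorisation $R_g \simeq R_{g|_{S_1}} \hat\otimes \cdots \hat\otimes R_{g|_{S_r}}$ over $\oE$, where $R_{g|_{S_j}}$ is the completed tensor product over $\oE$ of the $R_{g_i}$ with $i \in S_j$. The remaining point is that the gluing data respect this factorisation. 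The closure relations do so trivially, being coordinate-wise on $\mathcal G^f = \prod_j \mathcal G^{S_j}$. For the scalar twists I would argue that, cutting the Moebius band at the position $0 \in S$, one can pin down at every anchor index $i \in S$ the basis of $\MK^{(i)}$ lifting the fixed split lattice $\Lambda^{(i)}$ of \S\ref{subsec:Ktrhobar}; such a lift is unique up to a single element of $\oEp^\times$, which may be absorbed into the arc $\calV_j$ lying immediately to its left, so that the ambiguity $\lambda$ of Proposition \ref{propclassification} (whose exponent $n_f$ spans the whole gene) is distributed among the arcs without coupling them. With these anchored bases the Breuil--Kisin datum over $\oEp$ breaks up into $r$ independent blocks, the $j$-th involving only $(\MK^{(i)})_{i \in S_j}$ together with its Frobenius maps and the two pinned bases at the endpoints $i_j$ and $i_{j+1}$; the moduli of the $j$-th block form a rigid space $D_j$ over $\calV_j$, and $D^\psi(\vv,\ttt,\rhobar) = D_1 \times \cdots \times D_r$.

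It then remains to observe that $D_j$ depends only on the stratified variety $\calV_j$: the arc of the gene from position $i_j$ to position $i_{j+1}-1$ yields the equations of $\calV_j$ by Theorem \ref{thequationsKisin}, its stratification by Proposition \ref{prop:stratgene}, and the local charts $R_{g|_{S_j}}$ with their closure-relation gluing by Proposition \ref{propclassification}; hence the whole construction of $D_j$ is recovered from $\calV_j$ equipped with its stratification, which is the assertion. The main obstacle is the first step: realising $\sp$ as a morphism of genuine rigid (or adic) spaces and glueing the charts $R_g$ globally over $\vK^\psi(\vv,\ttt,\rhobar)$ — this is essentially Conjecture \ref{conj:D} itself — and, within it, controlling the normalisation scalar $\lambda$, which a priori links all the arcs through $n_f$; the anchored-basis device above is what I expect to resolve this, and carrying it out in families rather than only on closed points is the delicate part.
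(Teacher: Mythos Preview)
The statement you are attempting to prove is labelled a \emph{Conjecture} in the paper, and the paper gives no proof of it. After stating Conjecture~\ref{conj:D2}, the authors only go on in \S\ref{subsec:candidats} to sketch a construction of \emph{candidate} spaces $D_j$, explicitly calling them ``un candidat possible'' and describing the construction as an esquisse; they do not claim that these candidates are correct, nor that the product decomposition holds. So there is no paper proof to compare your proposal against.

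Your proposal is therefore an attempt at something the authors leave open, and you already put your finger on the genuine gap: the first step --- promoting the fibrewise, set-theoretic description of $\sp^{-1}(x)$ coming from Proposition~\ref{propclassification} into a global isomorphism of rigid spaces over $\vK^\psi(\vv,\ttt,\rhobar)$ --- is not established anywhere in the paper and is essentially the content of Conjecture~\ref{conj:D} itself. Since Conjecture~\ref{conj:D2} is explicitly presented as a refinement of Conjecture~\ref{conj:D}, assuming the latter to deduce the former is circular unless you can actually carry out that gluing. Your ``anchored-basis device'' for decoupling the scalar $\lambda$ across arcs is a reasonable heuristic, but making it work in families (not just on closed points) and showing that the resulting local models glue to the actual deformation space $D^\psi(\vv,\ttt,\rhobar)$ --- rather than to some auxiliary rigid space that merely has the right fibres over $\sp$ --- is precisely the missing ingredient that keeps both conjectures open.
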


Fixons à présent un indice $j \in \llbracket 1,r \rrbracket$ et étudions la variété stratifiée 
$\calV_j$. Remarquons, pour commencer, que le lemme \ref{lemstrat} nous 
apprend que, s'il y a une décoration en forme 
de croix entre les indices $i$ et $(i+1)$, alors la fonction $g_i$ est 
constante égale à $\I$ sur la variété de Kisin. Notons $\calV'_j$ 
la variété stratifiée associée à la portion de gène comprise 
entre les indices $i_j$ et $i_{j+1}$ dans laquelle nous avons contracté 
les croix (comme au \S \ref{ssec:reduction}). Nous avons une 
identification canonique $\calV'_j = \calV_j$ et la stratification sur 
$\calV'_j$ coïncide avec celle sur $\calV_j$ ; seuls les \og noms \fg\ 
des strates sont modifiés par le fait que les $\I$ correspondant aux
croix disparaissent.
Ainsi, sans perte de généralité, nous pouvons 
supposer qu'il n'y a pas de croix dans les décorations du gène $X$, ce 
que nous faisons à partir de maintenant. De même, quitte à échanger 
$\eta$ et $\eta'$, nous pouvons supposer que la fonction $\pr_{i_j}$ 
est constante égale à $[0:1]$ sur $\vK^\psi(\vv,\ttt,\rhobar)$.

Introduisons deux nouvelles notations. Premièrement, si $k$ est une 
extension finie de $k_E$ et $x = [u:v] \in \P^1(k)$, posons $x^{-1} = 
[v:u] \in \P^1(k)$. Deuxièment, si $a$ et $b$ sont deux éléments d'un 
ensemble, convenons que $\delta(a,b)$ vaut $\II$ si $a = b$ et $\I$ 
sinon.

\begin{prop}
\label{prop:stratP1}
Supposons $i_{j+1} > i_j + 1$. Alors, pour $i \in S_j$, nous avons :
$$g_i(x) = \delta(x_i, x_{i+1}^{-1})
\quad \text{pour tout $k$-point $x$ de $\vK^\psi(\vv,\ttt,\rhobar)$}$$
où $(x_i)_{0 \leq i < f}$ désigne l'image de $x$ dans $\prod_{i=0}^{f-1}
\P^1_{k_E}(k)$ et, par convention, $x_f = x_0$.
\end{prop}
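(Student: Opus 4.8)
The plan is to check the identity $g_i(x) = \delta(x_i, x_{i+1}^{-1})$ pointwise, for every $k$-point $x$ of $\vK^\psi(\vv,\ttt,\rhobar)$ and every $i \in S_j$. In particular $\vK^\psi(\vv,\ttt,\rhobar)$ is non-empty, so by Proposition \ref{corvide} the gene $X$ contains no couple of alleles $\binom{\gO}{\gO}$; moreover, since $i_{j+1} > i_j+1$ forces the existence of an index with non-constant projection, $X$ has a dominant character. As $\oubgenre$ identifies both $\Ieta$ and $\Ietap$ with $\I$, it suffices to decide whether $g_i(x) = \II$ or $g_i(x) = \I$, and by Proposition \ref{prop:stratgene} this only depends on the decoration of $X$ between the positions $i$ and $i+1$, together with the four homogeneous coordinates $x_i, x_{i+f}, x_{i+1}, x_{i+1+f}$ --- where, when $i = i_{j+1}-1$, the coordinates at the index $i+1$ are those of the constant projection $\pr_{i_{j+1}} \in \{[0:1],[1:0]\}$, and, when $i = f-1$, one reads them off via the convention $x_f = x_0$. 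Accordingly, I would run through the list of possible decorations between columns $i$ and $i+1$; since the crosses have been contracted, the only possibilities are a single ordinary link or the horizontal decoration (with one or two links).

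Suppose first that the decoration between $i$ and $i+1$ is a single ordinary link, say with $\gA$ dominant at $i$ and at $i+1$ (the case of $\gB$ follows by exchanging $\gA \leftrightarrow \gB$ and $\Ieta \leftrightarrow \Ietap$, an operation under which $\delta$ is invariant). Theorem \ref{thequationsKisin} then provides the Kisin equation $x_i x_{i+1+f} = 0$ if the link joins $X_i$ to $X_{i+1+f}$ (type (B2)) and $x_{i+1} x_{i+f} = 0$ if it joins $X_{i+f}$ to $X_{i+1}$ (type (B3)), while Proposition \ref{prop:stratgene}(i) states that $g_i(x) = \II$ exactly when \emph{both} coordinates at the ends of the link vanish. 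In either sub-case the Kisin equation forces at least one of the two projective points $x_i, x_{i+1}$ to be $[0:1]$ or $[1:0]$, and inspecting the remaining coordinate shows that ``$g_i(x) = \II$'' and ``$x_i = x_{i+1}^{-1}$'' are the same condition. The boundary index $i = i_{j+1}-1$ is treated identically, the only change being that the coordinates at $i+1$ are now the constant ones of $\pr_{i_{j+1}}$ (and the wrap-around $i = f-1$ is already built into Proposition \ref{prop:stratgene} via Lemma \ref{lemstrat}).

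Now suppose the decoration between $i$ and $i+1$ is a horizontal one, so the dominant allele changes between $i$ and $i+1$; here the Kisin variety imposes no relation between the two columns and one must use the combinatorics of the gene instead. If there are two horizontal links then $\binom{X_i}{X_{i+f}} = \binom{\gA}{\gA}$ (up to the $\gA \leftrightarrow \gB$ symmetry), and Proposition \ref{prop:stratgene}(ii) directly yields $g_i(x) = \II$ if and only if $[x_i:x_{i+f}] = [x_{i+1+f}:x_{i+1}]$, i.e. if and only if $x_i = x_{i+1}^{-1}$. If there is a single horizontal link, then combining Lemma \ref{proprelgene}, the absence of $\binom{\gO}{\gO}$, and the fact that every index of $S_j$ other than $i_j$ has non-constant projection (because $S \cap S_j = \{i_j\}$, and an index $i'$ with $X_{i'} = \gO$ or $X_{i'+f} = \gO$ lies in $S$), one sees that either $i = i_j$ with $\binom{X_{i_j}}{X_{i_j+f}} \in \{\binom{\gO}{\gA},\binom{\gO}{\gB}\}$, or $i = i_{j+1}-1$ with a $\gAB$ in column $i$ forcing a $\gO$ in column $i+1$; in both situations one of the two projective points is entirely pinned down (using the normalisation $\pr_{i_j}=[0:1]$ in the first case), and Proposition \ref{prop:stratgene}(ii) then again identifies ``$g_i(x)=\II$'' with ``$x_i = x_{i+1}^{-1}$'' after a one-line computation. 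This exhausts all cases.

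The main obstacle is precisely the horizontal-decoration case: there the Kisin variety imposes no equation relating the two adjacent $\P^1$-factors, so the identity $g_i(x) = \delta(x_i, x_{i+1}^{-1})$ can hold only because the gene itself constrains one of the two projective points to be constant. Pinning this down requires carefully combining Lemma \ref{proprelgene} with the definition of $S$ and with the normalisation $\pr_{i_j} = [0:1]$; the bookkeeping at $i = f-1$ is the other delicate point, though it is absorbed into Proposition \ref{prop:stratgene}. All the rest reduces to the elementary case-by-case verifications sketched above.
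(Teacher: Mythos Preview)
Your approach coincides with the paper's: both deduce the proposition from Proposition~\ref{prop:stratgene} by running through the possible decoration patterns between columns $i$ and $i{+}1$ for $i\in S_j$. The paper's proof is literally one line, invoking Proposition~\ref{prop:stratgene} together with a separate combinatorial lemma (Lemma~\ref{lem:stratgene}) that establishes precisely the list of admissible patterns depending on whether $i=i_j$, $i_j<i<i_{j+1}-1$, or $i=i_{j+1}-1$.

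The one place where you cut a corner is the sentence ``since the crosses have been contracted, the only possibilities are a single ordinary link or the horizontal decoration''. This is exactly the content of Lemma~\ref{lem:stratgene}, and it does require proof: contracting crosses is not by itself enough. For an interior index $i_j<i<i_{j+1}-1$ you must rule out the configuration with \emph{no} link between the two columns (which by Proposition~\ref{prop:stratgene}(i) would give $g_i=\II$ identically, contradicting $\delta(x_i,x_{i+1}^{-1})$ on a product of two free $\P^1$'s), and also rule out a single horizontal link (this needs the observation that $X_i,X_{i+f}\in\{\gA,\gB\}$ at such an index, so a single horizontal link would force $\{X_i,X_{i+f}\}=\{\gA,\gB\}$ and hence the same dominant allele at $i{+}1$, a contradiction). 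Similarly, at $i=i_j$ one must exclude the single diagonal link going from $X_{i+f}$ to $X_{i+1}$, since together with the normalisation $\pr_{i_j}=[0{:}1]$ it would force $\pr_{i+1}$ constant and hence $i{+}1\in S$. Your later paragraph on the single horizontal link already carries out part of this analysis correctly (and your conclusion $X_{i_j}=\gO$ in that case is right), but the exhaustivity of your case split is asserted rather than justified. Once you fill in this classification of patterns, your case-by-case verification using Proposition~\ref{prop:stratgene} is correct and matches the paper.
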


\begin{proof}
C'est une conséquence simple de la proposition \ref{prop:stratgene}
et du lemme \ref{lem:stratgene} ci-dessous.
\end{proof}

\begin{lem}
\label{lem:stratgene}
Sous les hypothèses précédentes, pour $i \in \llbracket i_j, i_{j+1}-1
\rrbracket$, le gène et sa décoration prennent, aux positions $i$ et $i+1$, 
l'une des formes suivantes :

\medskip

\begin{tabular}{p{4cm}p{2.1cm}p{2.1cm}p{2.1cm}p{2.1cm}}
\raisebox{0.4cm}{$\bullet$ si $i_j = i < i_{j+1}-1$ :} &
\begin{tikzpicture}[yscale=0.5]
\draw[transparent] (-0.3,0)--(-0.3,1);
\node at (0,1) { $\bullet$ };
\node at (1,1) { $\bullet$ };
\node at (1,0) { $\bullet$ };
\node at (0,0) { $\bullet$ };
\draw[thick] (0.2,0)--(0.8,0);
\end{tikzpicture} &
\begin{tikzpicture}[yscale=0.5]
\draw[transparent] (-0.3,0)--(-0.3,1);
\node at (0,1) { $\bullet$ };
\node at (1,1) { $\bullet$ };
\node at (1,0) { $\bullet$ };
\node at (0,0) { $\bullet$ };
\draw[thick] (0.2,0.8)--(0.8,0.2);
\end{tikzpicture} &
\begin{tikzpicture}[yscale=0.5]
\draw[transparent] (-0.3,0)--(-0.3,1);
\node at (0,1) { $\bullet$ };
\node at (1,1) { $\bullet$ };
\node at (1,0) { $\bullet$ };
\node at (0,0) { $\bullet$ };
\draw[thick] (0.2,0)--(0.8,0);
\draw[thick] (0.2,1)--(0.8,1);
\end{tikzpicture} \medskip \\
\raisebox{0.4cm}{$\bullet$ si $i_j < i < i_{j+1}-1$ :} &
\begin{tikzpicture}[yscale=0.5]
\draw[transparent] (-0.3,0)--(-0.3,1);
\node at (0,1) { $\bullet$ };
\node at (1,1) { $\bullet$ };
\node at (1,0) { $\bullet$ };
\node at (0,0) { $\bullet$ };
\draw[thick] (0.2,0.2)--(0.8,0.8);
\end{tikzpicture} &
\begin{tikzpicture}[yscale=0.5]
\draw[transparent] (-0.3,0)--(-0.3,1);
\node at (0,1) { $\bullet$ };
\node at (1,1) { $\bullet$ };
\node at (1,0) { $\bullet$ };
\node at (0,0) { $\bullet$ };
\draw[thick] (0.2,0.8)--(0.8,0.2);
\end{tikzpicture} &
\begin{tikzpicture}[yscale=0.5]
\draw[transparent] (-0.3,0)--(-0.3,1);
\node at (0,1) { $\bullet$ };
\node at (1,1) { $\bullet$ };
\node at (1,0) { $\bullet$ };
\node at (0,0) { $\bullet$ };
\draw[thick] (0.2,0)--(0.8,0);
\draw[thick] (0.2,1)--(0.8,1);
\end{tikzpicture} \medskip \\
\raisebox{0.4cm}{$\bullet$ si $i_j < i = i_{j+1}-1$ :} &
\begin{tikzpicture}[yscale=0.5]
\draw[transparent] (-0.3,0)--(-0.3,1);
\node at (0,1) { $\gAB$ };
\node at (1,1) { $\gO$ };
\node at (1,0) { $\bullet$ };
\node at (0,0) { $\bullet$ };
\draw[thick] (0.2,0)--(0.8,0);
\end{tikzpicture} &
\begin{tikzpicture}[yscale=0.5]
\draw[transparent] (-0.3,0)--(-0.3,1);
\node at (0,1) { $\gAB$ };
\node at (1,1) { $\gO$ };
\node at (1,0) { $\bullet$ };
\node at (0,0) { $\bullet$ };
\draw[thick] (0.2,0.2)--(0.8,0.8);
\end{tikzpicture} &
\begin{tikzpicture}[yscale=0.5]
\draw[transparent] (-0.3,0)--(-0.3,1);
\node at (0,1) { $\bullet$ };
\node at (1,1) { $\bullet$ };
\node at (1,0) { $\gO$ };
\node at (0,0) { $\gAB$ };
\draw[thick] (0.2,1)--(0.8,1);
\end{tikzpicture} &
\begin{tikzpicture}[yscale=0.5]
\draw[transparent] (-0.3,0)--(-0.3,1);
\node at (0,1) { $\bullet$ };
\node at (1,1) { $\bullet$ };
\node at (1,0) { $\gO$ };
\node at (0,0) { $\gAB$ };
\draw[thick] (0.2,0.8)--(0.8,0.2);
\end{tikzpicture}
\end{tabular}
\end{lem}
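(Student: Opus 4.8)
The plan is to prove the lemma by a finite case analysis on the two allele couples $\binom{X_i}{X_{i+f}}$, $\binom{X_{i+1}}{X_{i+f+1}}$ and on the symbols dominant at the columns $i$ and $i+1$, reading off in each case the ordinary and horizontal decorations directly from Definitions~\ref{defdominance}, \ref{defdecoration} and~\ref{lemdecorationsupplementaire}. The inputs are Lemma~\ref{proprelgene} (a $\gAB$-column is followed by an $\gO$-column, an $\gO$-column is preceded by a $\gAB$- or $\gO$-column), the fact that the couple $\binom\gO\gO$ never occurs since $\vK^\psi(\vv,\ttt,\rhobar)$ is assumed non-empty (Proposition~\ref{corvide}), the standing hypotheses that crosses have been contracted and that $\pr_{i_j}$ is constant equal to $[0:1]$ (with $i_{j+1}>i_j+1$), and — crucially — the characterization of $S$: for $i_j<i<i_{j+1}$ the map $\pr_i$ is \emph{not} constant on the Kisin variety, so by the equations of Theorem~\ref{thequationsKisin} neither $X_i$ nor $X_{i+f}$ is $\gO$, and no equation may identically force $x_i$ or $x_{i+f}$ to vanish.

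First I would treat the interior positions $i_j<i<i_{j+1}-1$. There the columns $i$ and $i+1$ contain no $\gO$ and (since a $\gAB$ would have to be followed by an $\gO$, impossible here by Lemma~\ref{proprelgene}) no $\gAB$ either, so $\binom{X_i}{X_{i+f}},\binom{X_{i+1}}{X_{i+f+1}}\in\{\gA,\gB\}^2$. If the couple at $i$ is $\binom\gA\gB$ or $\binom\gB\gA$, Definition~\ref{defdominance} makes the dominant symbol at $i$ equal to that at $i+1$, and Definition~\ref{defdecoration} then produces a single diagonal link, of type NW--SE or SW--NE according to the couple. If the couple at $i$ is $\binom\gA\gA$ or $\binom\gB\gB$, the dominant symbol at $i$ is $\gA$ resp.\ $\gB$; were it also dominant at $i+1$, Definition~\ref{defdecoration} would create two diagonal links, i.e.\ a cross, which is excluded, so the dominant symbol flips and Definition~\ref{lemdecorationsupplementaire} produces two horizontal links. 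This yields exactly the three shapes of the middle row of the table.

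Next I would analyse the two boundary columns. At $i_{j+1}$ exactly one of $X_{i_{j+1}},X_{i_{j+1}+f}$ equals $\gO$ (not both), and by Lemma~\ref{proprelgene} the symbol immediately to its left in the same row is $\gAB$ (it is not $\gO$, else $i_{j+1}-1\in S$); the other row of column $i_{j+1}-1$ carries a $\gA$ or a $\gB$ (not $\gAB$, by Lemma~\ref{proprelgene}; not $\gO$, else again $i_{j+1}-1\in S$). Reading off the dominant symbols — a column containing $\gAB$ has its dominance governed by the $\gA/\gB$ of the other row — and applying Definitions~\ref{defdecoration}--\ref{lemdecorationsupplementaire}, one gets that the decoration between columns $i_{j+1}-1$ and $i_{j+1}$ is the SW--NE diagonal (same dominance) or the bottom horizontal link (dominance flip) when the $\gAB$ lies in the top row, and symmetrically with rows swapped when it lies in the bottom row; a cross is impossible since it would need $\gA$ or $\gB$ in both rows of column $i_{j+1}-1$. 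This gives the four shapes of the bottom row.

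Finally, the first column $i=i_j$, where (after the normalization $\pr_{i_j}\equiv[0:1]$) the top coordinate $x_{i_j}$ vanishes identically while $x_{i_j+f}$ does not. The delicate point — and the main obstacle — is to rule out a lone SW--NE diagonal and a lone top-row horizontal link at this column: an SW--NE diagonal at $i_j$ imposes the equation $x_{i_j+f}\,x_{i_j+1}=0$, which together with $x_{i_j+f}\neq0$ forces $\pr_{i_j+1}$ to be constant, contradicting $i_j+1\notin S$; and a lone top-row horizontal link forces $X_{i_j+f}$ to be $\gO$ (impossible, since it would give $\pr_{i_j}=[1:0]$) or $\gAB$ (impossible, since Lemma~\ref{proprelgene} would then put $\gO$ at column $i_j+1$, hence $i_j+1\in S$); crosses are excluded by hypothesis. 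What survives are the bottom horizontal link, the NW--SE diagonal, and the double horizontal link, i.e.\ the three shapes of the top row. Apart from this bookkeeping at the boundary columns — where one must link the (non-)constancy of the projections to the shape of the decoration — every step is a direct unwinding of the definitions and of Lemmas~\ref{proprelgene} and~\ref{pasABBA}.
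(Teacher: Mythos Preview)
Your proposal is correct and follows essentially the same case analysis as the paper: interior columns, then the right boundary $i=i_{j+1}-1$, then the left boundary $i=i_j$, ruling out in each case the forbidden link configurations via the definitions of dominance and decoration together with Lemma~\ref{proprelgene} and the standing hypotheses. The one point you assert without justification is that column $i_{j+1}$ necessarily carries an $\gO$; the paper obtains this from Lemma~\ref{lem:tjsdroite} (constraint propagation in the reduction procedure goes only rightward, so a column that is in $S$ while its left neighbour is not must itself contain an $\gO$), and you should add this reference.
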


\begin{rem}
Le lemme ne dit rien dans le cas où $i_{j+1} = i_j + 1$.
\end{rem}

\begin{proof}
D'après la discussion qui suit la définition \ref{defdominance}, les
hypothèses que nous avons faites assurent que $X$ a un caractère
dominant.

Supposons tout d'abord $i_j < i < i_{j+1}-1$. Ainsi ni $i$, ni $i+1$ 
n'est dans $S$. Par le théorème \ref{thequationsKisin} et le lemme 
\ref{proprelgene}, ceci implique que $X_i$ et $X_{i+f}$ sont tous les 
deux différents de $\gAB$ et de $\gO$. Autrement dit, $X_i, X_{i+f} \in 
\{\gA, \gB\}$. Ainsi, si $Y \in \{\gA, \gB\}$ désigne l'allèle dominant 
en $i$, nous avons nécessairement $X_i = Y$ ou $X_{i+f} = Y$. Autrement
dit, au moins un lien part de $X_i$ ou de $X_{i+f}$. En outre, nous
avons supposé précédemment qu'il ne part pas deux liens en diagonale.
Il ne reste donc plus qu'à exclure le cas où un seul lien horizontal
partirait. Mais, si cela arrivait, nous aurions $\{X_i, X_{i+f}\} =
\{\gA, \gB\}$ et $Y$ resterait dominant en $(i+1)$, ce qui est 
incompatible avec des liens horizontaux.

Considérons maintenant le cas où $i = i_j < i_{j+1}-1$ et notons
$Y\in\{\gA,\gB\}$ l'allèle dominant en $i$. Comme précédemment, nous
obtenons $X_i \neq \gAB$ et $X_{i+f} \neq \gAB$, d'où nous déduisons
que $X_i = Y$ ou $X_{i+f} = Y$. Ainsi, au moins un lien part de $X_i$
ou de $X_{i+f}$. Dans le cas où $Y$ reste dominant en $(i+1)$, il ne
peut y avoir deux liens par hypothèse. Il ne peut pas non plus
y avoir un unique lien entre $X_{i+f}$ et $X_{i+1}$ car, par le 
théorème \ref{thequationsKisin} et l'hypothèse $\pr_i(\calV_j) = 
\{[0:1]\}$, cela impliquerait que $\pr_{i+1}(\calV_j) = \{[0:1]\}$
et donc $i+1 \in S$, ce qui est exclu. Il ne reste donc plus, dans
ce cas, que la possibilité d'un unique lien entre $X_i$ et $X_{i+f+1}$.
Dans le cas contraire où $Y$ n'est plus dominant en $(i+1)$, 
nous devons exclure la possibilité $X_i = Y$, $X_{i+f} \neq Y$. Mais,
de l'hypothèse $\pr_i(\calV_j) = \{[0:1]\}$, nous déduisons que
$X_{i+f} \neq \gO$. Ainsi, si $X_{i+f} \neq Y$, nous devons avoir
$X_{i+f} = \bar Y$ où $\bar Y$ est l'allèle complémentaire de $Y$,
c'est-à-dire l'unique élément de $\{\gA, \gB\}$ qui n'est pas $Y$. 
Par conséquent, si nous supposons également $X_i = Y$, nous obtenons 
$\{X_i, X_{i+f}\} = \{\gA, \gB\}$, ce qui contredit le fait que $Y$
ne soit pas dominant en $(i+1)$.

Reste enfin à traiter le cas où $i = i_{j+1} - 1$. D'après le lemme 
\ref{lem:tjsdroite}, nous avons nécessairement $X_{i+1} = \gO$ ou 
$X_{i+1+f} = \gO$. Si $X_{i+1} = \gO$, nous avons $X_i \in \{\gO, 
\gAB\}$ par le lemme \ref{proprelgene}. Mais la possibilité $X_i = \gO$ 
est exclue car $i \not\in S$. Ainsi $X_i = \gAB$. Comme la variété de 
Kisin est supposée non vide, ceci implique $X_{i+f} \neq \gAB$. Nous 
avons aussi $X_{i+f} \neq \gO$ puisque $i \not\in S$. Ainsi $X_{i+f} \in 
\{\gA, \gB\}$ et $Y = X_{i+f}$ est dominant en $i$. Nous en déduisons 
qu'un lien part de $X_{i+f}$ et, par suite, que nous sommes dans l'un 
des deux premiers cas du lemme. Le cas où $X_i = \gO$ se traite de la 
même manière, en échangeant les rôles de $i$ et $i+f$.
\end{proof}

\subsection{Quelques exemples}
\label{subsec:stratexemples}

\begin{figure}
\hspace{-1.9cm}
\begin{minipage}{18.5cm}
\small \renewcommand{\arraystretch}{1.5}
\begin{tabular}{|c|c|} 
\hline
Gène & Variété de Kisin 
\\ \hline

\begin{tikzpicture}[xscale=0.8,yscale=0.5]
\draw[transparent] (-0.5,1.5)--(-0.5,2);
\draw [fill=fond, thick] (-0.5,-0.5) rectangle (2.5,1.5);
\node[color=colA] at (0, 0) { $\gA$ };
\node[color=colA] at (0, 1) { $\gO$ };
\node[color=colB] at (1, 0) { $\gAB$ };
\node[color=colB] at (1, 1) { $\gB$ };
\node[color=colA] at (2, 0) { $\gO$ };
\node[color=colA] at (2, 1) { $\gA$ };
\draw[color=colA,thick] (-0.8,0.2)--(-0.2,0.8);
\draw[color=colA,thick] (2.2,0.8)--(2.8,0.2);
\draw[color=colstr,thick] (1.2,1)--(1.8,1);
\draw[color=colstr,thick] (0.2,0)--(0.7,0);
\end{tikzpicture}

& 

\begin{tikzpicture}
\draw[very thick] (-0.7,0)--(2.7,0);
\node[color=colE,right] at (2.7,0) { $\P^1$ };
\node[scale=0.8] at (1,-0.25) { $\I{\times}\I{\times}\I$ };
\fill[color=ptmarque] (0,0) circle (1mm);
\node[color=ptmarque,scale=0.8] at (0,0.25) { $\I{\times}\II{\times}\I$ };
\fill[color=ptmarque] (2,0) circle (1mm);
\node[color=ptmarque,scale=0.8] at (2,0.25) { $\II{\times}\I{\times}\I$ };
\end{tikzpicture}

\\ \cline{1-2}

\begin{tikzpicture}[xscale=0.8,yscale=0.5]
\draw[transparent] (-0.5,1.5)--(-0.5,2);
\draw [fill=fond, thick] (-0.5,-0.5) rectangle (2.5,1.5);
\node[color=colA] at (0, 0) { $\gA$ };
\node[color=colA] at (0, 1) { $\gO$ };
\node[color=colB] at (1, 0) { $\gAB$ };
\node[color=colB] at (1, 1) { $\gB$ };
\node[color=colB] at (2, 0) { $\gO$ };
\node[color=colB] at (2, 1) { $\gB$ };
\draw[color=colB,thick] (1.2,0.8)--(1.8,0.2);
\draw[color=colstr,thick] (-0.8,0)--(-0.2,0);
\draw[color=colstr,thick] (2.2,1)--(2.8,1);
\draw[color=colstr,thick] (0.2,0)--(0.7,0);
\end{tikzpicture}

&

\begin{tikzpicture}
\draw[very thick] (-0.7,0)--(2.7,0);
\node[color=colE,right] at (2.7,0) { $\P^1$ };
\node[scale=0.8] at (1,-0.25) { $\I{\times}\I{\times}\I$ };
\fill[color=ptmarque] (0,0) circle (1mm);
\node[color=ptmarque,scale=0.8] at (0,0.25) { $\II{\times}\I{\times}\I$ };
\fill[color=ptmarque] (2,0) circle (1mm);
\node[color=ptmarque,scale=0.8] at (2,0.25) { $\I{\times}\II{\times}\I$ };
\end{tikzpicture}

\\ \cline{1-2}

\begin{tikzpicture}[xscale=0.8,yscale=0.5]
\draw[transparent] (-0.5,1.5)--(-0.5,2);
\draw [fill=fond, thick] (-0.5,-0.5) rectangle (2.5,1.5);
\node[color=colA] at (0, 0) { $\gA$ };
\node[color=colA] at (0, 1) { $\gO$ };
\node[color=colA] at (1, 0) { $\gB$ };
\node[color=colA] at (1, 1) { $\gA$ };
\node[color=colA] at (2, 0) { $\gAB$ };
\node[color=colA] at (2, 1) { $\gA$ };
\draw[color=colA,thick] (0.2,0.2)--(0.8,0.8);
\draw[color=colA,thick] (1.2,0.8)--(1.8,0.2);
\end{tikzpicture}

& 

\begin{tikzpicture}
\draw[very thick] (-0.7,0)--(2.7,0);
\node[color=colE,right] at (2.7,0) { $\P^1$ };
\node[scale=0.8] at (1,-0.25) { $\I{\times}\I{\times}\I$ };
\fill[color=ptmarque] (0,0) circle (1mm);
\node[color=ptmarque,scale=0.8] at (0,0.25) { $\I{\times}\II{\times}\I$ };
\fill[color=ptmarque] (2,0) circle (1mm);
\node[color=ptmarque,scale=0.8] at (2,0.25) { $\I{\times}\I{\times}\II$ };
\end{tikzpicture}

\\ \cline{1-2}

\begin{tikzpicture}[xscale=0.8,yscale=0.5]
\draw[transparent] (-0.5,1.5)--(-0.5,2);
\draw [fill=fond, thick] (-0.5,-0.5) rectangle (2.5,1.5);
\node[color=colA] at (0, 0) { $\gA$ };
\node[color=colA] at (0, 1) { $\gA$ };
\node[color=colA] at (1, 0) { $\gA$ };
\node[color=colA] at (1, 1) { $\gB$ };
\node[color=colA] at (2, 0) { $\gA$ };
\node[color=colA] at (2, 1) { $\gB$ };
\draw[color=colA,thick] (-0.8,0.8)--(-0.2,0.2);
\draw[color=colA,thick] (2.2,0.2)--(2.8,0.8);
\draw[color=colA,thick] (0.2,0.2)--(0.8,0.8);
\draw[color=colA,thick] (0.2,0.8)--(0.8,0.2);
\draw[color=colA,thick] (1.2,0.2)--(1.8,0.8);
\end{tikzpicture}

& 

\begin{tikzpicture}
\draw[very thick] (-0.7,0)--(2.7,0);
\node[color=colE,right] at (2.7,0) { $\P^1$ };
\node[scale=0.8] at (1,-0.25) { $\I{\times}\I{\times}\I$ };
\fill[color=ptmarque] (0,0) circle (1mm);
\node[color=ptmarque,scale=0.8] at (0,0.25) { $\I{\times}\II{\times}\I$ };
\fill[color=ptmarque] (2,0) circle (1mm);
\node[color=ptmarque,scale=0.8] at (2,0.25) { $\I{\times}\I{\times}\II$ };
\end{tikzpicture}

\\ \cline{1-2}

\begin{tikzpicture}[xscale=0.8,yscale=0.5]
\draw[transparent] (-0.5,1.5)--(-0.5,2);
\draw [fill=fond, thick] (-0.5,-0.5) rectangle (2.5,1.5);
\node[color=colA] at (0, 0) { $\gA$ };
\node[color=colA] at (0, 1) { $\gA$ };
\node[color=colA] at (1, 0) { $\gA$ };
\node[color=colA] at (1, 1) { $\gB$ };
\node[color=colA] at (2, 0) { $\gB$ };
\node[color=colA] at (2, 1) { $\gA$ };
\draw[color=colA,thick] (-0.8,0.8)--(-0.8,0.8);
\draw[color=colA,thick] (-0.8,0.2)--(-0.2,0.8);
\draw[color=colA,thick] (2.2,0.8)--(2.8,0.2);
\draw[color=colA,thick] (0.2,0.2)--(0.8,0.8);
\draw[color=colA,thick] (0.2,0.8)--(0.8,0.2);
\draw[color=colA,thick] (1.2,0.2)--(1.8,0.8);
\end{tikzpicture}

&

\begin{tikzpicture}
\draw[very thick] (-0.7,0)--(2.7,0);
\node[color=colE,right] at (2.7,0) { $\P^1$ };
\node[scale=0.8] at (1,-0.25) { $\I{\times}\I{\times}\I$ };
\fill[color=ptmarque] (0,0) circle (1mm);
\node[color=ptmarque,scale=0.8] at (0,0.25) { $\I{\times}\II{\times}\I$ };
\fill[color=ptmarque] (2,0) circle (1mm);
\node[color=ptmarque,scale=0.8] at (2,0.25) { $\I{\times}\I{\times}\II$ };
\end{tikzpicture}

\\ \cline{1-2}

\begin{tikzpicture}[xscale=0.8,yscale=0.5]
\draw[transparent] (-0.5,1.5)--(-0.5,2);
\draw [fill=fond, thick] (-0.5,-0.5) rectangle (2.5,1.5);
\node[color=colA] at (0, 0) { $\gA$ };
\node[color=colA] at (0, 1) { $\gO$ };
\node[color=colB] at (1, 0) { $\gB$ };
\node[color=colB] at (1, 1) { $\gB$ };
\node[color=colB] at (2, 0) { $\gAB$ };
\node[color=colB] at (2, 1) { $\gB$ };
\draw[color=colB,thick] (1.2,0.8)--(1.8,0.2);
\draw[color=colB,thick] (1.2,0.2)--(1.8,0.8);
\draw[color=colstr,thick] (-0.8,0)--(-0.2,0);
\draw[color=colstr,thick] (2.2,1)--(2.8,1);
\draw[color=colstr,thick] (0.2,0)--(0.8,0);
\end{tikzpicture}

&

\begin{tikzpicture}
\draw[very thick] (-0.7,0)--(2.7,0);
\node[color=colE,right] at (2.7,0) { $\P^1$ };
\node[scale=0.8] at (1,-0.25) { $\I{\times}\I{\times}\I$ };
\fill[color=ptmarque] (0,0) circle (1mm);
\node[color=ptmarque,scale=0.8] at (0,0.25) { $\II{\times}\I{\times}\I$ };
\fill[color=ptmarque] (2,0) circle (1mm);
\node[color=ptmarque,scale=0.8] at (2,0.25) { $\I{\times}\I{\times}\II$ };
\end{tikzpicture}

\\ \hline

\begin{tikzpicture}[xscale=0.8,yscale=0.5]
\draw[transparent] (-0.5,1.5)--(-0.5,2);
\draw [fill=fond, thick] (-0.5,-0.5) rectangle (2.5,1.5);
\node[color=colA] at (0, 0) { $\gAB$ };
\node[color=colA] at (0, 1) { $\gO$ };
\node[color=colA] at (1, 0) { $\gO$ };
\node[color=colA] at (1, 1) { $\gA$ };
\node[color=colB] at (2, 0) { $\gAB$ };
\node[color=colB] at (2, 1) { $\gB$ };
\draw[color=colstr,thick] (-0.8,0)--(-0.2,0);
\draw[color=colstr,thick] (2.2,1)--(2.8,1);
\draw[color=colstr,thick] (1.2,1)--(1.8,1);
\end{tikzpicture}

& 

\begin{tikzpicture}
\draw[very thick] (-0.7,0)--(2.7,0);
\node[color=colE,right] at (2.7,0) { $\P^1$ };
\node[scale=0.8] at (1.5,-0.25) { $\II{\times}\I{\times}\I$ };
\fill[color=ptmarque] (0,0) circle (1mm);
\node[color=ptmarque,scale=0.8] at (0,0.25) { $\II{\times}\II{\times}\II$ };
\end{tikzpicture}

\\ \hline

\begin{tikzpicture}[xscale=0.8,yscale=0.5]
\draw[transparent] (-0.5,-0.5)--(-0.5,-0.8);
\draw [fill=fond, thick] (-0.5,-0.5) rectangle (2.5,1.5);
\node[color=colA] at (0, 0) { $\gA$ };
\node[color=colA] at (0, 1) { $\gO$ };
\node[color=colB] at (1, 0) { $\gB$ };
\node[color=colB] at (1, 1) { $\gA$ };
\node[color=colB] at (2, 0) { $\gAB$ };
\node[color=colB] at (2, 1) { $\gB$ };
\draw[color=colB,thick] (1.2,0.2)--(1.8,0.8);
\draw[color=colstr,thick] (-0.8,0)--(-0.2,0);
\draw[color=colstr,thick] (2.2,1)--(2.8,1);
\draw[color=colstr,thick] (0.2,0)--(0.8,0);
\end{tikzpicture}

&

\begin{tikzpicture}
\draw[transparent] (-2,0)--(2,1.1);
\draw [very thick] (-2.4,-0.1)--(0.4,0.6);
\draw [very thick] (-0.4,0.6)--(2.4,-0.1);
\fill[color=ptmarque] (0,0.5) circle (1mm);
\node[color=ptmarque,scale=0.8] at (0,0.8) { $\II{\times}\II{\times}\II$ };
\node[scale=0.8,right] at (1,0.4) { $\I{\times}\I{\times}\II$ };
\node[scale=0.8,left] at (-1,0.4) { $\II{\times}\I{\times}\I$ };
\node[color=colE,right] at (1.6,-0.25) { $\P^1$ };
\node[color=colE,left] at (-1.6,-0.25) { $\P^1$ };
\end{tikzpicture}

\\ \hline

\end{tabular}
\hfill
\begin{tabular}{|c|c|}
\hline
Gène & Variété de Kisin 
\\ \hline

\begin{tikzpicture}[xscale=0.8,yscale=0.5]
\draw[transparent] (-0.5,-0.5)--(-0.5,-1.4);
\draw [fill=fond, thick] (-0.5,-0.5) rectangle (2.5,1.5);
\node[color=colA] at (0, 0) { $\gA$ };
\node[color=colA] at (0, 1) { $\gO$ };
\node[color=colB] at (1, 0) { $\gA$ };
\node[color=colB] at (1, 1) { $\gB$ };
\node[color=colB] at (2, 0) { $\gAB$ };
\node[color=colB] at (2, 1) { $\gB$ };
\draw[color=colB,thick] (1.2,0.8)--(1.8,0.2);
\draw[color=colstr,thick] (-0.8,0)--(-0.2,0);
\draw[color=colstr,thick] (2.2,1)--(2.8,1);
\draw[color=colstr,thick] (0.2,0)--(0.8,0);
\end{tikzpicture}

&

\begin{tikzpicture}
\draw[transparent] (-2,-0.78)--(2,1.3);
\draw [very thick] (-2.4,-0.1)--(0.4,0.6);
\draw [very thick] (-0.4,0.6)--(2.4,-0.1);
\fill[color=ptmarque] (0,0.5) circle (1mm);
\node[color=ptmarque,scale=0.8] at (0,0.8) { $\I{\times}\II{\times}\I$ };
\fill[color=ptmarque] (-2,0) circle (1mm);
\node[color=ptmarque,scale=0.8] at (-1.5,-0.25) { $\I{\times}\I{\times}\II$ };
\fill[color=ptmarque] (2,0) circle (1mm);
\node[color=ptmarque,scale=0.8] at (1.5,-0.25) { $\II{\times}\I{\times}\I$ };
\node[scale=0.8,right] at (1,0.4) { $\I{\times}\I{\times}\I$ };
\node[scale=0.8,left] at (-1,0.4) { $\I{\times}\I{\times}\I$ };
\node[color=colE,right] at (2.4,-0.1) { $\P^1$ };
\node[color=colE,left] at (-2.4,-0.1) { $\P^1$ };
\end{tikzpicture}

\\ \hline

\begin{tikzpicture}[xscale=0.8,yscale=0.5]
\draw[transparent] (-0.5,-0.5)--(2.5,-3.8);
\draw [fill=fond, thick] (-0.5,-0.5) rectangle (2.5,1.5);
\node[color=colA] at (0, 0) { $\gA$ };
\node[color=colA] at (0, 1) { $\gO$ };
\node[color=colB] at (1, 0) { $\gB$ };
\node[color=colB] at (1, 1) { $\gB$ };
\node[color=colA] at (2, 0) { $\gAB$ };
\node[color=colA] at (2, 1) { $\gA$ };
\draw[color=colA,thick] (-0.8,0.2)--(-0.2,0.8);
\draw[color=colA,thick] (2.2,0.8)--(2.8,0.2);
\draw[color=colstr,thick] (0.2,0)--(0.8,0);
\draw[color=colstr,thick] (1.2,0)--(1.8,0);
\draw[color=colstr,thick] (1.2,1)--(1.8,1);
\end{tikzpicture}

&

\begin{tikzpicture}
\draw[transparent] (0,-0.1)--(6,4.3);
\draw[fill=colG!60] (0,0) rectangle (6,4);
\clip (0,0) rectangle (6,4);
\draw (6.1,-0.1) ellipse (1.1cm and 0.6cm);
\node[above left,xscale=0.6,yscale=0.7] at (6,0) { $\mathbb P^1 {\times} \mathbb P^1$ };
\node[color=colF,scale=0.8] at (5,3.5) { $\I{\times}\I{\times}\I$ };
\begin{scope}[thick,color=colB]
\draw (0,3.5)--(6,1)
  node[sloped,midway,xshift=-0.2cm,yshift=1.5mm,scale=0.8] { $\I{\times}\II{\times}\I$ };
\draw (0,0.5)--(6,2)
  node[sloped,midway,xshift=-0.3cm,yshift=-2mm,scale=0.8] { $\I{\times}\I{\times}\II$ };
\draw (0.5,0)--(1.5,4)
  node[midway,sloped,yshift=1.5mm,xshift=-1mm,scale=0.8] { $\II{\times}\I{\times}\I$ };
\end{scope}
\fill[color=ptmarque] (4.5,1.625) circle (1mm);
\node[color=ptmarque,scale=0.8,xshift=3mm,yshift=3.5mm] at (4.5,1.625) { $\I\!{\times}\II{\times}\II$ };
\fill[color=ptmarque] (0.667,0.667) circle (1mm);
\node[color=ptmarque,scale=0.8,right,xshift=0.5mm,yshift=-2.5mm] at (0.667,0.667) { $\II{\times}\I{\times}\II$ };
\fill[color=ptmarque] (1.245,2.981) circle (1mm);
\node[color=ptmarque,scale=0.8,right,xshift=0.5mm,yshift=1.5mm] at (1.245,2.981) { $\II{\times}\II{\times}\I$ };
\end{tikzpicture}

\\ \hline

\begin{tikzpicture}[xscale=0.8,yscale=0.5]
\draw[transparent] (-0.5,-0.5)--(2.5,-4.2);
\draw [fill=fond, thick] (-0.5,-0.5) rectangle (2.5,1.5);
\node[color=colA] at (0, 0) { $\gA$ };
\node[color=colA] at (0, 1) { $\gA$ };
\node[color=colB] at (1, 0) { $\gB$ };
\node[color=colB] at (1, 1) { $\gA$ };
\node[color=colB] at (2, 0) { $\gB$ };
\node[color=colB] at (2, 1) { $\gB$ };
\draw[color=colB,thick] (1.2,0.2)--(1.8,0.8);
\draw[color=colstr,thick] (0.2,0)--(0.8,0);
\draw[color=colstr,thick] (0.2,1)--(0.8,1);
\draw[color=colstr,thick] (2.2,0)--(2.8,0);
\draw[color=colstr,thick] (2.2,1)--(2.8,1);
\draw[color=colstr,thick] (-0.8,0)--(-0.2,0);
\draw[color=colstr,thick] (-0.8,1)--(-0.2,1);
\end{tikzpicture}

&

\begin{tikzpicture}
\draw[transparent] (0,-0.1)--(5,4.4);
\fill[color=colG] (0,0)--(3,0)--(5,1.5)--(2,1.5)--cycle;
\fill[color=colG!50] (0,0)--(2,1.5)--(2,4)--(0,2.5)--cycle;
\draw (0,0)--(3,0)--(5,1.5)--(2,1.5)--(2,4)--(0,2.5)--cycle;
\begin{scope}[color=colB,thick]
\draw (0,2.5)--(2,1.5)
  node[sloped,midway,yshift=1.7mm,scale=0.8] { $\II{\times}\I{\times}\I$ };
\draw (2,1.5)--(0,0)
  node[sloped,midway,yshift=1.7mm,scale=0.8] { $\I{\times}\II{\times}\I$ };
\draw (0,0)--(5,1.5)
  node[sloped,midway,yshift=1.7mm,scale=0.8] { $\I{\times}\I{\times}\II$ };
\end{scope}
\fill[color=ptmarque] (2,1.5) circle (1mm);
\node[color=ptmarque,scale=0.8,above right] at (2,1.5) { $\II{\times}\II{\times}\I$ };
\fill[color=ptmarque] (0,0) circle (1mm);
\node[color=ptmarque,scale=0.8,yshift=-1mm,below] at (0,0) { $\I{\times}\II{\times}\II$ };
\node[above left,xscale=0.6,yscale=0.5,xslant=1] at (3,0) { $\mathbb P^1 \times \mathbb P^1$ };
\node[below left,rotate=40,scale=0.6,xshift=0.5mm,xslant=0.9] at (2,4) { $\mathbb P^1 \times \mathbb P^1$ };
\end{tikzpicture}

\\ \hline

\end{tabular}
\renewcommand{\arraystretch}{1}
\end{minipage}
\caption{Liste des gènes et de leurs variétés de Kisin pour $f=3$}
\label{fig:f3}
\end{figure}

\subsubsection{Pour $f=2$}

D'après le tableau de la figure \ref{fig:f2} (page \pageref{fig:f2}), 
les deux seules variétés de Kisin qui peuvent apparaître pour $f=2$ sont 
$\Spec k_E$ et $\P^1_{k_E}$. En outre, dans le second cas, à isomorphisme près,
la stratification prend la forme suivante :
\begin{center}
\begin{tikzpicture}
\draw[very thick] (-0.7,0)--(2.7,0);
\node[color=colE,right] at (2.7,0) { $\P^1$ };
\node[scale=0.8] at (1,-0.25) { $\I{\times}\I$ };
\fill[color=ptmarque] (0,0) circle (1mm);
\node[color=ptmarque,scale=0.8] at (0,0.25) { $\I{\times}\II$ };
\fill[color=ptmarque] (2,0) circle (1mm);
\node[color=ptmarque,scale=0.8] at (2,0.25) { $\II{\times}\I$ };
\end{tikzpicture}
\end{center}
Comme nous l'avons démontré dans \cite{CDM}, l'anneau de déformations 
correspondant $R^\psi(\vv,\ttt,\rhobar)$ est, dans ce cas, isomorphe à 
$\frac{\oE[[T,U,V]]}{UV + p^2}$ et sa \og fibre générique \fg\ 
$D^\psi(\vv, \ttt, \rhobar)$ s'écrit comme un produit $B \times C$ où 
$B$ est une boule ouverte et $C$ est la couronne ouverte bordée par le 
cercle unité et celui de rayon $p^{-2}$. Cette couronne $C$ s'écrit 
comme l'union de deux couronnes ouvertes --- celle bordée par les 
cercles de rayon $1$ et $p^{-1}$ d'une part et celle bordée par les 
cercles de rayon $p^{-1}$ et $p^{-2}$ d'autre part --- et du cercle de 
rayon $p^{-1}$ qui s'écrit lui-même comme une union de boules ouvertes 
indexée par les points fermés de $\P^1_{k_E} \backslash \{0, \infty\}$. 
Nous constatons que ces résultats sont en accord avec la stratification 
sur $\vK^\psi(\vv,\ttt,\rhobar)$ que nous avons décrite précédemment.

\subsubsection{Pour $f=3$}

Il n'est pas difficile de faire la liste de tous les gènes possibles de 
longueur $3$ satisfaisant aux conditions des lemmes \ref{proprelgene} et 
\ref{pasABBA} et ne contenant pas le couple d'allèles $\binom\gO\gO$. Le 
tableau de la figure \ref{fig:f3} les présente à symétrie près et fait 
apparaître, pour chacun d'eux, la variété de Kisin stratifiée 
correspondante.

Nous constatons que les variétés de Kisin stratifiées sont isomorphes
pour les six premiers exemples de la colonne de gauche. Ainsi, d'après
la conjecture \ref{conj:D}, les espaces de déformations $D^\psi(\vv,
\ttt,\rhobar)$ devraient être isomorphes entre eux dans tous ces cas.
En nous appuyant sur les résultats obtenus pour $f=2$, nous pouvons
aller plus loin et proposer un candidat explicite pour $D^\psi(\vv,
\ttt,\rhobar)$ qui est 
$\Spm \big( \frac{\oE[[T_1,T_2,U,V]]}{UV + p^2}[1/p]\big)$.

Il est plus aventureux d'exhiber un candidat pour l'anneau 
$R^\psi(\vv, \ttt,\rhobar)$ lui-même. Par exemple, il existe des
couples $(\ttt, \rhobar)$ qui tombent sous le coup de l'exemple
de la cinquième ligne et pour lesquels il y a exactement $3$ poids
de Serre commun à $\ttt$ et $\rhobar$. Ainsi, d'après la conjecture
de Breuil--Mézard raffinée \cite{BM,EG} --- qui a été démontrée, dans ce 
cas, par Emerton et Gee dans \cite{EG} --- la multiplicité d'Hilbert--Samuel 
de $R^\psi(\vv, \ttt,\rhobar)$ est nécessairement $\geq 3$. Ceci exclut 
donc la possibilité $\frac{\oE[[T_1,T_2,U,V]]}{UV + p^2}$ pour 
$R^\psi(\vv, \ttt,\rhobar)$. La même discussion vaut pour l'exemple
de la sixième ligne. En revanche, pour les quatre premiers 
exemples, cette possibilité n'est pas exclue par un argument de ce type 
et, de fait, nous conjecturons que $R^\psi(\vv, \ttt,\rhobar) = 
\frac{\oE[[T_1,T_2,U,V]]}{UV + p^2}$ dans ces cas.

\subsubsection{Chaînes de $\P^1$}

Considérons une variété stratifiée facteur (voir \S
\ref{sssec:geomstrat}) $\calV$ portant sur $(\ell+1)$ indices, qui 
est isomorphe à une chaîne de $\ell$ copies de $\P^1_{k_E}$ représentée 
ci-dessous :

\begin{center}
\begin{tikzpicture}
\begin{scope}[thick]
\draw (-0.3,-0.3) to[out=70,in=200] (1.3,1.3);
\draw (0.7,1.3) to[out=-70,in=-200] (2.3,-0.3);
\draw (1.7,-0.3) to[out=70,in=200] (3.3,1.3);
\draw (5.2,-0.3) to[out=70,in=200] (6.8,1.3);
\draw (6.2,1.3) to[out=-70,in=-200] (7.8,-0.3);
\end{scope}
\begin{scope}[thick]
\clip (2.5,1.5) rectangle (3.2,-0.5);
\draw (2.7,1.3) to[out=-70,in=-200] (4.3,-0.3);
\end{scope}
\begin{scope}[thick]
\clip (5,1.5) rectangle (5.8,-0.5);
\draw (4.2,1.3) to[out=-70,in=-200] (5.8,-0.3);
\end{scope}
\node[scale=2] at (4.25,0.5) { $\cdots$ };
\fill[color=ptmarque] (-0.2,-0.08) circle (1mm);
\node[color=ptmarque,scale=0.8,yshift=1mm,right] at (-0.2,-0.08) { $P_0$ };
\fill[color=ptmarque] (0.8,1.08) circle (1mm);
\node[color=ptmarque,scale=0.8,yshift=1mm,left] at (0.8,1.08) { $P_1$ };
\fill[color=ptmarque] (1.8,-0.08) circle (1mm);
\node[color=ptmarque,scale=0.8,yshift=1mm,right] at (1.8,-0.08) { $P_2$ };
\fill[color=ptmarque] (2.8,1.08) circle (1mm);
\node[color=ptmarque,scale=0.8,yshift=1mm,left] at (2.8,1.08) { $P_3$ };
\fill[color=ptmarque] (5.3,-0.08) circle (1mm);
\node[color=ptmarque,scale=0.8,yshift=1mm,right] at (5.3,-0.08) { $P_{\ell-2}$ };
\fill[color=ptmarque] (6.3,1.08) circle (1mm);
\node[color=ptmarque,scale=0.8,yshift=1mm,left] at (6.3,1.08) { $P_{\ell-1}$ };
\fill[color=ptmarque] (7.3,-0.08) circle (1mm);
\node[color=ptmarque,scale=0.8,yshift=1mm,right] at (7.3,-0.08) { $P_{\ell}$ };
\end{tikzpicture}
\end{center}

\noindent
(voir également \S \ref{exchaineP1}).
Nous supposons en outre que la stratification sur $\calV$ est ainsi 
donnée : le genre du point $P_i$ est $\I \times \cdots \times \I \times 
\II \times \I \times \cdots \times \I$ (le $\II$ étant à la $i$-ième 
position), tandis que le genre des autres points est $\I \times \cdots 
\times \I$.
Un candidat naturel pour le facteur $D$ correspondant est alors 
l'espace rigide associé à l'anneau :
$$\left(\frac{\oE[[T_1, \ldots, T_\ell, U, V]]}{(UV + p^{\ell+1})}\right)
\!\left[p^{-1}\right]$$
En effet, l'espace $D$ ci-dessus est isomorphe au produit $B^\ell \times 
C$ où $B$ est une boule ouverte et $C$ est la couronne ouverte d'\og 
épaisseur $(\ell+1)$ \fg\ comprise entre les cercles de rayon $1$ et 
$p^{-\ell-1}$ qui peut s'écrire\footnote{Cette écriture est abusive
car les $B_i$ ne sont pas des espaces rigides.} :
$$C = A_0 \sqcup B_1 \sqcup A_1 \sqcup B_2 \sqcup \,\cdots\, \sqcup
A_{\ell-1} \sqcup B_\ell \sqcup A_\ell$$
où $A_i$ est la couronne ouverte d'\og épaisseur $1$ \fg\ comprise
entre les cercles de rayon $p^{-i}$ et $p^{-i-1}$ et $B_i$ est le
cercle de rayon $p^{-i}$ qui est lui-même recouvert par une union
de boules ouvertes indexée par $\P^1_{k_E} \backslash \{0,\infty\}$.
Sur la variété de Kisin stratifiée, chaque $A_i$ correspond donc au
point $P_i$ tandis que $B_i$ correspond au $i$-ième $\P^1_{k_E}$ privé
des points $P_{i-1}$ et $P_i$.

\subsection{Des candidats pour les espaces de déformations}
\label{subsec:candidats}

Considérons une représentation $\rhobar$ non dégénérée ainsi qu'un type 
$\ttt = \eta \oplus \eta'$ non dégénéré. La variété de Kisin 
$\vK^\psi(\vv,\ttt,\rhobar)$ munie de sa stratification s'écrit alors 
comme un produit de variétés stratifiées facteur, conformément aux 
résultats du \S \ref{sssec:geomstrat} :
$$\vK^\psi(\vv,\ttt,\rhobar) = \calV_1 \times \cdots \times \calV_r.$$
D'après la conjecture \ref{conj:D2}, à chaque facteur $\calV_j$ devrait 
être associé un espace rigide analytique $D_j$, de façon à ce que 
$D^\psi(\vv,\ttt,\rhobar) = D_1 \times \cdots \times D_r$. Le but 
de ce paragraphe est d'\emph{esquisser} la construction d'un candidat 
possible pour les espaces $D_j$.

\subsubsection*{Une construction préliminaire}

Rappelons que si $a$ et $b$ sont deux éléments d'un ensemble, nous avons 
défini le symbole $\delta(a,b)$ par $\delta(a,b) = \II$ si $a = b$ et 
$\delta(a,b) = \I$ sinon. Rappelons également que si $k$ est un corps et 
si $x = [u:v] \in \P^1(k)$, nous notons $x^{-1} = [v:u]$. Étant donnés 
un entier $d \geq 0$ et un élément $x_0 \in \P^1(k_E)$, nous allons 
construire un schéma formel $\calX_{d,x_0}$, un sous-schéma fermé $\bar 
\calZ_{d,x_0}$ de la fibre spéciale $\bar \calX_{d,x_0}$ de 
$\calX_{d,x_0}$ ainsi qu'un morphisme $\alpha_{d,x_0} : \calX_{d,x_0} 
\to (\P^1_{\oE})^d$ vérifiant les deux propriétés suivantes :
\begin{itemize}
\item[\emph{i)}] le morphisme $\alpha_{d,x_0}$ induit un isomorphisme 
$\bar \calZ_{d,x_0} \to (\P^1_{k_E})^d$, 
\item[\emph{ii)}] pour tout point fermé $x \in \bar \calZ_{d,x_0}$, l'image 
inverse de $x$ par le morphisme de spécialisation 
est isomorphe à $\Spm(W_{\oE}(k_E(x)) \otimes_{\oE} R_{g(x)}[1/p])$ où 
$$\begin{array}{l}
g(x) = \big(\delta(x_0, x_1^{-1}), \delta(x_1, x_2^{-1}),
\ldots, \delta(x_{d-1}, x_d^{-1})\big) \in \mathcal G^d \medskip \\
\hspace{3cm} \text{avec }
\alpha_{d,x_0}(x) = (x_1, x_2, \ldots, x_d) \in (\P^1_{k_E})^d
\end{array}$$
et nous rappelons que $R_{g(x)}$ est défini par la formule \eqref{eq:defRg},
page \pageref{eq:defRg}.
\end{itemize}

\medskip

Nous procédons par récurrence sur $d$. 
Pour $d = 0$, nous posons $\calX_{0,x_0} = \Spf\oE$, $\bar \calZ_{0,x_0} 
= \bar \calX_{0,x_0}$ et $\alpha_0 = \text{id}$.
Supposons à présent que $\calX_{d-1,x_0}$, $\bar \calZ_{d-1,x_0}$ et 
$\alpha_{d-1,x_0}$ sont construits. Nous posons $\calY = 
\calX_{d-1,x_0} \times_{\oE} \P^1_{\oE}$ et notons $\bar \calY$ sa fibre 
spéciale. Nous avons donc $\bar \calY = \bar \calX_{d-1,x_0} 
\times_{k_E} \P^1_{k_E}$. Soit $\bar \calD$ le diviseur de $\bar \calY$ 
défini comme l'image du morphisme :
$$\text{id} \times (\text{inv} \circ \varpi_{d-1} \circ
\alpha_{d-1}) : \bar \calX_{d-1} \to \bar \calY$$
où $\varpi_{d-1} : (\P^1_{k_E})^{d-1} \to \P^1_{k_E}$ est la projection 
sur la dernière composante et $\text{inv}$ est le morphisme $x \mapsto 
x^{-1}$. Nous définissons $\calX_{d,x_0}$ comme l'éclaté de $\calY$ par 
rapport au sous-schéma fermé $\bar \calD$ et $\bar \calZ_{d,x_0}$ comme 
le transformé strict de $\bar \calZ_{d-1,x_0}$ dans $\calX_{d,x_0}$.
Enfin, le morphisme $\alpha_{d,x_0}$ est la composée $\calX_{d,x_0} \to 
\calY \to (\P^1_{\oE})^d$ où la première flèche est le morphisme 
canonique et la seconde est $\alpha_{d-1,x_0} \times \text{id}$.

Nous pouvons alors vérifier que le triplet $(\calX_{d,x_0}, \bar 
\calZ_{d,x_0}, \alpha_{d,x_0})$ ainsi construit satisfait aux propriétés 
\emph{i)} et \emph{ii)} sus-mentionnées.

\subsubsection*{Un candidat pour $D_j$}

Dans ce paragraphe, nous reprenons les notations $\pr_i$, $S$, $i_j$, 
$S_j$ du \S \ref{sssec:geomstrat}. Sans restreindre la généralité, nous 
supposons également comme dans le \S \ref{sssec:geomstrat} que le 
plongement $\tau_0$ est choisi de sorte que $0 \in S$.

Fixons un entier $j \in \llbracket 1, r \rrbracket$. Si $i_{j+1} = i_j + 
1$, la variété facteur $\calV_j$ est réduite à un point. Le candidat que 
nous proposons pour $D_j$ est alors naturellement 
$\Spm R_{g_{i_j}(x)}[1/p]$ où 
$x$ est l'unique point de $\calV_j$. Supposons donc, à présent, que 
$i_{j+1} > i_j + 1$. Dans un premier temps, nous supposons également 
qu'il n'y a pas de croix dans les décorations du gène entre les
positions $i_j$ et $i_{j+1}$.
Par définition, les fonctions $\pr_{i_j}$ et $\pr_{i_{j+1}}$ sont 
constantes sur la variété de Kisin $\vK^\psi(\vv,\ttt,\rhobar)$. Notons 
$s$ (resp. $t$) la valeur prise par $\pr_{i_j}$ (resp. $\pr_{i_{j+1}}$) 
; elle vaut nécessairement $[0:1]$ ou $[1:0]$. Posons $\ell = i_{j+1} - 
i_j = \text{Card } S_j$ et, pour simplifier les écritures, notons
$\calX = \calX_{\ell,s}$ et $\alpha = \alpha_{\ell,s}$.
Définissons également $\beta$ comme la composée :
$$\textstyle
\beta : \calV_j \longrightarrow \prod_{i\in S_j} \P^1_{k_E}
\longrightarrow (\P^1_{k_E})^\ell$$
où la première flèche est l'inclusion déduite des $\pr_i$ ($i \in
S_j$) et où la deuxième flèche est le morphisme
$(x_{i_j}, x_{i_j+1}, \ldots, x_{i_{j+1}-1})
\mapsto (x_{i_j+1}, \ldots, x_{i_{j+1}-1}, t)$.
Du fait que la fonction $\pr_{i_j}$ est constante, nous déduisons
que $\beta$ est une immersion fermée et identifie ainsi $\calV_j$
à un sous-schéma fermé de $(\P^1_{k_E})^\ell$, ce dernier étant
lui-même un sous-schéma fermé de $\bar \calX$ (la fibre spéciale de 
$\calX$) \emph{via} $\alpha$.
Ainsi, cela a du sens de considérer le tube $]\calV_j[_{\calX}$. En 
outre, il est facile de se convaincre en remontant les définitions et en 
utilisant la propriété \emph{ii)} vérifiée par les variétés 
$\calX_{d,x_0}$ que l'image 
inverse par le morphisme de spécialisation $]\calV_j[_{\calX} \to 
\calV_j$ d'un point fermé $x \in \calV_j$ est isomorphe à 
$\Spm(W_{\oE}(k_E(x)) \otimes_{\oE} R_{g(x)}[1/p])$ où $g(x) = 
(g_i(x))_{i \in S_j}$.
Autrement dit, $]\calV_j[_{\calX}$ vérifie les propriétés attendues
de $D_j$ et, de fait, c'est le candidat que nous proposons pour être 
$D_j$.

Pour conclure, mentionnons rapidement comment la construction précédente 
s'étend au cas où les décorations du gène font apparaître des croix. 
Supposons donc qu'il y ait au moins une croix dans la portion de gène 
correspondant aux indices de $S_j$. Nous utilisons les notations 
$\calV'_j$ pour la variété stratifiée facteur associée à la portion
de gène ci-dessus dans laquelle toutes les croix ont été contractées.
Concrètement, la variété stratifiée $\calV'_j$ n'est autre que $\calV_j$
mais le genre d'un point fermé $x \in \calV'_j$ n'est pas $g(x)$ mais
le sous-uplet $g'(x) = (g_i(x))_{x \in S'_j}$ où $S'_j$ est le
sous-ensemble de $S_j$ où nous avons retiré les indices $i$ pour lesquels
la décoration entre $i$ et $(i+1)$ est une croix. Notons que ceci ne
modifie pas la stratification car, d'après la proposition 
\ref{prop:stratgene}, les fonctions $g_i$ pour $i \in S_j \backslash
S'_j$ sont constantes, égales à $\I$.
Cette dernière remarque justifie également le fait que nous nous 
attendions à ce que l'espace rigide $D_j$ associé à $\calV_j$ soit 
$D'_j \times B^c$ où $D'_j$ est l'espace associé à $\calV'_j$, $B$ 
est une boule ouverte et $c = \text{Card}(S_j \backslash S'_j)$ est 
le nombre de croix qui ont été contractées.

\end{document}